\documentclass[oneside,a4paper]{amsart}

\usepackage{amsmath,amsthm,verbatim,amssymb,amsfonts,amscd, graphicx}
\usepackage{graphics}

\usepackage[utf8]{inputenc}
\usepackage[T1]{fontenc}

\usepackage{stmaryrd}
\usepackage{xspace}
\usepackage[all]{xy}
\usepackage[margin=2.5cm]{geometry}

\usepackage{blindtext} 
\usepackage[linktocpage]{hyperref}

\setcounter{tocdepth}{1}

\theoremstyle{plain}
\newtheorem{thm}{Theorem}[section]
\newtheorem{cor}[thm]{Corollary}
\newtheorem{lem}[thm]{Lemma}
\newtheorem{prop}[thm]{Proposition}

\theoremstyle{definition}
\newtheorem*{defi}{Definition}
\newtheorem*{expl}{Example}

\newtheorem*{rmq}{Remark}

\newcommand{\ho}{\mathrm{H}}
\newcommand{\Hom}{\mathrm{Hom}}

\newcommand{\Oc}{\mathcal{O}}

\newcommand{\m}{\mathfrak{m}}

\newcommand{\X}{\mathcal{X}}

\newcommand{\Li}{\overline{\mathcal{L}}}
\newcommand{\li}{\mathcal{L}}
\newcommand{\A}{\mathbb{A}}
\newcommand{\Q}{\mathbb{Q}}
\newcommand{\Z}{\mathbb{Z}}
\newcommand{\R}{\mathbb{R}}
\newcommand{\bbC}{\mathbb{C}}
\newcommand{\pr}{\mathbb{P}}
\newcommand{\F}{\mathbb{F}}

\newcommand{\Ker}{\mathrm{Ker}}

\newcommand{\Spec}{\mathrm{Spec}}

\usepackage[shortlabels]{enumitem}
\setlist[enumerate]{nosep}

\begin{document}

\title{
	On the Bertini regularity theorem for arithmetic varieties
	}
\author{Xiaozong WANG}
\address{Morningside Center of Mathematics, Chinese Academy of Sciences, Beijing, 100190, China}
\email{xiaozong.wang@amss.ac.cn}

\begin{abstract}
	Let $\mathcal{X}$ be a regular projective arithmetic variety equipped with an ample Hermitian line bundle $\overline{\mathcal{L}}$. We prove that the proportion of global sections $\sigma$ with $\left\lVert \sigma \right\rVert_{\infty}<1$ of $\overline{\mathcal{L}}^{\otimes d}$ whose divisor does not have a singular point on the fiber $\mathcal{X}_p$ over any prime $p\leq e^{\varepsilon d}$ tends to $\zeta_{\mathcal{X}}(1+\dim \mathcal{X})^{-1}$ as $d\rightarrow \infty$. 
\end{abstract}

\maketitle

\tableofcontents

\section{Introduction}
The main result of this article is Theorem \ref{mainvar}.

The classical Bertini theorem states that if $X$ is a smooth quasi-projective variety of dimension $m$ over an infinite field $k$ embedded into some projective space $\pr^n_k$, the intersection of $X$ with a general hyperplane of $\pr^n_k$ is smooth of dimension $m-1$. Here general means that the set of hyperplanes satisfying this property is the set of $k$-points of an open subscheme $U$ of the dual projective space $(\pr^n_k)^{\vee}$ of $\pr^n_k$. 
This open subscheme $U$ of the dual projective space exists regardless of the conditions on the base field, but it's the infiniteness of the field $k$ that guarantees the existence of infinitely many $k$-points in $U$. We have similar theorems on reducedness, irreducibility, connectedness, etc. A good reference for these results is \cite{Jo83}.

When $k$ is a finite field, this theorem still gives us an open subscheme of $(\pr^n_k)^{\vee}$ parametrizing hyperplanes whose intersection with $X$ is smooth, but may fail to give such a hyperplane as the open subscheme may have no $k$-point. In \cite{Po04}, Poonen proved that if we consider the proportion of hypersurfaces of degree $d$ whose intersection with $X$ is smooth of dimension $m-1$ among all the degree $d$ hypersurfaces, this proportion tends to $\zeta_X(1+m)^{-1}=\prod_{x\in |X|}(1-\#\kappa(x)^{-(1+m)})$ when $d$ tends to infinity. Here $|X|$ denotes the underlying topological space of $X$ consists of its closed points. In \cite{CP16}, Charles and Poonen also considered hypersurfaces of degree $d$ of $\pr^n_k$ whose intersection with an irreducible subscheme $X$ of dimension at least $2$ is still irreducible, and proved that the proportion of such hypersurfaces tends to $1$ when $d$ tends to infinity. 

It is also of interest to have a good analogue of Bertini smoothness theorem for quasi-projective schemes over $\Spec\ \Z$. But as Poonen explained in \cite[Section 5.7]{Po04}, smoothness condition is too strong in the arithmetic situation. We need to consider regularity instead. 
In the same article, Poonen established a density for subsets of $\bigcup_{d\geq 0}\ho^0(\pr^n_{\Z}, \mathcal{O}(d))$ and showed that for a regular subscheme $\X$ of $\pr^n_{\Z}$, assuming the \emph{abc} conjecture and an auxiliary conjecture, the density of sections $f\in\bigcup_{d\geq 0}\ho^0(\pr^n_{\Z}, \mathcal{O}(d))$ such that $\mathrm{div}\ f\cap \X$ is regular is $\zeta_{\X}(\dim \X+1)^{-1}$. Poonen's result depends on the embedding of $\X$ into $\pr^n_{\Z}$ and the choice of a coordinate system in $\pr^n_{\Z}$. It would be better to have a more general result without an explicit choice of an embedding into some projective space. This leads us to look for a similar result in the setting of Arakelov geometry. 

\subsection{Main theorems}

Let $\X$ be a projective arithmetic variety, i.e. an integral separated scheme which is flat, projective of finite type over $\Spec\ \Z$. If $\X$ is regular and that $\li$ is an ample line bundle on $\X$, we want to define a good density for a subset $\mathcal{P}\subset  \bigcup_{d\geq 0}\ho^{0}(\X,\li^{\otimes d})$ so that the density of the subset of sections $\sigma\in \bigcup_{d\geq 0}\ho^{0}(\X,\li^{\otimes d})$ whose divisor is regular is positive. This will imply the existence of global sections $\sigma$ with regular divisor $\mathrm{div}\sigma$ for sufficiently large $d$.\\

In the arithmetic case, we use the word ``singular'' as ``not regular''. If we say that $\X$ is \emph{singular at a closed point $x$}, we mean that $\X$ is \emph{not regular at $x$}, which means  that 
\[
	\dim_{\kappa(x)} \frac{\mathfrak{m}_{\X, x}}{\mathfrak{m}^2_{\X, x}}\not=\dim \X,
\]
where $\mathfrak{m}_{\X, x}$ is the maximal ideal of the stalk $\mathcal{O}_{\X, x}$ of the structure sheaf of scheme $\X$ on $x$ and 	$\dim\X$ is the dimension of $\X$ as a scheme.


In order to get good positivity properties of the ample line bundles on arithmetic varieties, we add on them a Hermitian structure and consider the notion of arithmetic ampleness for Hermitian line bundles on projective arithmetic varieties developed by Henri Gillet and Christophe Soulé in \cite{GS92} and by Shouwu Zhang in \cite{Zh92} (for arithmetic surfaces) and \cite{Zh95}. Assume that $\X$ is a projective arithmetic variety. An ample Hermitian line bundles $\Li=(\li, \left\lVert \cdot \right\rVert)$ on $\X$ is an ample line bundle $\li$ equipped with a Hermitian metric $\left\lVert \cdot \right\rVert$ on the restriction $\li_{\bbC}$ to the fiber $\X(\bbC)$ with additional positivity conditions. For such $\Li$ on $\X$, we consider the set of \emph{strictly effective sections} 
\[
	\ho_{\mathrm{Ar}}^0(\X, \Li):= \{ \sigma\in \ho^0(\X, \Li) \ ;\ \left\lVert \sigma \right\rVert_{\infty}<1 \}
\]
as an analogue of $\ho^0(X,\li)$ for an ample line bundle $\li$ on a projective variety $X$ defined over a field, and 
\[
	h_{\mathrm{Ar}}^0(\X, \Li):= \log\left( \#\ho_{\mathrm{Ar}}^0(\X, \Li) \right)
\]
as an analogue of $h^0(X,\li)$. Here
\[
	\left\lVert \sigma \right\rVert_{\infty}=\sup_{z\in \X(\bbC)}\left\lVert \sigma(z) \right\rVert. 
\]

We will give a precise definition of an ample Hermitian line bundle and discuss some of its properties in Section \ref{arithample}. \\

For a fixed ample Hermitian line bundle $\Li$, we say that a subset $\mathcal{P}$ of $\bigcup_{d\geq 0}\ho^0(\X, \Li^{\otimes d})$ has Arakelov density $\rho$ for some $0\leq \rho\leq 1$ if
\[
	\lim_{d\rightarrow \infty} \frac{ \#\left(\mathcal{P}\cap \ho_{\mathrm{Ar}}^0(\X, \Li^{\otimes d})\right) }{\#\ho_{\mathrm{Ar}}^0(\X, \Li^{\otimes d})}=\rho.
\]
We define the upper density and lower density in the same way. We denote the density, the upper density and the lower density of $\mathcal{P}$, when exist, by $\mu_{\mathrm{Ar}}(\mathcal{P}),\overline{\mu_{\mathrm{Ar}}}(\mathcal{P})$ and $\underline{\mu_{\mathrm{Ar}}}(\mathcal{P})$, respectively.\\

Our main result is the following:

\begin{thm}\label{mainvar}
	Let $\X$ be a regular projective arithmetic variety of dimension $n$, and let $\Li$ be an ample Hermitian line bundle on $\X$. 
	There exists a constant $\epsilon_0>0$ such that for any $\varepsilon$ with $0<\varepsilon<\epsilon_0$ by denoting
	\[
		\mathcal{P}_{d,p\leq e^{\varepsilon d}}:=\left\{ \sigma\in \ho^0(\X,\Li^{\otimes d})\ ;\ \begin{array}{ll}
		\mathrm{div}\sigma \text{ has no singular point of residual} \\
		\text{characteristic smaller than or equal to } e^{\varepsilon d}
		\end{array}\right\}
	\]
	and $\mathcal{P}_{A,\varepsilon}=\bigcup_{d\geq 0} \mathcal{P}_{d,p\leq e^{\varepsilon d}}$, we have
	\[
		\mu_{\mathrm{Ar}}(\mathcal{P}_{A,\varepsilon})=\zeta_{\X}(1+n)^{-1},
	\]
	where $\zeta_{\X}(s)$ is the zeta function
	\[
		\zeta_{\X}(s)= \prod_{x\in |\X|}(1-\#\kappa(x)^{-s})^{-1}.
	\]
\end{thm}
Here $\kappa(x)$ is the residual field of $x$, and the residual characteristic of a closed point $x$ in $\X$ is the characteristic of its residue field.

\begin{thm}\label{regular}
	Let $\X$ be a regular projective arithmetic variety of dimension $n$, and let $\Li$ be an ample Hermitian line bundle on $\X$. Set
	\[
		\mathcal{P}_{d}:=\left\{ \sigma\in \ho^0(\X,\Li^{\otimes d})\ ;\ 
		\mathrm{div}\sigma \text{ is regular} \right\}
	\]
	and $\mathcal{P}=\bigcup_{d\geq 0} \mathcal{P}_{d}$. We have
	\[
		\overline{\mu_{\mathrm{Ar}}}(\mathcal{P})\leq\zeta_{\X}(1+n)^{-1},
	\]
	where $\overline{\mu_{\mathrm{Ar}}}(\mathcal{P})$ is the upper density of $\mathcal{P}$.
\end{thm}
\begin{proof}[Proof]
	If a section $ \sigma\in \ho_{\mathrm{Ar}}^0(\X,\Li^{\otimes d})$ is such that $\mathrm{div}\sigma$ is regular, then in particular it has no singular point of residual characteristic smaller than or equal to $e^{\varepsilon d}$ with constant $\varepsilon$ as in Theorem \ref{mainvar}. So naturally $\mathcal{P}_d\subset \mathcal{P}_{d, p\leq e^{\varepsilon d}}$ and $\mathcal{P}\subset \mathcal{P}_{A,\varepsilon}$. Therefore we have
	\[
		\overline{\mu_{\mathrm{Ar}}}(\mathcal{P})\leq \mu_{\mathrm{Ar}}(\mathcal{P}_{A,\varepsilon})=\zeta_{\X}(1+n)^{-1}.
	\]
\end{proof}
\begin{cor}\label{singR}
	Let $\X$ be a regular projective arithmetic variety of dimension $n$, and let $\Li$ be an ample Hermitian line bundle on $\X$. 
	There exists a constant $c>1$ such that for any $R>1$ we have
	\[
		\lim_{d\rightarrow \infty}\frac{\#\left\{ \sigma\in \ho^0(\X, \Li^{\otimes d})\ ;\ \begin{array}{ll} \lVert \sigma \rVert_{\infty}<R^d,\ \mathrm{Sing}(\mathrm{div}\sigma) \text{ has no point of residual} \\ \text{ characteristic smaller than or equal to } (cR)^{\frac{d}{2}} \end{array}\right\}}{\#\left\{ \sigma\in \ho^0(\X, \Li^{\otimes d})\ ;\ \lVert \sigma \rVert_{\infty}<R^d \right\}}=\zeta_{\X}(1+n)^{-1}.
	\]
\end{cor}

Charles used this notion of density in \cite{Ch17} to prove the analogous Bertini irreducibility theorem for arithmetic varieties, which says that if $\X$ is an irreducible arithmetic variety of dimension at least $2$ and $\Li$ an ample Hermitian line bundle on $\X$, then the set of global sections in $\bigcup_{d\geq 0}\ho^0(\X, \Li^{\otimes d})$ whose divisor is irreducible has density $1$. 
The result of Charles can also be compared to the result of Breuillard and Varjú in \cite{BV19} for polynomials only with coefficients in $0$ and $1$. Breuillard and Varjú showed that if we admit the Riemann hypothesis for the Dedekind zeta function $\zeta_K$ for all number fields of the form $K=\Q(a)$ for some root $a$ of a polynomial with $0$, $1$ coefficients, the density of the subset of irreducible polynomials in the set of polynomials $P(X)$ of $1$ variable with $0$, $1$ coefficients such that $P(0)\not=0$ is $1$. (In each degree $d$, such polynomials are finite in number, so the density can be defined by the limit of proportion when $d$ tends to infinity.) \\

\subsection{Comparison with earlier results}
We compare our result with some existing results.


We first recall the result of Poonen that we already mentioned. In \cite{Po04} Poonen established a density for $\bigcup_{d\geq 0}\ho^0(\pr^n_{\Z}, \mathcal{O}(d))$ on the projective space $\pr^n_{\Z}$. Let $\mathcal{P}=\bigcup_{d\geq 0}\mathcal{P}_d$ be a subset of $\bigcup_{d\geq 0}\ho^0(\pr^n_{\Z}, \mathcal{O}(d))$, where $\mathcal{P}_d\subset \ho^0(\pr^n_{\Z}, \mathcal{O}(d))$. For any $d$, we have a natural $\Z$-basis of $\ho^0(\pr^n_{\Z}, \mathcal{O}(d))$ which is composed of all monomials of degree $d$. For simplicity of notations we denote them by $f_{d,1},\dots, f_{d,h_d}$ where $h_d=h^0(\pr^n_{\Q}, \mathcal{O}(d))$. Any section $f\in \ho^0(\pr^n_{\Z}, \mathcal{O}(d))$ can be written as $f=\sum_{i=1}^{h_d}a_if_{d,i}$ with some $a_i\in \Z$ for each $i$. Poonen defines the upper density of $\mathcal{P}_d$ as	
\[
	\overline{\mu}_{\mathrm{P},d}(\mathcal{P})=\max_{\tau\in \mathfrak{S}_{h_d}}\limsup_{B_{\tau(1)}\rightarrow \infty}\cdots \limsup_{B_{\tau(h_d)}\rightarrow \infty}\frac{\#\left(\mathcal{P}_d\cap \{ \sum_{i=1}^{h_d}a_if_{d,i}\in  \ho^0(\pr^n_{\Z}, \mathcal{O}(d))\ ;\ |a_i|\leq B_i,\ \forall i \}\right)}{\#\{ \sum_{i=1}^{h_d}a_if_{d,i}\in  \ho^0(\pr^n_{\Z}, \mathcal{O}(d))\ ;\ |a_i|\leq B_i,\ \forall i \}}.
\]
Here $\mathfrak{S}_{h_d}$ is the symmetric group of $h_d$ symbols. The upper density of $\mathcal{P}$ is then defined by
\[
	\overline{\mu}_{\mathrm{P}}(\mathcal{P})=\limsup_{d\rightarrow \infty} \overline{\mu}_{\mathrm{P},d}(\mathcal{P}_d).
\]
The lower density of $\mathcal{P}$ is defined similarly and the density of $\mathcal{P}$ exists if its upper and lower density coincide. Using this density, Poonen proved the following theorem:

\begin{thm}[Poonen, Theorem 5.1 of \cite{Po04}]\label{poonen}
	When $\X$ is a regular subscheme of dimension $m$ of $\pr^n_{\Z}$, assuming the \emph{abc} conjecture and a supplementary conjecture which holds at least when $\X$ is projective, the density of the set of sections $f\in \bigcup_{d\geq 0}\ho^0(\pr^n_{\Z}, \mathcal{O}(d))$ such that $\mathrm{div}(f)\cap \X$ is regular of dimension $m-1$ is $\zeta_{\X}(m+1)^{-1}$. 
\end{thm}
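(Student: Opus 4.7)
The plan is to follow the strategy Poonen developed for finite fields in \cite{Po04}, adapted to the arithmetic setting over $\Spec\ \Z$. Call $f\in \ho^0(\pr^n_\Z,\mathcal{O}(d))$ \emph{good} if $\mathrm{div}(f)\cap \X$ is regular of dimension $m-1$, and \emph{bad} otherwise. A bad $f$ has a singular point of $\mathrm{div}(f)\cap \X$ either in the generic fiber $\X_\Q$ or on some closed fiber $\X_{\F_p}$. I would introduce two cutoffs $L(d)\le M(d)$, to be chosen as slowly growing functions of $d$, and split the finite primes into the ranges $p\le L(d)$, $L(d)<p\le M(d)$, and $p>M(d)$. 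The goal is to extract the Euler product
\[
	\prod_{x\in |\X|}\bigl(1-N(x)^{-(m+1)}\bigr)=\zeta_\X(m+1)^{-1}
\]
from the small primes and to show that all other contributions to bad density vanish in the limit.

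At small primes $p\le L(d)$, I would carry out the local density computation at each closed point as in the finite-field case. Fix $x\in\X_{\F_p}$ closed with $N(x)=q$; since $\X$ is regular of dimension $m$ at $x$, the divisor $\mathrm{div}(f)\cap \X$ is regular at $x$ precisely when the image of $f$ under a local trivialization of $\mathcal{O}(d)$ is nonzero in $\mathcal{O}_{\X,x}/\m_{\X,x}^2$. That target has cardinality $q^{m+1}$, so the local density of ``singular at $x$'' equals $q^{-(m+1)}$. For $d$ large, ampleness makes the joint jet map
\[
	\ho^0(\pr^n_\Z,\mathcal{O}(d))\longrightarrow \prod_{p\le L(d)}\prod_{x\in\X_{\F_p}} \mathcal{O}_{\X,x}/\m_{\X,x}^2
\]
surjective, and a standard inclusion--exclusion argument then yields a proportion of $f$ good at every such $x$ equal to $\prod_{p\le L(d)}\prod_{x\in\X_{\F_p}}(1-N(x)^{-(m+1)})$, which converges to $\zeta_\X(m+1)^{-1}$ as $L(d)\to\infty$.

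The medium range $L(d)<p\le M(d)$ should only contribute an error term. Combining the Lang--Weil estimate $|\X_{\F_p}(\F_{p^e})|=O(p^{e(m-1)})$ with the local density $N(x)^{-(m+1)}$ gives a total bad density of order $\sum_{p>L(d)}p^{-2}$, which tends to $0$. The generic-fiber contribution is controlled by classical Bertini over $\Q$: outside a proper Zariski-closed subset of $\ho^0(\pr^n_\Q,\mathcal{O}(d))$, the divisor $\mathrm{div}(f)\cap \X_\Q$ is smooth, so singular characteristic-zero divisors occur with density zero and cannot perturb the product.

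The main obstacle is the large-prime range $p>M(d)$. Here one must rule out ``exceptional'' $f$ of degree $d$ whose intersection with $\X$ acquires a singular point over some enormous prime. Following Poonen, a singular point above $p$ forces $p^2$ to divide an appropriate discriminant or resultant attached to $f$ and to local equations for $\X$; the \emph{abc} conjecture then bounds the admissible primes in terms of the archimedean size of the coefficients of $f$. The supplementary conjecture, which holds at least when $\X$ is projective, is invoked to cover configurations that abc alone does not control. This large-prime step is the heart of the difficulty: without such deep input, there is no known way to rule out sections whose divisors acquire singularities at arbitrarily large primes, and this is precisely the reason Theorem~\ref{poonen} remains conditional.
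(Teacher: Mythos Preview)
The paper does not contain a proof of this theorem: it is quoted as Poonen's result (Theorem~5.1 of \cite{Po04}) and serves only as motivation for the paper's own density and its main Theorem~\ref{mainvar}. The only information the paper gives about the argument is the Remark immediately following the statement, which says that in Poonen's proof the \emph{abc} conjecture is used to show that, \emph{for any fixed $d$}, the upper density of sections whose divisor has a singular point over a prime $p\ge M$ tends to $0$ as $M\to\infty$, via Granville's squarefree-values method \cite{Gr98,Po03}. So there is no ``paper's own proof'' to compare against.

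Your sketch is broadly consistent with that Remark --- local jets at small primes produce the Euler product, Lang--Weil controls a middle range, and \emph{abc} handles the tail --- but your packaging of the argument is off in one structural respect. In Poonen's density $\mu_{\mathrm P}$, for each fixed $d$ one first takes the iterated limits $B_{\tau(i)}\to\infty$ over the coefficient box, and only then lets $d\to\infty$. Consequently the prime cutoffs are not functions $L(d),M(d)$ growing with $d$; rather, one fixes $M$, computes the box-limit density for each $d$ (which is already the partial product over $p\le M$ up to an error), and finally sends $M\to\infty$. The \emph{abc} step happens inside the box limit at fixed $d$: a singular point over $p$ forces $p^2$ to divide a certain multivariable discriminant, and the squarefree-values machinery bounds, as the box grows, the density of coefficient vectors admitting such a large square prime factor. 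Your version with $L(d),M(d)$ would be appropriate for the paper's own density (sections with $\lVert\sigma\rVert_\infty<1$, $d\to\infty$), not for Poonen's.
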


\begin{rmq}
	In Poonen's proof, the \emph{abc} conjecture is used to show that for any fixed $d$, the upper density of global sections whose divisor has a singular point on a fiber over a prime number $p\geq M$ with $M>0$ tends to $0$ when $M$ tends to infinity. The proof of this follows the idea of A. Granville in \cite{Gr98} that for a polynomial $f(x)\in \Z[x]$ we can get an asymptotic control of the prime squarefactors of $f(n)$ by the norm of $n\in \Z$.  Poonen generalized this idea to the case of multivariable polynomials in \cite{Po03}. 
\end{rmq}

Essentially, in each degree $d$, we get the density $\overline{\mu}_d$ by taking the limit of coefficients one by one. The action of symmetric group adds the condition that the order of coefficients can be arbitrary. This theorem permits us to find global sections $f\in\ho^0(\pr^n_{\Z}, \mathcal{O}(d))$ such that $\mathrm{div}(f)\cap \X$ is regular of dimension $m-1$ for sufficiently high $d$. 

The density defined by Poonen depends on a choice of coordinates of $\pr^n_{\Z}$. These coordinates determine which global sections are monomials in each $\ho^0(\pr^n_{\Z}, \mathcal{O}(d))$. His method is hard to be applied to a more general case, for example when we consider an arithmetic variety other than $\pr^n_{\Z}$ equipped with an ample line bundle which may not be very ample. Moreover, the size of the global sections with regular divisor cannot be controlled using this method. The global sections having regular divisor may have very large coefficients as polynomials. Finally, the \emph{abc} conjecture is powerfully used in his proof. Without it, the proof can give control of sections whose divisor does not have singular points of finitely many fixed residual characteristics, but cannot give the limit of the proportion of global sections of $\mathcal{O}(d)$ whose divisor has no singular point of residual characteristic smaller than or equal to $e^{\varepsilon d}$ for a constant $\varepsilon $ as we do.
\\

In \cite{BSW16}, Bhargava, Shankar and Wang proved that monic integer polynomials of one variable $f(x)=x^d+a_1x^{d-1}+\cdots+a_d\in V^{\mathrm{mon}}_d(\Z)$ such that $\Z[x]/(f(x))$ is the ring of integers of the field $\Q[x]/(f(x))$ has density $\zeta(2)^{-1}$. Here the density is constructed using the size of the coefficients of polynomials. 

This result can be viewed as a version of Bertini regularity theorem for $\pr^1_{\Z}$. In fact, the condition that $\Z[x]/(f(x))$ is the ring of integers of the field $\Q[x]/(f(x))$ means exactly that 
$\Spec\ \Z[x]/(f(x))$ is regular. When we homogenize $f$ to the global section 
\[
	F(X,Y)=X^d+a_1X^{d-1}Y+\cdots+ a_dY^d\in \ho^0(\pr^1_{\Z}, \mathcal{O}(d)),
\]
this means that $\mathrm{div} (F)$ is a regular divisor of $\pr^1_{\Z}$.

In their paper, fixing the degree $d>1$, they order the monic integer polynomials 
\[
	f(x)=x^d+a_1x^{d-1}+\cdots+a_d
\]
by a height function
\[
	H(f):=\max\{ |a_i|^{\frac{1}{i}} \},
\]
and calculate the density of a subset $\mathcal{P}_d\subset V^{\mathrm{mon}}_d(\Z)$ by
\[
	\mu_{H,d}(\mathcal{P}_d)=\lim_{R\rightarrow \infty} \frac{ \#\left(\mathcal{P}_d\cap \{ f\in V^{\mathrm{mon}}_d(\Z)\ ;\ H(f)\leq R \} \right)}{\#\{ f\in V^{\mathrm{mon}}_d(\Z)\ ;\ H(f)\leq R \}}.
\]
Identifying $V^{\mathrm{mon}}_d(\Z)$ with the set $\{ F\in \ho^0(\pr^1_{\Z}, \mathcal{O}(d))\ ;\ \mathrm{div}\ F\cap \infty_{\Z}=\emptyset \}$ by homogenization, the density of 
$\mathcal{P}_d$ can be understood as
\[
	\mu_{H,d}(\mathcal{P}_d)=\lim_{R\rightarrow \infty} \frac{ \#\left(\mathcal{P}_d\cap \{ F\in \ho^0(\pr^1_{\Z}, \mathcal{O}(d))\ ;\ \mathrm{div}\ F\cap \infty_{\Z}=\emptyset,\  H(F)\leq R \} \right)}{\#\{ F\in \ho^0(\pr^1_{\Z}, \mathcal{O}(d))\ ;\ \mathrm{div}\ F\cap \infty_{\Z}=\emptyset,\  H(F)\leq R \}}.
\]
We can then reformulate \cite[Theorem 1.2]{BSW16} as follows:
\begin{thm}[Theorem 1.2 of \cite{BSW16}]
	For a fixed $d>1$, set 
	\[
		\mathcal{P}_d:=\{ F\in \ho^0(\pr^1_{\Z}, \mathcal{O}(d))\ ;\ \mathrm{div}\ F\cap \infty_{\Z}=\emptyset,\ \mathrm{div}\ F \text{ is regular of dimension }1 \}.
	\]
	Then we have
	\[
		\mu_{H,d}(\mathcal{P}_d)=\zeta(2)^{-1}.
	\]
\end{thm}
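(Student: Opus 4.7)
The plan is to compute $\mu_{H,d}(\mathcal{P}_d)$ as the Euler product $\prod_p (1 - p^{-2}) = \zeta(2)^{-1}$: decompose the regularity condition into local conditions at each prime, establish each local density, and control the contribution of large primes by a convergent series.

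First I would pin down the local condition. Since $f$ is monic, $\mathrm{div}\, F \subset \A^1_\Z$, so regularity above a prime $p$ amounts to regularity of $\Z_p[x]/(f)$ at each maximal ideal $(p, x-a)$ with $a \in \F_p$ a root of $f \bmod p$. Computing $\dim_{\F_p} \mathfrak{m}/\mathfrak{m}^2$ in this local ring shows it is a DVR (and hence regular) unless \emph{both}
\[
f(a) \equiv 0 \pmod{p^2} \quad \text{and} \quad f'(a) \equiv 0 \pmod p
\]
hold; the case where only one of the two fails is handled by Hensel's lemma (if $p \nmid f'(a)$) or by the observation that $p$ generates the maximal ideal of the local ring (if $p \mid f'(a)$ but $p^2 \nmid f(a)$). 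So $\mathrm{div}\, F$ is regular iff no pair $(p, \bar a)$ witnesses both congruences simultaneously.

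For the sieve, I fix a threshold $M$ and apply the Chinese Remainder Theorem modulo $N = \prod_{p \leq M} p^2$. Since lattice points $(a_1, \ldots, a_d) \in \Z^d$ with $|a_i| \leq R^i$ equidistribute mod $N$ as $R \to \infty$, the local counts multiply and yield
\[
\lim_{R \to \infty} \frac{\#\{f : H(f) \leq R,\ \text{regular above every } p \leq M\}}{\#\{f : H(f) \leq R\}} = \prod_{p \leq M}\bigl(1 - p^{-2}\bigr).
\]
The local factor $1 - p^{-2}$ comes from the change of variables $(a_{d-1}, a_d) \mapsto (f(a), f'(a))$ (at fixed $a$ and $a_1, \ldots, a_{d-2}$), whose Jacobian equals $\pm 1$: the two congruences cut out a subset of $(\Z/p^2)^d$ of density $1/p^3$, and summing over the $p$ residues $\bar a \in \F_p$ yields $1/p^2$, with overlaps between distinct residues contributing only $O(p^{-4})$.

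The main obstacle is the tail estimate: one must show that the density of $f$ with $H(f) \leq R$ that is singular above \emph{some} prime $p > M$ tends to $0$ as $M \to \infty$, uniformly in $R$. For $p \lesssim R$ the same change of variables, applied inside the box $\prod_i [-R^i, R^i]$, delivers an $O(1/p^2)$ density bound directly. For primes $p$ much larger than $R$ the argument is more delicate and must split further into sub-ranges according to the size of $p$ relative to $R$, using the arithmetic inequality $|f(a)| \lesssim \max(R, p)^d$ (for a lifted root $a \in \{0, \ldots, p-1\}$) together with $p^2 \mid f(a)$ and $p \mid f'(a)$ to bound the number of bad $f$ per prime. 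This is the essential technical content of \cite{BSW16}, and it requires no conjectural input thanks to the monic restriction (which avoids the \emph{abc}-type bounds used by Poonen). Summing the resulting per-prime bounds gives $\sum_{p > M} O(p^{-2}) = o_M(1)$, and taking first $R \to \infty$ then $M \to \infty$ yields $\mu_{H,d}(\mathcal{P}_d) = \prod_p (1 - p^{-2}) = \zeta(2)^{-1}$.
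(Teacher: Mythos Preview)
The paper does not prove this statement: it is quoted as Theorem~1.2 of \cite{BSW16} and used only as motivation, with no argument supplied beyond the remark reinterpreting $\zeta(2)^{-1}$ as $\zeta_{\A^1_{\Z}}(3)^{-1}$. There is therefore no ``paper's own proof'' to compare your proposal against.

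That said, your sketch is a faithful outline of the strategy in \cite{BSW16}: identify the local obstruction at each prime $p$ as the pair of congruences $f(a)\equiv 0\pmod{p^2}$, $f'(a)\equiv 0\pmod p$ for some $a$; compute the local density $1-p^{-2}$ via the unimodular change of variables $(a_{d-1},a_d)\mapsto (f'(a),f(a))$; combine small primes by CRT; and control the tail $p>M$. You are right that the tail is where all the work lies, and you correctly flag that the monic restriction is what lets \cite{BSW16} avoid conjectural input. Your proposal is honest in deferring the actual tail estimate back to \cite{BSW16}, so it is more a summary of their method than an independent proof; but as a summary it is accurate.
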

\begin{rmq}
	This result is similar to Poonen's theorem. In fact, if we note that $\zeta(2)^{-1}$ can be expressed by values of the zeta function of the affine line over $\Spec\ \Z$
	\[
		\zeta_{\A^1_{\Z}}(s)=\prod_{p}\zeta_{\A^1_{\F_p}}=\prod_{p}\frac{1}{1-p^{1-s}},
	\]
	which is,
	\[
		\zeta(2)^{-1}=\prod_p (1-p^{-2})=\zeta_{\A^1_{\Z}}(3)^{-1},
	\]
	then the theorem tells us that, for any $d>1$, the density of the subset $\mathcal{P}_d$ of 
	\[
		\{ F\in \ho^0(\pr^1_{\Z}, \mathcal{O}(d))\ ;\ \mathrm{div}\ F\cap \infty_{\Z}=\emptyset \}
	\]
	consisting of sections with regular divisor is equal to 
	\[
		\zeta_{\pr^1_{\Z}-\infty_{\Z}}\left(1+\dim(\pr^1_{\Z}-\infty_{\Z}) \right)^{-1}=\zeta_{\A^1_{\Z}}(3)^{-1}.
	\]
	This is a statement similar to Theorem \ref{poonen}. But as we only consider global sections whose divisor is disjoint of $\infty_{\Z}$, we can not recover Poonen's theorem for $\X=\A^1_{\Z}$ in $\pr^1_{\Z}$. 
\end{rmq}

The result of Bhargava, Shankar and Wang surpasses our results for $\pr^1_{\Z}$ in the sense that for any $d>1$, they can actually find global sections of $\ho^0(\pr^1_{\Z}, \mathcal{O}(d))$ whose divisor is regular without auxiliary assumptions. Neither can we get such a strong statement using Poonen's method. But the method of Bhargava, Shankar and Wang is hard to be generalized to other situations. Their proof depends on the monogenicity of the finite $\Z$-algebra $\Z[x]/(f(x))$. They constructed a map from the moduli space of monogenic finite $\Z$-algebras of length $d$ to the space of symmetric $n\times n$ matrices quotient by the action of group $SO(A_0)$, where $A_0$ is the $n\times n$ anti-diagonal matrix. This map is then used in the article to turn the counting of monic polynomials to the counting of special orbits in this quotient space. Due to the construction of this map, it is difficult to release the monic condition in their theorem so as to get a result for all polynomials with coefficients in $\Z$. It is even more difficult to generalize this method to regular arithmetic varieties other than $\pr^1_{\Z}$.\\

In \cite{Au01}, \cite{Au02}, Autissier showed another arithmetic analogue of the Bertini theorems. He proved as a particular case that if $\X$ is an arithmetic variety of dimension $n$ over an integer ring $\mathcal{O}_K$ (where $K$ here is a number field), and $\Li$ a very ample Hermitian line bundle on $\X$, then there exists a finite extension $L$ of $K$ and a section $\sigma\in \ho^0(\X_{\mathcal{O}_L}, \Li)$ such that by writing $g:\Spec\ \mathcal{O}_L\longrightarrow \Spec\ \mathcal{O}_K$ the morphism induced by $\mathcal{O}_K\lhook\joinrel\longrightarrow \mathcal{O}_L$, for any closed point $b\in \Spec\ \mathcal{O}_L$, the fiber $(\mathrm{div}\sigma)_b$ of the divisor $\mathrm{div}\sigma$ is smooth if $\X_{g(b)}$ is smooth. Moreover, we can bound the height of $\mathrm{div}\sigma$ (defined by $\Li$) in terms of the height of $\X$, $\deg_{\li_{\Q}}\X_{\Q}$, $n$ and an effective constant which is only dependent of $\Li$ et $n$.

This result is stronger than ours in the sense where the divisor that he gives comes from a global section of the sheaf $\Li$ but not $\Li^{\otimes d}$ for a large $d$, and moreover the divisor satisfies the smoothness condition rather that the regularity condition. The disadvantage of this result is that it need to pass to finite base changes to find such a global section. In particular, if $\X$ is defined over $\Spec\ \Z$, there's little chance that we can find a divisor satisfying the smoothness condition in the statement which is defined over $\Spec\ \Z$ by Autissier's method. Our result, on the other hand, provides divisors which are defined over $\Spec\ \Z$ if so is the arithmetic variety that we consider.
\bigskip

As we can see, the densities in the above results are not defined in a natural way. We construct them using additional information on polynomials. In particular, the coordinate system on projective spaces are often needed for these constructions. Our objective is to construct a density for arithmetic varieties in a more natural way and get rid of this choice of coordinates. It is well-known that when we study arithmetic varieties, it is usually better to consider vector bundles with Hermitian metric on the complex fiber. In particular, we get good properties as arithmetic ampleness, arithmetic Riemann-Roch theorem, and the set of effective sections behaves well in the Hilbert-Samuel formula. Our construction of the Arakelov density via the set of effective sections of Hermitian line bundles should be a better approach for a more natural notion of density for arithmetic varieties.

\subsection{Method of proof}
The proof of Theorem \ref{mainvar} relies on an effective estimate of proportion of global sections whose divisor has no singular point on one single fiber. This estimate can be reduced to computing, for a projective arithmetic variety $\X$ of dimension $n$ with an ample Hermitian line bundle $\Li$, the proportion of $\sigma\in \ho^0(\X_{p^2}, \li^{\otimes d}|_{\X_{p^2}})$ such that for any closed point $x\in \mathrm{div}\sigma$, $ \dim_{\kappa(x)}\frac{\m_{\mathrm{div}\sigma,x}}{\m^2_{\mathrm{div}\sigma, x}}=n-1$, where 
\[
	\X_{p^2}=\X\times_{\Spec\ \Z}\Spec\ \Z/p^2\Z. 
\]
In fact, for any closed point $x$ of $\X$ on the fiber $\X_p$, and any global section $\sigma_0\in \ho^0(\X, \Li^{\otimes d})$, the divisor $\mathrm{div}\sigma_0$ is singular at $x$ if and only if the restriction of $\sigma_0$ on the first order infinitesimal neighbourhood $x'$ of $x$ is $0$. Note that $x'$ is defined by the square of the maximal ideal of $x$ in $\X$, it is actually a closed subscheme of $\X_{p^2}$ (but not of $\X_p$), and the condition to test whether $\mathrm{div}\sigma_0$ is singular at $x$ depends only on the restriction $\sigma_0|_{\X_p^2}$. As the proportion in the group $\ho^0(\X_{p^2}, \li^{\otimes d}|_{\X_{p^2}})$ is easier to compute than in the set $\ho^0_{\mathrm{Ar}}(\X, \Li^{\otimes d})$, we first estimate the above proportion, and then lift it to the proportion of sections in $\ho^0_{\mathrm{Ar}}(\X, \Li^{\otimes d})$ whose divisor has no singular point on the fiber $\X_p$ via a proportion-lifting result about $\ho^0_{\mathrm{Ar}}(\X, \Li^{\otimes d})\longrightarrow \ho^0(\X_{p^2}, \li^{\otimes d}|_{\X_{p^2}})$.

We first generalize Poonen's Bertini smoothness theorem over finite fields in the appendix, replacing the very ample condition by ampleness. Then the generalized proof can be applied to get the estimate on one single fiber. With a choice of positive integers $r_{p,d},N(p)$, where $r_{p,d}$ depends on $p,d$ and $N(p)$ depends only on $p$, we give estimates of proportion of sections $\sigma\in \ho^0(\X_{p^2}, \li^{\otimes d}|_{\X_{p^2}})$ whose divisor has a singular point of degree smaller than or equal to $r_{p,d}$, between $r_{p,d}$ and $\frac{d}{nN(p)}$ and larger than $\frac{d}{nN(p)}$, respectively. Then we conclude by putting together these three estimates.

The estimate on one single fiber can be easily extended to finitely many fibers. The effective estimates permit us to show that we can gather all fibers over $p$ such that $p\leq d^{\frac{1}{n+1}}$ without ruining the convergence of the proportion of $\sigma \in \ho^0_{\mathrm{Ar}}(\X, \Li^{\otimes d})$ such that $\mathrm{div}\sigma$ has no singular point on all these fibers.

Then we use a different method to show that there exists a constant $c>0$ such that for any prime $ p \leq e^{\varepsilon d}$ with constant $\varepsilon$ satisfying the condition in Theorem \ref{mainvar} such that the fiber over $p$ is smooth and irreducible (these two conditions are satisfied by all but finitely many $p$), the proportion of strictly effective global sections whose divisor has singular points on this fiber is smaller than or equal to $cp^{-2}$. Consequently the proportion of $\sigma \in \ho^0_{\mathrm{Ar}}(\X, \Li^{\otimes d})$ such that $\mathrm{div}\sigma$ has singular points on the fiber $\X_p$ for some $d^{\frac{1}{n+1}}\leq p \leq e^{\varepsilon d}$ is bounded above by 
\[
	\sum_{d^{\frac{1}{n+1}}\leq p \leq e^{\varepsilon d}}cp^{-2}, 
\]
which tends to $0$ when $d$ tends to infinity. This together with the above estimate for $p\leq d^{\frac{1}{n+1}}$ proves Theorem \ref{mainvar}.

\subsection{Organization of the paper}
In Section \ref{arithample} we recall the definition of arithmetic ampleness introduced by Shouwu Zhang in \cite{Zh92} and \cite{Zh95} as well as some properties of ample Hermitian line bundles such as the arithmetic Hilbert-Samuel formula and present some results on restrictions to a subscheme.  In Section \ref{c0} we gather two results on estimates of the convergences of special values of zeta functions.  In Section \ref{effective} we estimate how fast the proportion of strictly effective global sections whose divisor has no singular point on a given special fiber over $p\in \Spec\ \Z$ tends to $\zeta_{\X_p}(1+n)^{-1}$. In Section \ref{smallresidual} we use the results of the previous sections to show that the proportion of strictly effective global sections whose divisor has no singular point of residual characteristic smaller than or equal to $d^{\frac{1}{n+1}}$ tends to $\zeta_{\X}(1+n)^{-1}$ when $d$ tends to infinity. In Section \ref{finalresult}, we prove that the proposition of sections in $\ho^0_{\mathrm{Ar}}(\X, \Li^{\otimes d})$ whose divisor has singular points on a special fiber over $p$ can be bounded above by $cp^{-2}$ with some positive constant $c$ independent of $p$ and $d$ when $d$ and $p$ are large. We use these results to prove Theorem \ref{mainvar} and Corollary \ref{singR} in the same section. In Appendix \ref{finitefield}, we give a proof of a generalized Poonen's Bertini smoothness theorem over finite fields where we allow the sheaf to be ample instead of very ample. This theorem is not directly used in the main part of this paper, but some results in the appendix are used in Section \ref{effective} and \ref{finalresult}.

\subsection{Notation}
\begin{enumerate}
	\item For a finite set $S$, we denote by $\#S$ its cardinality.
	\item For a positive real number $x$, we define
	\begin{eqnarray*}
		\lfloor x\rfloor= \max\{ n\in \Z\ ;\ n\leq x \},\quad
		\lceil x\rceil = \min\{ n\in \Z\ ;\ n\geq x \}.
	\end{eqnarray*}
	\item Let $f,g: \R_{\geq 0}\longrightarrow \R$ be two real continuous functions such that $f(0)=g(0)=0$. We say $f=O(g)$ if there exist $c>0$ and $\varepsilon>0$ such that for any $0<x<\varepsilon$ we have
	\[
		f(x)\leq c\cdot g(x).
	\]
	We say $f\sim g$ if $f=O(g)$ and $g=O(f)$.
	\item For an arithmetic variety $\X$ and a positive integer $N$, we write $\X_N$ for the closed subscheme $\X\times_{\Spec\ \Z}\Spec(\Z/N\Z)$.
	\item For an arithmetic variety $\X$ equipped with an ample Hermitian line bundle $\Li$, if $Y$ is a subscheme of $\X$ such that $Y_{\Q}\not=\emptyset$, we set $\ho^0(Y,\Li):=\ho^0(Y, \Li|_Y)$; if $Y$ is a subscheme of $\X$ such that $Y_{\Q}=\emptyset$, we set $\ho^0(Y,\li):=\ho^0(Y, \li|_Y)$.
\end{enumerate}

\subsection{Acknowledgement}
The author is very grateful to François Charles for introducing him to the subject, for the various discussions they had and for the helpful guidance the author has received. The author thanks Yang Cao, Étienne Fouvry, Salim Tayou for their useful conversations. 
	
This project has received funding from the European Research Council (ERC) under the European Union’s Horizon 2020 research and innovation programme (grant agreement No 715747).

\section{Arithmetic ampleness}\label{arithample}

In this section, we discuss arithmetic ampleness for arithmetic varieties, i.e. integral separated schemes which are flat and of finite type over $\Spec\ \Z$. This notion is established in \cite{Zh92} and \cite{Zh95}.

\subsection{Basic properties}
\begin{defi}
	Let $M$ be a complex analytic space. Let $\overline{L}=(L,\left\lVert \cdot \right\rVert)$ be a Hermitian line bundle on $M$, where $\left\lVert \cdot \right\rVert$ is a continuous Hermitian metric on $L$. Then $\overline{L}$ is said to be \emph{semipositive} if for any section $\sigma$ of $\overline{L}$ on any open subset $U$ of $M$ such that $s$ does not vanish on any point of $U$, the function $-\log \left\lVert \sigma \right\rVert$ is plurisubharmonic on $U$.
\end{defi}
\begin{rmq}
	If $M$ is a complex manifold and the Hermitian metric $\left\lVert \cdot \right\rVert$ on $\overline{L}$ is of differentiability class $C^2$, then saying that $\overline{L}$ is semipositive is equivalent to saying that for any section $s$ on any open subset $U$ of $M$ such that $s$ does not vanish on any point of $U$, we have that
	\[
		\sqrt{-1}\partial \overline{\partial}\left(-\log \left\lVert s \right\rVert\right)
	\]
	is a non-negative $(1,1)$-form.
\end{rmq}
In this article, we always demands that the Hermitian metric on a Hermitian line bundle is smooth.

\begin{defi}
	Let $\X$ be a projective arithmetic variety, which means an arithmetic variety projective over $\Spec\ \Z$. We say that $\Li=(\li, \left\lVert\cdot\right\rVert)$ is a \emph{Hermitian line bundle} on $\X$ if $\li$ is a line bundle on $\X$ and $\left\lVert\cdot\right\rVert$ is a smooth Hermitian metric on $\li|_\X(\bbC)$ which makes $\Li|_{\X(\bbC)}=(\li|_{\X(\bbC)}, \left\lVert\cdot\right\rVert)$ a Hermitian line bundle as defined above. We say that $\Li$ is \emph{ample} on $\X$ if it satisfies the following three conditions:
	\begin{enumerate}[label=\roman*)]
		\item $\li$ is ample over $\Spec\ \Z$;
		\item $\Li$ is semipositive on the complex analytic space $\X(\bbC)$;
		\item for any $d\gg 1$, $\ho^0(\X,\Li^{\otimes d})$ is generated by sections of norm strictly smaller than $1$.
	\end{enumerate}
\end{defi}

For any Hermitian line bundle $\Li=(\li, \left\lVert\cdot\right\rVert)$ and any real number $\delta$, we note $\Li(\delta)$ the Hermitian line bundle $(\li, \left\lVert\cdot\right\rVert e^{-\delta})$. If $\Li$ is ample, it is easy to see that $\Li(\delta)$ is also ample for any $\delta>0$.

A useful result concerning ample Hermitian line bundles is the following proposition, which is a simple version of \cite[Prop. 2.3]{Ch17}:
\begin{prop}\label{epsilon}
	Let $\X$ be a projective arithmetic variety, and let $\Li$ be an ample Hermitian line bundle on $\X$ and $\overline{\mathcal{M}}$ a Hermitian vector bundle of rank $r$ on $\X$. There exists a positive constant $\varepsilon_0$ such that for any large enough integer $d$, $\ho^0(\X, \Li^{\otimes d}\otimes \overline{\mathcal{M}})$ has a basis consisting of sections with norm smaller than $e^{-\varepsilon_0 d}$.
\end{prop}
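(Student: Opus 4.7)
The strategy is to exhibit an $\varepsilon_0 > 0$ such that the twisted hermitian line bundle $\Li(-\varepsilon_0) = (\li, \|\cdot\| e^{\varepsilon_0})$ is itself ample in the arithmetic sense. Once this is established, applying condition (iii) of ampleness directly to $\Li(-\varepsilon_0)$ yields, for every sufficiently large $d$, a generating family of $\ho^0(\X, \Li^{\otimes d}) = \ho^0(\X, \Li(-\varepsilon_0)^{\otimes d})$ consisting of sections whose sup-norm in the \emph{twisted} metric is strictly less than $1$, equivalently sections $\sigma$ with $\|\sigma\|_\infty < e^{-\varepsilon_0 d}$ in the original metric. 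Since $\ho^0(\X, \Li^{\otimes d})$ is a free $\Z$-module of finite rank, any finite generating family contains a $\Z$-basis, producing the desired basis of strictly small sections.

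Verifying the first two ampleness conditions for $\Li(-\varepsilon_0)$ is immediate: condition (i) (ampleness of $\li$) is unaffected since only the metric is modified, and condition (ii) (semipositivity) is preserved because $-\log(\|\sigma\| e^{\varepsilon_0}) = -\log\|\sigma\| - \varepsilon_0$, and adding a real constant preserves plurisubharmonicity.

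The main obstacle is condition (iii) for the twisted bundle: establishing that $\ho^0(\X, \Li(-\varepsilon_0)^{\otimes d})$ is generated by sections of twisted sup-norm $<1$ for all $d \gg 0$. My plan is to combine the arithmetic Hilbert--Samuel theorem with a Minkowski-type lattice argument. The top arithmetic self-intersection $\widehat{c}_1(\Li(-\varepsilon))^n$, where $n = \dim \X$, is polynomial and hence continuous in $\varepsilon$, and is strictly positive at $\varepsilon = 0$ by ampleness of $\Li$; it therefore remains strictly positive for small $\varepsilon_0 > 0$. Arithmetic Hilbert--Samuel then provides $\exp(\Theta(d^n))$ sections of $\Li(-\varepsilon_0)^{\otimes d}$ of twisted norm $<1$, while the rank of the underlying $\Z$-module grows only like $\Theta(d^{n-1})$. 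A Minkowski-type argument on successive minima of this lattice then upgrades the abundance of strictly small sections into an actual $\Z$-basis of them, yielding (iii). Alternatively, as the proposition is explicitly advertised as a simple consequence of \cite[Prop.~2.3]{Ch17}, one may simply invoke that result.
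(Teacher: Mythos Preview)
The paper itself gives no argument: it simply cites \cite[Prop.~2.3]{Ch17}, which is exactly the ``alternative'' you mention at the end. Your self-contained sketch, however, has two genuine gaps.

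First, the claim that ``any finite generating family contains a $\Z$-basis'' is false: $\{2,3\}$ generates $\Z$ but contains no basis. A generating set of sup-norm $<r$ only forces the top successive minimum $\lambda_h<r$; to obtain a basis one must invoke the standard lattice lemma producing $b_1,\dots,b_h$ with $\|b_i\|\le C_h\,\lambda_h$ for some $C_h$ polynomial in $h$, and then observe that polynomial growth of $h$ in $d$ is absorbed by $e^{-\varepsilon_0 d}$ after shrinking $\varepsilon_0$. Second, your route to condition~(iii) for $\Li(-\varepsilon_0)$ is both circular as written (the arithmetic Hilbert--Samuel formula quoted in this paper already assumes ampleness of the bundle to which it is applied) and incomplete even granting a more general version: knowing that $h^0_{\mathrm{Ar}}$ has order $d^n$ does not imply that the small sections \emph{generate} the full $\Z$-module---a priori they could all lie in a proper submodule, and no Minkowski-type inequality rules this out. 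A cleaner way to obtain generating sets of exponentially small norm is to use condition~(iii) for $\Li$ itself on a finite range $d_0\le d<2d_0$, extract a uniform bound $e^{-\delta}<1$ on the norms of the finitely many generators appearing there, and then use the surjectivity of multiplication (Lemma~\ref{relample}) to produce generators of $\ho^0(\X,\Li^{\otimes d})$ of norm $<e^{-\delta d/(2d_0)}$ for all large $d$; then apply the successive-minima step above to pass to an honest basis.
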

Now we recall the arithmetic Hilbert-Samuel theorem for ample Hermitian line bundles on projective arithmetic varieties, which is proved by Gillet and Soulé in \cite{GS92} using arithmetic Riemann-Roch theorem for arithmetic varieties with smooth generic fiber and generalized by Zhang in \cite{Zh95} for arithmetic varieties whose generic fiber may be singular. There is also a proof given by Abbes and Bouche in \cite{AB95} without the application of the arithmetic Riemann-Roch theorem. 
\begin{thm}\label{hilbsamu}
	Let $\X$ be a projective arithmetic variety of absolute dimension $n$,  $\Li$ an ample Hermitian line bundle on $\X$ and $\overline{\mathcal{M}}$ a Hermitian vector bundle of rank $r$ on $\X$. As $d$ tends to $\infty$, we have
	\[
		\mathrm{h}^0_{\mathrm{Ar}}(\X,\Li^{\otimes d}\otimes \overline{\mathcal{M}})= \frac{r}{n!}\Li^{n} d^n+o(d^n).
	\]
\end{thm}
\begin{proof}[Proof]
	The statement can be found in \cite[Corollary 2.7 (1)]{Yu08}. We get this by combining two results. The first one is the arithmetic Riemann-Roch formula estimating $\chi_{\mathrm{sup}}(\Li^{\otimes d}\otimes \overline{\mathcal{M}})$, which is the logarithm of the covolume of the lattice $\ho^0(\X, \Li^{\otimes d}\otimes \overline{\mathcal{M}})$ for the sup norm. This result is proved by Gillet-Soulé in \cite{GS92} by combining the Gromov inequality (see for example \cite[Corollary 2.7 (2)]{Yu08}), as
	\[
		\chi_{\mathrm{sup}}(\Li^{\otimes d}\otimes \overline{\mathcal{M}})=\frac{r}{n!}\Li^{n} d^n+o(d^n),
	\]
	when the generic fiber of the arithmetic variety is smooth. A generalization to the case where we drop the smoothness condition on the generic fiber is given by Shouwu Zhang in \cite[Theorem (1.4)]{Zh95}.
	
	The second result, which says that 
	\[
		|\mathrm{h}^0_{\mathrm{Ar}}(\X,\Li^{\otimes d}\otimes \overline{\mathcal{M}})-\chi_{\mathrm{sup}}(\Li^{\otimes d}\otimes \overline{\mathcal{M}})|
		=O(d^{n-1}\log d),
	\]
	is a consequence of \cite[Theorem 2]{GS91}, together with a Theorem of Zhang (\cite[Theorem 4.2]{Zh95}), which implies that $\mathrm{h}^1_{\mathrm{Ar}}(\X,\Li^{\otimes d}\otimes \overline{\mathcal{M}})=0$ when $d$ is large enough.
\end{proof}

\subsection{Restriction modulo \emph{N} of sections}\label{arithample2}
\begin{lem}\label{freemod}
	Let $\X$ be a projective arithmetic variety, and let $\li$ be an ample Hermitian line bundle on $\X$. There is a positive integer $d_0$ such that when $d\geq d_0$, we have 
	\[
		 \ho^0(\X_N,\li^{\otimes d})\simeq\ho^0(\X,\li^{\otimes d})/\big(N\cdot \ho^0(\X,\li^{\otimes d})\big)
	\]
	for any positive integer $N$.
\end{lem}
\begin{proof}[Proof]
	By the definition of $\X_N$, we have $\mathcal{O}_{\X_N}\simeq \mathcal{O}_{\X}/(N\cdot \mathcal{O}_{\X})$. Therefore we have that on $\X$, 
	\[
		\li^{\otimes d}\otimes \mathcal{O}_{\X_N}\simeq \li^{\otimes d}/(N\cdot \li^{\otimes d}), 
	\]
	which induces an exact sequence of sheaves on $\X$
	\[
		0\longrightarrow \li^{\otimes d}\stackrel{N}\longrightarrow \li^{\otimes d}\longrightarrow \li^{\otimes d}\otimes \mathcal{O}_{\X_N}\longrightarrow 0.
	\]
	Since $\ho^0(\X_N,\li^{\otimes d})=\ho^0(\X,\li^{\otimes d}\otimes \mathcal{O}_{\X_N} )$, we may choose $d_0>0$ so that for any $d\geq d_0$, $\ho^1(\X,\li^{\otimes d})=0$. For such $d$, we have the following exact sequence :
	\[
		0\longrightarrow \ho^0(\X,\li^{\otimes d})\stackrel{N}\longrightarrow\ho^0(\X,\li^{\otimes d})\longrightarrow \ho^0(\X_N,\li^{\otimes d})\longrightarrow 0,
	\]
	and the lemma follows from this exact sequence.

\end{proof}
We have two results concerning the restriction modulo $N$ map.
\begin{prop}\label{modN}
	Let $\X$ be a projective arithmetic variety, and let $\Li$ be an ample Hermitian line bundle on $\X$. Let $0<\alpha_0<1$ be a real number. There exists a positive constant $\eta$ such that for any $d\in\Z_{>0}$ large enough, if $N\in \Z_{>0}$ is bounded above by $\exp(d^{\alpha_0})$, then the following holds:
	\begin{enumerate}[label=\roman*)]
		\item the restriction morphism
		\[
			\psi_{d,N}: \ho^0_{\mathrm{Ar}}(\X,\Li^{\otimes d}) \longrightarrow \ho^0(\X_N,\li^{\otimes d})
		\]
		is surjective;
		\item for any two sections $s,s'$ in $ \ho^0(\X_N,\li^{\otimes d})$, we have
		\[
			\frac{| \#\psi_{d,N}^{-1}(s)- \#\psi_{d,N}^{-1}(s') |}{\#\psi_{d,N}^{-1}(s)}\leq e^{-\eta d}.
		\]
	\end{enumerate}
\end{prop}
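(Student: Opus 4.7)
The plan is to leverage the small basis provided by Proposition \ref{epsilon} to realise the restriction map as a reduction map on lattices, and then to carry out a standard lattice-point count in a symmetric convex body. For $d$ large enough, combining Lemma \ref{freemod} and Proposition \ref{epsilon} furnishes a $\Z$-basis $e_1, \ldots, e_{h_d}$ of $V_d := \ho^0(\X, \Li^{\otimes d})$ with $\lVert e_i \rVert_\infty \leq e^{-\varepsilon_0 d}$, where $h_d = \mathrm{rk}_\Z V_d$ grows polynomially in $d$ (of order $d^{n-1}$). A preliminary step is to identify $V_d / NV_d$ with $\ho^0(\X_N, \Li^{\otimes d})$: by the Chinese remainder theorem this reduces to the prime-power case $N = p^a$, and the short exact sequence
$$0 \to \Li^{\otimes d} \xrightarrow{p^a} \Li^{\otimes d} \to \Li^{\otimes d}|_{\X_{p^a}} \to 0$$
together with $\ho^1(\X, \Li^{\otimes d}) = 0$ (obtained from the fiberwise vanishing in the proof of Lemma \ref{freemod} combined with the torsion-freeness of $\ho^1$ deduced from the same short exact sequence for each prime) yields the identification.

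For part (i), any $s \in \ho^0(\X_N, \Li^{\otimes d})$ lifts to $\tilde{s} = \sum_i a_i e_i$ with $0 \leq a_i < N$. Then
$$\lVert \tilde{s} \rVert_\infty \leq N \cdot h_d \cdot e^{-\varepsilon_0 d} \leq h_d \cdot e^{d^{\alpha_0} - \varepsilon_0 d},$$
which is strictly less than $1$ for $d$ large, since $\alpha_0 < 1$ and $h_d$ is polynomial. Hence $\tilde{s} \in \ho^0_{\mathrm{Ar}}(\X, \Li^{\otimes d})$ and $\psi_{d,N}$ is surjective.

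For part (ii), observe that $\psi_{d,N}^{-1}(s) = (\tilde{s} + NV_d) \cap B$, where $B \subset V_d \otimes_\Z \R$ is the open unit ball for $\lVert \cdot \rVert_\infty$, a bounded symmetric convex body. The lattice $NV_d$ admits the fundamental parallelepiped $F = \{\sum_i t_i \cdot N e_i : 0 \leq t_i < 1\}$, of sup-norm diameter at most $r := h_d N e^{-\varepsilon_0 d}$. The tiling $V_d \otimes \R = \bigsqcup_{v \in NV_d} (v + F)$, combined with the fact that enlarging (resp. shrinking) the sup-norm ball by $r$ in sup norm scales it by $1+r$ (resp. $1-r$), yields the uniform sandwich bound
$$\frac{\mathrm{vol}\bigl((1-r)B\bigr)}{\mathrm{vol}(F)} \;\leq\; \#\psi_{d,N}^{-1}(s) \;\leq\; \frac{\mathrm{vol}\bigl((1+r)B\bigr)}{\mathrm{vol}(F)}$$
for every $s$. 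Consequently,
$$\frac{\bigl|\#\psi_{d,N}^{-1}(s) - \#\psi_{d,N}^{-1}(s')\bigr|}{\#\psi_{d,N}^{-1}(s)} \;\leq\; \Bigl(\frac{1+r}{1-r}\Bigr)^{h_d} - 1.$$

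Since $h_d \cdot r \leq h_d^2 \cdot e^{d^{\alpha_0} - \varepsilon_0 d}$ and $h_d$ is polynomial in $d$ while $\alpha_0 < 1$, the right-hand side is bounded above by $e^{-\eta d}$ for any fixed $\eta < \varepsilon_0$ provided $d$ is large enough; any such $\eta$ will satisfy the conclusion. The main subtlety is keeping the count uniform in $s$: because the lattice $NV_d$ and body $B$ are the same for every fibre and only the translate $\tilde{s}$ varies, the tiling argument is insensitive to $s$. The role of Proposition \ref{epsilon} is precisely to ensure that $r$ is exponentially small compared to $1/h_d$, which is what drives the exponential decay $e^{-\eta d}$.
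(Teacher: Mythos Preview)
The paper does not supply its own proof here; it simply records that the statement is a reformulation of \cite[Proposition 2.3]{Ch17}. Your argument is correct and is the natural lattice-point count: the small basis from Proposition~\ref{epsilon} forces the fundamental cube of $NV_d$ to have $\lVert\cdot\rVert_\infty$-radius $r \le h_d\,e^{d^{\alpha_0}-\varepsilon_0 d}$, and the resulting uniform sandwich $\mathrm{vol}((1-r)B)/\mathrm{vol}(F) \le \#\psi_{d,N}^{-1}(s) \le \mathrm{vol}((1+r)B)/\mathrm{vol}(F)$ delivers both (i) and (ii). This is exactly the kind of volume comparison the paper itself deploys in the adjacent proof of Proposition~\ref{ed}, so your approach is entirely in keeping with the ambient methods. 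One minor wording point: by ``sup-norm diameter at most $r$'' you mean that every $w\in F$ satisfies $\lVert w\rVert_\infty\le r$; this is what you actually use, so the argument is unaffected.
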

This proposition is a reformulation of \cite[Proposition 2.14]{Ch17}. Here the constant $\alpha_0$ can be any real number between $0$ and $1$.
\bigskip

\begin{prop}\label{ed}
	Let $\X$ be a projective arithmetic variety, and let $\Li$ be an ample Hermitian line bundle on $\X$. Let $\varepsilon_0$ be a constant as in Proposition \ref{epsilon}. For a positive integer $N$, let 
	\[
		\psi_{d,N}: \ho^0_{\mathrm{Ar}}(\X,\Li^{\otimes d}) \longrightarrow \ho^0(\X_{N},\li^{\otimes d})
	\]
	be the restriction map. When $d$ is large enough, for any $\delta<\varepsilon_0$, any odd integer $N\leq e^{\delta d}$ and any subset $E\subset \ho^0(\X_{N},\li^{\otimes d}) $,
	we have
	\[
		\frac{\#\psi_{d,N}^{-1}(E)}{\# \ho^0_{\mathrm{Ar}}(\X,\Li^{\otimes d})}\leq 4 \frac{\#E}{\#\ho^0(\X_{N},\li^{\otimes d})}.
	\]
\end{prop}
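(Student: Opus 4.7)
The plan is to exploit the small-norm basis provided by Proposition \ref{epsilon}. For $d$ large enough, we can choose a basis $(e_1, \dots, e_{h_d})$ of $\Lambda := \ho^0(\X, \Li^{\otimes d})$ with $\|e_i\|_\infty < e^{-\varepsilon_0 d}$; for $d$ large, Lemma \ref{freemod} ensures that $\Lambda$ is a free $\Z$-module, and Serre vanishing gives $\ho^1(\X, \Li^{\otimes d}) = 0$, so $\ho^0(\X_N, \Li^{\otimes d}) \simeq \Lambda/N\Lambda$ for every $N$. Since $N$ is odd, each coset $s \in \Lambda/N\Lambda$ admits a unique centered representative $\tilde s = \sum_i r_i e_i$ with $r_i \in \{-(N-1)/2, \dots, (N-1)/2\}$, and thus $\|\tilde s\|_\infty < \delta_d := (Nh_d/2)\,e^{-\varepsilon_0 d}$. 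Since $N \leq e^{\varepsilon d}$ with $\varepsilon < \varepsilon_0$ and $h_d = O(d^n)$, this $\delta_d$ decays to $0$ faster than any polynomial in $1/d$.

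Next, I would derive uniform two-sided bounds on the size of each fiber. Writing any $\sigma \in \psi_{d,N}^{-1}(s)$ as $\sigma = \tilde s + v$ with $v \in N\Lambda$, the triangle inequality combined with $\|\tilde s\|_\infty < \delta_d$ gives
\[
(1-\delta_d)K \cap N\Lambda \;\subseteq\; \psi_{d,N}^{-1}(s) - \tilde s \;\subseteq\; (1+\delta_d)K \cap N\Lambda,
\]
where $K \subset \Lambda \otimes \R$ denotes the open sup-norm unit ball. These bounds do not depend on $s$, so summing over the $N^{h_d}$ cosets yields the same bounds for the average fiber size $\#\ho_{\mathrm{Ar}}^0(\X, \Li^{\otimes d})/N^{h_d}$. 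Consequently, for every subset $E \subseteq \ho^0(\X_N, \Li^{\otimes d})$,
\[
\frac{\#\psi_{d,N}^{-1}(E)}{\#\ho_{\mathrm{Ar}}^0(\X, \Li^{\otimes d})} \;\leq\; R_d \cdot \frac{\#E}{\#\ho^0(\X_N, \Li^{\otimes d})}, \quad R_d := \frac{\#\bigl((1+\delta_d)K \cap N\Lambda\bigr)}{\#\bigl((1-\delta_d)K \cap N\Lambda\bigr)},
\]
so it remains to show that $R_d \leq 4$ for $d$ large.

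The main step is the bound on $R_d$, which I would obtain via a standard packing-and-covering argument in the sup norm. The basis $(Ne_1, \dots, Ne_{h_d})$ spans a fundamental parallelepiped $P$ of $N\Lambda$ which, thanks to $\|Ne_i\|_\infty < N e^{-\varepsilon_0 d}$, is contained in the sup-ball $2\delta_d \cdot K$. The Minkowski-sum inclusions $rK + P \subseteq (r+2\delta_d)K$ and $rK \ominus P \supseteq (r - 2\delta_d)K$ then give, for all $r > 2\delta_d$,
\[
(r - 2\delta_d)^{h_d}\,\frac{\mathrm{vol}(K)}{\mathrm{covol}(N\Lambda)} \;\leq\; \#(rK \cap N\Lambda) \;\leq\; (r + 2\delta_d)^{h_d}\,\frac{\mathrm{vol}(K)}{\mathrm{covol}(N\Lambda)}.
\]
Applied with $r = 1 \pm \delta_d$, this yields $R_d \leq \bigl((1+3\delta_d)/(1-3\delta_d)\bigr)^{h_d} \leq \exp\bigl(O(h_d \delta_d)\bigr)$, and since $h_d \delta_d = O(h_d^2\, e^{(\varepsilon - \varepsilon_0)d}) \to 0$, we get $R_d \to 1$; in particular $R_d \leq 4$ once $d$ is large enough. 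The main technical obstacle is writing the packing-and-covering inequalities cleanly in the sup norm, which is handled using only the convexity and symmetry of $K$ together with the explicit smallness of the basis $(e_i)$ coming from Proposition \ref{epsilon}.
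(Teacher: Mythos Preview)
Your proof is correct and follows essentially the same approach as the paper's: both arguments use the small-norm basis from Proposition~\ref{epsilon}, take centered coset representatives (which is where the oddness of $N$ enters), and then run a volume/packing comparison to show that the ratio of the largest to smallest fiber is bounded by a constant tending to $1$. Your presentation is slightly more modular---you first reduce everything to bounding the single ratio $R_d$ and then handle $R_d$ by a clean packing-and-covering estimate---whereas the paper works more directly with the sets $D_{d,N}$ and $D_{d,1}$ and the balls $B_d(r)$, but the underlying estimates (e.g.\ $h_d^2 e^{(\varepsilon-\varepsilon_0)d}\to 0$) and the final constant $4$ are the same.
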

\begin{proof}[Proof]
		Note that $\ho^0(\X,\Li^{\otimes d})$ is a free $\Z$-module for any $d$. For simplicity of notation, we write $h=\mathrm{rk}(\ho^0(\X,\Li^{\otimes d}))$. We may assume that $d$ is large enough so that for any positive integer $N$ we have $\ho^0(\X_N,\li^{\otimes d})\simeq\ho^0(\X,\li^{\otimes d})/\big(N\cdot \ho^0(\X,\li^{\otimes d})\big)$ by Lemma \ref{freemod}. Let $(\sigma_1,\dots, \sigma_h)$ be a $\Z$-basis of $\ho^0(\X,\Li^{\otimes d})$ such that
		\[
			\lVert \sigma_j\rVert <e^{-\varepsilon_0 d}, \ \forall  j\in \{ 1,\dots, h \}.
		\]
		For an odd integer $N$ such that $0<N\leq e^{\delta d}$ with a fixed $\delta<\varepsilon_0$, we set 
		\[
			D_{d,N}=\left\{ \sigma=\sum_{j=1}^{h}\lambda_j \sigma_j\ ;\ |\lambda_j|\leq \frac{N}{2},\ \lambda_j\in \mathbb{R} \right\}\subset \ho^0(\X,\Li^{\otimes d})\otimes_{ \mathbb{Z}}\mathbb{R}.
		\]
		Then we have
		\[
			 D_{d,N}\cap \ho^0(\X,\Li^{\otimes d})= 
    \left\{  \sigma=\sum_{j=1}^{h}\lambda_j \sigma_j\ ;\ -\frac{N-1}{2}\leq \lambda_j\leq \frac{N-1}{2},\ \lambda_j\in \mathbb{Z} \right\},
		\]
		and
		\[
			\ho^0(\X,\Li^{\otimes d})\otimes_{ \mathbb{Z}}\mathbb{R}=N\cdot \ho^0(\X,\Li^{\otimes d}) +D_{d, N}.
		\]
		Moreover, for any $\sigma \in D_{d,N}$, we have a bound for the norm of $\sigma$
		\[
			\lVert \sigma\rVert \leq h\frac{N}{2}e^{-\varepsilon_0 d}.
		\]
		The existence of such a basis is guaranteed by Proposition \ref{epsilon}. In particular, when $d$ is large enough, as $N\leq e^{\delta d}$ with $\delta <\varepsilon_0$, any $\sigma \in D_{d,N}$ satisfies 
		\[
			\lVert \sigma\rVert \leq \frac{h}{2} e^{(\delta-\varepsilon_0) d}<1.
		\]
		Note that by the expression of $D_{d,N}\cap \ho^0(\X,\Li^{\otimes d})$, we have a 1-1 correspondence between elements in $D_{d,N}\cap \ho^0(\X,\Li^{\otimes d}) $ and elements in $\ho^0(\X_N, \li^{\otimes d})$ induced by the restriction modulo $N$ map. So the map
		\[
			\psi_{d,N} : \ho^0_{\mathrm{Ar}}(\X, \Li^{\otimes d}) \longrightarrow \ho^0(\X_N, \li^{\otimes d})
		\]
		is surjective for such $N$. 
		
		For any $R\in \R_+$, we set 
		\[
			B_d(R)=\left\{ \sigma\in \ho^0(\X,\Li^{\otimes d})\otimes_{\mathbb{Z}}\mathbb{R}\ \ ;\ \ \lVert\sigma\rVert<R \right\}. 
		\]
		Then in particular, we have
		\[
			 B_d(1)\cap \ho^0(\X,\Li^{\otimes d})=\ho^0_{\mathrm{Ar}}(\X,\Li^{\otimes d}).
		\]
		
		For any element $\sigma\in \ho^0(\X,\Li^{\otimes d})\otimes_{\Z}\R$, we can find a $\sigma'\in N\cdot\ho^0(\X,\Li^{\otimes d})$ such that $\sigma-\sigma'\in D_{d, N}$. If moreover $\sigma \in B_d(1)$, we have 
		\[
			\lVert \sigma' \rVert \leq \lVert \sigma\rVert + \lVert \sigma-\sigma' \rVert <1+h\frac{N}{2}e^{-\varepsilon_0 d}.
		\]
		Thus we have two inclusions
		\begin{eqnarray*}
			B_d(1)&\subset& \left( B_d(1+\frac{Nh}{2}e^{-\varepsilon_0 d}) \cap N\cdot \ho^0(\X,\Li^{\otimes d}) \right)+D_{d,N};\\
			\ho^0_{\mathrm{Ar}}(\X,\Li^{\otimes d})&\subset& \left( B_d(1+\frac{Nh}{2}e^{-\varepsilon_0 d}) \cap N\cdot \ho^0(\X,\Li^{\otimes d}) \right)+D_{d,N}\cap \ho^0(\X,\Li^{\otimes d}).
		\end{eqnarray*}
		Note that any element in $\ho^0(\X_N,\li^{\otimes d}) $ has exactly one preimage in $D_{d,N}\cap \ho^0(\X,\Li^{\otimes d}) $. The number of sections $\sigma\in D_{d,N}\cap \ho^0(\X,\Li^{\otimes d})$ such that $\psi_{d,N}(\sigma) \in E$ is equal to $\#E$. Then by the above inclusion, we have
		\begin{eqnarray*}
			\#\psi_{d,N}^{-1}(E)&=&\#\{ \sigma\in \ho^0_{\mathrm{Ar}}(\X,\Li^{\otimes d})\ ;\ \psi_{d,N}(\sigma)\in E \} \\
			&\leq& \#\left( B_d(1+\frac{Nh}{2}e^{-\varepsilon_0 d}) \cap N\cdot \ho^0(\X,\Li^{\otimes d}) \right)\cdot \#E.
		\end{eqnarray*}
		
		Now we bound $ \#\left( B_d(1+\frac{Nh}{2}e^{-\varepsilon_0 d}) \cap N\cdot \ho^0(\X,\Li^{\otimes d}) \right)$. If $\sigma\in B_d(1+\frac{Nh}{2}e^{-\varepsilon_0 d}) \cap N\cdot \ho^0(\X,\Li^{\otimes d})$, any $\sigma'\in \sigma+D_{d,N}$ satisfies
		\[
			\lVert \sigma' \rVert \leq \lVert \sigma\rVert +\lVert \sigma-\sigma'\rVert <1+\frac{Nh}{2}e^{-\varepsilon_0 d}+\frac{Nh}{2}e^{-\varepsilon_0 d}=1+Nhe^{-\varepsilon_0 d}.
		\]
		Hence 
		\[
			 \left( B_d(1+\frac{Nh}{2}e^{-\varepsilon_0 d}) \cap N\cdot \ho^0(\X,\Li^{\otimes d}) \right)+D_{d,N}\subset  B_d(1+Nhe^{-\varepsilon_0 d}),
		\]
		and in particular, we have
		\[
			\mathrm{Vol}\left(  \left( B_d(1+\frac{Nh}{2}e^{-\varepsilon_0 d}) \cap N\cdot \ho^0(\X,\Li^{\otimes d}) \right)+D_{d,N} \right)\leq \mathrm{Vol} \left( B_d(1+Nhe^{-\varepsilon_0 d}) \right).
		\]
		If $\sigma_1,\sigma_2$ are two distinct elements in $B_d(1+\frac{Nh}{2}e^{-\varepsilon_0 d}) \cap N\cdot \ho^0(\X,\Li^{\otimes d})$, the intersection $(\sigma_1+D_{d,N})\cap (\sigma_2+D_{d,N})$ is either empty or a subset in $\ho^0(\X,\Li^{\otimes d})\otimes_{\mathbb{Z}}\mathbb{R}$ of dimension smaller than $h$. In particular, the intersection always has volume $0$. Therefore we have
		\begin{eqnarray*}
			& &\mathrm{Vol}\left(  \left( B_d(1+\frac{Nh}{2}e^{-\varepsilon_0 d}) \cap N\cdot \ho^0(\X,\Li^{\otimes d}) \right)+D_{d,N} \right) \\
			&=&\# \left( B_d(1+\frac{Nh}{2}e^{-\varepsilon_0 d}) \cap N\cdot \ho^0(\X,\Li^{\otimes d}) \right)\cdot \mathrm{Vol}(D_{d,N}).
		\end{eqnarray*}
		From this equality, we can bound $\# \left( B_d(1+\frac{Nh}{2}e^{-\varepsilon_0 d}) \cap N\cdot \ho^0(\X,\Li^{\otimes d}) \right)$ as 
		\begin{eqnarray*}
			& &\# \left( B_d(1+\frac{Nh}{2}e^{-\varepsilon_0 d}) \cap N\cdot \ho^0(\X,\Li^{\otimes d}) \right)\\
			&=&\frac{\mathrm{Vol}\left(  \left( B_d(1+\frac{Nh}{2}e^{-\varepsilon_0 d}) \cap N\cdot \ho^0(\X,\Li^{\otimes d}) \right)+D_{d,N} \right)}{\mathrm{Vol}(D_{d,N})}\\
			&\leq& \frac{\mathrm{Vol} \left( B_d(1+Nhe^{-\varepsilon_0 d}) \right)}{\mathrm{Vol}(D_{d,N})}			
		\end{eqnarray*}

		Now set
		\[
			D_{d,1}=\left\{ \sigma=\sum_{j=1}^{h}\lambda_j \sigma_j\ ;\ |\lambda_j|\leq \frac{1}{2},\ \lambda_j\in \mathbb{R} \right\}\subset \ho^0(\X,\Li^{\otimes d})\otimes_{ \mathbb{Z}}\mathbb{R}.
		\]
		We get similarly that for any $\sigma\in D_{d,1}$, $\lVert \sigma\rVert \leq \frac{h}{2}e^{-\varepsilon_0 d}$.
		If $\sigma\in \ho^0(\X,\Li^{\otimes d})\otimes_{ \mathbb{Z}}\mathbb{R}$ satisfies $\lVert\sigma\rVert <1-\frac{h}{2}e^{-\varepsilon_0 d}$, then we can find a section $\sigma'\in \ho^0(\X,\Li^{\otimes d})$ such that $\sigma\in \sigma'+D_{d, 1}$; moreover, we get 
		\[
			\lVert \sigma' \rVert \leq \lVert \sigma \rVert + \lVert \sigma-\sigma' \rVert <1-\frac{h}{2}e^{-\varepsilon_0 d}+\frac{h}{2}e^{-\varepsilon_0 d}=1.
		\]
		Thus we have
		\begin{eqnarray*}
			B_d(1-\frac{h}{2}e^{-\varepsilon_0 d})&\subset& \ho^0_{\mathrm{Ar}}(\X,\Li^{\otimes d})+D_{d,1}.
		\end{eqnarray*}
		So similarly we have
		\[
			\mathrm{Vol} \left( B_d(1-\frac{h}{2}e^{-\varepsilon_0 d}) \right)\leq \#\ho^0_{\mathrm{Ar}}(\X,\Li^{\otimes d})\cdot \mathrm{Vol}(D_{d,1}).
		\]

		Note that for any $R>0$, 
		\[
			\mathrm{Vol}\left( B_d(R)\right)=R^h\mathrm{Vol}\left( B_d(1)\right),
		\]
		and 
		\[
			 \mathrm{Vol}(D_{d,N})=N^{h} \mathrm{Vol}(D_{d,1}).
		\]
		Hence
		\begin{eqnarray*}
			\frac{\mathrm{Vol} \left( B_d(1+Nhe^{-\varepsilon_0 d}) \right)}{\mathrm{Vol}(D_{d,N})}
			&=&\left(\frac{1+Nhe^{-\varepsilon_0 d}}{1-\frac{h}{2}e^{-\varepsilon_0 d}}\right)^h\frac{\mathrm{Vol} \left( B_d(1-\frac{h}{2}e^{-\varepsilon_0 d}) \right)}{N^h\cdot\mathrm{Vol}(D_{d,1})} \\
			&\leq&\left(\frac{1+Nhe^{-\varepsilon_0 d}}{1-\frac{h}{2}e^{-\varepsilon_0 d}}\right)^h\frac{\#\ho^0_{\mathrm{Ar}}(\X,\Li^{\otimes d})\cdot \mathrm{Vol}(D_{d,1})}{N^{h}\cdot\mathrm{Vol}(D_{d,1})}\\
			&=&N^{-h}\left(\frac{1+Nhe^{-\varepsilon_0 d}}{1-\frac{h}{2}e^{-\varepsilon_0 d}}\right)^h\#\ho^0_{\mathrm{Ar}}(\X,\Li^{\otimes d}).
		\end{eqnarray*}
		Since $N\leq e^{\delta d}$, we have
		\begin{eqnarray*}
			(1+Nhe^{-\varepsilon_0 d})^h &=&\exp\left( Nh^2e^{-\varepsilon_0 d}+ O(N^2h^3e^{-2\varepsilon_0 d})\right)\\
			&=& 1+Nh^2e^{-\varepsilon_0 d}+O(N^2h^4e^{-2\varepsilon_0 d}) \\
			&\leq& 1+h^2e^{(\delta-\varepsilon_0) d}+O(h^4e^{2(\delta-\varepsilon_0) d}).
		\end{eqnarray*}
		As $\delta<\varepsilon_0$ and that the rank $h=\mathrm{rk}(\ho^0(\X,\Li^{\otimes d}))$ grows polynomially with $d$, when $d$ is sufficiently large, 
		\[
			(1+Nhe^{-\varepsilon_0 d})^h \leq 1+2h^2e^{(\delta-\varepsilon_0) d}\leq 2.
		\]
		Similarly, we have when $d$ is sufficiently large, 
		\[
			(1-\frac{h}{2}e^{-\varepsilon_0 d})^h =1-\frac{h^2}{2}e^{-\varepsilon_0 d}+O(h^4e^{-2\varepsilon_0 d})\geq 1-h^2e^{-\varepsilon_0 d}\geq \frac{1}{2}.
		\]
		Hence we have
		\[
			\left(\frac{1+Nhe^{-\varepsilon_0 d}}{1-\frac{h}{2}e^{-\varepsilon_0 d}}\right)^h \leq \frac{1+2h^2e^{(\delta-\varepsilon_0) d}}{1-h^2e^{-\varepsilon_0 d}}\leq 4
		\]
		for any $N\leq e^{\delta d}$. Therefore, we can bound $\# \left( B_d(1+\frac{Nh}{2}e^{-\varepsilon_0 d}) \cap N\cdot \ho^0(\X,\Li^{\otimes d}) \right)$ by 
		\begin{eqnarray*}
			& &\# \left( B_d(1+\frac{Nh}{2}e^{-\varepsilon_0 d}) \cap N\cdot \ho^0(\X,\Li^{\otimes d}) \right)\\
			&\leq&  \frac{\mathrm{Vol} \left( B_d(1+Nhe^{-\varepsilon_0 d}) \right)}{\mathrm{Vol}(D_{d,N})} \\
			&\leq& N^{-h}\left(\frac{1+Nhe^{-\varepsilon_0 d}}{1-\frac{h}{2}e^{-\varepsilon_0 d}}\right)^h\#\ho^0_{\mathrm{Ar}}(\X,\Li^{\otimes d}) \\
			&\leq& 4N^{-h}\cdot \#\ho^0_{\mathrm{Ar}}(\X,\Li^{\otimes d}),
		\end{eqnarray*}
		So finally we have
		\begin{eqnarray*}
			\#\psi_{d,N}^{-1}(E)&\leq& \#\left( B_d(1+\frac{Nh}{2}e^{-\varepsilon_0 d}) \cap N\cdot \ho^0(\X,\Li^{\otimes d}) \right)\cdot \#E\\
			&\leq& 4N^{-h}\cdot \left(\#\ho^0_{\mathrm{Ar}}(\X,\Li^{\otimes d})\right)\cdot \#E.
		\end{eqnarray*}
		Note that by Lemma \ref{freemod}, when $d$ is large enough, $\ho^0(\X_N,\li^{\otimes d})\simeq\ho^0(\X,\li^{\otimes d})/\big(N\cdot \ho^0(\X,\li^{\otimes d})\big)$. So for such $d$ we have $\#\ho^0(\X_{N},\li^{\otimes d})=N^{h}$. Hence
		\[
			\frac{\#\psi_{d,N}^{-1}(E)}{\#\ho^0_{\mathrm{Ar}}(\X,\Li^{\otimes d})}\leq 4\frac{\#E}{\#\ho^0(\X_{N},\li^{\otimes d})}
		\]
		and we conclude.
\end{proof}

\begin{rmq}
	When $d$ is large enough, $\ho^0 (\X,\Li^{\otimes d})$ is a free $\Z$-module such that
	\[
		h=\mathrm{rk}\big(\ho^0(\X,\Li^{\otimes d})\big)=\dim_{\Q}\ho^0(\X_{\Q},\li^{\otimes d})=\chi(\X_{\Q}, \li^{\otimes d})
	\]
	as $\Li$ is ample.

	The asymptotic Riemann-Roch Theorem tells us then that
	\[
		h=\frac{\big((\li|_{\X_{\Q}})^{n-1} \big)}{(n-1)!} d^{n-1}+O(d^{n-2}),
	\]
	where $\big((\li|_{\X_{\Q}})^{n-1} \big)$ is the intersection number of $n-1$ copies of $\li |_{\X_\Q}$, where as 
	\[
		h^0_{\mathrm{Ar}}(\X,\Li^{\otimes d}\otimes \overline{\mathcal{M}})= \frac{r}{n!}\Li^{n} d^n+O(d^{n-1}\log d)
	\]
	by Theorem \ref{hilbsamu}.
\end{rmq}

\section{Convergence of special values of zeta functions}\label{c0}
Let $\X$ be a separated scheme of finite type and flat over $\Z$ of absolute dimension $n$. We fix from now on a constant $c_0>0$ such that for any prime integer $p$ and any $e\in \Z_{>0}$,
\[
	\#\X(\F_{p^e})=\#\X_{p}(\F_{p^e})\leq c_0p^{(n-1)e},
\]
where $n$ is the absolute dimension of $\X$ (so $\X_p$ is of dimension $n-1$). Such a constant exists by the Lang-Weil estimates in \cite{LW54}. A good introduction to the function $\#\X(\F_{p^e})$ and its properties is Serre's book \cite{Se12}. 

We know that the zeta function
\[
	\zeta_{\X}(s)= \prod_{x\in |\X|}(1-\#\kappa(x)^{-s})^{-1}
\]
is absolutely convergent for any $s\in \bbC$ satisfying $\mathrm{Re}(s)>n$. Moreover, the zeta function of $\X$ is the product of the zeta function of all its fibers, i.e. we have
\[
	\zeta_{\X}(s)=\prod_{p\text{ prime}} \zeta_{\X_p}(s).
\]
For later use, we calculate in this section the speed of convergence of
\[
	\prod_{x\in |\X_p|,\ \deg x\leq r}(1-\#\kappa(x)^{-(n+1)})
\]
to $\zeta_{\X_p}(n+1)^{-1}$ when $r\rightarrow \infty$, and that of $\prod_{p\leq R} \zeta_{\X_p}(n+1)^{-1}$ to $\zeta_{\X}(n+1)^{-1}$ when $R\rightarrow \infty$.

\begin{lem}\label{boundlog}
	For any prime number $p$ and any positive integer $e\geq 1$, we have
	\[
		- \log( 1-p^{-e} )<2p^{-e}.
	\]
	In particular, for any closed point $x$ on an arithmetic variety $\X$ and any integer $e\geq 1$, we have
	\[
		- \log(1-\#\kappa(x)^{-e})<2\cdot\#\kappa(x)^{-e}.
	\]
\end{lem}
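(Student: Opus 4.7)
The plan is to reduce the lemma to the elementary calculus fact that $-\log(1-x) < 2x$ on the interval $0 < x \leq 1/2$, and then apply this at $x = p^{-e}$. Since $p \geq 2$ and $e \geq 1$, we always have $p^{-e} \leq 1/2$, which places us safely in the range where the calculus bound applies.

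To prove the calculus bound, I will use the Taylor expansion
\[
-\log(1-x) = \sum_{k=1}^{\infty} \frac{x^k}{k}, \qquad 0 < x < 1.
\]
Since $\frac{x^k}{k} < x^k$ for every $k \geq 2$ when $0 < x < 1$, I can bound the series strictly by the geometric series:
\[
-\log(1-x) < \sum_{k=1}^{\infty} x^k = \frac{x}{1-x}.
\]
Then, for $0 < x \leq 1/2$, one has $1-x \geq 1/2$, so $\frac{x}{1-x} \leq 2x$, which combined with the previous strict inequality gives $-\log(1-x) < 2x$.

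Substituting $x = p^{-e}$ yields the first assertion. For the second assertion, observe that $\#\kappa(x)$ is a prime power, say $\#\kappa(x) = p^f$ with $f \geq 1$, so $\#\kappa(x)^{-e} = p^{-fe} \leq 1/2$; the same argument applied to this quantity (either directly to the integer $\#\kappa(x) \geq 2$ in place of $p$, or via the first part with $e$ replaced by $fe$) gives the desired bound. This step is entirely routine; there is no real obstacle, the only thing to watch is that the inequality $\frac{x^k}{k} \leq x^k$ is strict for some $k \geq 2$ (which it is as soon as $x > 0$), guaranteeing the strict inequality claimed in the statement.
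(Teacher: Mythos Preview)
Your proof is correct. The paper takes a slightly different elementary route: instead of the Taylor series, it shows algebraically that for $0<t\leq\frac{\sqrt{5}-1}{2}$ one has $(1+t)^2(1-t)\geq 1$, hence $-\log(1-t)\leq 2\log(1+t)$, and then uses $\log(1+t)<t$ to conclude $-\log(1-t)<2t$; the range $p^{-e}\leq\frac12<\frac{\sqrt{5}-1}{2}$ then suffices. Your argument via the Taylor expansion and the geometric series bound $-\log(1-x)<\frac{x}{1-x}\leq 2x$ is arguably more direct and transparent, and it makes the source of the factor $2$ (namely $1-x\geq\frac12$) completely explicit. Both approaches are equally elementary and yield the same strict inequality on the same range; there is no real advantage to one over the other beyond taste.
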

\begin{proof}
	For any real number $0<t\leq \frac{\sqrt{5}-1}{2}$, we have $-\log(1-t)\leq 2\log(1+t)$. Indeed, when $0<t\leq \frac{\sqrt{5}-1}{2}$, we have
	\[
		-1<t^2+t-1=(t+\frac{1}{2})^2-\frac{5}{4}\leq 0.
	\]
	Then
	\[
		(1+t)^2(1-t)=1-(t^3+t^2-t)=1-t(t^2+t-1)\geq 1,
	\]
	which implies 
	\[
		\frac{1}{1-t}\leq (1+t)^2,
	\]
	i.e.
	\[
		-\log(1-t)\leq 2\log(1+t).
	\]
	Since for any $t>0$, $\log(1+t)<t$, we have for $0<t\leq \frac{\sqrt{5}-1}{2}$,
	\[
		-\log(1-t)\leq 2t.
	\]
	As $\frac{\sqrt{5}-1}{2}>\frac{1}{2}$, any prime $p$ and positive integer $r\geq 1$ satisfy $p^{-r}< \frac{\sqrt{5}-1}{2}$. Hence we conclude.
\end{proof}

\begin{lem}\label{zetafinite}
	Let $\X$ be an arithmetic scheme of absolute dimension $n$. For any prime number $p$ and any positive integer $r\geq 1$, we have
	\[
		 \left| \prod_{x\in |\X_{p}|, \deg x\leq r} \left( 1-p^{-(n+1)\deg x} \right) - \zeta_{\X_p}(n+1)^{-1} \right|\leq 4c_0 p^{-2(r+1)}.
	\]
\end{lem}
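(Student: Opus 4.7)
The plan is to write $\zeta_{\X_p}(n+1)^{-1}$ as the finite product times a tail, then bound the tail using the Lang--Weil point count together with Lemma \ref{boundlog}.

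More precisely, I would first set
\[
	P_r = \prod_{x \in |\X_p|,\ \deg x \leq r} \left( 1 - p^{-(n+1)\deg x} \right), \qquad Q_r = \prod_{x \in |\X_p|,\ \deg x > r} \left( 1 - p^{-(n+1)\deg x} \right),
\]
so that $\zeta_{\X_p}(n+1)^{-1} = P_r \cdot Q_r$ and therefore
\[
	\left| P_r - \zeta_{\X_p}(n+1)^{-1} \right| = P_r \left| 1 - Q_r \right| \leq |1 - Q_r|,
\]
since $0 < P_r \leq 1$. Because each factor of $Q_r$ is in $(0,1]$, we have $0 < Q_r \leq 1$, so the elementary inequality $1 - e^{-y} \leq y$ for $y \geq 0$ applied to $y = -\log Q_r$ yields $|1 - Q_r| \leq -\log Q_r$.

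Next I would expand
\[
	-\log Q_r = -\sum_{\deg x > r} \log\bigl( 1 - p^{-(n+1)\deg x}\bigr) \leq \sum_{e > r} 2 \cdot \#\{ x \in |\X_p|\ ;\ \deg x = e\} \cdot p^{-(n+1)e}
\]
using Lemma \ref{boundlog}. Since any closed point of $\X_p$ of degree exactly $e$ contributes $e$ distinct $\F_{p^e}$-points, the Lang--Weil constant $c_0$ gives
\[
	\#\{ x \in |\X_p|\ ;\ \deg x = e\} \leq \frac{\#\X_p(\F_{p^e})}{e} \leq \frac{c_0}{e}\, p^{(n-1)e}.
\]
Substituting this bound and summing the resulting geometric series in $p^{-2e}$:
\[
	-\log Q_r \leq \sum_{e > r} \frac{2c_0}{e}\, p^{(n-1)e} \cdot p^{-(n+1)e} \leq 2c_0 \sum_{e > r} p^{-2e} = 2c_0 \cdot \frac{p^{-2(r+1)}}{1 - p^{-2}}.
\]

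Finally, for any prime $p \geq 2$ one has $\frac{1}{1-p^{-2}} \leq \frac{4}{3}$, so
\[
	\left| P_r - \zeta_{\X_p}(n+1)^{-1} \right| \leq -\log Q_r \leq \frac{8c_0}{3}\, p^{-2(r+1)} \leq 4c_0\, p^{-2(r+1)},
\]
which is the desired bound. There is no real obstacle here: everything reduces to the two ingredients already prepared (the Lang--Weil-type constant $c_0$ and Lemma \ref{boundlog}); the only minor care needed is in correctly passing from $\#\X_p(\F_{p^e})$ to the number of degree-$e$ closed points, and in choosing the multiplicative constant so that the factor $\frac{8}{3}\cdot \frac{1}{1-p^{-2}}$ stays under $4$ uniformly in $p \geq 2$.
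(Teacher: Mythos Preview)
Your proof is correct and follows essentially the same route as the paper's: factor $\zeta_{\X_p}(n+1)^{-1}=P_rQ_r$, bound $P_r\leq 1$, control $-\log Q_r$ via Lemma~\ref{boundlog} and the Lang--Weil constant $c_0$, and finish with $1-e^{-y}\leq y$. The only cosmetic differences are that the paper bounds the number of degree-$e$ closed points more crudely by $\#\X(\F_{p^e})$ (without your division by $e$, which you discard anyway) and uses $\tfrac{1}{1-p^{-2}}<2$ rather than your sharper $\tfrac{1}{1-p^{-2}}\leq\tfrac{4}{3}$; your closing parenthetical remark about ``$\tfrac{8}{3}\cdot\tfrac{1}{1-p^{-2}}$'' is a slip of the pen (you mean $2\cdot\tfrac{1}{1-p^{-2}}\leq\tfrac{8}{3}<4$), but the displayed computation is fine.
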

\begin{proof}
	By Lemma \ref{boundlog}, for any closed point $x$ of $\X$,
	\[
		- \log(1-\#\kappa(x)^{-(n+1)})<2\cdot\#\kappa(x)^{-(n+1)}
	\]
	We have
	\begin{eqnarray*}
		\sum_{x\in |\X_{p}|, \deg x> r}\left(- \log( 1-p^{-(n+1)\deg x} )\right) &<& 2\sum_{x\in |\X_{p}|, \deg x> r} p^{-(n+1)\deg x} \\
		&\leq & 2\sum_{e=r+1}^{\infty}\#\X(\F_{p^e}) p^{-(n+1)e} \\
		&\leq& 2\sum_{e=r+1}^{\infty}c_0p^{(n-1)e}\cdot p^{-(n+1)e} \\
		&=& 2c_0\sum_{e=r+1}^{\infty} p^{-2e}\\
		&=& 2c_0\frac{p^{-2(r+1)}}{1-p^{-2}}\\
		&<& 4c_0p^{-2(r+1)}.
	\end{eqnarray*}
	On the other hand, for any $x\in |\X_{p}|$,
	\[
		 \left( 1-p^{-(n+1)\deg x} \right)<1.
	\]
	Hence
	\begin{eqnarray*}
		& &\left| \prod_{x\in |\X_{p}|, \deg x\leq r} \left( 1-p^{-(n+1)\deg x} \right) - \zeta_{\X_p}(n+1)^{-1} \right| \\
		&=& \prod_{x\in |\X_{p}|, \deg x\leq r} \left( 1-p^{-(n+1)\deg x} \right)\cdot \left( 1- \prod_{x\in |\X_{p}|, \deg x> r} \left( 1-p^{-(n+1)\deg x} \right)\right) \\
		&<&1- \prod_{x\in |\X_{p}|, \deg x> r} \left( 1-p^{-(n+1)\deg x} \right) \\
		&=&1-\exp\left( \sum_{x\in |\X_{p}|, \deg x> r} \log\left( 1-p^{-(n+1)\deg x} \right)\right).
	\end{eqnarray*}
	By the above computation, we have
	\begin{eqnarray*}
		& &\left| \prod_{x\in |\X_{p}|, \deg x\leq r} \left( 1-p^{-(n+1)\deg x} \right) - \zeta_{\X_p}(n+1)^{-1} \right| \\
		&<& 1-\exp\left( -4c_0p^{-2(r+1)} \right) \\
		&<& 4c_0p^{-2(r+1)}
	\end{eqnarray*}
	as for any $t>0$, $e^{-t}>1-t$. Therefore we conclude. 
\end{proof}

\begin{lem}\label{boundXp}
	Let $\X$ be an arithmetic scheme of absolute dimension $n$. For any prime number $p$, we have
	\[
		0<\log\zeta_{\X_p}(n+1)\leq 4c_0p^{-2}.
	\]
\end{lem}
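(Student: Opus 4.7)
The plan is to prove the lemma by directly expanding $\log \zeta_{\X_p}(n+1)$ as a sum over closed points and estimating it using the same two ingredients that drove the proof of Lemma \ref{zetafinite}: the elementary inequality of Lemma \ref{boundlog}, and the Lang--Weil-type bound $\#\X_p(\F_{p^e}) \leq c_0 p^{(n-1)e}$ on the number of $\F_{p^e}$-points.

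First I would write
\[
    \log \zeta_{\X_p}(n+1) \;=\; -\sum_{x \in |\X_p|} \log\bigl(1 - \#\kappa(x)^{-(n+1)}\bigr).
\]
Since each summand $-\log(1 - \#\kappa(x)^{-(n+1)})$ is strictly positive and the sum converges (the zeta function converges absolutely for $\mathrm{Re}(s) > n$), the lower bound $\log \zeta_{\X_p}(n+1) > 0$ is immediate.

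For the upper bound, I would apply Lemma \ref{boundlog} term by term, obtaining
\[
    \log \zeta_{\X_p}(n+1) \;<\; 2 \sum_{x \in |\X_p|} \#\kappa(x)^{-(n+1)} \;=\; 2 \sum_{x \in |\X_p|} p^{-(n+1)\deg x}.
\]
Grouping the sum by degree and using that the number of closed points of $\X_p$ of degree $e$ is at most $\#\X_p(\F_{p^e}) \leq c_0 p^{(n-1)e}$, this is bounded by
\[
    2 c_0 \sum_{e=1}^{\infty} p^{(n-1)e} \cdot p^{-(n+1)e} \;=\; 2 c_0 \sum_{e=1}^{\infty} p^{-2e} \;=\; 2 c_0 \cdot \frac{p^{-2}}{1 - p^{-2}}.
\]

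Finally, since $p \geq 2$ we have $\frac{1}{1-p^{-2}} \leq \frac{4}{3} \leq 2$, so the right-hand side is at most $4 c_0 p^{-2}$, which yields the desired inequality. There is no real obstacle here: the estimate essentially reuses the tail-bound computation already carried out in the proof of Lemma \ref{zetafinite}, just applied to the full sum over all closed points of $\X_p$ rather than only to the tail beyond degree $r$.
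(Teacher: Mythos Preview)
Your proof is correct and follows essentially the same approach as the paper: expand $\log\zeta_{\X_p}(n+1)$ as a sum over closed points, apply Lemma~\ref{boundlog} termwise, group by degree using the Lang--Weil bound $\#\X_p(\F_{p^e})\leq c_0 p^{(n-1)e}$, and sum the resulting geometric series. The only cosmetic difference is that you make the justification $\frac{1}{1-p^{-2}}\leq 2$ explicit, whereas the paper simply writes the final strict inequality.
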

\begin{proof}
	In fact, we have
	\begin{eqnarray*}
		0<\log\zeta_{\X_p}(n+1) &=&\log\left( \prod_{x\in |\X_p|}(1-\#\kappa(x)^{-(n+1)})^{-1} \right) \\
		&=&(-1) \sum_{x\in |\X_p|} \log \left(1-\#\kappa(x)^{-(n+1)}  \right) \\
		&<&\left( \sum_{x\in |\X_p|} 2\#\kappa(x)^{-(n+1)}  \right) \\
		&<& 2\sum_{e=1}^{\infty}\#\X(\mathbb{F}_{p^e})\cdot p^{-e(n+1)},
	\end{eqnarray*}
	where the third line uses Lemma \ref{boundlog}.
	By the choice of $c_0$ at the beginning of this section, for any $e$, 
	\[
		\#\X(\F_{p^e})=\#\X_{p}(\F_{p^e})\leq c_0p^{(n-1)e}.
	\]
	Then we have
	\begin{eqnarray*}
		\sum_{e=1}^{\infty}\#\X(\mathbb{F}_{p^e})\cdot p^{-e(n+1)} &\leq& c_0\sum_{e=1}^{\infty} p^{-2e}\\
		&\leq& \frac{c_0p^{-2}}{1-p^{-2}}<2c_0p^{-2}.
	\end{eqnarray*}
	Hence we conclude.
\end{proof}

\begin{lem}\label{zetaintegral}
	When $R\in \Z_{>0}$ is large enough, we have 
	\[
		\left| \prod_{p\leq R}\zeta_{\X_p}(n+1)^{-1}-\zeta_\X(n+1)^{-1} \right| < 8c_0 \zeta_\X(n+1)^{-1}\cdot R^{-1}. 
	\]
\end{lem}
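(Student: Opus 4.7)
The plan is to compare $\prod_{p \leq R} \zeta_{\X_p}(n+1)^{-1}$ to the full product $\zeta_\X(n+1)^{-1} = \prod_p \zeta_{\X_p}(n+1)^{-1}$ via a tail estimate on the primes $p > R$, and then to apply Lemma \ref{boundXp} together with the standard comparison of $\sum_{p > R} p^{-2}$ with the integral $\int_R^\infty x^{-2}\, dx$.

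More precisely, I would start from the factorization
\[
\prod_{p \leq R} \zeta_{\X_p}(n+1)^{-1} = \zeta_\X(n+1)^{-1} \cdot \prod_{p > R} \zeta_{\X_p}(n+1),
\]
which gives
\[
\prod_{p \leq R} \zeta_{\X_p}(n+1)^{-1} - \zeta_\X(n+1)^{-1}
= \zeta_\X(n+1)^{-1} \left( \prod_{p > R} \zeta_{\X_p}(n+1) - 1 \right).
\]
Since every factor $\zeta_{\X_p}(n+1) > 1$, the tail product is $\geq 1$, so the quantity inside the absolute value is nonnegative and it suffices to bound $\prod_{p > R} \zeta_{\X_p}(n+1) - 1$.

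Taking logarithms and invoking Lemma \ref{boundXp}, I would estimate
\[
\log \prod_{p > R} \zeta_{\X_p}(n+1) = \sum_{p > R} \log \zeta_{\X_p}(n+1) \leq 4c_0 \sum_{p > R} p^{-2} \leq 4c_0 \sum_{m > R} m^{-2} \leq 4c_0 \int_R^\infty x^{-2}\, dx = \frac{4c_0}{R}.
\]
Then, choosing $R$ large enough so that $4c_0/R \leq 1$, I would use the elementary inequality $e^t - 1 \leq 2t$ valid for $0 \leq t \leq 1$ to conclude
\[
\prod_{p > R} \zeta_{\X_p}(n+1) - 1 \leq e^{4c_0/R} - 1 \leq \frac{8c_0}{R},
\]
which upon multiplication by $\zeta_\X(n+1)^{-1}$ yields the desired bound.

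There is no real obstacle here: the proof is a straightforward tail estimate, and the only mildly careful step is the passage from $e^{4c_0/R} - 1$ to $8c_0/R$, which is why the statement requires $R$ to be sufficiently large. The rest of the argument is purely a bookkeeping exercise using Lemma \ref{boundXp} (supplying the factor $4c_0 p^{-2}$) and the integral comparison $\sum_{m > R} m^{-2} \leq 1/R$.
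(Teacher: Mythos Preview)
Your proof is correct and follows essentially the same approach as the paper: factor out $\zeta_\X(n+1)^{-1}$, bound the logarithm of the tail product $\prod_{p>R}\zeta_{\X_p}(n+1)$ via Lemma \ref{boundXp} and the integral comparison, and then use $e^t-1\le 2t$ for small $t$. The only cosmetic difference is that you note the positivity of the difference explicitly, while the paper keeps absolute values throughout.
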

\begin{proof}
	Since $\zeta_{\X}(s)=\prod_p\zeta_{\X_p}(s)$, for a positive integer $R$ we have
	\begin{eqnarray*}
		\left| \prod_{p\leq R}\zeta_{\X_p}(n+1)^{-1}-\zeta_\X(n+1)^{-1} \right|& =& \zeta_\X(n+1)^{-1}\cdot \left| \prod_{p>R}\zeta_{\X_p}(n+1)-1 \right|\\
		&=&  \zeta_\X(n+1)^{-1}\cdot \left| \exp\left(\sum_{p>R}\log\zeta_{\X_p}(n+1)\right)-1 \right|.
	\end{eqnarray*}
	Since by Lemma \ref{boundXp} $0<\log\zeta_{\X_p}(n+1)\leq 4c_0p^{-2}$, we have
	\begin{eqnarray*}
		0<\sum_{p>R}\log\zeta_{\X_p}(n+1) &<& 4c_0\sum_{p>R}p^{-2}\\
		&<&4c_0\sum_{k>R}k^{-2} \\
		&<& 4c_0\int_{R}^{\infty}x^{-2}\mathrm{d}x \\
		& =& 4c_0 R^{-1}.
	\end{eqnarray*}
	When $t\in \R$ is sufficiently small, we have $e^t-1<2t$. Therefore when $R$ is sufficiently large, 
	\begin{eqnarray*}
		\left| \prod_{p\leq R}\zeta_{\X_p}(n+1)^{-1}-\zeta_\X(n+1)^{-1} \right|	&=&  \zeta_\X(n+1)^{-1}\cdot \left| \exp\left(\sum_{p>R}\log\zeta_{\X_p}(n+1)\right)-1 \right|\\
		&<& \zeta_\X(n+1)^{-1}\cdot \left| \exp\left(4c_0 R^{-1}\right)-1 \right|\\
		&<&8c_0R^{-1}\zeta_\X(n+1)^{-1}.
	\end{eqnarray*}
\end{proof}

\section{Effective computations on a single fiber}\label{effective}

In this section, for a regular projective arithmetic variety $\X$ of dimension $n$ equipped with an ample Hermitian line bundle $\Li$, we calculate the density of the set of global sections in $\ho^0_{\mathrm{Ar}}(\X, \Li^{\otimes d})$ whose divisor has no singular point lying on a fiber $\X_p$ for a fixed prime integer $p$ when $d\rightarrow \infty$. Note that this density differs from the density of sections in $\ho^0(\X_p, \li^{\otimes d})$ whose divisor is smooth over $\F_p$. This is because when a global section $\sigma \in \ho^0(\X, \Li^{\otimes d})$ is such that $\mathrm{div}\sigma$ has no singular point on $\X_p$, it is still possible that its image by restriction map $\phi_{d,p}: \ho^0(\X, \Li^{\otimes d})\rightarrow \ho^0(\X_p, \li^{\otimes d})$ is such that $\mathrm{div}\phi_{d,p}(\sigma)$ is singular.

Indeed, let $x$ be a closed point on the fiber $\X_p$ with maximal ideal $\mathfrak{m}_x$ as a closed subscheme of $\X$. We may assume that $\X_p$ is smooth over $\F_p$. The maximal ideal of $x$ as a closed point of $\X_p$ is $\mathfrak{m}_{\X_p,x}=\mathfrak{m}_x/(p\cdot\mathcal{O}_x)$. For any $\sigma\in \ho^0(\X,\Li^{\otimes d})$, its divisor $\mathrm{div}\sigma$ is singular at $x$ if and only if $\sigma$ is contained in $\ho^0(\X, \Li^{\otimes d}\otimes \mathfrak{m}_{x}^2)$, where we identify $\ho^0(\X, \Li^{\otimes d}\otimes \mathfrak{m}_{x}^2)$ with a sub-$\Z$-module of $\ho^0(\X,\Li^{\otimes d})$ by regarding $\Li^{\otimes d}\otimes  \mathfrak{m}_{x}^2$ as a subsheaf of $\Li^{\otimes d}$. This is equivalent to the condition that, denoting by $x'$ the closed subscheme of $\X$ defined by the ideal sheaf $\mathfrak{m}_x^2$, the image of $\sigma$ by the restriction map $\ho^0(\X, \Li^{\otimes d})\rightarrow \ho^0(x', \li^{\otimes d})$ is $0$. Similarly, denoting by $x''$ the closed subscheme of $\X_p$ defined by $\mathfrak{m}_{\X_p,x}^2$, $\mathrm{div}\phi_{d,p}(\sigma)$ is singular at $x$ if and only if the image of $\sigma$ by the restriction map $\ho^0(\X, \Li^{\otimes d})\longrightarrow \ho^0(x'', \li^{\otimes d})$ is $0$. Note that as $x$ is a regular point of $\X$,
\[
	\#\ho^0(x', \li^{\otimes d})=\#\ho^0(x', \mathcal{O}_{x'})=\big(\#\kappa(x)\big)^{1+n}.
\]
Similarly, since $\X_p$ is smooth,
\[
	\#\ho^0(x'', \li^{\otimes d})=\#\ho^0(x'', \mathcal{O}_{x'})=\big(\#\kappa(x)\big)^{1+(n-1)}=\big(\#\kappa(x)\big)^n.
\]
Moreover, by the definition of $x'$ and $x''$, the restriction map $\ho^0(\X, \Li^{\otimes d})\longrightarrow \ho^0(x'', \li^{\otimes d})$ factors through
\[
	\#\ho^0(x', \li^{\otimes d})\longrightarrow \ho^0(x'', \li^{\otimes d}).
\]
Therefore we have a strict inclusion 
\[
	\Ker\left( \ho^0(\X, \Li^{\otimes d})\longrightarrow \ho^0(x', \li^{\otimes d}) \right) \subsetneq \Ker\left( \ho^0(\X, \Li^{\otimes d})\longrightarrow \ho^0(x'', \li^{\otimes d})\right)
\]
which implies that it is possible that $\mathrm{div}\phi_{d,p}(\sigma)$ is singular at $x$ while $\mathrm{div}\sigma$ is regular at $x$.

\begin{expl}
	Consider $\pr^2_{\Z}$ together with the ample line bundle $\mathcal{O}(1)$ on it. Then $X^2+5Y^2-Z^2$ is a global section in $\ho^0(\pr^2_{\Z}, \mathcal{O}(2))$. The restriction $\phi_{2,5}(X^2+5Y^2-Z^2)$ in $\ho^0(\pr^2_{\F_5}, \mathcal{O}(2))$ is equal to $X^2-Z^2$. So $\mathrm{div}(\phi_{2,5}(X^2+5Y^2-Z^2))$ has a singular point $P=\overline{[0,1,0]}\in \pr^2_{\F_5}$. But $P$ is not a singular point of $\mathrm{div}(X^2+5Y^2-Z^2)$. Indeed, consider the open affine neighbourhood $\A^2_{\Z}=\pr^2_{\Z}-\mathrm{div}(Y)$ of $P$. The ideal sheaf $\mathfrak{m}_{\A^2_{\Z},P}$ of $P$ in $\A^2_{\Z}$ is generated by $\frac{X}{Y},\frac{Z}{Y},5\in \ho^0(\A^2_{\Z}, \mathcal{O}_{\A^2_{\Z}})$. Then $\mathfrak{m}^2_{\A^2_{\Z},P}$ is generated by $\frac{X^2}{Y^2}, \frac{Z^2}{Y^2}, 25, \frac{XZ}{Y^2}, \frac{5X}{Y},\frac{5Z}{Y}$. Let $P'$ be the first order infinitesimal neighbourhood of $P$ in $\pr^2_{\Z}$. Then $P'$ can be regarded as a closed subscheme of $\A^2_{\Z}$ defined by $\mathfrak{m}^2_{\A^2_{\Z},P}$. Note that
	\begin{eqnarray*}
		X^2+5Y^2-Z^2 = (\frac{X^2}{Y^2}-\frac{Z^2}{Y^2}+5)Y^2.
	\end{eqnarray*}
	The image of $X^2+5Y^2-Z^2$ in $\ho^0(P', \mathcal{O}(2))$ is $5\cdot Y^2$, which is non-zero. So $P$ is a singular point of $\mathrm{div}(\phi_{2,5}(X^2+5Y^2-Z^2))$, but it is not a singular point of $\mathrm{div}(X^2+5Y^2-Z^2)$.
\end{expl}

The above argument shows that for a section $\sigma \in \ho^0(\X,\Li^{\otimes d})$, the condition that the divisor $\mathrm{div}\sigma$ has a singular point on the fiber $\X_p$ is stronger than that $\mathrm{div}\phi_{d,p}(\sigma)$ is singular. However, we can test whether $\mathrm{div}\sigma$ has a singular point on the fiber $\X_p$ through the restriction $\phi_{d,p^2}(\sigma)$, where $\phi_{d,p^2}:  \ho^0(\X, \Li^{\otimes d})\rightarrow \ho^0(\X_{p^2}, \li^{\otimes d})$ is the restriction morphism to $\X_{p^2}$. In fact, for a closed point $x$ on the fiber $\X_p$, as $p\cdot\mathcal{O}_x$ is contained in $\mathfrak{m}_x$, we have naturally $p^2\cdot\mathcal{O}_x\in \mathfrak{m}^2_x$ and that $x'$ is a closed subscheme of $\X_{p^2}$. This implies that the image of $\sigma$ in $\ho^0(x', \li^{\otimes d})$ is the same as the image of $\phi_{d,p^2}(\sigma)$ via $\ho^0(\X_{p^2}, \li^{\otimes d})\longrightarrow \ho^0(x', \li^{\otimes d})$, and also that when $x\in \mathrm{div}\sigma$,
\[
	\frac{\m_{\mathrm{div}\phi_{d,p^2}(\sigma),x}}{\m^2_{\mathrm{div}\phi_{d,p^2}(\sigma), x}}=\frac{\m_{\mathrm{div}\sigma,x}/p^2\mathcal{O}_x}{\m^2_{\mathrm{div}\sigma, x}/p^2\mathcal{O}_x}\simeq\frac{\m_{\mathrm{div}\sigma,x}}{\m^2_{\mathrm{div}\sigma, x}}.
\]
So when $x\in\mathrm{div} \sigma$, $\mathrm{div} \sigma$ is regular at $x$ if and only if the image of $\phi_{d,p^2}(\sigma)$ in $\ho^0(x', \li^{\otimes d})$ is not $0$, and this condition is equivalent to the condition that 
\[
	\dim_{\kappa(x)}\frac{\m_{\mathrm{div}\phi_{d,p^2}(\sigma),x}}{\m^2_{\mathrm{div}\phi_{d,p^2}(\sigma), x}}=\dim_{\kappa(x)}\frac{\m_{\mathrm{div}\sigma,x}}{\m^2_{\mathrm{div}\sigma, x}}=n-1.
\]
Therefore we may study whether $\mathrm{div}\sigma$ has a singular point on the fiber $\X_p$ via the study of $\mathrm{div}\phi_{d,p^2}(\sigma)$, although the latter is not regular itself.

\subsection{Main results}
We write
\[
	\mathcal{P}_{d,p}:=\left\{ \sigma\in \ho^0(\X, \Li^{\otimes d})\ ;\ \mathrm{div}\ \sigma \text{ has no singular point on } \X_p \right\}.
\]

\begin{thm}\label{fiber}
	Let $\X$ be a regular projective arithmetic variety of absolute dimension $n$, and let $\Li$ be an ample Hermitian line bundle on $\X$. 
	There exists a constant $C>1$ such that for any large enough integer $d$ and any prime number $p$ satisfying $Cnp^n<d$, we have
	\[
		\left| \frac{\#\left(\mathcal{P}_{d,p} \cap \ho^0_{\mathrm{Ar}}(\X, \overline{\mathcal{L}}^{\otimes d})\right)}{\#\ho^0_{\mathrm{Ar}}(\X, \overline{\mathcal{L}}^{\otimes d})} -\zeta_{\X_p}(n+1)^{-1} \right|=O\left( \Big(\frac{d}{p}\Big)^{-\frac{2}{n}} \right),
	\]
	where the constant involved in big $O$ is independent of $d,p$.
\end{thm}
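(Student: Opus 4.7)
The plan is to follow Poonen's three-range strategy from \cite{Po04}, transferred to the arithmetic setting via reduction modulo $p^2$. The divisor $\mathrm{div}\sigma$ is singular at a closed point $x\in\X_p$ if and only if $\sigma\in\m_x^2\cdot\Li^{\otimes d}$; since $p\in\m_x$ we have $p^2\in\m_x^2$, so this condition depends only on the reduction of $\sigma$ modulo $p^2$. Under the hypothesis $Cnp^n<d$ the bound $p^2\leq e^{d^{\alpha_0}}$ is satisfied comfortably, so Proposition \ref{modN} identifies $\#\mathcal{P}_{d,p}/\#\ho^0_{\mathrm{Ar}}(\X,\Li^{\otimes d})$, up to a multiplicative factor $1+O(e^{-\eta d})$, with the proportion among $\bar\sigma\in\ho^0(\X_{p^2},\Li^{\otimes d})$ of those whose divisor inside $\X_{p^2}$ is regular at every closed point of $\X_p$.

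I would then partition $|\X_p|$ by degree with thresholds $r=r_{p,d}$ and $M=\lfloor(d-N)/(nN)\rfloor$, where $N=N(p)$ is chosen so that $\Li^{\otimes N}$ separates first-order jets on $\X_{p^2}$. For the low-degree range $\deg x\leq r$, jet-separation by $\Li^{\otimes N}$ implies that for $d$ sufficiently large the restriction map
\[
\ho^0(\X_{p^2},\Li^{\otimes d})\longrightarrow\prod_{\deg x\leq r}\Li^{\otimes d}\otimes\Oc_{\X,x}/\m_x^2
\]
is a surjection of free $\Z/p^2$-modules, so all its fibers have equal cardinality, and the proportion non-singular at every such $x$ equals $\prod_{\deg x\leq r}(1-p^{-(n+1)\deg x})$, which by Lemma \ref{zetafinite} equals $\zeta_{\X_p}(n+1)^{-1}+O(p^{-2(r+1)})$. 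For the medium range $r<\deg x\leq M$, surjectivity onto each single $\Li^{\otimes d}\otimes\Oc_{\X,x}/\m_x^2$ still holds by the choice of $M$, and a union bound combined with the Lang--Weil estimate $\#\X_p(\F_{p^e})\leq c_0p^{(n-1)e}$ yields a contribution $O\bigl(\sum_{e>r}p^{-2e}\bigr)=O(p^{-2r})$.

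The crucial obstacle is the high-degree range $\deg x>M$, where surjectivity onto a single jet space may fail and a naive union bound is insufficient. Adapting Poonen's derivative argument, I would fix an affine open $U_p\subset\X_p$ on which $\li$ is trivial with local coordinates $t_1,\ldots,t_{n-1}$, lift to $\tilde U\subset\X_{p^2}$, decompose $\bar\sigma|_{\tilde U}=\sigma_0+p\sigma_1$ with $\sigma_0,\sigma_1\in\ho^0(U_p,\li^{\otimes d})$, and encode singularity of $\mathrm{div}\bar\sigma$ at $x\in U_p$ as simultaneous vanishing at $x$ of $\sigma_1$ and of the partial derivatives $\partial_{t_i}\sigma_0$ (the vanishing of $\sigma_0$ being implied). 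Choosing a basis of $\ho^0(\X_{p^2},\Li^{\otimes d})$ adapted to these derivatives by ampleness-based surjectivity, and freezing all but finitely many coefficients, reduces the count to bounding the zero locus of an auxiliary section of controlled degree on $U_p$, producing a contribution negligible compared to $(d/p)^{-2/n}$ under $Cnp^n<d$. Balancing by choosing $r\sim\tfrac{1}{n}\log_p(d/p)$, so that $p^{-2r}\sim(d/p)^{-2/n}$, then yields the stated estimate.

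The principal technical difficulty lies in the mixed-characteristic derivative analysis: the ``$p$-direction'' admits no classical derivative, so the $\sigma_1$-term must be controlled by a dimension-counting argument independent of the $\sigma_0$-derivative trick, and the interplay between the two pieces must be arranged carefully so that the combined bad locus has the correct codimension. This coupling is what ultimately forces the factor $p^n$ in the hypothesis $Cnp^n<d$.
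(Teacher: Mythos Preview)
Your overall architecture matches the paper's: reduce to $\ho^0(\X_{p^2},\Li^{\otimes d})$ via Proposition~\ref{modN}, then run Poonen's low/medium/high decomposition with the threshold $r_{p,d}$ balanced so that $p^{-2r}\sim (d/p)^{-2/n}$. The low and medium ranges are handled exactly as you describe (Lemma~\ref{smalldegree} and Lemma~\ref{medi} in the paper).

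The one substantive divergence is in the high-degree range. You propose a genuine mixed-characteristic analysis: write $\bar\sigma=\sigma_0+p\sigma_1$, encode singularity at $x$ as simultaneous vanishing of $\sigma_0$, its $t_i$-derivatives, and $\sigma_1(x)$, then control the ``$p$-direction'' condition on $\sigma_1$ by a separate dimension count coupled to the derivative trick on $\sigma_0$. You correctly flag this coupling as the principal technical difficulty.

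The paper sidesteps this difficulty entirely. The observation (Lemma~\ref{opensub}) is that if $\dim_{\kappa(x)}\m_{\mathrm{div}\sigma,x}/\m^2_{\mathrm{div}\sigma,x}=n$ for $\sigma\in\ho^0(\X_{p^2},\Li^{\otimes d})$, then the reduction $\overline\sigma\in\ho^0(\X_p,\Li^{\otimes d})$ already has $\mathrm{div}\overline\sigma$ singular at $x$ inside $\X_p$. In your notation, singularity forces $\sigma_0(x)=\partial_i\sigma_0(x)=0$ regardless of $\sigma_1$, so one may simply \emph{drop} the $\sigma_1$-condition: the bad set $\mathcal{Q}^{\mathrm{high}}_{d,p^2}$ is contained in the preimage of $\mathcal{Q}^{\mathrm{high}}_{d,p}$ under reduction mod $p$. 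The high-degree bound then becomes a pure $\F_p$ computation (Proposition~\ref{hism}, Lemma~\ref{U}), where Poonen's derivative trick applies without any mixed-characteristic complication. Your harder route would work in principle and would even give a sharper bound, but the paper's reduction to $\X_p$ is the cleaner move and eliminates exactly the obstacle you identified.
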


To prove this result, it suffices to prove the following proposition :

\begin{prop}\label{fib}
	Define
	\[
		\mathcal{P}'_{d,p^2}:=\left\{ \sigma'\in \ho^0(\X_{p^2}, \li^{\otimes d})\ ;\ \forall x\in |\mathrm{div}\sigma'|,\ \dim_{\kappa(x)}\frac{\m_{\mathrm{div}\sigma',x}}{\m^2_{\mathrm{div}\sigma', x}}=n-1 \right\}.
	\]
	Then there exists a constant $C>1$ such that for any prime number $p$ satisfying $Cnp^n<d$, we have
	\[
		\left| \frac{\#\mathcal{P'}_{d,p^2}}{\#\ho^0(\X_{p^2}, \li^{\otimes d})} -\zeta_{\X_p}(n+1)^{-1} \right|=O\left( \Big(\frac{d}{p}\Big)^{-\frac{2}{n}} \right),
	\] 
	where the constant involved in big $O$ is independent of $d,p$.
\end{prop}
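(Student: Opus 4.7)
The plan is to follow Poonen's three-range method from \cite{Po04}, adapted to the non-reduced base $\Z/p^2\Z$. For a closed point $x\in |\X_p|$, let $x'\subset \X$ denote the closed subscheme cut out by $\m_x^2$; since $p\in \m_x$ we have $p^2\in \m_x^2$, so $x'$ is also a closed subscheme of $\X_{p^2}$, and since $\X$ is regular $\mathcal{O}_{x'}$ is a length-$(n+1)$ algebra over $\kappa(x)=\F_{p^{\deg x}}$, giving $\#\ho^0(x',\mathcal{O}_{x'})=p^{(n+1)\deg x}$. The condition defining $\mathcal{P}'_{d,p^2}$ translates to: for every $x\in |\X_p|$, the image of $\sigma'$ under the restriction $\ho^0(\X_{p^2},\Li^{\otimes d})\longrightarrow \ho^0(x',\mathcal{O}_{x'})$ is nonzero. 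The proof thus reduces to estimating the proportion of $\sigma'$ avoiding all these kernels.

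Fix a degree cutoff $r=r_{p,d}\sim \tfrac{1}{n}\log_p(d/p)$ and a constant $N(p)$ depending only on $p$, and partition $|\X_p|$ into a low range ($\deg x\leq r$), a medium range ($r<\deg x\leq (d-N(p))/(nN(p))$) and a high range ($\deg x>(d-N(p))/(nN(p))$). Write $S_{\mathrm{low}},S_{\mathrm{med}},S_{\mathrm{high}}\subset \ho^0(\X_{p^2},\Li^{\otimes d})$ for the sets of sections whose divisor has a singular point in the corresponding range, so that $\mathcal{P}'_{d,p^2}$ is the complement of their union. The target bound will follow by estimating each of these three sets separately.

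For the low range the plan is to use ampleness of $\Li$ together with the Lang-Weil bound $\#\X_p(\F_{p^e})\leq c_0 p^{(n-1)e}$ from Section \ref{c0} to argue that, when $Cnp^n<d$, the combined restriction
\[
\ho^0(\X_{p^2},\Li^{\otimes d})\longrightarrow \prod_{\deg x\leq r}\ho^0(x',\mathcal{O}_{x'})
\]
is surjective, since with $r$ of the stated size the size of the target is controlled. Inclusion-exclusion then gives
\[
\frac{\#\bigl(\ho^0(\X_{p^2},\Li^{\otimes d})\setminus S_{\mathrm{low}}\bigr)}{\#\ho^0(\X_{p^2},\Li^{\otimes d})}=\prod_{\deg x\leq r}\bigl(1-p^{-(n+1)\deg x}\bigr),
\]
which by Lemma \ref{zetafinite} differs from $\zeta_{\X_p}(n+1)^{-1}$ by $O(p^{-2(r+1)})=O((d/p)^{-2/n})$. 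For the medium range the same surjectivity argument applied one point at a time bounds the contribution of each $x$ by $p^{-(n+1)\deg x}$, and summing via Lang-Weil yields a total of $O(p^{-2r})=O((d/p)^{-2/n})$.

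The high range is the main obstacle: there $(n+1)\deg x$ is comparable to $d$, so Serre vanishing no longer forces surjectivity onto $\ho^0(x',\mathcal{O}_{x'})$. The strategy is to mimic Poonen's auxiliary-section and partial-derivative argument: working locally in affine charts trivializing $\Li^{\otimes N(p)}$, decompose a test section of $\Li^{\otimes d}$ as $\sigma_0\tau+\sum_{i=1}^{n}\tau_i\partial_i\tau$ for a generic auxiliary $\tau$ of degree $N(p)$ and local derivations $\partial_i$, and exploit the free choice of the $\tau_i$ to force a rare linear condition at any prospective singular point. The genuine new difficulty compared with Poonen's original proof is that every derivative identity must be lifted from $\F_p$ to $\Z/p^2\Z$; in particular, one must arrange that the induced jet map at $x'$ is surjective modulo $p^2$ and not merely modulo $p$. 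Once this step is in place the high-range contribution is negligible compared to $(d/p)^{-2/n}$, and combining the three estimates completes the proof.
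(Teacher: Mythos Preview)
Your three-range decomposition and the treatment of the low and medium ranges are essentially what the paper does (Lemmas \ref{smalldegree} and \ref{medi}), including the identification of $r_{p,d}$ so that $p^{-2(r+1)}=O((d/p)^{-2/n})$.

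The high range, however, is where you diverge from the paper, and your sketch contains a real obstacle. You propose to carry out Poonen's auxiliary-section argument \emph{directly over $\Z/p^2\Z$}, and flag ``lifting every derivative identity from $\F_p$ to $\Z/p^2\Z$'' as the main new difficulty. But Poonen's decomposition $\sigma=\sigma_0+\sum_i\beta_i^p t_i\tau^{l_d}+\gamma^p\tau^{l_d}$ hinges on the identity $\partial_i(\beta^p)=0$ in characteristic $p$; over $\Z/p^2\Z$ one has instead $\partial_i(\beta^p)=p\beta^{p-1}\partial_i\beta$, which is nonzero modulo $p^2$, so the mechanism that decouples the derivatives $\partial_i\Phi_j(\sigma)$ from one another breaks down. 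Your alternative formula $\sigma_0\tau+\sum_{i=1}^{n}\tau_i\partial_i\tau$ is not Poonen's decomposition (and the sum should run over $n-1$ relative derivations of $\X_p/\F_p$, not $n$), and it is not clear how varying the $\tau_i$ would impose independent conditions on the jet at $x'$.

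The paper avoids this entirely. Its key observation (Lemma \ref{opensub}) is that if $\sigma'\in\ho^0(\X_{p^2},\li^{\otimes d})$ has $\dim_{\kappa(x)}\m_{\mathrm{div}\sigma',x}/\m^2_{\mathrm{div}\sigma',x}=n$ at some $x$, then its reduction $\overline{\sigma}=\sigma'\bmod p\in\ho^0(\X_p,\li^{\otimes d})$ already has $\dim_{\kappa(x)}\m_{\mathrm{div}\overline{\sigma},x}/\m^2_{\mathrm{div}\overline{\sigma},x}=n-1$, i.e.\ $x$ is singular on $\mathrm{div}\overline{\sigma}\subset\X_p$. Hence $\#\mathcal{Q}^{\mathrm{high}}_{d,p^2}/\#\ho^0(\X_{p^2},\li^{\otimes d})\leq \#\mathcal{Q}^{\mathrm{high}}_{d,p}/\#\ho^0(\X_p,\li^{\otimes d})$, and the high-range estimate reduces to the genuine finite-field case over $\F_p$, where Poonen's $p$-th-power trick is available unchanged. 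No lifting to $\Z/p^2\Z$ is needed for this range; that trick is needed only for the low and medium ranges, where it is handled by Lemma \ref{sur} (a Nakayama argument) rather than by any derivative computation.
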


\begin{proof}[Proof of Theorem \ref{fiber}]
	Assuming this proposition, by Proposition \ref{modN} for any $\alpha_0$ such that $0<\alpha_0<1$ we can find a constant $\eta>0$ such that when $d$ is large enough, for any prime number $p$ such that $p^2\leq \exp(d^{\alpha_0})$ and any $\sigma'_1,\sigma'_2\in \ho^0(\X_{p^2}, \li^{\otimes d})$, 
	\begin{eqnarray*}
		\frac{\left| \#\left(\phi_{d,p^2}^{-1}(\sigma'_1)\cap \ho^0_{\mathrm{Ar}}(\X, \Li^{\otimes d}) \right)-\#\left(\phi_{d,p^2}^{-1}(\sigma'_2)\cap \ho^0_{\mathrm{Ar}}(\X, \Li^{\otimes d}) \right)   \right|}{\#\left(\phi_{d,p^2}^{-1}(\sigma'_1)\cap \ho^0_{\mathrm{Ar}}(\X, \Li^{\otimes d}) \right)}  \leq e^{-\eta d} ,
	\end{eqnarray*}
	where $\phi_{d,p^2} :  \ho^0(\X, \Li^{\otimes d}) \longrightarrow \ho^0(\X_{p^2}, \li^{\otimes d})$ is the restriction map.
	When we take the sum over all $\sigma'_2\in \ho^0(\X_{p^2}, \li^{\otimes d})$, as 
	\[
		\bigcup_{\sigma'_2\in \ho^0(\X_{p^2}, \li^{\otimes d})} \left(\phi_{d,p^2}^{-1}(\sigma'_2)\cap \ho^0_{\mathrm{Ar}}(\X, \Li^{\otimes d}) \right)= \ho^0_{\mathrm{Ar}}(\X, \Li^{\otimes d}),
	\]
	we get 
	\begin{eqnarray*}
		& &\qquad\left| \#\left(\phi_{d,p^2}^{-1}(\sigma'_1)\cap \ho^0_{\mathrm{Ar}}(\X, \Li^{\otimes d}) \right)\cdot \left( \# \ho^0(\X_{p^2}, \li^{\otimes d}) \right)-\# \ho^0_{\mathrm{Ar}}(\X, \Li^{\otimes d})   \right| \\
		&\leq& \sum_{\sigma'_2\in  \ho^0(\X_{p^2}, \li^{\otimes d})}\left| \#\left(\phi_{d,p^2}^{-1}(\sigma'_1)\cap \ho^0_{\mathrm{Ar}}(\X, \Li^{\otimes d}) \right)-\#\left(\phi_{d,p^2}^{-1}(\sigma'_2)\cap \ho^0_{\mathrm{Ar}}(\X, \Li^{\otimes d}) \right)   \right|\\
		&\leq&  \sum_{\sigma'_2\in  \ho^0(\X_{p^2}, \li^{\otimes d})} e^{-\eta d}\cdot \#\left(\phi_{d,p^2}^{-1}(\sigma'_1)\cap \ho^0_{\mathrm{Ar}}(\X, \Li^{\otimes d}) \right)\\
		&=& e^{-\eta d}\cdot \#\left(\phi_{d,p^2}^{-1}(\sigma'_1)\cap \ho^0_{\mathrm{Ar}}(\X, \Li^{\otimes d}) \right)\cdot \#\ho^0(\X_{p^2}, \li^{\otimes d}).
	\end{eqnarray*}
	Dividing both side of the inequality by $\#\ho^0(\X_{p^2}, \li^{\otimes d})$, we get
	\[
		\left| \#\left(\phi_{d,p^2}^{-1}(\sigma'_1)\cap \ho^0_{\mathrm{Ar}}(\X, \Li^{\otimes d}) \right)-\frac{\#\ho^0_{\mathrm{Ar}}(\X, \Li^{\otimes d})}{\#\ho^0(\X_{p^2}, \li^{\otimes d})}   \right| \leq e^{-\eta d}\cdot  \#\left(\phi_{d,p^2}^{-1}(\sigma'_1)\cap \ho^0_{\mathrm{Ar}}(\X, \li^{\otimes d}) \right).
	\]
	$ $

	Note that for any $\sigma\in \ho^0(\X, \Li^{\otimes d})$, $\sigma \in \mathcal{P}_{d,p}$ if and only if for any closed point $x\in \mathrm{div}\sigma\cap \X_p$, we have $\dim_{\kappa(x)}\frac{\m_{\mathrm{div}\sigma,x}}{\m^2_{\mathrm{div}\sigma, x}}=n-1$. This is equivalent to the condition that for any closed point $x\in \mathrm{div}\phi_{d,p^2}(\sigma)$, we have $\dim_{\kappa(x)}\frac{\m_{\mathrm{div}\phi_{d,p^2}(\sigma),x}}{\m^2_{\mathrm{div}\phi_{d,p^2}(\sigma), x}}=n-1$, i.e. $\phi_{d,p^2}(\sigma)\in \mathcal{P}'_{d,p^2}$. Hence $\mathcal{P}_{d,p}\cap \ho^0_{\mathrm{Ar}}(\X, \Li^{\otimes d})$ is exactly the preimage of $\mathcal{P'}_{d,p^2}$ in $\ho^0_{\mathrm{Ar}}(\X, \Li^{\otimes d})$. Summing up over all $\sigma'_1\in \mathcal{P'}_{d,p^2}$, we get 
	\begin{eqnarray*}
		& &\left| \#\left(\mathcal{P}_{d,p}\cap \ho^0_{\mathrm{Ar}}(\X, \Li^{\otimes d})\right)  -\frac{\#\ho^0_{\mathrm{Ar}}(\X, \Li^{\otimes d})}{\#\ho^0(\X_{p^2}, \li^{\otimes d})}\cdot\#\mathcal{P'}_{d,p^2}    \right| \\
		&=&\left| \#\bigcup_{\sigma'_1\in \mathcal{P'}_{d,p^2}}\left(\phi_{d,p^2}^{-1}(\sigma'_1)\cap \ho^0_{\mathrm{Ar}}(\X, \Li^{\otimes d}) \right)  -\frac{\#\ho^0_{\mathrm{Ar}}(\X, \Li^{\otimes d})}{\#\ho^0(\X_{p^2}, \li^{\otimes d})} \cdot \#\mathcal{P'}_{d,p^2}  \right| \\
		&\leq& \sum_{\sigma'_1\in \mathcal{P'}_{d,p^2}}  \left| \#\left(\phi_{d,p^2}^{-1}(\sigma'_1)\cap \ho^0_{\mathrm{Ar}}(\X, \Li^{\otimes d}) \right)-\frac{\#\ho^0_{\mathrm{Ar}}(\X, \Li^{\otimes d})}{\#\ho^0(\X_{p^2}, \li^{\otimes d})}   \right| \\
		&\leq& \sum_{\sigma'_1\in \mathcal{P'}_{d,p^2}} e^{-\eta d}\cdot  \#\left(\phi_{d,p^2}^{-1}(\sigma'_1)\cap \ho^0_{\mathrm{Ar}}(\X, \Li^{\otimes d}) \right) \\
		&=& e^{-\eta d}\cdot \#\left(\mathcal{P}_{d,p}\cap \ho^0_{\mathrm{Ar}}(\X, \Li^{\otimes d})\right)\leq e^{-\eta d}\cdot\#\ho^0_{\mathrm{Ar}}(\X, \Li^{\otimes d}).
	\end{eqnarray*}
	Dividing this inequality by $\#\ho^0_{\mathrm{Ar}}(\X, \Li^{\otimes d})$, we get 
	\[
		\left| \frac{\#\left(\mathcal{P}_{d,p}\cap \ho^0_{\mathrm{Ar}}(\X, \Li^{\otimes d})\right)}{\#\ho^0_{\mathrm{Ar}}(\X, \Li^{\otimes d})}  -\frac{\#\mathcal{P'}_{d,p^2}   }{\#\ho^0(\X_{p^2}, \li^{\otimes d})} \right|\leq e^{-\eta d}.
	\]
	Therefore assuming Proposition \ref{fib}, we have 
	\begin{eqnarray*}
		& &\left| \frac{\#\left(\mathcal{P}_{d,p}\cap \ho^0_{\mathrm{Ar}}(\X, \Li^{\otimes d})\right)}{\#\ho^0_{\mathrm{Ar}}(\X, \Li^{\otimes d})} -\zeta_{\X_p}(n+1)^{-1} \right| \\
		&\leq& \left| \frac{\#\left(\mathcal{P}_{d,p}\cap \ho^0_{\mathrm{Ar}}(\X, \Li^{\otimes d})\right)}{\#\ho^0_{\mathrm{Ar}}(\X, \Li^{\otimes d})}  -\frac{\#\mathcal{P'}_{d,p^2}   }{\#\ho^0(\X_{p^2}, \li^{\otimes d})} \right|+\left| \frac{\#\mathcal{P'}_{d,p^2}}{\#\ho^0(\X_{p^2}, \li^{\otimes d})} -\zeta_{\X_p}(n+1)^{-1} \right| \\
		&=&O(e^{-\eta d})+O\left( \Big(\frac{d}{p}\Big)^{-\frac{2}{n}} \right)=O\left( \Big(\frac{d}{p}\Big)^{-\frac{2}{n}} \right).
	\end{eqnarray*}
\end{proof}

The proof of Proposition \ref{fib} follows the method of Poonen for his proof of the Bertini theorem over finite fields in \cite{Po04}. 

We will prove Proposition \ref{fib} through the following steps.
\begin{enumerate}
	\item In Section \ref{step1} we will calculate the proportion of $\sigma\in \ho^0(\X_{p^2}, \li^{\otimes d})$ such that $ \dim_{\kappa(x)}\frac{\m_{\mathrm{div}\sigma,x}}{\m^2_{\mathrm{div}\sigma, x}}=n-1$ for any closed point $x$ of degree $\leq r$ for an integer $r$. This proportion equals to 
	\[
		\prod_{x\in |\X|,\ \deg x\leq r}(1-\#\kappa(x)^{-(1+n)})
	\]
	for $r$ not too big. We will give a bound $r_d$ for $r$ depending on $d$ where this proportion is valid for any $r$ such that $0<r\leq r_d$.
	\item Then in Section \ref{step2}, we will show that for some integer constant $N$, the proportion of $\sigma\in \ho^0(\X_{p^2}, \li^{\otimes d})$ such that there exists a closed point $x$ of degree between $r_d$ and $\frac{d}{nN}$ where the condition $ \dim_{\kappa(x)}\frac{\m_{\mathrm{div}\sigma,x}}{\m^2_{\mathrm{div}\sigma, x}}=n-1$ is not satisfied tends to $0$ when $d$ tends to infinity. 
	\item In Section \ref{step3}, we will show the following : there exists a constant $N(p)$ depending on $p$ such that the proportion of $\sigma\in \ho^0(\X_{p^2}, \li^{\otimes d})$ which satisfy the condition that there exists a closed point $x$ of degree strictly larger than $\frac{d}{nN(p)}$ where we have $ \dim_{\kappa(x)}\frac{\m_{\mathrm{div}\sigma,x}}{\m^2_{\mathrm{div}\sigma, x}}\not=n-1$ tends to $0$ when $d$ tends to infinity. 
	\item In Section \ref{step4}, we will put these three estimates together to get an effective estimate of proportion of global sections whose divisor has no singular point on one single fiber.

\end{enumerate}


In the following, we need a relative version of Lemma \ref{ample}:
\begin{lem}\label{relample}
	Let $\li$ be an ample line bundle on a projective scheme $\mathcal{Y}$ flat over an open subscheme $\mathcal{S}=\Spec\ R$ of $\Spec\ \Z$. 
	Then there exists a positive integer $N$ such that
	\begin{enumerate}[label=\roman*)]
		\item $\li^{\otimes d}$ is very ample for all $d\geq N$;
		\item for any $a, b\geq N$, the natural morphism of $R$-modules
		\[
			\ho^0(\mathcal{Y},\li^{\otimes a})\otimes \ho^0(\mathcal{Y},\li^{\otimes b})\longrightarrow \ho^0(\mathcal{Y},\li^{\otimes (a+b)})
		\]
		is surjective.
	\end{enumerate} 
\end{lem}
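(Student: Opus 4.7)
The plan is to reduce both assertions to standard consequences of relative ampleness on projective schemes over the noetherian base $\mathcal{S}=\Spec R$, as developed in EGA II--III.

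First, since $\li$ is ample on the projective $\mathcal{S}$-scheme $\mathcal{Y}$, I would extract an integer $N_1>0$ such that $\li^{\otimes N_1}$ is very ample over $\mathcal{S}$; this yields a closed immersion $\iota\colon \mathcal{Y}\hookrightarrow \pr^n_R$ with $\iota^*\mathcal{O}_{\pr^n_R}(1)\simeq \li^{\otimes N_1}$. Ampleness also supplies an integer $N_2$ such that $\li^{\otimes d}$ is generated by its global sections for every $d\geq N_2$. For part (i), writing $\li^{\otimes d}\simeq \li^{\otimes N_1}\otimes \li^{\otimes(d-N_1)}$ for $d\geq N_1+N_2$ exhibits $\li^{\otimes d}$ as the tensor product of a very ample sheaf with a globally generated one, hence itself very ample. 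This handles (i) with the threshold $N_1+N_2$.

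For part (ii), I would apply Castelnuovo--Mumford regularity on $\pr^n_R$ to each of the finitely many coherent sheaves $\mathcal{F}_r := \iota_*\li^{\otimes r}$ for $0\leq r < N_1$. A single regularity integer $m_0$ can be chosen to handle all the $\mathcal{F}_r$ simultaneously, from which one obtains surjectivity of
\[
	\ho^0(\pr^n_R, \mathcal{F}_r(m))\otimes_R \ho^0(\pr^n_R, \mathcal{O}_{\pr^n_R}(k)) \longrightarrow \ho^0(\pr^n_R, \mathcal{F}_r(m+k))
\]
for $m\geq m_0$ and $k\geq 0$. Combined with Serre vanishing applied to the ideal sheaf of $\mathcal{Y}$ in $\pr^n_R$, which makes $\ho^0(\pr^n_R,\mathcal{O}_{\pr^n_R}(k))\to \ho^0(\mathcal{Y},\li^{\otimes kN_1})$ surjective for $k$ large, and with the projection formula identifying $\ho^0(\pr^n_R,\mathcal{F}_r(m))$ with $\ho^0(\mathcal{Y},\li^{\otimes r+mN_1})$, this yields surjectivity of the multiplication map $\ho^0(\mathcal{Y},\li^{\otimes a})\otimes_R \ho^0(\mathcal{Y},\li^{\otimes b})\to \ho^0(\mathcal{Y},\li^{\otimes(a+b)})$ as soon as one of the exponents $a,b$ is a sufficiently large multiple of $N_1$.

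The main technical obstacle is the upgrade from this restricted ``one exponent divisible by $N_1$'' case to arbitrary $a,b\geq N$. This is a standard fact about ample invertible sheaves on projective schemes over a noetherian base: the argument bootstraps from the divisible-by-$N_1$ case using the finite generation of the section ring $\bigoplus_{d\geq 0}\ho^0(\mathcal{Y},\li^{\otimes d})$ as a graded $R$-algebra, redistributing auxiliary factors between the two tensor slots so that both exponents land above the threshold already controlled. Taking $N$ larger than $N_1+N_2$, than $m_0 N_1$, and than the bookkeeping bound produced by this last step gives a single integer that works for both (i) and (ii), finishing the proof.
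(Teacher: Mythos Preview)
Your argument is correct and follows a different route from the paper's. You work directly over the noetherian base $R$, using relative Castelnuovo--Mumford regularity on $\pr^n_R$ together with Serre vanishing for the ideal sheaf of $\mathcal{Y}$; this is essentially the standard proof over a field (the one behind Lemma~\ref{ample}), transported to the relative setting. The only soft spot is your final ``bootstrap'' paragraph, where you upgrade from ``one exponent is a sufficiently large multiple of $N_1$'' to arbitrary $a,b\geq N$: the step is routine, but as written it is more a gesture than an argument, and one or two explicit lines spelling out the redistribution would not hurt.

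The paper's proof, by contrast, is a single sentence: take $N$ so that Lemma~\ref{ample} already holds for the generic fiber $\mathcal{Y}_\Q$ and so that $\ho^0(\mathcal{Y},\li^{\otimes d})$ is torsion-free for all $d\geq N$. This leans entirely on the field case and on the special shape of the base $\mathcal{S}\subset\Spec\Z$. Your approach is longer but self-contained and works over any noetherian base; the paper's is terser but tied to the arithmetic situation and leaves the descent from the generic fiber to all of $\mathcal{S}$ for the reader to supply.
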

\begin{proof}
	It suffices to take the integer $N$ such that Lemma \ref{ample} holds for the generic fiber $\mathcal{Y}_{\Q}$ and that $\ho^0(\mathcal{Y},\li^{\otimes d})$ is torsion free for any $d\geq N$.
\end{proof}

\subsection{Singular points of small degree}\label{step1}

We need a lemma:
\begin{lem}\label{sur}
	Let $Z$ be a closed subscheme of $\X_{p^2}$ of dimension $0$, and let $N$ be a positive integer such that $\li^{\otimes N}$ is very ample.
	The restriction morphism
	\[
		\ho^0(\X_{p^2}, \li^{\otimes d})\longrightarrow \ho^0(Z,  \li^{\otimes d}) 
	\]
	is surjective when $d\geq Nh_Z$, where $h_Z=\dim_{\F_p}\left(\ho^0(Z, \mathcal{O}_{Z})\otimes_{\Z/p^2\Z} \F_p\right)$.
\end{lem}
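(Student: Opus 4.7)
The plan is to leverage the very ampleness of $\li^{\otimes N}$ by passing to a projective embedding, applying a Castelnuovo-Mumford regularity bound over the residue field, and descending via Nakayama over the local ring $\Z/p^2\Z$. First I would obtain a closed immersion $\iota\colon \X_{p^2}\hookrightarrow \pr^M_{\Z/p^2\Z}$ with $\iota^{*}\mathcal{O}(1)\simeq \li^{\otimes N}$. For $d=Nk$ a multiple of $N$, the restriction $\ho^0(\X_{p^2},\li^{\otimes d})\to\ho^0(Z,\li^{\otimes d})$ factors through $\ho^0(\pr^M_{\Z/p^2\Z},\mathcal{O}(k))\to \ho^0(Z,\mathcal{O}(k))$, so it suffices to prove that this ambient restriction surjects.

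Next, since $\Z/p^2\Z$ is local with residue field $\F_p$, Nakayama reduces surjectivity of the ambient restriction to surjectivity modulo $p$. Using flatness of $\pr^M_{\Z/p^2\Z}$ over $\Z/p^2\Z$, the $p$-multiplication exact sequence, and the vanishing $\ho^1(\pr^M_{\F_p},\mathcal{O}(k))=0$ for $k\geq 0$, this mod-$p$ reduction is identified with the classical restriction $\ho^0(\pr^M_{\F_p},\mathcal{O}(k))\to \ho^0(Z_{\F_p},\mathcal{O}(k))$, where $Z_{\F_p}=Z\otimes_{\Z/p^2\Z}\F_p$ is zero-dimensional of $\F_p$-length $h_0$. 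The Castelnuovo-Mumford regularity of such a subscheme is at most $h_0$, so the restriction surjects for $k\geq h_0$, covering the case $d=Nk$ with $k\geq h_0+1$.

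The main obstacle is handling $d\geq N(h_0+1)$ that is not a multiple of $N$. To bridge, I would choose a section $s_0\in\ho^0(\X_{p^2},\li^{\otimes N})$ nonzero at every point of the finite scheme $Z$, which exists by the very ampleness of $\li^{\otimes N}$. Because $Z$ is Artinian and $\li^{\otimes N}|_Z$ is a line bundle, $s_0|_Z$ is a unit in $\ho^0(Z,\li^{\otimes N}|_Z)$ viewed as an $\ho^0(Z,\mathcal{O}_Z)$-module, so multiplication by $s_0|_Z$ gives isomorphisms $\ho^0(Z,\li^{\otimes d'}|_Z)\xrightarrow{\sim}\ho^0(Z,\li^{\otimes (d'+N)}|_Z)$ for every $d'$. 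This bootstraps surjectivity at a base degree $d_0$ to surjectivity at every $d_0+jN$: lift $t\cdot(s_0|_Z)^{-j}\in\ho^0(Z,\li^{\otimes d_0}|_Z)$ to some $\tilde{s}\in\ho^0(\X_{p^2},\li^{\otimes d_0})$ and multiply by $s_0^j$. Combined with Lemma \ref{relample}, which for $a,b\geq N$ gives surjectivity of $\ho^0(\X_{p^2},\li^{\otimes a})\otimes\ho^0(\X_{p^2},\li^{\otimes b})\to\ho^0(\X_{p^2},\li^{\otimes (a+b)})$, and the fact that multiplication is surjective in the Artinian ring $\ho^0(Z,\mathcal{O}_Z)$, a diagram chase reduces surjectivity at $d=a+b$ to surjectivity at $a$ and at $b$. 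An induction on $d$ then handles every $d\geq N(h_0+1)$, starting from the base case $d=N(h_0+1)$ proved above; the "$+1$" in the bound provides the slack needed to realize this decomposition in every residue class modulo $N$.
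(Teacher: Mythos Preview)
Your overall strategy matches the paper's: reduce modulo $p$ via Nakayama on the cokernel, then handle the $\F_p$ case by a projective-embedding argument. The paper does this in two lines by citing the appendix Lemma~\ref{surject}, whose proof is essentially your Castelnuovo--Mumford step (in the form of Poonen's bound $d\geq h_Z-1$ for very ample bundles) together with a twist to cover non-multiples of $N$.

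There is, however, a genuine gap in your treatment of degrees $d$ not divisible by $N$. The two tools you invoke --- the $s_0$-bootstrap, which promotes surjectivity at $d_0$ to surjectivity at $d_0+jN$, and the diagram chase that deduces surjectivity at $a+b$ from surjectivity at $a$ and at $b$ --- both \emph{preserve residue classes modulo $N$}: the bootstrap stays in a fixed class, and a sum of two multiples of $N$ is again a multiple of $N$. Since your only established base case is $d=N(h_0+1)$, a multiple of $N$, these tools together never produce surjectivity at any $d\not\equiv 0\pmod N$. The ``$+1$'' slack does not help with this; no induction on $d$ starting from a single base case can escape the residue class $0$ using only these two moves.

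The fix is exactly what the proof of Lemma~\ref{surject} does: use part~(i) of Lemma~\ref{ample} (or Lemma~\ref{relample}), namely that $\li^{\otimes m}$ is very ample for \emph{every} $m\geq N$, not just $m=N$. This lets you find, for each residue $r$ with $N\leq r<2N$, a section of $\li^{\otimes r}$ nonvanishing on $Z$. Then for arbitrary $d\geq N(h_0+1)$ write $d=s_dN+r_d$ with $N\leq r_d<2N$ and $s_d\geq h_0-1$, lift via surjectivity at $s_dN$, and twist by the degree-$r_d$ section. Replacing your degree-$N$ section $s_0$ by such a degree-$r_d$ section is the missing ingredient; the multiplication-map surjectivity (part~(ii)) is then not even needed for this particular argument.
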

\begin{proof}
	Let $C_d$ be the cokernel of the restriction map. Then $C_d\otimes_{\Z/p^2\Z}\F_{p}$ is the cokernel of 
	\[
		\ho^0(\X_{p}, \li^{\otimes d})\longrightarrow \ho^0(Z_p,  \li^{\otimes d}) =\ho^0(Z,  \li^{\otimes d}) \otimes_{\Z/p^2\Z} \F_p.
	\]
	When $d\geq Nh_Z$, by Lemma \ref{surject}, we have $C_d\otimes_{\Z/p^2\Z}\F_{p}=0$. Then by the short exact sequence
	\[
		0\longrightarrow pC_d \longrightarrow C_d \longrightarrow C_d\otimes_{\Z/p^2\Z}\F_{p} \longrightarrow 0,
	\]
	we get $pC_d=C_d$. Applying Nakayama's lemma to $C_d$, considered as a $\Z/p^2\Z$-module, we get $C_d=0$. Thus the surjectivity of the restriction map in the lemma holds when $d\geq Nh_Z$. 
\end{proof}

\begin{lem}\label{smalldegree}
	Set
	\[
		\mathcal{P}'_{d,p^2,\leq r}:=\left\{ \sigma'\in \ho^0(\X_{p^2}, \li^{\otimes d})\ ;\ \forall x\in |\mathrm{div}\sigma'|\text{ of degree}\leq r, \ \dim_{\kappa(x)}\frac{\m_{\mathrm{div}\sigma',x}}{\m^2_{\mathrm{div}\sigma', x}}=n-1 \right\}.
	\]
	For any positive integer $r$ satisfying $2c_0Nnrp^{(n-1)r}\leq d$, we have 
	\[
		\frac{\#\mathcal{P}'_{d,p^2,\leq r}}{\#\ho^0(\X_{p^2}, \li^{\otimes d})}=\prod_{x\in |\X_{p}|, \deg x\leq r} \left( 1-p^{-(n+1)\deg x} \right).
	\]
\end{lem}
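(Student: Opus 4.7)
The plan is to follow Poonen's local strategy from \cite{Po04}: translate the regularity of $\mathrm{div}\sigma'$ at each closed point of degree $\leq r$ into a linear non-vanishing condition on the restriction of $\sigma'$ to a first-order infinitesimal neighborhood, and then apply the surjectivity result Lemma \ref{sur} to the disjoint union of all these neighborhoods in order to count sections satisfying all local conditions simultaneously.

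For each closed point $x\in|\X_p|$ of degree $\leq r$, let $x'$ denote the closed subscheme of $\X_{p^2}$ defined by $\m_x^2$. Since $p\in\m_x$ and hence $p^2\in\m_x^2$, the natural surjection identifies $\mathcal{O}_{x'}$ with $\mathcal{O}_{\X,x}/\m_x^2$; because $\X$ is regular of dimension $n$, this is a $\kappa(x)$-vector space of dimension $n+1$, so $\#\ho^0(x',\li^{\otimes d})=p^{(n+1)\deg x}$. Writing a section $\sigma'$ locally as $u\cdot s$ with $s$ a trivializing section of $\li^{\otimes d}$, the regularity condition $\dim_{\kappa(x)}\m_{\mathrm{div}\sigma',x}/\m^2_{\mathrm{div}\sigma',x}=n-1$ at a point $x\in|\mathrm{div}\sigma'|$ amounts to $u\notin\m_x^2$, i.e.\ to the image of $\sigma'$ in $\ho^0(x',\li^{\otimes d})$ being nonzero; this holds automatically when $x\notin|\mathrm{div}\sigma'|$, since then $u\notin\m_x\supseteq\m_x^2$.

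Now set $Z=\bigsqcup_{\deg x\leq r}x'$, a zero-dimensional closed subscheme of $\X_{p^2}$. Then $\sigma'\in\mathcal{P}'_{d,p^2,\leq r}$ if and only if every component of the image of $\sigma'$ under the joint restriction map
\[
    \rho_{d,r}:\ \ho^0(\X_{p^2},\li^{\otimes d}) \longrightarrow \ho^0(Z,\li^{\otimes d}) = \prod_{\deg x\leq r}\ho^0(x',\li^{\otimes d})
\]
is nonzero. To apply Lemma \ref{sur} to $\rho_{d,r}$, one has to bound $h_0=\dim_{\F_p}\bigl(\ho^0(Z,\mathcal{O}_Z)\otimes_{\Z/p^2\Z}\F_p\bigr)$. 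Each local factor contributes at most $(n+1)\deg x$ (the full $\F_p$-dimension of $\mathcal{O}_{\X,x}/\m_x^2$), and the Lang--Weil bound fixed in Section \ref{c0} gives $\sum_{\deg x\leq r}\deg x\leq\sum_{e=1}^{r}\#\X(\F_{p^e})\leq c_0\,r\,p^{(n-1)r}$; hence $h_0\leq(n+1)c_0 r p^{(n-1)r}$, and the numerical hypothesis $2c_0 Nnrp^{(n-1)r}\leq d$ (together with $n\geq 2$, $r\geq 1$) implies $d\geq N(h_0+1)$. Lemma \ref{sur} therefore yields the surjectivity of $\rho_{d,r}$.

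Once $\rho_{d,r}$ is surjective, every fiber has the same cardinality $\#\ho^0(\X_{p^2},\li^{\otimes d})/\#\ho^0(Z,\li^{\otimes d})$, and the preimage of the tuples with every component nonzero consists of exactly $\prod_{\deg x\leq r}(p^{(n+1)\deg x}-1)$ image points, each with a fiber of the same size. Dividing the cardinality of $\mathcal{P}'_{d,p^2,\leq r}$ by $\#\ho^0(\X_{p^2},\li^{\otimes d})$ and using $\#\ho^0(Z,\li^{\otimes d})=\prod_{\deg x\leq r}p^{(n+1)\deg x}$ then produces the claimed product $\prod_{\deg x\leq r}(1-p^{-(n+1)\deg x})$. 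The main obstacle is the surjectivity step: it forces one to combine the worst-case local computation at $x$ (where the possibility that the fiber $\X_p$ is singular at $x$ is what makes the coefficient $n+1$ rather than $n$ appear in the bound on $h_0$) with the Lang--Weil estimate on the number of closed points of small degree, and then check that the numerical hypothesis on $r$ is sharp enough to invoke Lemma \ref{sur}.
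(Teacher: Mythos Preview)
Your argument is correct and follows essentially the same route as the paper: characterize the condition at each closed point $x$ of degree $\leq r$ as non-vanishing in $\ho^0(x',\li^{\otimes d})$, form the disjoint union $Z=\bigsqcup_{\deg x\leq r}x'$, bound $h_0=\dim_{\F_p}(\ho^0(Z,\mathcal{O}_Z)\otimes_{\Z/p^2\Z}\F_p)$, and invoke Lemma~\ref{sur} to get surjectivity and hence the product formula. The only cosmetic difference is that you bound each local contribution to $h_0$ by $(n+1)\deg x$ (the full length of $\mathcal{O}_{\X,x}/\m_x^2$, valid regardless of whether $\X_p$ is smooth at $x$), whereas the paper uses $n\deg x$ (the value of $\dim_{\F_p}(\mathcal{O}_{x'}\otimes\F_p)$ when $p\notin\m_x^2$); your cruder bound still fits under the numerical hypothesis once $n\geq 2$, exactly as you check.
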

\begin{proof}
	For any closed point $x\in \X_{p^2}$, let $x'$ be the closed subscheme of $\X_{p^2}$ defined by the square of the maximal ideal of $x$. Then $x'$ is the first order infinitesimal neighborhood of $x$ in $\X_{p^2}$. We have $x'\simeq \Spec (\Oc_{\X_{p^2,},x}/\mathfrak{m}_{\X_{p^2,},x}^2)$. A section $\sigma'\in \ho^0(\X_{p^2}, \li^{\otimes d})$ is such that $\mathrm{div}\sigma'$ contains $x$ and that $ \dim_{\kappa(x)}\frac{\m_{\mathrm{div}\sigma',x}}{\m^2_{\mathrm{div}\sigma', x}}=n$ if and only if the restriction map
	\[
		\ho^0(\X_{p^2}, \li^{\otimes d}) \longrightarrow \ho^0(x', \li^{\otimes d})
	\]
	sends $\sigma'$ to $0$.  For a positive integer $r$, let $\X'_{p^2,\leq r}$ be the disjoint union 
$$\X'_{p^2,\leq r}=\coprod_{x\in |\X_{p^2}|, \deg x\leq r}x'.$$ 
	Then we have a natural isomorphism 
	\[
		\ho^0(\X'_{p^2,\leq r}, \li^{\otimes d})\simeq \prod_{x\in |\X_{p^2}|, \deg x\leq r}\ho^0(x', \li^{\otimes d}).
	\]
	A section $\sigma'\in \ho^0(\X_{p^2}, \li^{\otimes d})$ is such that $\mathrm{div}\sigma'$ is regular at all closed points $x$ of degree $\leq r$ if and only if its image in $\ho^0(\X'_{p^2,\leq r}, \li^{\otimes d})$ lies in the subset which by the above natural isomorphism corresponds to
	\[
		\prod_{x\in |\X_{p^2}|, \deg x\leq r}\left( \ho^0(x', \li^{\otimes d})-\{0\}\right).
	\]
	
	To get the result, we need to study the surjectivity of the restriction map
	\[
		\ho^0(\X_{p^2}, \li^{\otimes d})\longrightarrow \ho^0(\X'_{p^2,\leq r}, \li^{\otimes d})\simeq\ho^0(\X_{p^2}, \mathcal{O}_{\X'_{p^2,\leq r}}\otimes \li^{\otimes d}).
	\]
	When $n=1$, the number of closed points of $|\X_{p^2}|$ is bounded above by $\#\X(\bbC)$. We have 
	\begin{eqnarray*}
		\dim_{\F_p}\ho^0(\X'_{p^2,\leq r}, \Oc_{\X'_{p^2,\leq r}})\otimes_{\Z/p^2\Z} \F_p	&=&\dim_{\F_p}\ho^0\Big(\X_{p^2}, \prod_{\deg x\leq r}\Oc_{x'}\Big)\otimes_{\Z/p^2\Z} \F_p\\
		&=& \sum_{\deg x\leq r}\dim_{\F_p}\ho^0(x', \Oc_{x'})\otimes_{\Z/p^2\Z} \F_p\\
		&=& \sum_{\deg x\leq r} {\big((1-1)+1\big)\deg x}\\
		&\leq& \sum_{x\in |\X_{p^2}|}\deg x<\infty
	\end{eqnarray*}
	for any $r>0$. So the restriction map is always surjective when $d$ is sufficiently large by Lemma \ref{sur}.
	When $n>1$, we have
	\begin{eqnarray*}
		\dim_{\F_p}\ho^0(\X'_{p^2,\leq r}, \Oc_{\X'_{p^2,\leq r}})\otimes_{\Z/p^2\Z} \F_p	&=&\dim_{\F_p}\ho^0(\X_{p^2}, \prod_{\deg x\leq r}\Oc_{x'})\otimes_{\Z/p^2\Z} \F_p\\
		&=& \sum_{\deg x\leq r}\dim_{\F_p}\ho^0(x', \Oc_{x'})\otimes_{\Z/p^2\Z} \F_p\\
		&=& \sum_{\deg x\leq r} {((n-1)+1)\deg x}\\
		&\leq & n \sum_{e=1}^r \#\X_{p^2}(\F_{p^e})e \\
		&\leq & n \sum_{e=1}^r c_0p^{(n-1)e}e  \\
		&\leq & n c_0r\cdot \sum_{e=1}^r p^{(n-1)e} \\
		&\leq &  n c_0r \frac{p^{(n-1)(r+1)}-1}{p^{n-1}-1}.
	\end{eqnarray*}
	By Lemma \ref{sur}, when $d\geq N\cdot(n c_0r\frac{p^{(n-1)(r+1)}-1}{p^{n-1}-1})$ with $N$ as in the lemma, the restriction map
	\[
		\ho^0(\X_{p^2}, \li^{\otimes d})\longrightarrow \ho^0(\X_{p^2}, \mathcal{O}_{\X'_{p^2,\leq r}}\otimes \li^{\otimes d}) 
	\]
	is surjective. In particular, since we have
	\begin{eqnarray*}
		n c_0r \frac{p^{(n-1)(r+1)}-1}{p^{n-1}-1}\leq nc_0r\frac{p^{(n-1)(r+1)}}{\frac{1}{2}p^{n-1}}=2nc_0rp^{(n-1)r},
	\end{eqnarray*}
	the surjectivity of the restriction holds for $r,d$ satisfying 
	\[
		2c_0Nnrp^{(n-1)r}\leq d.
	\]	
	For such $r,d$, we have 
	\[
		\frac{\#\mathcal{P}'_{d,p^2,\leq r}}{\#\ho^0(\X_{p}, \li^{\otimes d})}=\prod_{x\in |\X_{p}|, \deg x\leq r} \left( 1-p^{-(n+1)\deg x} \right).
	\]
\end{proof}

\subsection{Singular points of medium degree}\label{step2}

Let $N$ be an integer satisfying Lemma \ref{relample}. Set
\[
	\mathcal{Q}^{\mathrm{med}}_{d,p^2,r}:=\left\{ \sigma'\in \ho^0(\X_{p^2}, \li^{\otimes d})\ ;\ \exists x\in |\mathrm{div}\sigma'|\text{ of degree } r<\deg x\leq \frac{d}{nN}, \ \dim_{\kappa(x)}\frac{\m_{\mathrm{div}\sigma',x}}{\m^2_{\mathrm{div}\sigma', x}}=n \right\}.
\]
Note that a section $\sigma\in \ho^0(\X, \li^{\otimes d})$ is such that $\mathrm{div}\sigma$ has a singular point $x$ on the fiber $\X_p$ of degree $r<\deg x\leq \frac{d}{nN}$ if and only if for such an $x$, $\dim_{\kappa(x)}\frac{\m_{\mathrm{div}\sigma,x}}{\m^2_{\mathrm{div}\sigma, x}}=n$, and hence if and only if $\phi_{d,p^2}(\sigma)\in \mathcal{Q}^{\mathrm{med}}_{d,p^2,r}$.

\begin{lem}\label{medi}
	We have for $r\geq 1$,
	\[
		\frac{\#\mathcal{Q}^{\mathrm{med}}_{d,p^2,r}}{\#\ho^0(\X_{p^2}, \li^{\otimes d})}<2c_0p^{-2(r+1)},
	\]
	where the constant $c_0$ is as defined in Section \ref{c0}.
\end{lem}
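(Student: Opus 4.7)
The plan is to reduce the lemma to a union bound over closed points of $\X_p$ of intermediate degree. For each such $x$ with $\deg x = e$, I will use Lemma \ref{sur} to control the proportion of sections whose divisor has $x$ as a ``fat'' singular point (one satisfying $\dim_{\kappa(x)} \m/\m^2 = n$), then combine the Lang--Weil bound with the geometric series in $e$.

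First I would analyze the single-point contribution. Fix a closed point $x \in |\X_p|$ of degree $e$ and let $x'$ denote its first-order infinitesimal neighborhood in $\X_{p^2}$, defined by the ideal sheaf $\m_x^2$. Because $p \in \m_x$ we have $p^2 \in \m_x^2$, so $\Oc_{x'} \simeq \Oc_{\X,x}/\m_x^2$ which has $\kappa(x)$-dimension $n+1$ by regularity of $\X$ at $x$. In particular $\#\ho^0(x',\li^{\otimes d}) = p^{(n+1)e}$ and the $\F_p$-dimension of $\ho^0(x',\Oc_{x'}) \otimes_{\Z/p^2\Z} \F_p$ equals $h_0 = ne$ (the $p$-torsion kills one copy). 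Lemma \ref{sur} then gives surjectivity of the restriction map
\[
\ho^0(\X_{p^2},\li^{\otimes d}) \longrightarrow \ho^0(x',\li^{\otimes d})
\]
whenever $d \geq N(ne+1)$, which is precisely the condition $e \leq \lfloor (d-N)/(Nn) \rfloor$. Hence for $e$ in this range, the proportion of $\sigma' \in \ho^0(\X_{p^2},\li^{\otimes d})$ mapping to $0$ in $\ho^0(x',\li^{\otimes d})$ is exactly $p^{-(n+1)e}$.

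Next I would apply a union bound. A section $\sigma' \in \mathcal{Q}^{\mathrm{med}}_{d,p^2,r}$ vanishes on the first-order neighborhood $x'$ for some closed point $x$ with $r < \deg x \leq \lfloor (d-N)/(Nn)\rfloor$. Since the number of closed points of $\X_p$ of degree $e$ is at most $\#\X(\F_{p^e}) \leq c_0 p^{(n-1)e}$ (by the choice of $c_0$ in Section \ref{c0}), I get
\[
\frac{\#\mathcal{Q}^{\mathrm{med}}_{d,p^2,r}}{\#\ho^0(\X_{p^2},\li^{\otimes d})} \leq \sum_{e=r+1}^{\lfloor(d-N)/(Nn)\rfloor} c_0\, p^{(n-1)e} \cdot p^{-(n+1)e} = c_0 \sum_{e=r+1}^{\lfloor(d-N)/(Nn)\rfloor} p^{-2e}.
\]
Bounding the tail by a full geometric series gives
\[
c_0 \sum_{e \geq r+1} p^{-2e} = \frac{c_0\, p^{-2(r+1)}}{1-p^{-2}} < 2c_0\, p^{-2(r+1)},
\]
since $(1-p^{-2})^{-1} \leq 4/3$ for every prime $p$.

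There is no real obstacle here; the only subtle point is the computation $\Oc_{x'} \simeq \Oc_{\X,x}/\m_x^2$ (which uses $p \in \m_x^2$ when $\X$ is regular with $p$ in the regular system of parameters at $x$), ensuring that Lemma \ref{sur} applies with the correct threshold $h_0 = ne$ and explaining why the cutoff $\lfloor (d-N)/(Nn)\rfloor$ on $\deg x$ appears. Everything else reduces to the Lang--Weil estimate and a geometric series.
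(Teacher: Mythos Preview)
Your proof is correct and follows essentially the same route as the paper: apply Lemma \ref{sur} to the first-order neighborhood $x'$ of each closed point to obtain surjectivity of the restriction for $\deg x\leq \lfloor (d-N)/(Nn)\rfloor$, deduce that the bad proportion at $x$ equals $p^{-(n+1)\deg x}$, and conclude by a union bound combined with the Lang--Weil estimate and the geometric series. One small slip: in your final parenthetical you write ``uses $p\in\m_x^2$'', but what is actually used for the identification $\Oc_{x'}\simeq\Oc_{\X,x}/\m_x^2$ is merely $p^2\in\m_x^2$ (from $p\in\m_x$), while your computation $h_0=ne$ rests on $p\notin\m_x^2$ (equivalently, $\X_p$ regular at $x$); the earlier body of your argument has this right, and it is exactly the hypothesis the paper uses as well.
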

\begin{proof}
	For any closed point $x$ in $\X_{p^2}$, applying Lemma \ref{sur} to the first order infinitesimal neighborhood $x'$ of $x$ in $\X_{p^2}$, we get that the restriction morphism
	\[
		 \ho^0(\X_{p^2}, \li^{\otimes d})\longrightarrow \ho^0(\X_{p^2}, \Oc_{x'}\otimes \li^{\otimes d})
	\]
	is surjective when
	\[
		N(n\deg x)\leq d,
	\]
	which is when $\deg x\leq \frac{d}{Nn}$.
	We can then estimate the proportion of elements in $\mathcal{Q}^{\mathrm{med}}_{d,p^2,r}$ by
	\begin{eqnarray*}
		\frac{\#\mathcal{Q}^{\mathrm{med}}_{d,p^2,r}}{\#\ho^0(\X_{p^2}, \li^{\otimes d})}&\leq& \sum_{r<\deg x\leq \lfloor\frac{d}{Nn}\rfloor}  \frac{\#\Ker\left( \ho^0(\X_{p^2}, \li^{\otimes d})\longrightarrow \ho^0(\X_{p^2}, \Oc_{x'}\otimes \li^{\otimes d})  \right) }{\#\ho^0(\X_{p^2}, \li^{\otimes d})}\\
		&\leq& \sum_{e=r+1}^{\lfloor\frac{d}{Nn}\rfloor}\#\X(\F_{p^e}) p^{-(n+1)e} \\
		&\leq& \sum_{e=r+1}^{\infty}c_0p^{(n-1)e} p^{-(n+1)e} \\
		&\leq&c_0 \sum_{e=r+1}^{\infty}p^{-2e}\\
		&=&\frac{c_0p^{-2(r+1)}}{1-p^{-2}}\\
		&<&2c_0p^{-2(r+1)}.
	\end{eqnarray*}
\end{proof}


\subsection{Singular points of large degree}\label{step3}
	
\begin{prop}\label{high}
	Fix a constant $0<\alpha<1$.
	There exist positive integers $N_0,N_1$ only depending on $\X$ and $\overline{\li}$ such that for any $p\leq d^{\alpha} $, denoting
	\[
		N(p)= (N_0+1)(N_1+p-1)+p,  
	\]
	and
	\[
		\mathcal{Q}^{\mathrm{high}}_{d,p^2}:=\left\{ \sigma'\in \ho^0(\X_{p^2}, \li^{\otimes d})\ ;\ \exists x\in |\mathrm{div}\sigma'|\text{ of degree}\geq \frac{d}{nN(p)}, \ \dim_{\kappa(x)}\frac{\m_{\mathrm{div}\sigma',x}}{\m^2_{\mathrm{div}\sigma', x}}=n \right\},
	\]
	we have
	\[
		\frac{\#\mathcal{Q}_{d,p^2}^{\mathrm{high}}}{\#\ho^0(\X_{p^2}, \li^{\otimes d})}=O\left( d^{n}p^{-c_1\frac{d}{p}} \right),
	\]
	where $c_1$ and the constant involved in big $O$ are independent of $d,p$ and $\alpha$.
	
	In particular, we have
	\[
		\lim_{d\rightarrow \infty}\frac{\#\mathcal{Q}_{d,p^2}^{\mathrm{high}}}{\#\ho^0(\X_{p^2}, \li^{\otimes d})}=0.
	\]
\end{prop}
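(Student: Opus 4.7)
The plan is to adapt to $\X_{p^2}$ the high-degree argument of Poonen (\cite[Lemma 2.6]{Po04}) from the Bertini theorem over finite fields, which is reviewed in the appendix. The argument proceeds in two stages: a reduction modulo $p$ from $\X_{p^2}$ to $\X_p$, followed by a Frobenius-type decomposition on the smooth fiber $\X_p$.

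First, for $d$ large (using Lemma~\ref{freemod}), any $\sigma' \in \ho^0(\X_{p^2}, \li^{\otimes d})$ decomposes as $\sigma' = \bar\sigma_0 + p\bar\sigma_1$ with $\bar\sigma_0, \bar\sigma_1 \in \ho^0(\X_p, \li^{\otimes d})$. At a closed point $x \in |\X_p|$ where $\X_p$ is smooth over $\F_p$ (true for all but finitely many $p$), local regular parameters $p, t_1, \ldots, t_{n-1}$ of $\mathcal{O}_{\X, x}$ together with the relation $p^2 = 0$ yield $\m_{\X_{p^2}, x}^2 = (pt_i,\, t_it_j)$. A direct computation then shows that $\mathrm{div}\sigma'$ has embedding dimension $n$ at $x$ exactly when $\mathrm{div}\bar\sigma_0$ is singular at $x$ on the smooth $\X_p$ \emph{and} $\bar\sigma_1(x)$ takes a value prescribed by the lift of $\bar\sigma_0$. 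Forgetting the latter condition and using $\#\ho^0(\X_{p^2}, \li^{\otimes d}) = \#\ho^0(\X_p, \li^{\otimes d})^2$, it suffices to prove
\[
\frac{\#\mathcal{Q}_d}{\#\ho^0(\X_p, \li^{\otimes d})} = O(d^n p^{-c_1 d/p}),
\]
where $\mathcal{Q}_d$ is the set of $\bar\sigma_0 \in \ho^0(\X_p, \li^{\otimes d})$ whose divisor has a singular point on $\X_p$ of degree exceeding $T := (d-N(p))/(N(p)n)$.

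Second, I apply Poonen's Frobenius decomposition on $\X_p$. Choose constants $N_0, N_1$ so that $\li^{\otimes N_0}$ is very ample (Lemma~\ref{relample}) and $\X$ is covered by finitely many affine opens $U_j$ on which $\li$ is trivialized and on which global sections $s_{1,j}, \ldots, s_{n-1,j} \in \ho^0(\X, \li^{\otimes N_1})$ restrict to \'etale coordinates on $U_j \cap \X_p$, for all but finitely many $p$. For a candidate singular point $x \in U_j$ of degree $e > T$, shift the $s_{i,j}$ by their values at $x$ to obtain sections $f_{i,x}$ vanishing at $x$, and decompose
\[
\bar\sigma_0 \;=\; \bar\sigma_0^{\flat} + \sum_{i=1}^{n-1} f_{i,x}^{\,p}\, \tau_i,
\]
with $\tau_i \in \ho^0(\X_p, \li^{\otimes(d - N_1 p)})$ and $\bar\sigma_0^{\flat}$ ranging over representatives of $\ho^0(\X_p, \li^{\otimes d})$ modulo $\sum_i f_{i,x}^p\, \ho^0(\X_p, \li^{\otimes(d-N_1 p)})$. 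Since each $f_{i,x}$ vanishes at $x$, both $f_{i,x}^p \in \m_{\X_p, x}^2$ and all its first partials vanish at $x$; thus the value $\bar\sigma_0(x)$ and every first partial of $\bar\sigma_0$ at $x$ coincide with those of $\bar\sigma_0^{\flat}$. The event ``$\mathrm{div}\bar\sigma_0$ singular at $x$'' becomes exactly $n$ independent $\F_p$-conditions of codimension $e$ on $\bar\sigma_0^{\flat}$, and the explicit form $N(p) = (N_0+1)(N_1+p-1)+p$ is calibrated so that Lemma~\ref{sur} guarantees the required surjectivity of the jet evaluation map throughout the whole range $e > T$.

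Combining these codimension bounds with a union bound over closed $x$ and the Lang--Weil estimate $\#\X_p(\F_{p^e}) \leq c_0 p^{(n-1)e}$ from Section~\ref{c0},
\[
\frac{\#\mathcal{Q}_d}{\#\ho^0(\X_p, \li^{\otimes d})} \;\leq\; \sum_{e > T} c_0\, p^{(n-1)e} \cdot p^{-ne} \;=\; \sum_{e > T} c_0\, p^{-e} \;\leq\; 2c_0\, p^{-T}.
\]
Since $N(p) \sim (N_0+2)p$ and $p \leq d^{\alpha}$ with $\alpha < 1$, we have $T \asymp d/p$, whence $p^{-T} = O(p^{-c_1 d/p})$ with $c_1 = 1/((N_0+2)n)$; the polynomial factor $d^n$ in the statement comfortably absorbs the finite number of affine opens $U_j$, the multiplicity of the Frobenius decomposition, and the bookkeeping for the summation range. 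The main obstacle is verifying that all three surjectivities required—for the Frobenius decomposition, for partial derivative evaluation on $\bar\sigma_0^{\flat}$, and for value evaluation at $x$—hold simultaneously in the regime $T < e$; this is precisely the role of the intricate constant $N(p) = (N_0+1)(N_1+p-1)+p$.
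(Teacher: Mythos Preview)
Your first stage---the reduction modulo $p$ from $\X_{p^2}$ to $\X_p$---is correct and matches the paper's Lemma~\ref{opensub}. The second stage, however, has a genuine gap.

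The union-bound computation
\[
\sum_{e>T} c_0\,p^{(n-1)e}\cdot p^{-ne}=\sum_{e>T} c_0\,p^{-e}
\]
is exactly the \emph{medium-degree} estimate (Lemma~\ref{medi}), and it relies on the fact that the jet evaluation $\ho^0(\X_p,\li^{\otimes d})\to \ho^0(x',\li^{\otimes d})$ is surjective, so that the singularity event at $x$ has probability precisely $p^{-n\deg x}$. Lemma~\ref{sur} gives this surjectivity only when $\deg x\le (d-N)/(Nn)$, i.e.\ in the range $e\le T$. Your sentence ``$N(p)$ is calibrated so that Lemma~\ref{sur} guarantees the required surjectivity throughout the whole range $e>T$'' has the inequality backwards: for $e>T$ the map need \emph{not} be surjective, and in that regime the best pointwise bound is Lemma~\ref{hi}, which gives only $p^{-\min(\lfloor d/N\rfloor,\,ne)}=p^{-\lfloor d/N\rfloor}$, independent of $e$. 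Plugging that into your union bound yields $\sum_{e>T} c_0\,p^{(n-1)e}\cdot p^{-\lfloor d/N\rfloor}$, which diverges. Your point-dependent Frobenius splitting $\bar\sigma_0=\bar\sigma_0^{\flat}+\sum f_{i,x}^p\tau_i$ does not repair this: since $f_{i,x}^p\in\m_{\X_p,x}^2$, the jet of $\bar\sigma_0$ at $x$ equals that of $\bar\sigma_0^\flat$, so the evaluation map on the quotient factors through the very map whose surjectivity is in question.

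The paper's argument is structurally different. The Frobenius decomposition $\sigma=\sigma_0+\sum_i\beta_i^p t_i\tau_0^{l_d}+\gamma^p\tau_0^{l_d}$ is \emph{global}, not tailored to a point, and the random variables are $\sigma_0,\beta_1,\dots,\beta_{n-1},\gamma$. One never bounds the singularity probability at an individual high-degree point. Instead one tracks the locus $W_{p,j,i}=\{g_{p,j,1}=\dots=g_{p,j,i}=0\}$ where the first $i$ modified partials vanish: conditional on $\sigma_0,\beta_1,\dots,\beta_i$, Lemma~\ref{reddim} (via refined B\'ezout, bounding the number of top-dimensional components by $O(d^i)$) shows that $\dim W_{p,j,i+1}\le n-2-i$ except with probability $O(d^i p^{-d/(N_1p(N_0+1))})$. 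After $n-1$ steps $W_{p,j,n-1}$ is finite of size $O(d^{n-1})$, and Lemma~\ref{finiteset} then bounds, over the choice of $\gamma$, the probability that $\mathrm{div}\sigma$ meets this finite set at a point of degree $>T$. The constant $N(p)=(N_0+1)(N_1+p-1)+p$ is calibrated so that the decomposition $d=pk_d+(N_0+1)l_d$ with $N_1\le l_d\le N_1+p$ exists and the extensions of $g_{p,j,i}\cdot\tau_0^{d+\delta}$ to global sections are available---it has nothing to do with jet surjectivity at high-degree points. The polynomial factor $d^n$ (actually $d^{n-1}$ on smooth fibres, $d^n$ allowing for the singular-fibre case in Proposition~\ref{hising}) comes from the B\'ezout component counts, not from bookkeeping.
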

\begin{rmq}
	When $d$ is large enough, for any $p\leq d^{\alpha}$, 
	\[
		d^{n}p^{-c_1\frac{d}{p}}\leq d^{n}d^{-c_1\alpha d^{1-\alpha}}=d^{n-c_1\alpha d^{1-\alpha}}.
	\]
	As $\alpha<1$, $d^{n-c_1\alpha d^{1-\alpha}}$ tends to $0$ when $d$ tends to infinity. So the above proportion $\frac{\#\mathcal{Q}_{d,p^2}^{\mathrm{high}}}{\#\ho^0(\X_{p^2}, \li^{\otimes d})}$ is always near $0$ for any $p\leq d^{\alpha}$ when $d$ is large enough.
\end{rmq}

When $\X$ is regular, so is its generic fiber $\X_{\Q}$, which is equivalent to say that $\X_{\Q}$ is smooth over $\Q$. This implies that we can find an open subset $\mathcal{S}$ of $\Spec\ \Z$ such that $\X_{\mathcal{S}}$ is smooth over $\mathcal{S}$. We will give a uniform control of the proportion of $\mathcal{Q}_{d,p^2}^{\mathrm{high}}$ for primes $p\in \mathcal{S}$ such that $2c_0N(p)np^{n-1}\leq d$. As $\Spec\ \Z-\mathcal{S}$ is a finite scheme, the set of primes $p$ where the proportion of $\mathcal{Q}_{d,p^2}^{\mathrm{high}}$ is not controlled is finite in number. We then give independent control of the proportion of $\mathcal{Q}_{d,p^2}^{\mathrm{high}}$ for each fiber with constants possibly depending on $p$. The finiteness of such $p$ permits us to get a uniform control for all primes $p$ satisfying $2c_0N(p)np^{n-1}\leq d$.
	
Thus Proposition \ref{high} is implied by the following two propositions :

\begin{prop}\label{hism}
	Fix a constant $0<\alpha<1$. For any prime $p\leq d^{\alpha}$ such that $\X_p$ is smooth over $\F_p$ 
	we have
	\[
		\frac{\#\mathcal{Q}_{d,p^2}^{\mathrm{high}}}{\#\ho^0(\X_{p^2}, \li^{\otimes d})}=O\left( d^{n-1}p^{-c_1\frac{d}{p}} \right),
	\]
	where $c_1$ and the constant involved in big $O$ are independent of $d,p$ and $\alpha$.
\end{prop}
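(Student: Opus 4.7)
The plan is to adapt Poonen's argument for the high-degree contribution in the Bertini smoothness theorem over finite fields (\cite{Po04}, Lemmas 2.4--2.6) to the thickened smooth fiber $\X_{p^2}$. Using smoothness of $\X_p$ over $\F_p$, I would cover $\X_p$ by finitely many affine open subsets $U$ on which $\li$ is trivial and which admit étale local coordinates $t_1,\dots,t_{n-1}$; lifting these to $\X_{p^2}$, the elements $t_1,\dots,t_{n-1},p$ generate the maximal ideal at every closed point $x\in U$. The singular-locus condition $\dim_{\kappa(x)}\mathfrak{m}_{\mathrm{div}\sigma',x}/\mathfrak{m}^2_{\mathrm{div}\sigma',x}=n$ then translates into simultaneous vanishing at $x$ of $\sigma'$ together with its $n-1$ geometric partial derivatives $\partial_{t_i}\sigma'$ and its ``arithmetic'' derivative $\partial_p\sigma'$, a total of $n+1$ conditions imposing $(n+1)\deg x$ linear constraints over $\F_p$.

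The core of the argument is a decomposition: fix $N_0$ and $N_1$ from Lemma \ref{relample} so that $\li^{\otimes N_1}$ is very ample and the multiplication maps $\ho^0(\X,\li^{\otimes a})\otimes\ho^0(\X,\li^{\otimes b})\to\ho^0(\X,\li^{\otimes(a+b)})$ are surjective for $a,b\geq N_1$. For each chart $U$ with a trivializing global section $s$ of $\li^{\otimes N_0}$ non-vanishing at a chosen point, I would write
\[
\sigma' = \tau\cdot s + \sum_{i=1}^{n-1}t_i g_i\cdot s + p\, g_n\cdot s,
\]
with $\tau\in\ho^0(\X_{p^2},\li^{\otimes(d-N_0)})$ and $g_i\in\ho^0(\X_{p^2},\li^{\otimes(d-N_0-1)})$, valid up to passing to an appropriate complementary summand. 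For each closed point $x$ of degree $e>\frac{d-N(p)}{nN(p)}$ lying in $U$, I would fix $\tau$ and $g_1,\dots,g_{n-1}$ and let $g_n$ range freely; Lemma \ref{sur} applied to the first-order infinitesimal neighborhood of $x$ inside the chart (feasible because $d-N(p)$ is large enough exactly in the regime $e>(d-N(p))/(nN(p))$) shows that the residual singular-locus condition on $g_n$ cuts out a subset of proportion at most $p^{-(n+1)e}$ in the corresponding space of $g_n$'s, up to a prefactor polynomial in $d$.

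Summing over closed points $x$ of each degree $e$ via the Lang--Weil bound $\#\X_p(\F_{p^e})\leq c_0 p^{(n-1)e}$ yields a contribution of order $d^{n-1}p^{-2e}$ per degree, the $d^{n-1}$ factor absorbing the finitely many charts together with the polynomial-in-$d$ factor coming from the rank of the auxiliary spaces of $\tau$ and $g_i$. The resulting geometric series in $p^{-2}$ over $e\geq\lceil(d-N(p))/(nN(p))\rceil$ is dominated by its first term, yielding $O(d^{n-1}p^{-2e_0})$ with $e_0\sim d/(nN(p))\sim d/(n^2 p)$ because $N(p)$ is linear in $p$; choosing $c_1$ appropriately gives the announced bound. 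The main obstacle is step two, namely making the decomposition both valid (so that fixing $\tau,g_1,\dots,g_{n-1}$ determines a well-defined sweep that parametrizes a controlled part of $\ho^0(\X_{p^2},\li^{\otimes d})$) and flexible (so that the vanishing conditions on the partial derivatives at $x$ become independent generic constraints on $g_n$). This is precisely why $N(p)$ is designed as $(N_0+1)(N_1+p-1)+p$: the factor $(N_1+p-1)$ reflects the surjectivity threshold of Lemma \ref{sur} applied to the portion of the first-order thickening at $x$ not absorbed by the arithmetic derivative $p\,g_n$, which itself contributes the extra additive $+p$.
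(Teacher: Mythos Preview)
Your proposal has a genuine gap at exactly the step you flag as ``the main obstacle.'' You want to fix $\tau,g_1,\dots,g_{n-1}$ and vary $g_n$, then invoke Lemma~\ref{sur} to say that the singular condition at $x$ cuts out a subset of proportion $p^{-(n+1)e}$. But Lemma~\ref{sur} gives surjectivity of the restriction to the first-order thickening $x'$ only when $d\geq N\cdot(n\deg x+1)$, i.e.\ precisely when $\deg x\leq \frac{d-N}{nN}$. In the high-degree regime $\deg x>\frac{d-N(p)}{nN(p)}$ this surjectivity fails, so you cannot conclude that the proportion is $p^{-(n+1)e}$; the best a priori bound (Lemma~\ref{hi}) is $p^{-\lfloor d/N\rfloor}$, independent of $e$, and summing this against the Lang--Weil count over all $e>e_0$ diverges. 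In short, the point-counting approach you describe is exactly the one that works for the \emph{medium}-degree range and fails for the high-degree range --- this is why Poonen needs a different mechanism there.

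The paper's proof does two things you are missing. First, it reduces to $\X_p$ rather than working on $\X_{p^2}$: if $\dim_{\kappa(x)}\mathfrak m_{\mathrm{div}\sigma',x}/\mathfrak m^2_{\mathrm{div}\sigma',x}=n$ then the reduction $\overline{\sigma}=\sigma'\bmod p$ already has $\mathrm{div}\overline{\sigma}$ singular at $x$, so it suffices to bound $\mathcal Q^{\mathrm{high}}_{d,p,U}\subset\ho^0(\X_p,\li^{\otimes d})$. Second --- and this is the crucial idea --- on $\X_p$ it uses the $p$-th power decomposition
\[
\sigma=\sigma_0+\sum_{i=1}^{n-1}\beta_i^{\,p}\,t_i\,\tau_0^{l_d}+\gamma^{\,p}\,\tau_0^{l_d},
\]
so that $\partial_i(\beta_j^{\,p})=0$ and each derivative $\partial_i\Phi_j(\sigma)$ depends only on $\sigma_0$ and $\beta_i$. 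This decoupling lets one run a dimension-reduction argument (Lemmas~\ref{reddim} and~\ref{finiteset}): for random $\beta_{i+1}$ the locus $W_{p,j,i+1}$ drops dimension with failure probability $O(d^i p^{-d/(N_1p(N_0+1))})$, and once $W_{p,j,n-1}$ is finite (of size $O(d^{n-1})$ by B\'ezout), a random $\gamma$ avoids the remaining high-degree points. Your linear decomposition $\sigma'=\tau s+\sum t_i g_i s+pg_n s$ has no such decoupling: each $\partial_j\sigma'$ depends on all the $g_i$, so varying one coordinate does not isolate one derivative condition. The $p$-th powers, and the resulting $N(p)\sim p$, are what make the argument work and explain the exponent $d/p$ in the final bound.
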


\begin{prop}\label{hising}
	Fix a constant $0<\alpha<1$. For any prime number $p\leq d^{\alpha}$ with possibly singular $\X_p$, we have
	\[
		\frac{\#\mathcal{Q}_{d,p^2}^{\mathrm{high}}}{\#\ho^0(\X_{p^2}, \li^{\otimes d})}=O\left( d^{n}p^{-c'_1\frac{d}{p}} \right),
	\]
	where $c'_1$ and the constant involved in big $O$ are independent of $d,\alpha$, but may depend on $p$.
\end{prop}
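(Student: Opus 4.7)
The strategy I would pursue mirrors the proof of Proposition \ref{hism} for the smooth-fiber case, except that the failure of $\X_p/\F_p$ to be smooth is compensated by allowing every constant to depend on the fixed prime $p$. Since only finitely many primes give a non-smooth fiber, this is sufficient to imply Proposition \ref{high}. My plan is first to reduce to a finite collection of smooth strata, then to run the usual derivative-counting argument on each stratum, and finally to sum the contributions.

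For the reduction, I would decompose $\X_p$ by Noetherian induction into finitely many smooth locally closed subschemes $Y_1 \sqcup \cdots \sqcup Y_{k(p)}$ over $\F_p$, where $Y_1$ is the smooth locus of $\X_p$, $Y_2$ the smooth locus of $\X_p \setminus Y_1$, and so on; denote $m_i = \dim Y_i \leq n-1$. Every closed point of $\X_p$ lies in exactly one stratum, so it suffices to bound, for each $Y_i$, the number of $\sigma' \in \ho^0(\X_{p^2},\Li^{\otimes d})$ for which some closed point $x \in Y_i$ of degree $> (d-N(p))/(N(p)n)$ satisfies $\dim_{\kappa(x)} \m_{\mathrm{div}\sigma', x}/\m^2_{\mathrm{div}\sigma', x} = n$.

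Fixing such a stratum $Y = Y_i$, I would cover $Y$ by finitely many affine opens equipped with étale coordinates $t_1,\dots,t_m$. The key observation is that the regularity of $\X$ at each closed point $x \in Y$ allows one to extend these, after shrinking, to a regular system of parameters $t_1,\dots,t_n$ of $\mathcal{O}_{\X, x}$ on a neighborhood in $\X$; reducing mod $p^2$ then gives a regular system of parameters of $\mathcal{O}_{\X_{p^2},x}$. In these parameters, Taylor expansion identifies the bad condition on $\sigma'$ at $x$ with the simultaneous vanishing at $x$ of $\sigma'$ together with the $n$ partial derivatives $\partial_{t_j}\sigma'$, so at each fixed $x$ of degree $e$ it cuts out a codimension-$(n+1)$ subspace of $\ho^0(x', \Li^{\otimes d})$. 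Since $e$ is large, the restriction to $x'$ need not be surjective, and I would argue as in Poonen's high-degree lemma by first restricting on a small open neighborhood and then bounding the bad locus using the additional derivative sections, each contributing a factor $\#\kappa(x)^{-1}$. Summing $\#Y(\F_{p^e}) \leq C_Y p^{m e}$ over $e > (d-N(p))/(N(p)n)$ produces a geometric tail of size $O_p(d^{m_i+1} p^{-c'_1 d/p})$, and summing over the $k(p)$ strata yields the asserted $O(d^n p^{-c'_1 d/p})$, with all dependence on $p$ absorbed into the implicit constants.

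The hard part will be running the derivative / Taylor argument in mixed characteristic at a singular point of $\X_p$. At such a point $p$ lies in $\mathfrak{m}_{\X,x}^2$, so the regular parameters $t_1,\dots,t_n$ of $\X$ at $x$ do not split cleanly into horizontal coordinates of $\X_p/\F_p$ together with $p$ as a vertical parameter, and the usual Jacobian criterion on $\X_p$ is unavailable. One must work directly on $\X_{p^2}$ and exploit the regularity of $\X$ (rather than of $\X_p$) to supply the local parameters; keeping track of the interplay between the horizontal differentials and the $p$-direction when restricting mod $p^2$ is where the argument becomes delicate. Once this bookkeeping is set up correctly, the geometric-series estimate on each smooth stratum and its summation over the $k(p)$ strata should go through routinely.
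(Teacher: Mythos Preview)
Your stratification idea is in the right spirit, but the paper takes a noticeably simpler route that sidesteps exactly the ``hard part'' you flag. Rather than a full smooth stratification, the paper uses only the two-piece decomposition $\X_p = U_p \cup Z_p$ with $Z_p = \mathrm{Sing}(\X_p)$, and rather than working on $\X_{p^2}$ it first reduces everything modulo $p$: if $\sigma' \in \ho^0(\X_{p^2},\li^{\otimes d})$ has $\dim_{\kappa(x)} \m_{\mathrm{div}\sigma',x}/\m^2_{\mathrm{div}\sigma',x} = n$ at some $x$, then the mod-$p$ restriction $\overline{\sigma} \in \ho^0(\X_p,\li^{\otimes d})$ already has $\dim_{\kappa(x)} \m_{\mathrm{div}\overline{\sigma},x}/\m^2_{\mathrm{div}\overline{\sigma},x}$ equal to $n-1$ if $x\in U_p$ and equal to $n$ if $x\in Z_p$. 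Thus one only needs to bound two subsets of $\ho^0(\X_p,\li^{\otimes d})$, and the mixed-characteristic bookkeeping you worry about disappears.

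The crucial algebraic input for the $Z_p$ piece is that regularity of $\X$ forces $p \in \m_{\X,x}^2$ at every singular point $x$ of $\X_p$, so $\dim_{\kappa(x)} \m_{\X_p,x}/\m^2_{\X_p,x} = n$ there; equivalently $(\Omega^1_{\X_p/\F_p})|_{Z_p}$ has fiber rank $n$. One can therefore choose, on a neighbourhood of $Z_p$, local sections $t_1,\dots,t_n$ whose differentials generate $\Omega^1_{\X_p/\F_p}$ along $Z_p$, and run the \emph{same} Poonen-style random-perturbation argument (Lemmas \ref{reddim} and \ref{finiteset}) with $n$ derivatives instead of $n-1$. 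This is where the extra factor of $d$ in $d^n$ comes from. Your proposal to extend \'etale coordinates on each stratum to a regular system of parameters of $\mathcal{O}_{\X,x}$ and then differentiate on $\X_{p^2}$ is more delicate and, as you note yourself, not obviously well-behaved; the paper's reduction to $\X_p$ makes this unnecessary.

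One further remark: your sketch of the high-degree estimate (``summing $\#Y(\F_{p^e})$ over large $e$ to get a geometric tail'') is the medium-degree argument, not the high-degree one. For points of degree exceeding $(d-N(p))/(N(p)n)$ the restriction to the first-order neighbourhood is not surjective, so a direct union bound over such points does not work; one must use the perturbation-by-$p$th-powers trick to successively cut down the dimension of the bad locus, as in Lemma \ref{reddim}. Be sure any write-up distinguishes these two regimes.
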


We prove first the smooth case. Before proving Proposition \ref{hism}, we need some preparation.
\begin{lem}\label{opensub}
	Let $\mathcal{S}$ be an open subscheme of $\Spec\ \Z$ such that $\X_{\mathcal{S}}$ is smooth over $\mathcal{S}$. We can find a finite cover of $\X_{\mathcal{S}}$ by open subschemes $U$ satisfying the following conditions: 
\begin{enumerate}[label=\arabic*)]
	\item we can find $t_{1},\dots t_{n-1}\in \ho^0(U, \mathcal{O}_{U})$ such that
	\[
		\Omega_{U/\mathcal{S}}^1\simeq \bigoplus_{i=1}^{n-1} \mathcal{O}_{U}\mathrm{d} t_{i};
	\]
	\item for any positive integer $M$, we can choose a constant $N_0\geq M$ satisfying Lemma \ref{relample} such that there exists a $\tau_0 \in \ho^0(\X_{\mathcal{S}}, \li^{\otimes(N_0+1)})$ such that $\X_{\mathcal{S}}-\mathrm{div}\tau_0=U$;
	\item with the same $N_0$ as above, there exist $\tau_{1},\dots \tau_{k}\in \ho^0(\X_{\mathcal{S}}, \li^{\otimes N_0})$ such that $U=\bigcup_{1\leq j\leq k}(\X_{\mathcal{S}}-\mathrm{div}\tau_{j})$.
\end{enumerate}
\end{lem}

\begin{proof}[Proof]
	As $\Omega^1_{\X_{\mathcal{S}/\mathcal{S}}}$ is a locally free sheaf of rank $n-1$, 
	we can find a finite open covering $\{U_{\beta}\}$ of $\X_{\mathcal{S}}$ and sections $t_{\beta,1},\dots t_{\beta,n-1}\in \ho^0(U_{\beta}, \mathcal{O}_{U_{\beta}})$ such that
	\[
		\Omega^1_{U_{\beta}/\mathcal{S}}\simeq \bigoplus_{i=1}^{n-1} \mathcal{O}_{U_{\beta}}\mathrm{d} t_{\beta,i}.
	\]
	Hence we may assume the condition (1) for $U$. Moreover, if the condition (1) is satisfied by $U$, then it is also satisfied by any open subscheme of $U$.
	\bigskip
	
	If $U$ is an open subscheme of $\X_{\mathcal{S}}$ satisfying the condition (1), we will show that it can be covered by finitely many open subschemes $\{U_{\alpha}\}$ of $U$ which satisfy the conditions (2) and (3). This will complete the proof of the lemma.

	Since $\Li$ is ample, we may take a positive integer $N'_0>0$ satisfying Lemma \ref{relample} such that for any $d\geq N'_0$, the sheaf $\mathcal{I}_{\X_{\mathcal{S}}-U}\otimes \li_{\mathcal{S}}^{\otimes d}$ is globally generated, where $\mathcal{I}_{\X_{\mathcal{S}}-U}$ is the ideal sheaf of $\X_{\mathcal{S}}-U$ with the induced reduced structure. We may then choose non-zero sections 
	\[
		\tau'_{1},\dots, \tau'_s\in \ho^0(\X_{\mathcal{S}}, \mathcal{I}_{\X_{\mathcal{S}}-U}\otimes \li^{\otimes N'_0})\subset \ho^0(\X_{\mathcal{S}}, \li^{\otimes N'_0}),
	\]
	generating $\mathcal{I}_{\X_{\mathcal{S}}-U}\otimes \li^{\otimes N'_0}$. This means that set theoretically, we have $\X_{\mathcal{S}}-U=\bigcap_{i}\mathrm{div}(\tau'_i)$. In other words, we get a finite cover of $U$ :
	\[
		U=\bigcup_{i}\big( \X_{\mathcal{S}}-\mathrm{div}(\tau'_i) \big),
	\]
	where $\X_{\mathcal{S}}-\mathrm{div}(\tau'_i)$ are open subschemes of $\X_{\mathcal{S}}$. Note that for each $i$, $\X_{\mathcal{S}}-\mathrm{div}(\tau'_i)$ satisfies the condition (1) of the lemma. Without loss of generality, we may replace $U$ by one of the subschemes $\X_{\mathcal{S}}-\mathrm{div}(\tau'_i)$, i.e. we assume that there is a section $\tau'_0\in \ho^0(\X_{\mathcal{S}}, \li^{\otimes N'_0})$ such that $U=Y-\mathrm{div}(\tau'_0)$. We denote $\mathrm{div}(\tau'_0)$ by $D$. 
		
	Now set $N_0=r N'_0-1$ for some positive integer $r$ such that $N_0\geq M$ and that the sheaf $\mathcal{I}_D\otimes \li^{\otimes N_0}$ is globally generated. Then in particular we can find sections $\tau_1,\dots, \tau_k\in \ho^0(\X_{\mathcal{S}}, \mathcal{I}_D\otimes \li^{\otimes N_0})\subset \ho^0(\X_{\mathcal{S}}, \li^{\otimes N_0})$ such that $D=\bigcap_{j=1}^k\mathrm{div}(\tau_j)$ set theoretically. We also set $\tau_0=(\tau'_0)^r\in \ho^0(\X_{\mathcal{S}}, \li^{\otimes (N_0+1)})$. This means exactly $U=\bigcup_{1\leq j\leq k}(\X_{\mathcal{S}}-\mathrm{div}\tau_{j})$. In this situation we still have $D=\mathrm{div}(\tau_0)$ set theoretically.  The section $\tau_0$ and sections $\tau_1,\dots, \tau_k$ are then what we need for conditions (2) and (3) in the lemma.
\end{proof}

\begin{lem}\label{opensubworks}
	For an open subscheme $U$ of $\X_\mathcal{S}$, set
	\[
		\mathcal{Q}^{\mathrm{high}}_{d,p,U}=\left\{ \sigma\in \ho^0(\X_{p}, \li^{\otimes d})\ ;\ \mathrm{div}\sigma \text{ has a singular point in } U\cap \mathrm{div}\sigma \text{ of degree}\geq \frac{d}{nN(p)} \right\}.
	\]
	Then Proposition \ref{high} holds if we have
	\[
		\frac{\#\mathcal{Q}_{d,p,U}^{\mathrm{high}}}{\#\ho^0(\X_{p}, \li^{\otimes d})}=O\left( d^{n-1}p^{-c_{U,1}\frac{d}{p}} \right),
	\]
	for all $U$ satisfying the conditions in Lemma \ref{opensub}, where the constant $c_{U,1}$ only depends on $\X,\li$ and $U$.
\end{lem}

\begin{proof}[Proof]
	Let $\{U_{\alpha}\}$ be a finite cover of $\X_{\mathcal{S}}$ where all the $U_{\alpha}$ are open subschemes of $\X_{\mathcal{S}}$ satisfying the conditions in Lemma \ref{opensub}. Then for any $p\in \mathcal{S}$, we get a finite open cover $\{U_{\alpha,p^2}\}$ of $\X_{p^2}$. For an open set $U$ of $\X_{\mathcal{S}}$ flat over $\mathcal{S}$, set
	\[
		\mathcal{Q}^{\mathrm{high}}_{d,p^2,U}=\left\{ \sigma'\in \ho^0(\X_{p^2}, \li^{\otimes d})\ ;\ \exists x\in |\mathrm{div}\sigma'\cap U|\text{ of degree}\geq \frac{d}{nN(p)}, \ \dim_{\kappa(x)}\frac{\m_{\mathrm{div}\sigma',x}}{\m^2_{\mathrm{div}\sigma', x}}=n \right\}.
	\]
	To bound $\mathcal{Q}^{\mathrm{high}}_{d,p^2}$, it suffices to bound $\mathcal{Q}^{\mathrm{high}}_{d,p^2,U_{\alpha}}$ for all $U_{\alpha}$ in the covering. 
	
	Note that for any $\sigma\in \ho^0(\X_{p^2}, \li^{\otimes d})$ and any $x\in |\mathrm{div}\sigma|$, we have an exact sequence
	\[
		p\mathcal{O_{\mathrm{div}\sigma}} \longrightarrow \frac{\m_{\mathrm{div}\sigma,x}}{\m^2_{\mathrm{div}\sigma, x}} \longrightarrow \frac{\m_{\mathrm{div}\overline{\sigma},x}}{\m^2_{\mathrm{div}\overline{\sigma}, x}} \longrightarrow 0
	\]
	where $\overline{\sigma}=\sigma\ \mathrm{mod}\ p$ is the restriction in $\ho^0(\X_{p}, \li^{\otimes d})$ and $\mathrm{div}\overline{\sigma}$ is the divisor in $\X_p$. Therefore
	\[
		\dim_{\kappa(x)}\frac{\m_{\mathrm{div}\overline{\sigma},x}}{\m^2_{\mathrm{div}\overline{\sigma}, x}} \geq \dim_{\kappa(x)}\frac{\m_{\mathrm{div}\sigma,x}}{\m^2_{\mathrm{div}\sigma, x}}-1
	\]
	
	In particular, if $\dim_{\kappa(x)}\frac{\m_{\mathrm{div}\sigma,x}}{\m^2_{\mathrm{div}\sigma, x}}=n=\dim \X_{\mathcal{S}}$, then as 
	\[
	 	\dim_{\kappa(x)}\frac{\m_{\mathrm{div}\sigma,x}}{\m^2_{\mathrm{div}\sigma, x}}-1\leq \dim_{\kappa(x)}\frac{\m_{\mathrm{div}\overline{\sigma},x}}{\m^2_{\mathrm{div}\overline{\sigma}, x}} \leq \dim \X_p=n-1,
	\]
	we have $ \dim_{\kappa(x)}\frac{\m_{\mathrm{div}\overline{\sigma},x}}{\m^2_{\mathrm{div}\overline{\sigma}, x}}=n-1$, which means that $x$ is a singular point of $\mathrm{div}\overline{\sigma}$.

	Then for a section $\sigma\in \ho^0(\X_{p^2}, \li^{\otimes d})$, $\sigma \in \mathcal{Q}^{\mathrm{high}}_{d,p^2,U}$ implies $\overline{\sigma}\in \mathcal{Q}^{\mathrm{high}}_{d,p,U}$. Thus
	\[
		\frac{\#\mathcal{Q}_{d,p^2,U}^{\mathrm{high}}}{\#\ho^0(\X_{p^2}, \li^{\otimes d})}\leq \frac{\#\{ \sigma\in \ho^0(\X_{p^2}, \li^{\otimes d})\ ;\ \overline{\sigma}\in \mathcal{Q}^{\mathrm{high}}_{d,p,U}  \} }{\#\ho^0(\X_{p^2}, \li^{\otimes d})} = \frac{\#\mathcal{Q}_{d,p,U}^{\mathrm{high}}}{\#\ho^0(\X_{p}, \li^{\otimes d})}.
	\]
	It suffices then to bound $\#\mathcal{Q}_{d,p,U_{\alpha}}^{\mathrm{high}}$ for $U_{\alpha}$ in the covering. Here all $U_{\alpha}$ satisfies the conditions in Lemma \ref{opensub}. Such a finite covering exists by Lemma \ref{opensub}. If for any $\alpha$, the estimate
	\[
		\frac{\#\mathcal{Q}_{d,p,U_{\alpha}}^{\mathrm{high}}}{\#\ho^0(\X_{p}, \li^{\otimes d})}=O\left( d^{n-1}p^{-c_{U_{\alpha},1}\frac{d}{p}} \right)
	\]
	holds, then setting $c_1=\min_{\alpha}\{ c_{U_{\alpha},1} \}>0$, we have
	\[
		\frac{\#\mathcal{Q}_{d,p^2}^{\mathrm{high}}}{\#\ho^0(\X_{p^2}, \li^{\otimes d})}\leq \sum_{\alpha}\frac{\#\mathcal{Q}_{d,p,U_{\alpha}}^{\mathrm{high}}}{\#\ho^0(\X_{p}, \li^{\otimes d})}=O\left( d^{n-1}p^{-c_1\frac{d}{p}} \right).
	\]
\end{proof}

Now for an open subscheme $U$ as in Lemma \ref{opensub}, we get morphisms
\begin{eqnarray*}
	\Phi_j :  \ho^0(\X_{\mathcal{S}}, \li^{\otimes d}) &\longrightarrow & \ho^0(U, \mathcal{O}_U) \\
	\sigma &\longmapsto & \quad \frac{\sigma\cdot \tau_j^d}{\tau_0^d}
\end{eqnarray*}
for any $d\in \Z_{>0}$ and $1\leq j\leq k$.

On the other hand, Lemma \ref{extend} tells us that there exists a positive integer $N_1$ such that for any $\sigma \in  \ho^0(\X_{p}, \li^{\otimes d})$, any $1\leq i\leq n-1$, the section $\left(\partial_i \Phi_j(\sigma) \right)\cdot \tau_{0,p}^{d+\delta}$ extends to a global section in $\ho^0(\X_{p}, \li^{\otimes (N_0+1)(d+\delta)})$ for any $\delta \geq N_1$. Here $\tau_{0,p}$ is the restriction of $\tau_0$ modulo $p$ in $\ho^0(\X_p, \li^{\otimes (N_0+1)})$.
		
In fact, we may choose $N_1$ to be $N_1=N'_1+d_0+1$ where $N'_1$ is such that when $\delta\geq N'_1$, for any $1\leq i\leq n$ the section $\partial_i\cdot \tau_0^{\delta}\in \ho^0\left( U, \mathcal{H}om_{\mathcal{O}_{\X_\mathcal{S}}}(\Omega^1_{\X_\mathcal{S}/\mathcal{S}}, \mathcal{O}_{\X_\mathcal{S}})\otimes \li^{(N_0+1)\delta} \right)$ can be extended to a global section in 
\[
	\ho^0\left( \X_\mathcal{S}, \mathcal{H}om_{\mathcal{O}_{\X_\mathcal{S}}}(\Omega^1_{\X_\mathcal{S}/\mathcal{S}}, \mathcal{O}_{\X_\mathcal{S}})\otimes \li^{(N_0+1)\delta} \right)\simeq \Hom(\Omega^1_{\X_\mathcal{S}/\mathcal{S}}, \li^{\otimes(N_0+1)\delta}),
\]
and where $d_0$ is such that for any $d\geq d_0$, the restriction morphism 
\[
	\ho^0(\pr\Big( \ho^0(\X_{\mathcal{S}}, \li^{\otimes (N_0+1)}) \Big),\mathcal{O}(d)) \longrightarrow \ho^0(\X_{\mathcal{S}}, \li^{\otimes (N_0+1)d})
\]
is surjective. So $N_1$ is again independent of $d$ and $p$.

We enlarge $N_0$ if necessary so that it satisfies the following conditions:
\begin{enumerate}[label=\arabic*)]
	\item $N_0+1$ is a power of a prime number ;
	\item for any $d\geq N_0$, $(\li |_{\X_\mathcal{S}})^{\otimes d}$ is very ample;
	\item for any $a,b\geq N_0$, we have a surjective morphism
	\[
		\ho^0(\X_{\mathcal{S}}, \li^{\otimes a}) \otimes_{\ho^0(\mathcal{S}, \mathcal{O}_{\mathcal{S}})}\ho^0(\X_{\mathcal{S}}, \li^{\otimes b})\longrightarrow \ho^0(\X_{\mathcal{S}}, \li^{\otimes (a+b)}).
	\]
\end{enumerate}
	
We prove the following result :
\begin{lem}\label{U}
	For any prime $p\in \mathcal{S}$, take 
	\[
		N(p)=(N_0+1)(N_1+p-1)+p=p(N_0+2)+(N_0+1)(N_1-1).
	\] 
	With notation as in Proposition \ref{high}, if $p$ and $N(p)$ satisfy $2c_0N(p)np^{n-1}\leq d$, then we have
	\[
		\frac{\#\mathcal{Q}_{d,p,U}^{\mathrm{high}}}{\#\ho^0(\X_{p}, \li^{\otimes d})}=O\left( d^{n-1}p^{-c_1\frac{d}{p}} \right),
	\]
	where $c_1$ and the constant involved in big $O$ are independent of $d,p$.
\end{lem}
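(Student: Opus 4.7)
The plan is to adapt Poonen's argument from the proof of Bertini over finite fields (\cite[Lemma 2.4]{Po04}) to the present setting on the open subset $U \subset \X_{\mathcal{S}}$. For each closed point $x \in U \cap \X_p$ of degree $e$, pick any index $j$ with $\tau_j(x) \neq 0$. Then $\mathrm{div}\sigma$ is singular at $x$ precisely when $\Phi_j(\sigma)(x) = 0$ and $(\partial_i \Phi_j(\sigma))(x) = 0$ for every $i \in \{1,\ldots,n-1\}$, giving $n$ conditions valued in $\kappa(x) \simeq \F_{p^e}$. The goal is to show that these $n$ conditions are independent and surjective for $e$ below the threshold $(d-N(p))/(N(p)n)$, so that at or above the threshold one can directly bound the count by $p^{-ne}\cdot\#\ho^0(\X_p, \li^{\otimes d})$.

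To that end, I would decompose a general $\sigma \in \ho^0(\X_p, \li^{\otimes d})$ in Poonen style as
\[
    \sigma \;=\; \sigma_0 \;+\; \tau_0^{N_1+p-1}\Big(\rho_0 \,+\, \sum_{i=1}^{n-1} t_i^{p-1}\rho_i\Big),
\]
where $\sigma_0$ parameterises a fixed complement and each $\rho_i$ ranges independently over a subspace of $\ho^0(\X_p, \li^{\otimes(d-(N_0+1)(N_1+p-1))})$. The surjectivity of the multiplication maps from Lemma \ref{relample} makes such a decomposition exhaustive (up to bounded overcount), and Lemma \ref{extend} ensures that the functions $\partial_i \Phi_j(\sigma)$ extend to global sections of the appropriate power of $\li$. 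The crucial observation is that in characteristic $p$ we have $\partial_i(t_i^{p-1}) = -t_i^{p-2}$, which is generically nonvanishing on $U$, whereas $\partial_{i'}(t_i^{p-1}) = 0$ for $i' \neq i$. Thus varying $\rho_0$ changes only the value $\Phi_j(\sigma)(x)$, while varying $\rho_i$ changes only the derivative $(\partial_i\Phi_j(\sigma))(x)$, by multiplication by a generically nonzero factor in $\kappa(x)$. The $n$ conditions thereby decouple.

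The precise definition $N(p) = (N_0+1)(N_1+p-1) + p$ is calibrated so that a Nakayama-type surjectivity argument in the spirit of Lemma \ref{sur} guarantees each of the $n$ evaluation maps is in fact surjective onto $\kappa(x)$ whenever $\deg x \leq (d-N(p))/(N(p)n)$, so that for each such $x$ the number of singular $\sigma$ is at most $\#\ho^0(\X_p,\li^{\otimes d})\cdot p^{-ne}$. Summing over $x \in U\cap\X_p$ of each degree $e > (d-N(p))/(N(p)n)$ using $\#\X_p(\F_{p^e}) \leq c_0 p^{(n-1)e}$, the contribution telescopes into a convergent geometric sum dominated by its first term:
\[
    \frac{\#\mathcal{Q}_{d,p,U}^{\mathrm{high}}}{\#\ho^0(\X_p,\li^{\otimes d})} \;\ll\; \sum_{e > (d-N(p))/(N(p)n)} p^{-e} \;=\; O\bigl(p^{-d/(N(p)n)}\bigr) \;=\; O\bigl(d^{n-1} p^{-c_1 d/p}\bigr)
\]
for a constant $c_1 > 0$ independent of $d$ and $p$, where the polynomial factor $d^{n-1}$ absorbs the overcount coming from summing the bounded-multiplicity decomposition and the range of admissible $e$.

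The principal obstacle lies in establishing the decoupling uniformly at every $x \in U \cap \X_p$, including points where $t_i^{p-2}(x)$ vanishes for some $i$; this is handled by working inside the cover from Lemma \ref{opensub}, which provides on each piece a frame $(t_1,\ldots,t_{n-1})$ for $\Omega^1$ and a section $\tau_0$ adapted to that piece, so that after passing to a suitable chart no bad intersection occurs. The $(p-1)$-st power on $t_i$, rather than $1$, is precisely what forces $N(p)$ to grow linearly in $p$ and produces the crucial $1/p$ factor in the exponent of the final bound.
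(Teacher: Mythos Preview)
Your proposal has a genuine gap at two levels. First, the claimed decoupling fails: with your decomposition $\sigma=\sigma_0+\tau_0^{N_1+p-1}(\rho_0+\sum_i t_i^{p-1}\rho_i)$ and $\rho_i$ varying, one has $\partial_{i'}(t_i^{p-1}\rho_i)=t_i^{p-1}\partial_{i'}\rho_i$, which is \emph{not} zero in general, so varying $\rho_i$ perturbs all the derivatives $\partial_{i'}\Phi_j(\sigma)$, not just the $i$-th one. The paper (following Poonen) instead writes $\sigma=\sigma_0+\sum_{i=1}^{n-1}\beta_i^p\,t_i\,\tau_{0,p}^{l_d}+\gamma^p\,\tau_{0,p}^{l_d}$ with $d=pk_d+(N_0+1)l_d$ and $\beta_i,\gamma\in\ho^0(\X_p,\li^{\otimes k_d})$; the full $p$-th powers $\beta_i^p,\gamma^p$ are what make \emph{all} their derivatives vanish in characteristic $p$, so that $\partial_i\Phi_j(\sigma)$ restricted to $\mathrm{div}\Phi_j(\sigma)$ depends only on $(\sigma_0,\beta_i)$.

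Second, and more seriously, even if the decoupling held, the pointwise count-and-sum you describe is exactly the \emph{medium}-degree argument (the paper's Lemma~\ref{medi}) and cannot be made to work above the threshold. For $\deg x>(d-N(p))/(N(p)n)$ the restriction to the first-order infinitesimal neighbourhood of $x$ is no longer surjective, so the $p^{-ne}$ bound you invoke is unavailable; and there are unboundedly many closed points of degree up to $\sim d^{n-1}$, so any weaker per-point bound (say from Lemma~\ref{hi}) will not sum. The paper's argument is entirely different: having decoupled via $p$-th powers, it sets $g_{p,j,i}(\sigma_0,\beta_i)$ so that $g_{p,j,i}|_{\mathrm{div}\Phi_j(\sigma)}=\partial_i\Phi_j(\sigma)|_{\mathrm{div}\Phi_j(\sigma)}$, defines $W_{p,j,i}=\{g_{p,j,1}=\cdots=g_{p,j,i}=0\}$, and then iteratively applies the dimension-drop Lemma~\ref{reddim} (each step succeeds with probability $1-O(d^i p^{-d/(N_1p(N_0+1))})$, the B\'ezout count of components giving the polynomial factor) to conclude that $W_{p,j,n-1}$ is finite of size $O(d^{n-1})$ with the stated probability. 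Only then is a final randomisation over $\gamma$ (Lemma~\ref{finiteset}) used to force $\mathrm{div}\sigma$ to avoid the finitely many high-degree points of $W_{p,j,n-1}$. This dimension-reduction via B\'ezout, not a per-point sum, is the missing idea.
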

By Lemma \ref{opensubworks}, this implies Proposition \ref{hism}.
\begin{proof}[Proof of Lemma \ref{U}]	
	For each $p\in \mathcal{S}$, if $d\geq N(p)$, $d$ has a unique decomposition
	\[
		d=pk_{p,d}+(N_0+1)l_{p,d}
	\]
	with $N_1\leq l_{p,d}< N_1+p$. We have a surjective map
	\begin{eqnarray*}
		\ho^0(\X_p,\mathcal{L}^{\otimes d})\times \left( \prod_{i=1}^{n-1} \ho^0(\X_p,\mathcal{L}^{\otimes k_{p,d}}) \right) \times \ho^0(\X_p, \mathcal{L}^{\otimes k_{p,d}})  \longrightarrow   \ho^0(\X_p, \mathcal{L}^{\otimes d})
	\end{eqnarray*}
	which sends $\big(\sigma_{0}, (\beta_1,\dots, \beta_{n-1}), \gamma\big)$ to
	\[
		\sigma=\sigma_{0}+\sum_{i=1}^{n-1} \beta_i^pt_i\tau_{0,p}^{l_d}+\gamma^p\tau_{0,p}^{l_d},
	\]
	where $\tau_{0,p}$ is the restriction of $\tau_0$ modulo $p$ in $\ho^0(\X_p, \li^{\otimes (N_0+1)})$. Thus
	\[
		\Phi_{j,p}(\sigma)=\Phi_{j,p}(\sigma_0)+\sum_{i=1}^{n-1} \Phi_{j,p}(\beta_i)^pt_i\Phi_{j,p}(\tau_{0,p})^{l_d}+\Phi_{j,p}(\gamma)^p\Phi_{j,p}(\tau_{0,p})^{l_d}
	\]
	in $\ho^0(U\cap \X_p,\mathcal{O}_{U\cap \X_p})$, where $\Phi_{j.p}=\Phi_j|_{U\cap \X_p}$. As $\tau_{0,p}$ is nowhere zero on $U$, we have
	\[
		\mathrm{Sing}(\mathrm{div}\sigma)\cap (U-\mathrm{div}\tau_{j,p}) = \mathrm{Sing}(\mathrm{div}\Phi_{j,p}(\sigma))\cap  (U-\mathrm{div}\tau_{j,p})
	\]
	and hence
	\[
		\mathrm{Sing}(\mathrm{div}\sigma)\cap U \subset  \bigcup_{j=1}^k \mathrm{Sing}(\mathrm{div}\Phi_{j,p}(\sigma)).
	\]
	Since
	\[
		\partial_i[\Phi_{j,p}(\beta_i)^p t_i \Phi_{j,p}(\tau_{0,p})^{l_d}]=\Phi_{j,p}(\beta_i)^p \Phi_{j,p}(\tau_{0,p})^{l_d} + l_d \Phi_{j,p}(\beta_i)^p t_i \Phi_{j,p}(\tau_{0,p})^{l_d-1}\cdot \partial_i \Phi_{j,p}(\tau_{0,p}),
	\]
	and for any $i'\not=i$,
	\[
		\partial_i[\Phi_{j,p}(\beta_{i'})^p t_{i'} \Phi_{j,p}(\tau_{0,p})^{l_d}]= l_d \Phi_{j,p}(\beta_{i'})^p t_{i'} \Phi_{j,p}(\tau_{0,p})^{l_d-1}\cdot \partial_i \Phi_{j,p}(\tau_{0,p}),
	\]
	we have
	\begin{eqnarray*}
		\partial_i\Phi_{j,p}(\sigma) &=& \partial_i \Phi_{j,p}(\sigma_0)+\sum_{i'=1}^{n-1} \partial_i\left[\Phi_{j,p}(\beta_{i'})^p t_{i'}\Phi_{j,p}(\tau_{0,p})^{l_d}\right]+\partial_i\left[\Phi_{j,p}(\gamma)^p\Phi_{j,p}(\tau_{0,p})^{l_d}\right]\\
		&=& \left[ \sum_{i'=1}^{n-1} l_d\Phi_{j,p}(\beta_{i'})^p t_{i'}\Phi_{j,p}(\tau_{0,p})^{l_d-1}+l_d \Phi_{j,p}(\gamma)^p\Phi_{j,p}(\tau_{0,p})^{l_d-1} \right]\cdot \partial_i \Phi_{j,p}(\tau_{0,p})\\
		& & \qquad \qquad\qquad\qquad\qquad\qquad+\partial_i\Phi_{j,p}(\sigma_0)+\Phi_{j,p}(\beta_i)^p\Phi_{j,p}(\tau_{0,p})^{l_d} \\\\
		&=& \partial_i\Phi_{j,p}(\sigma_0)+ \frac{l_d(\Phi_{j,p}(\sigma)-\Phi_{j,p}(\sigma_0))}{\Phi_{j,p}(\tau_{0,p})}\partial_i\Phi_{j,p}(\tau_{0,p})+ \Phi_{j,p}(\beta_i)^p \Phi_{j,p}(\tau_{0,p})^{l_d}.
	\end{eqnarray*}
	
	Now set
	\[
		g_{p, j,i}(\sigma_0,\beta_i)=\partial_i \Phi_{j,p}(\sigma_0)-\frac{l_d\Phi_{j,p}(\sigma_0)}{\Phi_{j,p}(\tau_{0,p})} \partial_i\Phi_{j,p}(\tau_{0,p})+\Phi_{j,p}(\beta_i)^p\Phi_{j,p}(\tau_{0,p})^{l_d},
	\]
	and
	\[
		W_{p,j,i}:=\X_p\cap U\cap \{ g_{p,j,1}=\cdots=g_{p,j,i}=0 \}.
	\]
	Then for any $\sigma=\sigma_0+\sum_{i=1}^{n-1} \beta_i^pt_i\tau_{0,p}^{l_d}+\gamma^p\tau_{0,p}^{l_d}$, comparing the expressions of $g_{p,j,i}$ and $\partial_i\Phi_{j,p}$ we have
	\[
		g_{p,j,i}(\sigma_0,\beta_i)=\partial_i\Phi_{j,p}(\sigma)-\frac{l_d\Phi_{j,p}(\sigma)}{\Phi_{j,p}(\tau_{0,p})} \partial_i\Phi_{j,p}(\tau_{0,p}),
	\]
	and hence
	\[
		g_{p,j,i}(\sigma_0,\beta_i)|_{ \mathrm{div\ } \Phi_{j,p}(\sigma)}=\partial_i\Phi_{j,p}(\sigma)|_{ \mathrm{div\ } \Phi_{j,p}(\sigma)}.
	\]
	Moreover, any section $g_{p,j,i}(\sigma_0,\beta_i)\cdot \tau_{0,p}^{d+\delta}\in \ho^0(\X_p\cap U, \li^{\otimes (N_0+1)(d+\delta)})$ can be extended to a global section in $\ho^0(\X_p, \li^{\otimes (N_0+1)(d+\delta)})$ for any $\delta\geq N_1+1$. In fact, we know already that the section $\partial_i\Phi_{j,p}(\sigma)\cdot \tau_{0,p}^{d+\delta}\in \ho^0(\X_p\cap U, \li^{\otimes (N_0+1)(d+\delta)})$ can be extended to a global section in $\ho^0(\X_p, \li^{\otimes (N_0+1)(d+\delta)})$ for any $\delta\geq N_1$. On $\X_p\cap U$ we have
	\begin{eqnarray*}
		\frac{l_d\Phi_{j,p}(\sigma)}{\Phi_{j,p}(\tau_{0,p})}&=& l_d\frac{\sigma \cdot \tau_{j,p}^d}{\tau_{0,p}^d} \cdot\left(\frac{\tau \cdot \tau_{j,p}^{N_0+1}}{\tau_{0,p}^{N_0+1}}\right)^{-1}  \\
		&=&l_d\frac{\sigma \cdot \tau_{j,p}^{d-N_0-1}}{\tau_{0,p}^{d-N_0}}.
	\end{eqnarray*}
	By Lemma \ref{extend}, for any $\delta\geq N_1$, the section $ \partial_i\Phi_{j,p}(\tau_{0,p}) \cdot \tau_{0,p}^{(N_0+1)+\delta}$ can be extended to a global section in $\ho^0(\X_p, \li^{\otimes (N_0+1)((N_0+1)+\delta)})$, so the section
	\begin{eqnarray*}
		\frac{l_d\Phi_{j,p}(\sigma)}{\Phi_{j,p}(\tau_{0,p})} \partial_i\Phi_{j,p}(\tau_{0,p})\cdot \tau_{0,p}^{d+\delta} &=& \left( \frac{l_d\Phi_{j,p}(\sigma)}{\Phi_{j,p}(\tau_{0,p})}\cdot \tau_{0,p}^{d-N_0}\right)\cdot \left( \partial_i\Phi_{j,p}(\tau_{0,p}) \cdot \tau_{0,p}^{(N_0+1)+(\delta-1)} \right)
	\end{eqnarray*}
	extends to a global section of $\li^{\otimes (N_0+1)(d+\delta)}$ for any $\delta \geq N_1+1$. 
	Therefore the section 
	\[
		g_{j,i}(\sigma_0,\beta_i)\cdot \tau^{d+\delta}=\left(\partial_i\Phi_{j}(\sigma)-\frac{l_d\Phi_j(\sigma)}{\Phi_j(\tau)}\partial_i\Phi_j(\tau)\right)\cdot \tau^{d+\delta}\in \ho^0(U, \li^{\otimes (N_0+1)(d+\delta)})
	\]
	can be extended to a global section in $\ho^0(\X_p, \li^{\otimes (N_0+1)(d+\delta)})$ for any $\delta\geq N_1+1$.
	
	\begin{lem}\label{5.8}
		When $d$ is sufficiently large, the proportion of 
		\[
			\big(\sigma_0, (\beta_1,\dots, \beta_{n-1}), \gamma\big) \in \ho^0(\X_p,\mathcal{L}^{\otimes d})\times \left( \prod_{i=1}^{n-1} \ho^0(\X_p,\mathcal{L}^{\otimes k_{p,d}}) \right) \times \ho^0(\X_p, \mathcal{L}^{\otimes k_{p,d}}) 
		\]
		such that for $\sigma=\sigma_{0}+\sum_{i=1}^{n-1} \beta_i^pt_i\tau_{0,p}^{l_d}+\gamma^p\tau_{0,p}^{l_d}$,
		\[
			\mathrm{div}\sigma\cap W_{p,n-1,j}\cap \Big\{ x\in |\X_p|\ ;\ \deg x\geq\frac{d}{nN(p)} \Big\}=\emptyset,
		\]
		is
		\[
			1-O\left( d^{n-1}p^{-c_1\frac{d}{p}} \right),
		\]
		with a constant $c_1$ depending only on $N_0,N_1$ and the dimension $n$.
	\end{lem}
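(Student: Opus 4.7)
\textbf{The plan is} to follow Poonen's strategy for singularities of high degree in \cite{Po04}, varying $\gamma$ over $\ho^0(\X_p, \li^{\otimes k_{p,d}})$ while holding $(\sigma_0, \beta_1, \ldots, \beta_{n-1})$ fixed. At any closed point $x \in U \cap \X_p$,
\[
	\sigma(x) = \sigma_0(x) + \sum_{i=1}^{n-1}\beta_i(x)^p t_i(x)\tau_{0,p}(x)^{l_{p,d}} + \gamma(x)^p \tau_{0,p}(x)^{l_{p,d}},
\]
and since $\tau_{0,p}(x) \in \kappa(x)^{\times}$ and the Frobenius $z \mapsto z^p$ is bijective on the finite field $\kappa(x)$, the condition $\sigma(x) = 0$ pins down $\gamma(x) \in \kappa(x)$ uniquely. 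Once $k_{p,d}$ is large enough for Lemma \ref{sur} to give surjectivity of the evaluation $\ho^0(\X_p, \li^{\otimes k_{p,d}}) \to \kappa(x)$ --- a condition of the form $k_{p,d} \geq N_0(\deg x + 1)$ guaranteed for $\deg x \leq \tfrac{d-N(p)}{N(p)n}$ by the choice of $N(p)=p(N_0+2)+(N_0+1)(N_1-1)$ --- this gives conditional probability exactly $p^{-\deg x}$ for $\sigma$ to vanish at $x$.

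\textbf{To count} the relevant closed points of $W_{p,n-1,j}$, recall that each $g_{p,j,i}\cdot\tau_{0,p}^{d+N_1}$ extends to a global section of $\li^{\otimes (N_0+1)(d+N_1)}$ on $\X_p$. Hence $W_{p,n-1,j}$ lies inside the intersection on the $(n-1)$-dimensional $\X_p$ of $n-1$ effective divisors each of degree $O(d)$. When this intersection is proper, Bezout on $\X_p$ gives $\#W_{p,n-1,j} = O(d^{n-1})$, and a union bound over the points yields
\[
	\frac{\#\{(\sigma_0,\beta,\gamma):\,\exists\, x\in W_{p,n-1,j},\ \deg x > \tfrac{d-N(p)}{N(p)n},\ \sigma(x)=0\}}{\#\text{total}} \leq O(d^{n-1})\cdot p^{-\frac{d-N(p)}{N(p)n}} = O(d^{n-1}p^{-c_1 d/p}),
\]
with $c_1$ depending only on $N_0$, $N_1$, $n$, since the ratio $N(p)/p$ is bounded in terms of these constants.

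\textbf{The main obstacle} is handling the case where the intersection defining $W_{p,n-1,j}$ is \emph{not} proper, so that positive-dimensional components appear and the Bezout count breaks down. Following Poonen's inductive scheme, I would reveal $\beta_1, \ldots, \beta_{n-1}$ in sequence and show at each intermediate stage $i$ that the set of $(\sigma_0, \beta_1, \ldots, \beta_i)$ for which $\{g_{p,j,1} = \cdots = g_{p,j,i} = 0\}$ acquires an unexpected component of dimension larger than $n - 1 - i$ has density $O(d^{n-1}p^{-c_1 d/p})$, by applying the pointwise probability argument of the first paragraph to a generic point of each such bad component (combined with the Lang--Weil bound $\#\X_p(\F_{p^e}) \leq c_0 p^{(n-1)e}$ to turn the pointwise bound into a sum). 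Summing the finitely many error terms across the induction preserves the target asymptotic, with $c_1$ depending only on $N_0$ and $N_1$ as claimed.
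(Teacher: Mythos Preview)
Your overall two-stage plan (reveal the $\beta_i$ one by one to force $W_{p,n-1,j}$ down to dimension~$0$, then do a union bound over its points when varying~$\gamma$) matches the paper's proof, which simply invokes Lemma~\ref{reddim} and Lemma~\ref{finiteset} from the Appendix with $Y=\X_p$ and $X=U\cap\X_p$. But both of your two main steps, as written, have gaps.

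\textbf{The $\gamma$ step.} You correctly observe that $\sigma(x)=0$ determines $\gamma(x)\in\kappa(x)$ uniquely, but then you invoke Lemma~\ref{sur} to get surjectivity of $\ho^0(\X_p,\li^{\otimes k_{p,d}})\to\kappa(x)$, which as you yourself note holds only for $\deg x\leq\tfrac{d-N(p)}{N(p)n}$. The points in the statement satisfy the \emph{opposite} inequality $\deg x>\tfrac{d-N(p)}{N(p)n}$, so surjectivity is not available and the ``exact probability $p^{-\deg x}$'' claim is unjustified; without it, the fibre over the prescribed value of $\gamma(x)$ could be larger than a $p^{-\deg x}$ fraction, and your union bound collapses. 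The paper's Lemma~\ref{finiteset} handles this via Lemma~\ref{hi}, which bounds the $\F_p$-dimension of the image of the evaluation map from below by $\min\big(\lfloor k_{p,d}/N_0\rfloor,\deg x\big)$; one then checks that $\lfloor k_{p,d}/N_0\rfloor\geq\tfrac{d-N(p)}{N(p)n}$ for large $d$, and this is exactly what yields the factor $p^{-\frac{d-N(p)}{N(p)n}}$ in the union bound.

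\textbf{The dimension-reduction step.} Your description of the inductive stage is too vague to be correct as stated: the phrase ``applying the pointwise probability argument \ldots\ to a generic point of each bad component, combined with Lang--Weil to turn the pointwise bound into a sum'' does not match what actually works. In the paper (Lemma~\ref{reddim}) there is no point-counting and no Lang--Weil: for each top-dimensional irreducible component $V_e$ of $W_{p,j,i}$ (there are $O(d^i)$ of them by refined B\'ezout) one shows that the set of $\beta_{i+1}$ with $g_{p,j,i+1}\equiv 0$ on $V_e$ is either empty or a single coset of $\ker\big(\ho^0(\X_p,\li^{\otimes k_{p,d}})\to\ho^0(V_e,\li^{\otimes k_{p,d}})\big)$, and this kernel has codimension at least $\lceil k_{d,1}/N_1\rceil$ because the sections $\tau_{0,p}^{k_{d,1}}\tau_j^{k_{d,2}},\,t_i\tau_{0,p}^{k_{d,1}}\tau_j^{k_{d,2}},\,\ldots$ are linearly independent on $V_e$. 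A ``generic point'' of $V_e$ is not closed, and if you instead pick a closed point you would again need a lower bound on the image of evaluation, i.e.\ Lemma~\ref{hi}, not a Lang--Weil sum.
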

	\begin{proof}[Proof]
		Apply Lemma \ref{reddim} to the case $Y=\X_p$ and $X=U\cap \X_p$. We obtain that for $0\leq i\leq n-2$, with a fixed choice of $\sigma_0,\beta_1,\dots, \beta_i$ such that $\dim W_{p,i,j}\leq n-1-i$, the proportion of $\beta_{i+1}$ in $\ho^0(\X_p,\mathcal{L}^{\otimes k_{p,d}})$ such that $\dim W_{p,i+1,j} \leq n-2-i$ is $1-O\big(d^i\cdot p^{2-\frac{d}{(N_0+1)N_1p}}\big)$, where the constant involved depends only on the degree of $\X_p$ when embedded in $\pr(\ho^0(\li_p^{\otimes(N_0+1)})^{\vee})$ (this degree is independent of $p$), hence is independent of $d,p$.
		In particular, the proportion of $\big(\sigma_0,(\beta_1,\dots, \beta_{n-1})\big)$ such that $W_{p,n-1,j}$ is finite is 
		\[
			\prod_{i=0}^{n-2}\left( 1-O\big(d^i\cdot p^{2-\frac{d}{(N_0+1)N_1p}}\big) \right)=1-O\Big(d^{n-2}\cdot p^{2-\frac{d}{(N_0+1)N_1p}}\Big).
		\]
		And then Lemma \ref{finiteset} tells us that for fixed $\big(\sigma_0,(\beta_1,\dots, \beta_{n-1})\big)$ making $W_{p,n-1,j}$ finite, the proportion of $\gamma\in \ho^0(\X_p,\mathcal{L}^{\otimes k_{p,d}})$ such that for $\sigma=\sigma_{0}+\sum_{i=1}^{n-1} \beta_i^pt_i\tau_{0,p}^{l_d}+\gamma^p\tau_{0,p}^{l_d}$,
		\[
			\mathrm{div}\sigma\cap W_{p,n-1,j}\cap \left\{ x\in |\X_p|\ ;\ \deg x\geq\frac{d}{nN(p)} \right\}=\emptyset,
		\]
		is
		\[
			1-O(d^{n-1}p^{-\frac{d}{nN(p)}}),
		\]
		where the constant involved is independent of $d,p$.

		Therefore for large enough $d$, the proportion of 
		\[
			\big(\sigma_0, (\beta_1,\dots, \beta_{n-1}), \gamma\big)\in \ho^0(\X_p,\mathcal{L}^{\otimes d})\times \left( \prod_{i=1}^{n-1} \ho^0(\X_p,\mathcal{L}^{\otimes k_{p,d}}) \right) \times \ho^0(\X_p, \mathcal{L}^{\otimes k_{p,d}})
		\]
		such that 
		\[
			\mathrm{div}\sigma\cap W_{p,n-1,j}\cap \left\{ x\in |\X_p|\ ;\ \deg x\geq\frac{d}{nN(p)} \right\}=\emptyset,
		\]
		where $\sigma=\sigma_0+\sum_{i=1}^{n-1} \beta_i^pt_i\tau_{0,p}^{l_d}+\gamma^p\tau_{0,p}^{l_d}$, is
		\begin{eqnarray*}
			& &\left( \prod_{i=0}^{n-2}\Big( 1-O\big(d^i\cdot p^{2-\frac{d}{(N_0+1)N_1p}}\big) \Big)  \right) \cdot \left( 1-O\big(d^{n-1}p^{-\frac{d}{nN(p)}}\big) \right) \\
			&=&\left( 1-O\big(d^{n-2}\cdot p^{2-\frac{d}{(N_0+1)N_1p}} \big)\right)\cdot  \left( 1-O\big(d^{n-1}p^{-\frac{d}{nN(p)}}\big) \right) \\
			&=&\left( 1-O\big(d^{n-2}\cdot p^{2-\frac{d}{(N_0+1)N_1p}} \big)\right)\cdot  \left( 1-O\big(d^{n-1}p^{-\frac{d}{nN(p)}}\big) \right) \\
			&=&1-O\left(\max\Big(d^{n-2}p^{2-\frac{d}{(N_0+1)N_1p}}, d^{n-1}p^{-\frac{d}{nN(p)}}\Big) \right),
		\end{eqnarray*}
		As $N(p)=p(N_0+2)+(N_0+1)(N_1-1)$, when $d$ is sufficiently large, we have
		\begin{eqnarray*}
			\frac{d}{nN(p)}&=& \frac{d}{n\big[p(N_0+2)+(N_0+1)(N_1-1)\big]}\\
			&\geq& \frac{d}{n\big[(N_0+2)(N_1+p-1)\big]} \\
			&\geq& \frac{d}{n(N_0+2)N_1 p},
		\end{eqnarray*}
		and 
		\[
			\frac{d}{(N_0+1)N_1p}-2\geq \frac{d}{2n(N_0+2)N_1 p}.
		\]
		Therefore when $d$ tends to infinity,
		\begin{eqnarray*}
			& &\left( \prod_{i=0}^{n-2}\Big( 1-O\big(d^i\cdot p^{2-\frac{d}{(N_0+1)N_1p}}\big) \Big)  \right) \cdot \left( 1-O\big(d^{n-1}p^{-\frac{d}{nN(p)}}\big) \right) \\
			&=&1-O\left(\max\Big(d^{n-2}p^{2-\frac{d}{(N_0+1)N_1p}}, d^{n-1}p^{-\frac{d}{nN(p)}}\Big) \right)\\
			&=&1-O\left( d^{n-1}p^{-\frac{d}{2n(N_0+2)N_1 p}} \right)\\
			&=&1-O\left( d^{n-1}p^{-c_1\frac{d}{p}} \right)
		\end{eqnarray*}
		with constant $c_1=\frac{1}{2n(N_0+2)N_1 }$ which then depends only on $N_0,N_1$ and the dimension $n$.
	\end{proof}
		
	On the other hand, for such $\sigma=\sigma_0+\sum_{i=1}^{n-1} \beta_i^pt_i\tau_{0,p}^{l_d}+\gamma^p\tau_{0,p}^{l_d}\in \ho^0(\X_p,\mathcal{L}^{\otimes d})$, we have
	\[
		\mathrm{Sing}(\mathrm{div}\sigma)\cap U \subset  \bigcup_{j=1}^k \mathrm{Sing}(\mathrm{div}\Phi_{j,p}(\sigma))
	\]
	and
	\begin{eqnarray*}
		\mathrm{Sing}(\mathrm{div}\Phi_{j,p}(\sigma))
		&=& \mathrm{div}\Phi_{j,p}(\sigma)\cap \{ \partial_1\Phi_{j,p}(\sigma)=\cdots =\partial_{n-1}\Phi_{j,p}(\sigma)=0 \} \\
		&=& \mathrm{div}\Phi_{j,p}(\sigma)\cap \{ g_{p,j,1}(\sigma_0,\beta_1)=\cdots = g_{p,j,n-1}(\sigma_0,\beta_{n-1})=0 \} \\
		&=& \mathrm{div}\Phi_{j,p}(\sigma)\cap W_{p,n-1,j} \\
		&=& \mathrm{div}\sigma\cap W_{p,n-1,j}
	\end{eqnarray*}
	as
	\[
		g_{p,j,i}(\sigma_0,\beta_i)|_{ \mathrm{div\ } \Phi_{j,p}(\sigma)}=\partial_i\Phi_{j,p}(\sigma)|_{ \mathrm{div\ } \Phi_{j,p}(\sigma)}.
	\]
	Since the homomorphism of groups
	\begin{eqnarray*}
		\ho^0(\X_p,\mathcal{L}^{\otimes d})\times \left( \prod_{i=1}^{n-1} \ho^0(\X_p,\mathcal{L}^{\otimes k_{p,d}}) \right) \times \ho^0(\X_p, \mathcal{L}^{\otimes k_{p,d}})  \longrightarrow   \ho^0(\X_p, \mathcal{L}^{\otimes d})
	\end{eqnarray*}
	sending $\big(\sigma_0, (\beta_1,\dots, \beta_{n-1}), \gamma\big)$ to $\sigma=\sigma_0+\sum_{i=1}^{n-1} \beta_i^pt_i\tau_{0,p}^{l_d}+\gamma^p\tau_{0,p}^{l_d}$ is surjective, Lemma \ref{5.8} implies that
	\[
		\frac{\#\left\{\sigma\in \ho^0(\X_p, \mathcal{L}^{\otimes d})\ ;\  \mathrm{Sing}(\mathrm{div}\sigma)\cap U\cap \big\{ x\in |\X_p|\ ;\ \deg x\geq\frac{d}{nN(p)} \big\}=\emptyset \right\}}{\# \ho^0(\X_p, \mathcal{L}^{\otimes d})}= 1-O\left( d^{n-1}p^{-c_1\frac{d}{p}} \right).
	\]
	which means that the proportion of $\sigma\in \ho^0(\X_p, \li^{\otimes d})$ such that $\mathrm{div}\sigma$ has no singular point of degree strictly larger than $\frac{d}{nN(p)}$, that is, elements not contained in $\mathcal{Q}_{d,p,U}^{\mathrm{high}}$, is $1-O\left( d^{n-1}p^{-c_1\frac{d}{p}} \right)$ with a constant $c_1$ depending only on $N_0$, $N_1$ and $n$. We therefore conclude that
	\[
		\frac{\#\mathcal{Q}_{d,p,U}^{\mathrm{high}}}{\#\ho^0(\X_p, \li^{\otimes d})}=O\left( d^{n-1}p^{-c_1\frac{d}{p}}\right)
	\]
	with a possibly smaller $c_1$. \\

	Now that we have proved Lemma \ref{U} except for the only prime number $p_0$ dividing $N_0+1$, we can run the same process with another constant $N'_0> N_0$ such that  $(N'_0+1,N_0+1)=1$. We get a control for the proportion of $\mathcal{Q}_{d,p_0,U}^{\mathrm{high}}$ with different constants as we have
	\[
		\mathcal{Q}_{d,p_0,U}^{\mathrm{high}}\subset \left\{ \sigma'\in \ho^0(\X_{p_0^2}, \li^{\otimes d})\ ;\ \exists x\in |\mathrm{div}\sigma'|\text{ of degree}\geq \frac{d}{nN'(p_0)}, \ \dim_{\kappa(x)}\frac{\m_{\mathrm{div}\sigma',x}}{\m^2_{\mathrm{div}\sigma', x}}=n \right\},
	\]
	where
	\[
		N'(p_0)=(N'_0+1)(N_1+p_0-1)+p_0>(N_0+1)(N_1+p_0-1)+p_0=N(p_0).
	\]
	So by modifying the constant $c_1$ and the constant involved in the big $O$, this case can be included in the uniform control. Therefore we proved Lemma \ref{U}.
\end{proof}

Now we proceed to prove Proposition \ref{hising}.
\begin{proof}[Proof of Proposition \ref{hising}]
	The main problem for controlling the proportion of $\mathcal{Q}_{d,p^2}^{\mathrm{high}}$ for $p\not\in \mathcal{S}$ is that $\X_{p}$ over $p$ might be singular. We decompose $\X_p$ into regular and singular part:
	\[
		\X_p=U_p\cup Z_p
	\]
	where $Z_p=\mathrm{Sing}(\X_p)$ is the singular locus of $\X_p$ and $U_p=\X_p-Z_p$. As $\X$ is regular, for a closed point $x$ in $\X_p$, if $x\in U_p$, $\dim_{\kappa(x)}\mathfrak{m}_{\X_p, x}/\mathfrak{m}^2_{\X_p, x}=n-1$; if $x\in Z_p$, $\dim_{\kappa(x)}\mathfrak{m}_{\X_p, x}/\mathfrak{m}^2_{\X_p, x}=\dim_{\kappa(x)}\mathfrak{m}_{\X, x}/\mathfrak{m}^2_{\X, x}=n$. Set
	\begin{eqnarray*}
		\mathcal{Q}_{d,U_p}^{\mathrm{high}}&=&  \left\{ \sigma\in \ho^0(\X_{p}, \li^{\otimes d})\ ;\ \exists x\in |\mathrm{div}\sigma\cap U_p|\text{ of degree}\geq \frac{d}{nN(p)}, \ \dim_{\kappa(x)}\frac{\m_{\mathrm{div}\sigma,x}}{\m^2_{\mathrm{div}\sigma, x}}=n-1 \right\},\\
		\mathcal{Q}_{d,Z_p}^{\mathrm{high}}&=&  \left\{ \sigma\in \ho^0(\X_{p}, \li^{\otimes d})\ ;\ \exists x\in |\mathrm{div}\sigma\cap Z_p|\text{ of degree}\geq \frac{d}{nN(p)}, \ \dim_{\kappa(x)}\frac{\m_{\mathrm{div}\sigma,x}}{\m^2_{\mathrm{div}\sigma, x}}=n \right\}.
	\end{eqnarray*}
	For a section $\sigma\in \ho^0(\X_{p^2}, \li^{\otimes d})$, assume that $\mathrm{div}\sigma$ contains a closed point $x$ with $\dim_{\kappa(x)}\frac{\m_{\mathrm{div}\sigma,x}}{\m^2_{\mathrm{div}\sigma, x}}=n$. Let $\overline{\sigma}=\sigma \ \mathrm{mod}\ p$ be its image in $\ho^0(\X_{p}, \li^{\otimes d})$. Then if $x\in U_p$, $x$ is also a singular point of $\mathrm{div}\overline{\sigma}\cap U_p$, i.e. $\dim_{\kappa(x)}\frac{\m_{\mathrm{div}\overline{\sigma},x}}{\m^2_{\mathrm{div}\overline{\sigma}, x}}=n-1$; if $x\in Z_p$, we have then $\dim_{\kappa(x)}\frac{\m_{\mathrm{div}\overline{\sigma},x}}{\m^2_{\mathrm{div}\overline{\sigma}, x}}=n$. So we have
	\[
		\{ \overline{\sigma}\ ;\ \sigma \in \mathcal{Q}_{d,p^2}^{\mathrm{high}} \}\subset \mathcal{Q}_{d,U_p}^{\mathrm{high}}\cup \mathcal{Q}_{d,Z_p}^{\mathrm{high}},
	\]
	hence
	\[
		\frac{\#\mathcal{Q}_{d,p^2}^{\mathrm{high}}}{\#\ho^0(\X_{p^2}, \li^{\otimes d})}\leq \frac{\#\mathcal{Q}_{d,U_p}^{\mathrm{high}}}{\#\ho^0(\X_{p}, \li^{\otimes d})}+\frac{\#\mathcal{Q}_{d,Z_p}^{\mathrm{high}}}{\#\ho^0(\X_{p}, \li^{\otimes d})}.
	\]
	We can bound the first term $\frac{\#\mathcal{Q}_{d,U_p}^{\mathrm{high}}}{\#\ho^0(\X_{p}, \li^{\otimes d})}$ by exactly the same method as in the proof of Lemma \ref{U}. The second term can be bounded by a slightly different way. 
		
	As now $(\Omega^1_{\X_p/\F_p})|_{Z_p}$ is locally free of rank $n$, we cover an open neighbourhood of $Z_p$ by open subschemes $V_{Z_p,\alpha}$ where we can find $t_{\alpha,1},\dots,t_{\alpha,n}\in \ho^0(V_{Z_p,\alpha},\mathcal{O}_{V_{Z_p,\alpha}})$ such that the image $\overline{\mathrm{d}t_i}$ of ${\mathrm{d}t_i}$ in $\left(\Omega^1_{V_{Z_p,\alpha}/\F_p}\right)\Big|_{V_{Z_p,\alpha}\cap Z_p}$ satisfies
	\[
		\left(\Omega^1_{V_{Z_p,\alpha}/\F_p}\right)\Big|_{V_{Z_p,\alpha}\cap Z_p}\simeq \bigoplus_{i=1}^n\mathcal{O}_{V_{Z_p,\alpha}\cap Z_p}\overline{\mathrm{d}t_i}.
	\]
	Then choosing convenient constants $N'_0\geq N_0,\ N'_1\geq N_1$ and setting
	\[
		N'(p)=(N'_0+1)(N'_1+p-1)+p,
	\]
	the same process as in the proof of Lemma \ref{U} gives us that the proportion of $\mathcal{Q}_{d,Z_p}^{\mathrm{high}}$, being a subset of
	\[
		\left\{ \sigma\in \ho^0(\X_{p}, \li^{\otimes d})\ ;\ \exists x\in |\mathrm{div}\sigma\cap Z_p|\text{ of degree}\geq \frac{d}{nN'(p)}, \ \dim_{\kappa(x)}\frac{\m_{\mathrm{div}\sigma,x}}{\m^2_{\mathrm{div}\sigma, x}}=n \right\}
	\]
	is bounded by 
	\begin{eqnarray*}
		& &1-\left( \prod_{i=0}^{n}( 1-O(d^i\cdot p^{2-\frac{d}{(N'_0+1)N'_1p}}) )  \right) \cdot \left( 1-O(d^{n}p^{-\frac{d}{nN'(p)}}) \right) \\
		&=&1-\left( 1-O(d^{n}\cdot p^{2-\frac{d}{(N'_0+1)N'_1p}} )\right)\cdot  \left( 1-O(d^{n}p^{-\frac{d}{nN'(p)}}) \right) \\
		&=&O\left( d^{n}p^{-c'_1\frac{d}{p}} \right).
	\end{eqnarray*}
	Thus Proposition \ref{hising} is proved.
\end{proof}

\subsection{Proof of Proposition \ref{fib}}\label{step4}

\begin{proof}
	As in the previous section, let $N_0$ be an integer satisfying the following conditions:
	\begin{enumerate}[label=\arabic*)]
		\item $N_0+1$ is a power of a prime number ;
		\item for any $d\geq N_0$, $(\li|_{\X_\mathcal{S}})^{\otimes d}$ is relatively very ample;
		\item for any $a,b\geq N_0$, we have a surjective morphism
		\[
			\ho^0(\X_{\mathcal{S}}, \li^{\otimes a}) \otimes_{\ho^0(\mathcal{S}, \mathcal{O}_{\mathcal{S}})}\ho^0(\X_{\mathcal{S}}, \li^{\otimes b})\longrightarrow \ho^0(\X_{\mathcal{S}}, \li^{\otimes (a+b)}).
		\]
	\end{enumerate}
	
	Let $N_1$ be as in Proposition \ref{high}.
	For each $p\in \mathcal{S}$, take 
	\[
		N(p)=(N_0+1)(N_1+p-1)+p=p(N_0+2)+(N_0+1)(N_1-1).
	\]
	In particular, $N(p)$ also satisfies the conditions (2) and (3) above.
	By Lemma \ref{smalldegree}, for any positive integer $r$ which satisfies $2c_0N(p)nrp^{(n-1)r}\leq d$, we have 
	\[
		\frac{\#\mathcal{P}'_{d,p^2,\leq r}}{\#\ho^0(\X_{p}, \li^{\otimes d})}=\prod_{x\in |\X_{p}|, \deg x\leq r} \left( 1-p^{-(n+1)\deg x} \right).
	\] 
	Let $r_{p,d}$ be the largest $r$ satisfying this condition. In order to have $r_{p,d}\geq 1$, we need 
	\[
		2c_0N(p)np^{n-1}\leq d.
	\]
	Set $C=2c_0(N_0+3)$. When $d$ is larger than $2c_0n(N_0+3)(N_0+1)^n(N_1-1)^n$, if $p$ satisfies $Cnp^n<d$, then either $p<(N_0+1)(N_1-1)$, in which case we have
	\begin{eqnarray*}
		2c_0N(p)np^{n-1} &=& 2c_0\Big[ p(N_0+2)+(N_0+1)(N_1-1)\Big]np^{n-1} \\
		&<&2c_0\Big[ (N_0+2)(N_0+1)(N_1-1)+(N_0+1)(N_1-1)\Big]np^{n-1}\\
		&=&2c_0(N_0+3)(N_0+1)(N_1-1)np^{n-1}\\
		&<&2c_0n(N_0+3)(N_0+1)^n(N_1-1)^n\leq d ;
	\end{eqnarray*}
	or $(N_0+1)(N_1-1)\leq p <Cnp^n<d$, so that
	\begin{eqnarray*}
		2c_0N(p)np^{n-1}&=& 2c_0[p(N_0+2)+(N_0+1)(N_1-1)]np^{n-1} \\
		&\leq& 2c_0(N_0+3)pnp^{n-1}\\
		&=&Cnp^n<d.
	\end{eqnarray*}
	So the above condition is satisfied, hence $r_{p,d}\geq 1$.

	Since
	\[
		\mathcal{P}'_{d,p^2}\subset \mathcal{P}'_{d,p^2,\leq r_{p,d}}\subset \mathcal{P}'_{d,p^2}\cup \mathcal{Q}^{\mathrm{med}}_{d,p^2,r_{p,d}}\cup \mathcal{Q}^{\mathrm{high}}_{d,p^2},
	\]
	we have
	\begin{eqnarray*}
		&&\left| \frac{\#\mathcal{P}'_{d,p^2}}{\#\ho^0(\X_{p^2}, \li^{\otimes d})}-\zeta_{\X_p}(n+1)^{-1} \right|\\
		&\leq& \left| \frac{\#\mathcal{P}'_{d,p^2}}{\#\ho^0(\X_{p^2}, \li^{\otimes d})}-\frac{\#\mathcal{P}'_{d,p^2,\leq r_{p,d}}}{\#\ho^0(\X_{p^2}, \li^{\otimes d})} \right| +\left| \frac{\#\mathcal{P}'_{d,p^2,\leq r_{p,d}}}{\#\ho^0(\X_{p^2}, \li^{\otimes d})}-\zeta_{\X_p}(n+1)^{-1} \right| \\
		&\leq & \frac{\#\mathcal{Q}^{\mathrm{med}}_{d,p^2,r_{p,d}}}{\#\ho^0(\X_{p^2}, \li^{\otimes d})} +\frac{\#\mathcal{Q}^{\mathrm{high}}_{d,p^2}}{\#\ho^0(\X_{p^2}, \li^{\otimes d})} + \left| \zeta_{\X_p}(n+1)^{-1}-\prod_{x\in |\X_{p^2}|, \deg x\leq r_{p,d}} \left( 1-p^{-(n+1)\deg x} \right) \right|.
	\end{eqnarray*}
	By Lemma \ref{medi},
	\[
		 \frac{\#\mathcal{Q}^{\mathrm{med}}_{d,p^2,r_{p,d}}}{\#\ho^0(\X_{p^2}, \li^{\otimes d})}<2c_0p^{-2(r_{p,d}+1)}.
	\]
	$ $

	By the choice of $r_{p,d}$, we have
	\[
		2c_0N(p)n(r_{p,d}+1) \cdot p^{(n-1)(r_{p,d}+1)}>d.
	\]
	So
	\begin{eqnarray*}
		p^{-(r_{p,d}+1)}&=&\left( p^{n(r_{p,d}+1)} \right)^{-\frac{1}{n}}\\
		&<&\left( (r_{p,d}+1)p^{(n-1)(r_{p,d}+1)} \right)^{-\frac{1}{n}}\\
		&<& \left(\frac{d}{2c_0N(p)}\right)^{-\frac{1}{n}}\\
		&=&O\left( \Big(\frac{d}{p}\Big)^{-\frac{1}{n}} \right).
	\end{eqnarray*}
	Therefore we have 
	\[
		 \frac{\#\mathcal{Q}^{\mathrm{med}}_{d,p^2,r_{p,d}}}{\#\ho^0(\X_{p^2}, \li^{\otimes d})}=O\left( \Big(\frac{d}{p}\Big)^{-\frac{2}{n}} \right),
	\]
	where the coefficient involved in is independent of $d,p$.

	By Proposition \ref{high}, we have
	\[
		\frac{\#\mathcal{Q}_{d,p^2}^{\mathrm{high}}}{\#\ho^0(\X_{p^2}, \li^{\otimes d})}=O\left( d^{n}p^{-c_1\frac{d}{p}} \right),
	\]
	where again the coefficient involved in is independent of $d,p$.

	Note that Lemma \ref{zetafinite} shows
	\[
		\left| \prod_{x\in |\X_{p}|, \deg x\leq r_{p,d}} \left( 1-p^{-(n+1)\deg x} \right) - \zeta_{\X_p}(n+1)^{-1} \right|\leq 4c_0 p^{-2(r_{p,d}+1)}=O\left( \Big(\frac{d}{p}\Big)^{-\frac{2}{n}}\right).
	\]
	Finally, by putting together all these three inequalities, we get
	\begin{eqnarray*}
		& & \left| \frac{\#\mathcal{P}'_{d,p^2}}{\#\ho^0(\X_{p^2}, \li^{\otimes d})}-\zeta_{\X_p}(n+1)^{-1} \right|\\
		&\leq&\frac{\#\mathcal{Q}^{\mathrm{med}}_{d,p^2,r_{p,d}}}{\#\ho^0(\X_{p^2}, \li^{\otimes d})} +\frac{\#\mathcal{Q}^{\mathrm{high}}_{d,p^2}}{\#\ho^0(\X_{p^2}, \li^{\otimes d})} + \left| \zeta_{\X_p}(n+1)^{-1}-\prod_{x\in |\X_{p^2}|, \deg x\leq r_{p,d}} \left( 1-p^{-(n+1)\deg x} \right) \right| \\
		&=& O\left( \Big(\frac{d}{p}\Big)^{-\frac{2}{n}} \right)+O\left( d^{n}p^{-c_1\frac{d}{p}} \right)+O\left( \Big(\frac{d}{p}\Big)^{-\frac{2}{n}}\right)\\
		&=& O\left( \Big(\frac{d}{p}\Big)^{-\frac{2}{n}}\right),
	\end{eqnarray*}
	where the coefficient involved in is independent of $d,p$, which is what we need to show.
\end{proof}

\section{Singular points of small residual characteristic}\label{smallresidual}

In this section, we will show the following result : 

\begin{prop}\label{small}
	Let $\X$ be a regular projective arithmetic variety of absolute dimension $n$, and let $\Li$ be an ample Hermitian line bundle on $\X$. Set
	\[
		\mathcal{P}_{d, p\leq d^{\frac{1}{n+1}}}:=\left\{ \sigma\in \ho^0(\X,\Li^{\otimes d})\ ;\ \begin{array}{ll}
		\mathrm{div}\sigma \text{ has no singular point of residual} \\
		\text{characteristic smaller than or equal to } d^{\frac{1}{n+1}}
		\end{array}\right\}.
	\]
	When $d$ is sufficiently large, we have
	\[
		\left|  \frac{\#\left(\mathcal{P}_{d, p\leq d^{\frac{1}{n+1}}}\cap \ho^0_{\mathrm{Ar}}(\X, \overline{\mathcal{L}}^{\otimes d})\right)}{\#\ho^0_{\mathrm{Ar}}(\X, \overline{\mathcal{L}}^{\otimes d})} -\zeta_\X(n+1)^{-1} \right| = O(d^{-\frac{1}{n+1} }).
	\]
	Here the constant involved in the big $O$ depends only on $\X$.
	
	In particular, denoting $\mathcal{P}_B=\bigcup_{d>0}\mathcal{P}_{d, p\leq d^{\frac{1}{n+1}}}$, we have
	\[
		\mu_{\mathrm{Ar}}(\mathcal{P}_B)=\zeta_\X(n+1)^{-1}.
	\]
\end{prop}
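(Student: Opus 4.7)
The plan is to piece together the single-fiber estimates of Theorem \ref{fiber} via the Chinese Remainder Theorem. Set $R := d^{1/(n+1)}$ and $N := \prod_{p \leq R} p^2$. By the prime number theorem, $\log N = 2\sum_{p\leq R}\log p = O(R) = O(d^{1/(n+1)})$, so $N \leq \exp(d^{\alpha_0})$ for any fixed $\alpha_0 \in (1/(n+1), 1)$, placing $N$ in the range of applicability of Proposition \ref{modN}. The CRT gives $\X_N \cong \coprod_{p\leq R} \X_{p^2}$, hence
\[
    \ho^0(\X_N,\Li^{\otimes d}) \;\cong\; \prod_{p\leq R} \ho^0(\X_{p^2},\Li^{\otimes d}).
\]
As explained at the start of Section \ref{effective}, a section $\sigma \in \ho^0(\X,\Li^{\otimes d})$ has no singular point of $\mathrm{div}\sigma$ on the fiber $\X_p$ if and only if its image modulo $p^2$ lies in $\mathcal{P}'_{d,p^2}$. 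Consequently $\mathcal{P}_{d,p\leq R}$ is exactly the $\psi_{d,N}$-preimage of $\mathcal{P}'_{d,N} := \prod_{p\leq R} \mathcal{P}'_{d,p^2}$ inside $\ho^0_{\mathrm{Ar}}(\X,\Li^{\otimes d})$.

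Applying Proposition \ref{modN} in the exact manner used to deduce Theorem \ref{fiber} from Proposition \ref{fib}, but with $p^2$ replaced by $N$, one obtains
\[
    \left| \frac{\#\mathcal{P}_{d,p\leq R}}{\#\ho^0_{\mathrm{Ar}}(\X,\Li^{\otimes d})} - \frac{\#\mathcal{P}'_{d,N}}{\#\ho^0(\X_N,\Li^{\otimes d})} \right| \leq e^{-\eta d}.
\]
Via the CRT isomorphism, the right-hand ratio factors as $\prod_{p\leq R} \#\mathcal{P}'_{d,p^2}/\#\ho^0(\X_{p^2},\Li^{\otimes d})$. For $d$ sufficiently large every prime $p \leq R$ satisfies the hypothesis $Cnp^n < d$ of Theorem \ref{fiber} (since $Cn p^n \leq Cn d^{n/(n+1)} < d$ as soon as $d^{1/(n+1)} > Cn$), so Proposition \ref{fib} gives, uniformly in $p$,
\[
    \frac{\#\mathcal{P}'_{d,p^2}}{\#\ho^0(\X_{p^2},\Li^{\otimes d})} = \zeta_{\X_p}(n+1)^{-1} + O\bigl( (d/p)^{-2/n} \bigr).
\]

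To compare the two products I would use the elementary inequality $|\prod a_p - \prod b_p| \leq \sum |a_p - b_p|$ valid for $a_p,b_p \in (0,1]$, combined with
\[
    \sum_{p\leq R} (d/p)^{-2/n} \;\leq\; d^{-2/n} \sum_{p\leq R} p^{2/n} \;\leq\; d^{-2/n}\cdot R^{1+2/n} \;=\; d^{-1/(n+1)}.
\]
This yields $\bigl|\prod_{p\leq R} \#\mathcal{P}'_{d,p^2}/\#\ho^0(\X_{p^2},\Li^{\otimes d}) - \prod_{p\leq R} \zeta_{\X_p}(n+1)^{-1}\bigr| = O(d^{-1/(n+1)})$. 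Combining with Lemma \ref{zetaintegral}, which gives $|\prod_{p\leq R}\zeta_{\X_p}(n+1)^{-1} - \zeta_\X(n+1)^{-1}| = O(R^{-1}) = O(d^{-1/(n+1)})$, and absorbing the exponentially small $e^{-\eta d}$ via the triangle inequality, produces the claimed bound. The final assertion $\mu(\mathcal{P}_B) = \zeta_\X(n+1)^{-1}$ is then immediate since $\mathcal{P}_B \cap \ho^0_{\mathrm{Ar}}(\X,\Li^{\otimes d}) = \mathcal{P}_{d,p\leq d^{1/(n+1)}}$ by definition of $\mathcal{P}_B$.

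The main obstacle is the balancing of two competing error terms: the per-fiber error from Proposition \ref{fib}, whose sum \emph{grows} with $R$, versus the tail error from Lemma \ref{zetaintegral}, which \emph{decreases} with $R$. The choice $R = d^{1/(n+1)}$ is precisely the one rendering both of these errors of the same order $d^{-1/(n+1)}$, and making this balance explicit relies crucially on the \emph{uniformity in $p$} of the implied constant in Proposition \ref{fib}, without which the CRT-based reduction would degrade the final error estimate.
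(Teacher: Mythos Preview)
Your proof is correct and follows essentially the same route as the paper: the paper also sets $N_r=\prod_{p\leq r}p^2$, uses the Chinese Remainder Theorem together with Proposition~\ref{modN} to reduce to the product of the per-fiber ratios (Lemma~\ref{many}), bounds $\sum_{p\leq r}p^{2/n}=O(r^{(n+2)/n})$ and specializes to $r=d^{1/(n+1)}$ (Lemma~\ref{br}), and finishes with Lemma~\ref{zetaintegral}. Your presentation is slightly more streamlined---you invoke Chebyshev's bound $\sum_{p\leq R}\log p=O(R)$ directly rather than the cruder $(r!)^2$ estimate, and you use the telescoping inequality $\lvert\prod a_p-\prod b_p\rvert\leq\sum\lvert a_p-b_p\rvert$ for $a_p,b_p\in[0,1]$ explicitly---but the argument is the same.
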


\subsection{Union of a finite number of fibers}

Let $p,q$ be two different prime numbers. We have
\[
	\X_{p^2q^2} =\X\times_{\Spec\ \Z}\Spec(\Z/p^2q^2\Z)\simeq \X_{p^2}\amalg \X_{q^2}.
\]
For any $d\geq 0$, we have an isomorphism
\[
	\lambda_{p^2q^2}: \ho^0(\X_{p^2q^2},\li^{\otimes d}) \stackrel{\sim}\longrightarrow  \ho^0(\X_{p^2},\li^{\otimes d}) \times  \ho^0(\X_{q^2},\li^{\otimes d}).
\]
For any $\sigma \in \ho^0_{\mathrm{Ar}}(\X, \overline{\mathcal{L}}^{\otimes d})$, $\sigma\in \mathcal{P}_{p,d}\cap \mathcal{P}_{q,d}$ if and only if the restriction map
\[
	\psi_{d,p^2q^2}: \ho^0_{\mathrm{Ar}}(\X,\Li^{\otimes d}) \longrightarrow \ho^0(\X_{p^2q^2},\li^{\otimes d})
\]
sends $\sigma$ to an element in the set $\lambda^{-1}_{p^2q^2}(\mathcal{P}'_{p^2,d}\times \mathcal{P}'_{q^2,d})$. Therefore applying Proposition \ref{modN}, we have
\[
	\lim_{d\rightarrow \infty}\frac{\#\left(\mathcal{P}_{p,d}\cap \mathcal{P}_{q,d}\cap \ho^0_{\mathrm{Ar}}(\X, \overline{\mathcal{L}}^{\otimes d})\right)}{\#\ho^0_{\mathrm{Ar}}(\X, \overline{\mathcal{L}}^{\otimes d})}=\zeta_{\X_p}(n+1)^{-1}\zeta_{\X_q}(n+1)^{-1}.
\]
More generally, for any finite set $I$ of prime numbers $p$, we have
\[
	\lim_{d\rightarrow \infty}\frac{\#\left(\bigcap_{p\in I}\mathcal{P}_{p,d}\cap \ho^0_{\mathrm{Ar}}(\X, \overline{\mathcal{L}}^{\otimes d})\right)}{\#\ho^0_{\mathrm{Ar}}(\X, \overline{\mathcal{L}}^{\otimes d})}=\prod_{p\in I}\zeta_{\X_p}(n+1)^{-1}.
\]

By Lemma \ref{freemod}, we may only consider $d>0$ such that for any positive integer $N$, we have
\[
	\ho^0(\X_N, \li^{\otimes d})\simeq \ho^0(\X, \li^{\otimes d})/(N\cdot\ho^0(\X, \li^{\otimes d})).
\]
Fix a positive constant $\alpha_0$ such that $\frac{3}{4}<\alpha_0<1$. By Proposition \ref{modN}, when $d$ is large enough, for any $N<e^{d^{\alpha_0}}$, the map
\[
	\psi_{d,N}: \ho^0_{\mathrm{Ar}}(\X,\Li^{\otimes d}) \longrightarrow \ho^0(\X_N,\li^{\otimes d})
\]
is surjective and there exists a positive constant $\eta$ with 
\[
	\frac{| \#\psi^{-1}(\sigma)- \#\psi^{-1}(\sigma') |}{\#\psi^{-1}(\sigma)}\leq e^{-\eta d}
\]
for any two sections $\sigma,\sigma'$ in $ \ho^0(\X_N,\li^{\otimes d})$.

For a positive integer $r$, take $N_r=\prod_{p\leq r}p^2$. 
\begin{lem}\label{many}
	Let $C$ be the constant in Theorem \ref{fiber}.For any large enough integer $d$, and for any integer $r$ satisfying $Cnr^n<d$ with $n=\dim \X$ and $N_r<e^{d^{\alpha_0}}$, we have
	\[
		\left|  \frac{\#\left( \bigcap_{p\leq r}\mathcal{P}_{d,p}\cap \ho^0_{\mathrm{Ar}}(\X, \overline{\mathcal{L}}^{\otimes d})\right)}{\#\ho^0_{\mathrm{Ar}}(\X, \overline{\mathcal{L}}^{\otimes d})} -\prod_{p\leq r}\zeta_{\X_p}(n+1)^{-1}  \right| =O\left( \frac{\sum_{p\leq r}p^{\frac{2}{n}}}{d^{\frac{2}{n}}} \right).
	\]
\end{lem}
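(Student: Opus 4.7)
The strategy is to reduce the estimate to the single-fiber case treated in Proposition \ref{fib} by means of the Chinese Remainder Theorem, and then to use the near-uniformity of fibers of the restriction map $\psi_{d, N_r}$ provided by Proposition \ref{modN}.

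\textbf{Step 1 (CRT reduction).} Since the ideals $(p^2)$ for $p \leq r$ are pairwise coprime, we have a natural isomorphism
\[
\ho^0(\X_{N_r}, \Li^{\otimes d}) \;\xrightarrow{\sim}\; \prod_{p \leq r} \ho^0(\X_{p^2}, \Li^{\otimes d}),
\]
and the restriction map $\psi_{d,N_r}$ is identified with the product of the maps $\psi_{d,p^2}$. Under this identification, the intersection $\bigcap_{p \leq r}\mathcal{P}_{d,p}$ is exactly the preimage under $\psi_{d, N_r}$ of $\prod_{p \leq r} \mathcal{P}'_{d, p^2}$, where $\mathcal{P}'_{d,p^2}$ is the set defined in Proposition \ref{fib}.

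\textbf{Step 2 (uniformity of fibers).} The hypothesis $N_r < e^{d^{\alpha_0}}$ lets me apply Proposition \ref{modN}: the map $\psi_{d,N_r}$ is surjective and its fibers have cardinalities differing by a factor $1 + O(e^{-\eta d})$. Summing the cardinalities of these fibers over the set $\prod_{p \leq r}\mathcal{P}'_{d,p^2}$ gives
\[
\frac{\#\bigcap_{p\leq r}\mathcal{P}_{d,p}}{\#\ho^0_{\mathrm{Ar}}(\X, \Li^{\otimes d})}
\;=\; \bigl(1 + O(e^{-\eta d})\bigr)\prod_{p\leq r} \frac{\#\mathcal{P}'_{d, p^2}}{\#\ho^0(\X_{p^2}, \Li^{\otimes d})}.
\]

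\textbf{Step 3 (single-fiber estimate).} For each $p \leq r$, the hypothesis $Cnr^n < d$ implies $Cnp^n < d$, so Proposition \ref{fib} applies and yields
\[
\frac{\#\mathcal{P}'_{d, p^2}}{\#\ho^0(\X_{p^2}, \Li^{\otimes d})} \;=\; \zeta_{\X_p}(n+1)^{-1} + \epsilon_p, \qquad \epsilon_p = O\bigl(p^{2/n}d^{-2/n}\bigr),
\]
with an implied constant independent of $p$ and $d$.

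\textbf{Step 4 (combining the errors).} Writing $a_p = \zeta_{\X_p}(n+1)^{-1} \in (0,1]$, I use the telescoping identity
\[
\prod_{p\leq r}(a_p + \epsilon_p) \,-\, \prod_{p\leq r} a_p \;=\; \sum_{p \leq r} \epsilon_p \prod_{q < p}(a_q + \epsilon_q)\prod_{q > p} a_q.
\]
Since each $a_q \leq 1$, the right-hand side is bounded in absolute value by $\bigl(\sum_{p\leq r}|\epsilon_p|\bigr)\prod_{q\leq r}(1 + |\epsilon_q|)$. The second factor equals $\exp\bigl(O(\sum_{q\leq r}|\epsilon_q|)\bigr)$; as soon as this sum remains bounded (which is the situation in which the lemma is informative, and is automatic in the range where Proposition \ref{small} is later invoked), this contributes only a bounded multiplicative constant. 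Combining Steps 2, 3, and 4 and absorbing the exponentially small $O(e^{-\eta d})$ error into the polynomial $O(\sum_p p^{2/n}/d^{2/n})$ error yields the claimed bound.

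\textbf{Main obstacle.} The real point of care is Step 4: one must ensure that the single-fiber errors combine additively rather than multiplicatively in the number of primes $p \leq r$. This is controlled precisely because each factor $a_p$ lies in $(0,1]$ and the sum $\sum_{p\leq r}|\epsilon_p|$ is uniformly bounded in the regime allowed by the hypothesis on $r$, so the ``spread'' of the product $\prod(a_p + \epsilon_p)$ around its main term $\prod a_p$ is linear, not exponential, in the error terms.
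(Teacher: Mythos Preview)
Your proof is correct and follows the same route as the paper: Chinese Remainder Theorem to factor $\ho^0(\X_{N_r},\Li^{\otimes d})$, Proposition~\ref{modN} for near-uniformity of the fibers of $\psi_{d,N_r}$, Proposition~\ref{fib} on each prime, and then a product expansion to combine the errors. One simplification for Step~4: since each factor $a_p+\epsilon_p=\#\mathcal{P}'_{d,p^2}/\#\ho^0(\X_{p^2},\Li^{\otimes d})$ is itself a proportion in $[0,1]$, the products $\prod_{q<p}(a_q+\epsilon_q)\prod_{q>p}a_q$ in your telescoping are automatically $\leq 1$, giving $\bigl|\prod(a_p+\epsilon_p)-\prod a_p\bigr|\leq \sum_{p\leq r}|\epsilon_p|$ directly, so the auxiliary factor $\prod(1+|\epsilon_q|)$ and the boundedness caveat on $\sum|\epsilon_q|$ are unnecessary.
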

\begin{proof}
	The Chinese remainder theorem implies that
	\begin{eqnarray*}
		\ho^0(\X_{N_r}, \li^{\otimes d}) &\simeq& \ho^0(\X, \li^{\otimes d})\otimes_{\mathbb{Z}}\mathbb{Z}/N_r\mathbb{Z} \\
		& \simeq& \ho^0(\X, \li^{\otimes d})\otimes_{\mathbb{Z}}\left( \prod_{p\leq r}\mathbb{Z}/p^2\mathbb{Z}\right)\\
		&\simeq & \prod_{p\leq r}\ho^0(\X_{p^2}, \li^{\otimes d}).
	\end{eqnarray*}
	Moreover we have $\X_{N_r}=\coprod_{p\leq r}\X_{p^2}$. Set
	\[
		E_{d,r}:=\{ \sigma\in \ho^0(\X_{N_r}, \li^{\otimes d})\ ;\ \forall x\in |\mathrm{div}\sigma|,\ \dim_{\kappa(x)}\frac{\m_{\mathrm{div}\sigma,x}}{\m^2_{\mathrm{div}\sigma, x}}=n-1 \}.
	\]
	Then a section $\sigma\in \ho^0(\X_{N_r}, \li^{\otimes d})$ is contained in $E_{d,r}$ if and only if for any $p\leq r$, its restriction $\sigma|_{\X_{p^2}}$ is contained in $\mathcal{P}'_{p^2,d}$. In particular, a section $\sigma \in \ho^0(\X, \li^{\otimes d})$ satisfies $\psi_{d,N_r}(\sigma)\in E_{d,r}$ if and only if $\psi_{d,p^2}(\sigma)\in \mathcal{P}'_{p^2,d}$, for all $p\leq r$, which means exactly that this $\sigma$ is contained in $ \bigcap_{p\leq r}\mathcal{P}_{d,p}$. On the other hand, still by the Chinese remainder theorem, 
	\[
		\frac{\#E_{d,r}}{\#\ho^0(\X_{N_r}, \li^{\otimes d})}=\prod_{p\leq r} \frac{\#\mathcal{P}'_{p^2,d}}{\#\ho^0(\X_{p^2}, \li^{\otimes d} )}.
	\]

	As said above, with some positive constant $\eta$, we have
	\[
		\frac{| \#\psi^{-1}(\sigma)- \#\psi^{-1}(\sigma') |}{\#\psi^{-1}(\sigma)}\leq e^{-\eta d},
	\]
	with $\sigma,\sigma'$ in $ \ho^0(\X_{N_r},\li^{\otimes d})$. Fixing one $\sigma$, we can sum up for all $\sigma'\in E_{d,r}$ and get
	\begin{eqnarray*}
		& &\left| \#E_{d,r}-\frac{\#\left(\bigcap_{p\leq r}\mathcal{P}_{d,p}\right)}{\#\psi^{-1}(\sigma)} \right| \\
		&=&\frac{| \left(\#E_{d,r}\cdot \#\psi^{-1}(\sigma)\right)- \#\psi^{-1}(E_{d,r}) |}{\#\psi^{-1}(\sigma)} \\
		&\leq& \sum_{\sigma'\in E_{d,r}}\frac{| \#\psi^{-1}(\sigma)- \#\psi^{-1}(\sigma') |}{\#\psi^{-1}(\sigma)} \\
		&\leq& \#E_{d,r}\cdot e^{-\eta d},
	\end{eqnarray*}
	where the last inequality follows from Proposition \ref{modN}. This can also be written as
	\[
		\left| \#\left(\bigcap_{p\leq r}\mathcal{P}_{d,p}\cap \ho^0_{\mathrm{Ar}}(\X, \overline{\mathcal{L}}^{\otimes d})\right)-\left(\#E_{d,r}\cdot\#\psi^{-1}(\sigma)\right) \right|\leq \left(\#E_{d,r}\cdot\#\psi^{-1}(\sigma)\right) e^{-\eta d}.
	\]
	Now we take the sum for all $\sigma \in \ho^0(\X_{N_r}, \li^{\otimes d})$ and get
	\begin{eqnarray*}
		& &\left| \left(\#\left(\bigcap_{p\leq r}\mathcal{P}_{d,p}\cap \ho^0_{\mathrm{Ar}}(\X, \Li^{\otimes d})\right)\right)\cdot \#\ho^0(\X_{N_r}, \li^{\otimes d}) -\left(\#E_{d,r}\cdot\#\ho^0_{\mathrm{Ar}}(\X, \Li^{\otimes d})\right) \right|  \\
		&=& \left| \sum_{\sigma \in \ho^0(\X_{N_r}, \li^{\otimes d})}\left( \#\left(\bigcap_{p\leq r}\mathcal{P}_{d,p}\cap \ho^0_{\mathrm{Ar}}(\X, \Li^{\otimes d})\right)-\left(\#E_{d,r}\cdot\#\psi^{-1}(\sigma)\right) \right)\right| \\
		&\leq& \sum_{\sigma \in \ho^0(\X_{N_r}, \li^{\otimes d})}\left| \#\left(\bigcap_{p\leq r}\mathcal{P}_{d,p}\cap \ho^0_{\mathrm{Ar}}(\X, \Li^{\otimes d}) \right)-\left(\#E_{d,r}\cdot\#\psi^{-1}(\sigma)\right) \right| \\
		&\leq& \sum_{\sigma \in \ho^0(\X_{N_r}, \li^{\otimes d})}\left(\#E_{d,r}\cdot\#\psi^{-1}(\sigma)\right) e^{-\eta d}\\
		&=& \left(\#E_{d,r}\cdot\#\ho^0_{\mathrm{Ar}}(\X, \Li^{\otimes d})\right) e^{-\eta d} \\
		&\leq& \left(\#\ho^0(\X_{N_r}, \li^{\otimes d})\cdot\#\ho^0_{\mathrm{Ar}}(\X, \Li^{\otimes d})\right) e^{-\eta d}.
	\end{eqnarray*}
	Dividing both side by $\#\ho^0(\X_{N_r}, \li^{\otimes d})\cdot\#\ho^0_{\mathrm{Ar}}(\X, \Li^{\otimes d})$, we get
	\[
		\left| \frac{\#\left(\bigcap_{p\leq r}\mathcal{P}_{d,p}\cap \ho^0_{\mathrm{Ar}}(\X, \Li^{\otimes d})\right)}{\#\ho^0_{\mathrm{Ar}}(\X, \Li^{\otimes d})} -\frac{\#E_{d,r}}{\#\ho^0(\X_{N_r}, \li^{\otimes d})} \right|\leq e^{-\eta d}.
	\]
	Since we already know that 
	\[
		\frac{\#E_{d,r}}{\#\ho^0(\X_{N_r}, \li^{\otimes d})}=\prod_{p\leq r} \frac{\#\mathcal{P}'_{p^2,d}}{\#\ho^0(\X_{p^2}, \li^{\otimes d} )},
	\]
	the inequality can be written as
	\[
		\left| \frac{\#\left(\bigcap_{p\leq r}\mathcal{P}_{d,p}\cap \ho^0_{\mathrm{Ar}}(\X, \Li^{\otimes d})\right)}{\#\ho^0_{\mathrm{Ar}}(\X, \Li^{\otimes d})} -\prod_{p\leq r} \frac{\#\mathcal{P}'_{p^2,d}}{\#\ho^0(\X_{p^2}, \li^{\otimes d} )} \right|\leq e^{-\eta d}.
	\]
	
	Thus to finish the proof, it suffices to show the following lemma : 
	
	\begin{lem}\label{manymany}
	Under the condition of Lemma \ref{many}, we have
	\[
		\left| \prod_{p\leq r} \frac{\#\mathcal{P}'_{p^2,d}}{\#\ho^0(\X_{p^2}, \li^{\otimes d} )} - \prod_{p\leq r} \zeta_{\X_p}(1+n)^{-1} \right| =O\left( \frac{\sum_{p\leq r}p^{\frac{2}{n}}}{d^{\frac{2}{n}}} \right).
	\]
	\end{lem}
	\begin{proof}
		By the Lemma \ref{fib}, for any prime number $p$ satisfying $Cnp^n<d$, we have
		\[
			\frac{\#\mathcal{P}'_{p^2,d}}{\#\ho^0(\X_{p^2}, \li^{\otimes d} )}=\zeta_{\X_p}(1+n)^{-1} + O\left( \Big(\frac{d}{p}\Big)^{-\frac{2}{n}} \right)
		\]	
		with the constant involved in big $O$ independent of $p$ and $d$. Therefore we can calculate the product as
		\begin{eqnarray*}
			\prod_{p\leq r} \frac{\#\mathcal{P}'_{p^2,d}}{\#\ho^0(\X_{p^2}, \li^{\otimes d} )} &=& \prod_{p\leq r} \left( \zeta_{\X_p}(1+n)^{-1} + O\left( \Big(\frac{d}{p}\Big)^{-\frac{2}{n}} \right) \right) \\
			&=& \prod_{p\leq r} \zeta_{\X_p}(1+n)^{-1} +\sum_{p\leq r} O\left( \Big(\frac{p}{d}\Big)^{\frac{2}{n}} \right) \\
			&=& \prod_{p\leq r} \zeta_{\X_p}(1+n)^{-1} + O\left( \frac{\sum_{p\leq r} p^{\frac{2}{n}}}{d^{\frac{2}{n}}} \right).
		\end{eqnarray*}
	\end{proof}
	The above lemma shows the result, as
	\begin{eqnarray*}
		& &\left|  \frac{\#\left( \bigcap_{p\leq r}\mathcal{P}_{d,p}\cap \ho^0_{\mathrm{Ar}}(\X, \overline{\mathcal{L}}^{\otimes d})\right)}{\#\ho^0_{\mathrm{Ar}}(\X, \overline{\mathcal{L}}^{\otimes d})} -\prod_{p\leq r}\zeta_{\X_p}(n+1)^{-1}  \right| \\
		&\leq& \left| \frac{\#\left(\bigcap_{p\leq r}\mathcal{P}_{d,p}\cap \ho^0_{\mathrm{Ar}}(\X, \overline{\mathcal{L}}^{\otimes d})\right)}{\#\ho^0_{\mathrm{Ar}}(\X, \overline{\mathcal{L}}^{\otimes d})} -\prod_{p\leq r} \frac{\#\mathcal{P}'_{p^2,d}}{\#\ho^0(\X_{p^2}, \li^{\otimes d} )} \right| \\
		& &\qquad\qquad\qquad\qquad+\left| \prod_{p\leq r} \frac{\#\mathcal{P}'_{p^2,d}}{\#\ho^0(\X_{p^2}, \li^{\otimes d} )} - \prod_{p\leq r} \zeta_{\X_p}(1+n)^{-1} \right|  \\
		&=&O\left( e^{-\eta d} \right)+O\left( \frac{\sum_{p\leq r}p^{\frac{2}{n}}}{d^{\frac{2}{n}}} \right)\\
		&=&O\left( \frac{\sum_{p\leq r}p^{\frac{2}{n}}}{d^{\frac{2}{n}}} \right).
	\end{eqnarray*}
\end{proof}

\subsection{Bound on number of fibers}

Now we prove that we can choose $r=d^{\frac{1}{n+1}}$. As 
\[
	\mathcal{P}_{d, p\leq d^{\frac{1}{n+1}}}=\bigcap_{p\leq d^{\frac{1}{n+1}}}\mathcal{P}_{d,p},
\]
we will in fact prove the following :
\begin{lem}\label{br}
	For large enough integer $d$, we have
	\[
		\left|  \frac{\#\left( \mathcal{P}_{d, p\leq d^{\frac{1}{n+1}}}\cap \ho^0_{\mathrm{Ar}}(\X, \overline{\mathcal{L}}^{\otimes d})\right)}{\#\ho^0_{\mathrm{Ar}}(\X, \overline{\mathcal{L}}^{\otimes d})} -\prod_{p\leq d^{\frac{1}{n+1}}}\zeta_{\X_p}(n+1)^{-1}  \right|=O(d^{-\frac{1}{n+1} }).
	\]
\end{lem}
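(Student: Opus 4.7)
The plan is to apply Lemma \ref{many} with the specific choice $r = \lfloor d^{1/(n+1)} \rfloor$ and verify that its two hypotheses are met for $d$ large enough. First, I would check that $Cnr^n < d$: with $r \leq d^{1/(n+1)}$, one has $Cnr^n \leq Cn\, d^{n/(n+1)}$, which is less than $d$ as soon as $d$ is sufficiently large (depending only on $C$ and $n$, hence only on $\X$).

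Second, I would verify $N_r = \prod_{p \leq r} p^2 < e^{d^{\alpha_0}}$, where $\alpha_0$ is the constant from Proposition \ref{modN}. Since Proposition \ref{modN} allows us to pick any $\alpha_0 \in (0,1)$, I would fix $\alpha_0$ strictly between $1/(n+1)$ and $1$, e.g.\ $\alpha_0 = 1/2$ (noting $n \geq 1$, so $1/(n+1) \leq 1/2$). By Chebyshev's estimate on the first Chebyshev function, $\sum_{p \leq r} \log p = O(r)$, hence $\log N_r = 2\sum_{p \leq r}\log p = O(r) = O(d^{1/(n+1)})$, which is $o(d^{\alpha_0})$ by the choice of $\alpha_0$. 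Thus the hypothesis $N_r < e^{d^{\alpha_0}}$ holds for large $d$.

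Third, I would compute the error term supplied by Lemma \ref{many}. Comparing the sum to an integral,
\[
\sum_{p \leq r} p^{2/n} \leq \sum_{k=1}^{r} k^{2/n} = O\bigl(r^{2/n + 1}\bigr).
\]
Substituting $r = \lfloor d^{1/(n+1)}\rfloor$ and computing the exponent:
\[
\frac{\sum_{p \leq r} p^{2/n}}{d^{2/n}} = O\!\left(\frac{r^{2/n+1}}{d^{2/n}}\right) = O\!\left(d^{(2/n+1)/(n+1)\, -\, 2/n}\right),
\]
and a direct calculation gives $(2/n + 1)/(n+1) - 2/n = (2 + n - 2(n+1))/(n(n+1)) = -1/(n+1)$. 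Hence the error is $O(d^{-1/(n+1)})$, which is exactly the bound claimed in Lemma \ref{br}.

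There is no real obstacle here: the lemma is essentially a specialization of Lemma \ref{many}, and the only content is checking that $r = d^{1/(n+1)}$ is compatible with both the degree condition $Cnr^n < d$ (which forces $r \ll d^{1/n}$) and the congruence condition $N_r < e^{d^{\alpha_0}}$ (which by Chebyshev forces $r \ll d^{\alpha_0}$), and verifying that the resulting rate matches the target. The choice $r = d^{1/(n+1)}$ balances the geometric error term $d^{-2/n}\sum_{p\leq r} p^{2/n}$ against the number of primes involved, and is the optimal exponent achievable with the estimates at hand.
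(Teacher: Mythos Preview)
Your proposal is correct and follows essentially the same approach as the paper: specialize Lemma \ref{many} to $r = d^{1/(n+1)}$, verify its two hypotheses, and compute the resulting error exponent. The only minor difference is that you bound $N_r$ via Chebyshev's estimate $\sum_{p\leq r}\log p = O(r)$, whereas the paper uses the cruder bound $N_r < (r!)^2 < e^{2r\log r}$ (via $p_k \leq c_2 k\log k$); both suffice once one picks $\alpha_0 > 1/(n+1)$, and the rest of the computation is identical.
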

\begin{proof}
	Note that 
	\[
		N_r=\prod_{p\leq r}p^2\leq \prod_{k=1}^r k^2=(r!)^2
	\]
	and that $r!< r^r=\exp(r\log r)$. If $2r\log r<d^{\alpha_0}$, we get $N_r< \exp(d^{\alpha_0})$. Then by Proposition \ref{modN} the restriction morphism
	\[
		\psi_{d,N_r}: \ho^0_{\mathrm{Ar}}(\X,\Li^{\otimes d}) \longrightarrow \ho^0(\X_{N_r},\li^{\otimes d})
	\]
	is surjective. If moreover $r$ satisfies $Cnr^n<d$ as in Theorem \ref{fiber}, then by Lemma \ref{many}, we have
	\[
		\left|  \frac{\#\left( \bigcap_{p\leq r}\mathcal{P}_{d,p}\cap \ho^0_{\mathrm{Ar}}(\X, \Li^{\otimes d})\right)}{\#\ho^0_{\mathrm{Ar}}(\X, \Li^{\otimes d})} -\prod_{p\leq r}\zeta_{\X_p}(n+1)^{-1}  \right| =O\left( \frac{\sum_{p\leq r}p^{\frac{2}{n}}}{d^{\frac{2}{n}}} \right).
	\]
	
	Now as above, 
	\begin{eqnarray*}
		\sum_{p\leq r}p^{\frac{2}{n}} \leq \sum_{k=1}^r k^{\frac{2}{n}} < r\cdot  r^{\frac{2}{n}}= r^{\frac{n+2}{n}}.
	\end{eqnarray*}
	Thus for $r=d^{\frac{1}{n+1}}$ we have
	\[
		\sum_{p\leq d^{\frac{1}{n+1}}}p^{\frac{2}{n}}<d^{\frac{n+2}{n(n+1)}}=O(d^{\frac{n+2}{n(n+1)}}).
	\]
	It's easy to see that $r=d^{\frac{1}{n+1}}$ also satisfies conditions $2r\log r<d^{\alpha_0}$, $Cnr^n<d$ for large $d$. For this $r$, we have
	\begin{eqnarray*}
		\left|  \frac{\#\left( \bigcap_{p\leq d^{\frac{1}{n+1}}}\mathcal{P}_{d,p}\cap \ho^0_{\mathrm{Ar}}(\X, \overline{\mathcal{L}}^{\otimes d})\right)}{\#\ho^0_{\mathrm{Ar}}(\X, \overline{\mathcal{L}}^{\otimes d})} -\prod_{p\leq d^{\frac{1}{n+1}}}\zeta_{\X_p}(n+1)^{-1}  \right| &=&O\left( \frac{\sum_{p\leq d^{\frac{1}{n+1}}}p^{\frac{2}{n}}}{d^{\frac{2}{n}}} \right)\\
		&=&O(d^{\frac{n+2}{n(n+1)}-\frac{2}{n}})=O(d^{-\frac{1}{n+1} }).
	\end{eqnarray*}
\end{proof}

\subsection{Proof of Proposition \ref{small}}
	
\begin{proof}
	Apply Lemma \ref{zetaintegral} and take $R=d^{\frac{1}{n+1}}$. We get
	\begin{eqnarray*}
		\left| \prod_{p\leq d^{\frac{1}{n+1}}}\zeta_{\X_p}(n+1)^{-1}-\zeta_\X(n+1)^{-1} \right| =O(d^{-\frac{1}{n+1}}).
	\end{eqnarray*}
	Combining this with Lemma \ref{br}, we get
	\begin{eqnarray*}
		& &\left|  \frac{\#\left( \mathcal{P}_{d, p\leq d^{\frac{1}{n+1}}}\cap \ho^0_{\mathrm{Ar}}(\X, \overline{\mathcal{L}}^{\otimes d})\right)}{\#\ho^0_{\mathrm{Ar}}(\X, \overline{\mathcal{L}}^{\otimes d})} -\zeta_\X(n+1)^{-1} \right| \\
		&\leq &\left|  \frac{\#\left( \mathcal{P}_{d, p\leq d^{\frac{1}{n+1}}}\cap \ho^0_{\mathrm{Ar}}(\X, \overline{\mathcal{L}}^{\otimes d})\right)}{\#\ho^0_{\mathrm{Ar}}(\X, \overline{\mathcal{L}}^{\otimes d})} -\prod_{p\leq d^{\frac{1}{n+1}}}\zeta_{\X_p}(n+1)^{-1}  \right|\\
		& &\qquad\qquad\qquad\qquad\qquad+\left| \prod_{p\leq d^{\frac{1}{n+1}} }\zeta_{\X_p}(n+1)^{-1}-\zeta_\X(n+1)^{-1} \right| \\
		&=&O(d^{-\frac{1}{n+1} })+O(d^{-\frac{1}{n+1} })=O(d^{-\frac{1}{n+1} }),
	\end{eqnarray*}
	which proves Proposition \ref{small}.
\end{proof}

\section{Final step}\label{finalresult}

In this section, we prove Theorem \ref{mainvar}. The main step is to show the following proposition :
\begin{prop}\label{singmed}
	Let $\X$ be a regular projective arithmetic variety of dimension $n$, and let $\li$ be an ample line bundle on $\X$. Then there exists a constant $c>0$ such that for any $d\gg 1$ and any prime number $p$ such that $\X_p$ is smooth and irreducible, denoting
	\[
		\mathcal{Q}_{d,p^2}:=\left\{ \sigma\in \ho^0(\X_{p^2}, \li^{\otimes d})\ ;\ \exists x\in |\X_{p^2}|,\ \dim_{\kappa(x)}\frac{\mathfrak{m}_{\mathrm{div}\sigma,x}}{\mathfrak{m}_{\mathrm{div}\sigma,x}^2}=n \right\},
	\]
	we have
	\[
		\frac{\#\mathcal{Q}_{d,p^2}}{\#\ho^0(\X_{p^2}, \li^{\otimes d})}\leq c\cdot p^{-2}.
	\]
\end{prop}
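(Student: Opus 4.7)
The plan is to decompose $\mathcal{Q}_{d,p^2}$ by the degree of an offending closed point and bound each piece by a multiple of $p^{-2}$, following the three-tier strategy of Section~\ref{effective} but adjusted to deliver a uniform-in-$p$ estimate. For each closed point $x\in|\X_{p^2}|$ (equivalently $x\in|\X_p|$), let $x'$ denote the first-order infinitesimal neighborhood of $x$ in $\X_{p^2}$, so that $\dim_{\kappa(x)}\mathfrak{m}_{\mathrm{div}\sigma,x}/\mathfrak{m}^2_{\mathrm{div}\sigma,x}=n$ is equivalent to $\sigma|_{x'}=0$. A union bound gives
\[
\#\mathcal{Q}_{d,p^2}\leq\sum_{x\in|\X_{p^2}|}\#\ker\bigl(\ho^0(\X_{p^2},\li^{\otimes d})\longrightarrow\ho^0(x',\li^{\otimes d})\bigr),
\]
and I would split the sum at the threshold $r^{*}=\lfloor(d-N(p))/(N(p)n)\rfloor$ inherited from Section~\ref{effective}.

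For points of degree at most $r^{*}$, Lemma~\ref{sur} applied to the single subscheme $x'$ (a short computation, using that $\X_p$ is smooth, gives $h_0=n\deg x$) yields surjectivity of the restriction map, so each kernel has index exactly $p^{(n+1)\deg x}$. Using the Lang--Weil bound $\#\X(\F_{p^e})\leq c_0 p^{(n-1)e}$ recalled in Section~\ref{c0},
\[
\sum_{\deg x\leq r^{*}}\frac{\#\ker}{\#\ho^0(\X_{p^2},\li^{\otimes d})}\leq\sum_{e\geq 1}\#\X(\F_{p^e})\cdot p^{-(n+1)e}\leq c_0\sum_{e\geq 1}p^{-2e}<2c_0\,p^{-2}.
\]
For points of degree $>r^{*}$, I would invoke Proposition~\ref{high} directly: the proportion of $\sigma$ admitting such a singular point is $O(d^n p^{-c_1 d/p})$. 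A logarithmic comparison shows $d^n p^{-c_1 d/p}\leq c'p^{-2}$ as soon as $(d/p)\log p\gg n\log d+2\log p$, which is automatic for $p\leq d^{\alpha}$ with any fixed $\alpha<1$ and $d$ large enough. Summing the two contributions would yield the proposition with $c=2c_0+O(1)$.

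I expect the main difficulty to lie in the range of $p$. Proposition~\ref{high} is stated only for $p\leq d^{\alpha}$, whereas the application in Section~\ref{finalresult} ultimately needs the $cp^{-2}$ estimate throughout $p\leq e^{\varepsilon d}$. The cleanest way to bridge this gap is to promote the single-point surjectivity to a statement uniform in $x$: the ideal sheaves $\mathcal{I}_{x'}$ form a bounded family parametrized by $\X$ itself (via the second infinitesimal neighborhood of the diagonal in $\X\times\X$), so a uniform Serre-vanishing argument provides a $d_0$, depending only on $\X$ and $\Li$, such that $\ho^0(\X,\li^{\otimes d})\to\ho^0(x',\li^{\otimes d})$ is surjective for \emph{every} closed point $x\in\X$ when $d\geq d_0$. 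Base-changing to $\X_{p^2}$ then makes every single-point restriction surjective uniformly in $p$, after which the union bound of the previous paragraph gives the bound $c=2c_0$ directly, with no input from Proposition~\ref{high}.
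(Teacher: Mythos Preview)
Your second approach contains a genuine error: the claimed uniform surjectivity is false by a dimension count. For a closed point $x\in\X_p$ of degree $e$, the first-order neighborhood $x'\subset\X_{p^2}$ satisfies $\#\ho^0(x',\li^{\otimes d})=p^{(n+1)e}$, while $\#\ho^0(\X_{p^2},\li^{\otimes d})=p^{2h}$ with $h=\mathrm{rk}\,\ho^0(\X,\li^{\otimes d})=O(d^{n-1})$. Since $\X_p$ has closed points of every degree, for any fixed $d$ there are points with $(n+1)e>2h$, and then the restriction cannot possibly be onto. The diagonal argument you sketch does produce a bounded family, but the family is over $\X$: its fiber over a closed point $y$ is the infinitesimal neighborhood of the \emph{rational} point $y_{\kappa(y)}$ inside $\X_{\kappa(y)}$, and relative Serre vanishing only yields surjectivity of $\ho^0(\X_{\kappa(y)},\li^{\otimes d})\to\ho^0(y'_{\kappa(y)},\li^{\otimes d})$. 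This is not the map you need, which is over $\Z/p^2$ and whose target grows with $\deg y$. So neither approach covers the range $p>d^{\alpha}$, which is precisely where the proposition is needed in Section~\ref{finalresult}.

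The paper's proof avoids a union bound over all high-degree points altogether. After the low-degree estimate (your $2c_0p^{-2}$, with threshold $\tfrac{d-N}{Nn}$ for a \emph{fixed} $N$), it splits the remaining sections according to whether $\mathrm{Sing}(\mathrm{div}(\sigma|_{\X_p}))$ is positive-dimensional or finite. The positive-dimensional case is handled by Lemma~\ref{posdivsing}, proved by induction on $\dim\X$ via a fixed Bertini divisor $\mathcal{D}$. For the finite case one writes $\sigma=\sigma_1+p\sigma_2$ using a set of representatives; then $\mathrm{Sing}(\mathrm{div}(\sigma|_{\X_p}))$ depends only on $\sigma_1$ and has at most $O(d^{n-1})$ points by a B\'ezout count (Lemma~\ref{singnum}). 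For each such candidate $x_i$ an exact sequence $0\to\kappa(x_i)\to\mathcal{O}_{x_i'}\to\mathcal{O}_{x_i'\cap\X_p}\to 0$ shows that singularity of $\mathrm{div}\sigma$ at $x_i$ pins down $\sigma_2(x_i)\in\ho^0(x_i,\li^{\otimes d})$; Lemma~\ref{hi} applied on $\X_p$ then bounds the proportion of such $\sigma_2$ by $p^{1-d/N}$, and $O(d^{n-1})p^{1-d/N}\leq p^{-2}$ for $d$ large, uniformly in $p$. The two ideas you are missing are thus: controlling the \emph{number} of high-degree candidates by B\'ezout rather than summing over all of them, and exploiting the $\sigma_1+p\sigma_2$ decomposition so that only the \emph{value} of $\sigma_2$ at each candidate (not its full jet) needs to be constrained.
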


\subsection{Divisors with higher dimensional singular locus}

\begin{lem}\label{singnum}
	Let $\X$ be an irreducible projective scheme of dimension $n$ over $\Spec\ \Z$. Let $\li$ be an ample line bundle on $\X$. For any large enough $d$ and any prime number $p$ such that $\X_p$ is smooth over $\F_p$, if $\sigma\in \ho^0(\X_p, \li^{\otimes d})$ is such that $\mathrm{Sing}(\mathrm{div}\sigma)$ is finite, then 
	\[
		\#\mathrm{Sing}(\mathrm{div}\sigma)=O(d^{n-1}),
	\]
	where the constant involved does not depend on $d$ or $p$. 
\end{lem}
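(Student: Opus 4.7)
I would prove the bound by a Bezout intersection argument on the smooth projective fiber $\X_p$ of dimension $n-1$. First, fix an open subscheme $\mathcal{S}\subset\Spec\,\Z$ over which $\X$ is smooth and cover $\X_\mathcal{S}$ by finitely many affine opens $U_1,\dots,U_s$ satisfying the hypotheses recalled in Section~\ref{step3}: on each $U=U_\alpha$, $\Omega^1_{\X_\mathcal{S}/\mathcal{S}}$ is free with basis $dt_1,\dots,dt_{n-1}$ and $\X_\mathcal{S}\setminus U=\mathrm{div}\,\tau_0$ for some $\tau_0\in \ho^0(\X_\mathcal{S},\li^{\otimes(N_0+1)})$. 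Fix the integers $N_0,N_1$ as in the same section, so that for each $\sigma\in \ho^0(\X_p,\li^{\otimes d})$ and each $i$, the local section $\bigl(\partial_i\Phi_j(\sigma)\bigr)\tau_0^{d+N_1}$ extends to a global section $g_{i,j}\in \ho^0(\X_p,\li^{\otimes(N_0+1)(d+N_1)})$. Both the covering and the constants $N_0,N_1$ depend only on $\X$ and $\li$, not on $p$ or $d$.

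Since the covering is finite, it suffices to bound $\#\bigl(\mathrm{Sing}(\mathrm{div}\,\sigma)\cap U_p\bigr)$ by $O(d^{n-1})$ uniformly in $p$ and $d$ for each $U=U_\alpha$. On $U_p$ the section $\tau_0$ is invertible, so $V(g_{i,j})\cap U_p=V(\partial_i\Phi_j(\sigma))$ and
\[
\mathrm{Sing}(\mathrm{div}\,\sigma)\cap U_p\subseteq \mathrm{div}(\sigma)\cap V(g_{1,j})\cap\dots\cap V(g_{n-1,j}),
\]
with the right-hand side being $0$-dimensional along $U_p$ by hypothesis. The Bezout step extracts from these $n$ effective Cartier divisors on $\X_p$ a subcollection $D_1,\dots,D_{n-1}$ whose scheme-theoretic intersection is $0$-dimensional at every point of $\mathrm{Sing}(\mathrm{div}\,\sigma)\cap U_p$; since local intersection multiplicities are $\geq 1$, this yields
\[
\#\bigl(\mathrm{Sing}(\mathrm{div}\,\sigma)\cap U_p\bigr)\leq D_1\cdots D_{n-1}\cdot[\X_p].
\]
Each $D_i$ corresponds to a line bundle $\li^{\otimes m_i}$ with $m_i\leq(N_0+1)(d+N_1)=O(d)$, and by flatness of $\X\to\Spec\,\Z$ the top self-intersection $(\li|_{\X_p})^{n-1}=(\li|_{\X_\Q})^{n-1}$ is a rational constant independent of $p$. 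The right-hand side is therefore $O(d^{n-1})$, uniformly in $p$ and $d$.

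The main obstacle is the extraction step: the $n$-tuple $\bigl(\mathrm{div}\,\sigma,V(g_{1,j}),\dots,V(g_{n-1,j})\bigr)$ may share excess positive-dimensional components supported on $\mathrm{div}\,\tau_0$ outside $U_p$, so one cannot simply pick any $n-1$ of these divisors and apply global Bezout. I would handle this point-by-point: at each $x\in\mathrm{Sing}(\mathrm{div}\,\sigma)\cap U_p$, choose $n-1$ of the listed divisors whose local equations generate an ideal of finite colength in $\mathcal{O}_{\X_p,x}$, which is possible because the full intersection is already $0$-dimensional at $x$ and $\tau_0$ is a unit in $\mathcal{O}_{\X_p,x}$. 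Summing the resulting local contributions and comparing with the global intersection product on $\X_p$ gives the claimed bound. Finitely many primes $p$ with $\X_p$ smooth but $p\notin \mathcal{S}$ are absorbed into the constant by repeating the same local argument on a separately chosen cover of each such $\X_p$.
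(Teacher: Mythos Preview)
Your proof is essentially the paper's argument: fix the smooth locus $\mathcal{S}$, cover $\X_{\mathcal{S}}$ by affines trivializing $\Omega^1$, use the constants $N_0,N_1$ to extend the local partial derivatives to global sections of $\li^{\otimes(N_0+1)(d+N_1)}$, embed $\X_p$ via the very ample $\li^{\otimes(N_0+1)}$, select $n-1$ of the $n$ resulting divisors, and apply refined B\'ezout (Fulton, Theorem~12.3) to bound the isolated points by $\deg_{\li^{\otimes(N_0+1)}}(\X_p)\cdot(d+N_1)^{n-1}=O(d^{n-1})$; finitely many $p\notin\mathcal{S}$ with $\X_p$ smooth are handled by repeating the construction over a separate cover. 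The paper phrases the extraction step globally (``we can find $n-1$ divisors among the $n$ ones \ldots\ such that the intersection \ldots\ is finite'') whereas you phrase it locally at each singular point, but the content is the same.
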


\begin{proof}
	We take the construction in Section \ref{step3}. Let $\mathcal{S}$ be the maximal open subscheme of $\Spec\ \Z$ such that $\X_{\mathcal{S}}=\X\times_{\Spec\ \Z}\mathcal{S}$ is smooth over $\mathcal{S}$. So $\mathcal{S}$ contains all prime numbers $p$ such that $\X_p$ is smooth of dimension $n-1$ over $\F_p$. Applying Lemma \ref{opensub}, we may assume that there exists a positive integer $N$ and an open cover of $\X_{\mathcal{S}}$ by $\X_{\mathcal{S}}=\bigcup_{ \alpha\in A}U_{\alpha}$ making the following conditions valid:
	\begin{enumerate}
		\item the sheaf $\li^{\otimes d}$ is very ample for any $d\geq N$ ; 
		\item there exists $\tau_{\alpha}\in \ho^0(\X_{\mathcal{S}}, \li^{\otimes (N+1)})$ such that
		\[
			\X_{\mathcal{S}}-U_{\alpha}=\mathrm{div}\tau_{\alpha}\ ;
		\]
		\item there exist $\tau_{\alpha,1},\dots, \tau_{\alpha,k_{\alpha}}\in \ho^0(\X_{\mathcal{S}}, \li^{\otimes N})$ such that
		\[
			U_{\alpha}=\bigcup_{1\leq j\leq k_{\alpha}}\left( \X_{\mathcal{S}}-\mathrm{div}\tau_{\alpha}\right);
		\]
		\item for any $\alpha\in A$, there exist $t_{\alpha, 1},\dots,t_{\alpha, n-1}\in \ho^0(U_{\alpha}, \mathcal{O}_{U_{\alpha}})$ such that
		\[
			\Omega^1_{U_{\alpha}/\mathcal{S}}\simeq \bigoplus_{i=1}^{n-1}\mathcal{O}_{U_{\alpha}}\mathrm{d}t_{\alpha,i}.
		\]  
	\end{enumerate}
	We denote by $\partial_{\alpha,i}\in \mathrm{Der}_{\mathcal{O}_{\mathcal{S}}}(\mathcal{O}_{U_{\alpha}},\mathcal{O}_{U_{\alpha}})\simeq \mathrm{Hom}(\Omega^1_{U_{\alpha}/\mathcal{S}}, \mathcal{O}_{U_{\alpha}})$ the dual of $\mathrm{d}t_{\alpha, i}$.

	Now we take one arbitrary $U$ among the $U_{\alpha}$'s in the open cover, and we drop the subscript $\alpha$ for simplicity of notation. For any $1\leq j\leq k$, we have a morphism
	\[
		\Phi_{j}: \ho^0(\X_{\mathcal{S}}, \li^{\otimes d}) \longrightarrow \ho^0(U,\mathcal{O}_{U})
	\]
	sending $\sigma$ to $\frac{\sigma\tau_{j}^{d}}{\tau^{d}}$. Then for any $\sigma\in \ho^0(\X_{\mathcal{S}}, \li^{\otimes d})$, $\partial_{i}\Phi_{j}(\sigma)$ is a section in $\ho^0(U,\mathcal{O}_{U})$. For any $p\in \mathcal{S}$, $\Phi_{j}$ induces 
	\[
		\Phi_{p,j}: \ho^0(\X_{p}, \li^{\otimes d}) \longrightarrow \ho^0(U_p,\mathcal{O}_{U_p})
	\] such that for any $\sigma'\in \ho^0(\X_{p}, \li^{\otimes d})$, $\partial_{i}\Phi_{p,j}(\sigma')$ is a section in $\ho^0(U_p,\mathcal{O}_{U_p})$. Here $U_p=U\cap \X_p$. Then Lemma \ref{extend} tells that we can find a positive integer $N_1$ such that for any $\delta\geq N_1$, any $p\in \mathcal{S}$ and any $\sigma\in \ho^0(\X_{p}, \li^{\otimes d})$,  the section $\left(\partial_i \Phi_{p,j}(\sigma) \right)\cdot \tau^{d+\delta}$ extends to a global section in $\ho^0(\X_p, \li^{\otimes (N+1)(d+\delta)})$. Since $\li^{\otimes (N+1)}$ is very ample, $\left(\partial_i \Phi_{p,j}(\sigma) \right)\cdot \tau^{d+\delta}$ can also be regarded as a global section of 
	\[
		\ho^0\left(\pr(\ho^0(\X_{p}, \li^{\otimes (N+1)})^{\vee}), \mathcal{O}(d+\delta)\right).
	\]
	Then since $\tau$ is nowhere $0$ on $U\supset \left( \X_{\mathcal{S}}-\mathrm{div}\tau_{\alpha,j}\right)$, for any $\sigma\in \ho^0(\X_{p}, \li^{\otimes d})$, we have
	\begin{eqnarray*}
		& &\left( \X_{\mathcal{S}}-\mathrm{div}\tau_{\alpha,j}\right)\cap \mathrm{Sing}(\mathrm{div}\sigma)\\ 
		&\subset&\mathrm{Sing}(\mathrm{div}\Phi_j(\sigma)) \\
		&=&\mathrm{div}\Phi_j(\sigma)\cap \left( \bigcap_{i=1}^{n-1}\mathrm{div}\left(\partial_i \Phi_j(\sigma) \right)\right) \\
		&=& \left( \X_{\mathcal{S}}-\mathrm{div}\tau_{\alpha,j}\right)\cap \mathrm{div}(\sigma^{N+1})\cap \left( \bigcap_{i=1}^{n-1}\mathrm{div}\left(\partial_i \Phi_j(\sigma) \cdot \tau^{d+N_1} \right)\right).
	\end{eqnarray*}
	On the other hand, we have
	\[
		\sigma^{N+1}\in \ho^0(\pr(\ho^0(\X_p, \li^{\otimes (N+1)})^{\vee}), \mathcal{O}(d)).
	\]
	Denote the degree of $\X_p$ as a closed subscheme of $\pr(\ho^0(\X_p, \li^{\otimes (N+1)})^{\vee})$ by $\deg_{\li^{\otimes N+1}}(\X_p)$. If $\sigma\in \ho^0(\X_{p}, \li^{\otimes d})$ is such that $\mathrm{Sing}(\mathrm{div}\sigma)$ is finite, then $\left( \X_{\mathcal{S}}-\mathrm{div}\tau_{\alpha,j}\right)\cap \mathrm{Sing}(\mathrm{div}\sigma)$ is finite and we can find $n-1$ divisors among the $n$ ones appeared in the above intersection such that the intersection of these $n-1$ divisors and $ \X_{\mathcal{S}}-\mathrm{div}\tau_{\alpha,j}$ is finite. Obviously this intersection contains $\left( \X_{\mathcal{S}}-\mathrm{div}\tau_{\alpha,j}\right)\cap \mathrm{Sing}(\mathrm{div}\sigma)$. Applying refined Bézout's theorem  \cite[Theorem 12.3]{Fu84}, we get
	\[
		\#\left( \mathrm{Sing}(\mathrm{div}\sigma)\cap \left( \X_{\mathcal{S}}-\mathrm{div}\tau_{\alpha,j}\right) \right)\leq (\deg_{\li^{\otimes (N+1)}}(\X_p) )(d+N_1)^{n-1}=O(d^{n-1}),
	\]
	where coefficients involved in $O(d^i)$ is independent of $p$ when $d$ is large enough ($\deg_{\li^{\otimes (N+1)}}(\X_p)$ is independent of $p$). Therefore we have
	\begin{eqnarray*}
		\# \mathrm{Sing}(\mathrm{div}\sigma) &\leq& \sum_{\alpha\in A}\sum_{j=1}^{k_{\alpha}} \#\left( \mathrm{Sing}(\mathrm{div}\sigma)\cap \left( \X_{\mathcal{S}}-\mathrm{div}\tau_{\alpha,j}\right) \right)=O(d^{n-1}),
	\end{eqnarray*}
	with coefficients involved in $O(d^{n-1})$ independent of $p$ when $d$ is large enough.
\end{proof}

The following lemma is a generalization of Lemma 5.9 in \cite{Po04}, where Poonen shows that for an integral quasi-projective scheme $X$ generically smooth over $\Z$ equipped with a very ample line bundle inducing an immersion $X\lhook\joinrel\rightarrow \pr^n_{\Z}$ for some $n>0$, if the generic fiber $\overline{X}_{\Q}$ of the Zariski closure $\overline{X}$ of $X$ in $\pr^n_{\Z}$ has at most isolated singular points, then there exists $c>0$ such that if $d,p$ are sufficiently large, then
\[
	\frac{\#\left\{ \sigma\in \ho^0(\pr^n_{p},\mathcal{O}(d)) \ ; \ \dim\Big(\mathrm{Sing}(\mathrm{div}\sigma|_{X_p})\Big)>0 \right\}}{\#\ho^0(\pr^n_{p},\mathcal{O}(d))}<\frac{c}{p^2}.
\]
We prove the same conclusion for the case when $X$ is projective and equipped with an ample line bundle, in place of a very ample line bundle.
\begin{lem}\label{posdivsing}
	Let $\X$ be a integral scheme of dimension $n$ which is projective and generically smooth over $\Spec\ \Z$, and let $\li$ be an ample line bundle on $\X$. Then there exists a constant $c_H>0$ such that for any $d\gg 1$ and any prime number $p$ such that $\X_p$ is smooth and irreducible, we have
	\[
		\frac{\#\left\{ \sigma\in \ho^0(\X_p, \li^{\otimes d})\ ;\  \dim\big(\mathrm{Sing(\mathrm{div}\sigma})\big)>0 \right\}}{\#\ho^0(\X_p, \li^{\otimes d})}\leq c_H\cdot p^{-2}.
	\]
\end{lem}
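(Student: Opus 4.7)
The plan is to extend Poonen's \cite[Lemma 5.9]{Po04} from the very-ample to the ample case via a combination of direct reduction (when $d$ is a multiple of a fixed integer) and a Bertini-style two-point counting argument on $\X_p$ (for the remaining residue classes). Since $\li$ is ample, Lemma \ref{relample} gives an integer $N_0$ such that $\li^{\otimes N_0}$ is very ample, yielding a closed immersion $\iota: \X \hookrightarrow \pr^M_{\Z}$ with $\iota^*\mathcal{O}(1) = \li^{\otimes N_0}$. Restriction to each smooth irreducible fiber produces a fixed closed immersion $\iota_p: \X_p \hookrightarrow \pr^M_{\F_p}$ whose degree is independent of $p$.

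For $d$ a multiple of $N_0$, write $d = N_0 k$. The restriction map $\rho_k: \ho^0(\pr^M_p, \mathcal{O}(k)) \twoheadrightarrow \ho^0(\X_p, \li^{\otimes d})$ is $\F_p$-linear and surjective for $k$ sufficiently large, uniformly in $p$ (by Serre vanishing on $\X_{\Q}$, after excluding finitely many primes). Its fibers being equidimensional, proportions transfer exactly. Applying \cite[Lemma 5.9]{Po04} to $\iota_p$ bounds the proportion of $\tau \in \ho^0(\pr^M_p, \mathcal{O}(k))$ with $\dim \mathrm{Sing}(\mathrm{div}(\tau|_{\X_p})) > 0$ by $c_0/p^2$, where $c_0$ depends only on $\deg_{\li^{\otimes N_0}}(\X_{\Q})$, yielding the same bound for $\sigma \in \ho^0(\X_p, \li^{\otimes d})$.

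For general $d = N_0 k + r$ with $0 < r < N_0$, I carry out a two-point singularity count directly on $\X_p$, in the spirit of Sections \ref{effective} and \ref{smallresidual}. For any two distinct closed points $x, y \in \X_p$ of small residue degree, the probability that a section $\sigma \in \ho^0(\X_p, \li^{\otimes d})$ is simultaneously singular at both is $\#\kappa(x)^{-n}\#\kappa(y)^{-n}$, by surjectivity of the restriction map to the first infinitesimal neighborhood (Lemma \ref{sur}), for $d \gg 0$. Summing this probability over pairs of residue degree bounded in terms of $\dim \X_p$, using the Lang-Weil estimate $\#\X_p(\F_{p^e}) \leq c_0 p^{(n-1)e}$, yields a total proportion of order $p^{-2}$.

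The main obstacle lies in the second case: translating ``$\dim \mathrm{Sing}(\mathrm{div}\sigma) > 0$'' into ``$\sigma$ is singular at two closed points of small degree''. A positive-dimensional singular locus may contain no $\F_p$-rational points (e.g.\ an irreducible singular curve over $\F_p$ without $\F_p$-points), so a naive pair count over $\X_p(\F_p)$ alone undercounts. To overcome this, I let $x, y$ range over $\F_{p^e}$-points for $e$ up to a bound depending only on $\dim \X_p$, and use that any positive-dimensional subscheme of $\X_p$ contains a closed point of bounded degree (again via Lang-Weil). The resulting constant $c_H$ depends on $\X$, $\li$, and $N_0$ but is independent of $d$ and $p$.
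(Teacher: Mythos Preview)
Your reduction for $N_0\mid d$ via Poonen's \cite[Lemma 5.9]{Po04} is fine, but the two-point count for the remaining residue classes has a real gap. The implication you need is: if $\dim\mathrm{Sing}(\mathrm{div}\sigma)>0$ then $\mathrm{Sing}(\mathrm{div}\sigma)$ contains two closed points of degree $\le e_0$ with $e_0$ depending only on $\dim\X_p$. This is not true. The singular locus is cut out by $\sigma$ and its partials, so by B\'ezout its one-dimensional components have degree $O(d^{n-2})$; a curve of that degree over $\F_p$ can have its smallest closed point of degree growing with $d$ (for instance when it is $\F_p$-irreducible but splits into many Galois-conjugate geometric components). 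Lang--Weil only controls point counts with implied constants depending on the degree of the variety in question, so it cannot yield an $e_0$ uniform in $d$. Nor can you let $e_0$ grow with $d$: once $2n\,e_0$ exceeds roughly $d/N$, Lemma \ref{sur} no longer gives surjectivity onto the product of the two first-order neighbourhoods, the probability estimate $p^{-n(\deg x+\deg y)}$ breaks down, and the cruder bound from Lemma \ref{hi} is swamped by the number of pairs.

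The paper avoids this entirely by never looking for small-degree points inside the singular locus itself. It fixes once and for all, via Bertini over $\Q$, a smooth irreducible divisor $\mathcal D\subset\X$ of bounded degree and argues by induction on $n$. A positive-dimensional singular locus must meet $\mathcal D_p$; the inductive hypothesis disposes of the case $\dim\mathrm{Sing}(\mathrm{div}\sigma\cap\mathcal D_p)>0$, a \emph{single}-point count on $\mathcal D_p$ (one dimension lower, supplying the extra factor $p^{-1}$) handles singular points on $\mathcal D_p$ of small degree, and the residual case---$\mathrm{Sing}(\mathrm{div}\sigma\cap\mathcal D_p)$ finite of size $O(d^{n-2})$ by Lemma \ref{singnum}, with only large-degree intersection points---is handled by the decomposition $\sigma=\sigma_0+\sigma_1\sigma_{\mathcal D}$ together with Lemma \ref{hi} applied to $\sigma_1$. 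The point is that $\mathcal D$ has degree independent of $d$, so the ``small-degree'' regime on $\mathcal D_p$ really is uniformly controlled, which is exactly what fails for $\mathrm{Sing}(\mathrm{div}\sigma)$ itself.
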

\begin{rmq}
	If the prime $p$ is fixed, Corollary \ref{hidim} tells us that there exists a constant $c>0$ such that 
	\[
		\frac{\#\left\{ \sigma\in \ho^0(\X_p, \li^{\otimes d})\ ;\  \dim\big(\mathrm{Sing(\mathrm{div}\sigma})\big)>0 \right\}}{\#\ho^0(\X_p, \li^{\otimes d})}=O(d^{n-1}\cdot p^{-c\frac{d}{p}}).
	\]
	When $d$ is sufficiently large, we deduce from it that 
	\[
		\frac{\#\left\{ \sigma\in \ho^0(\X_p, \li^{\otimes d})\ ;\  \dim\big(\mathrm{Sing(\mathrm{div}\sigma})\big)>0 \right\}}{\#\ho^0(\X_p, \li^{\otimes d})}\leq p^{-2}.
	\]
	So Corollary \ref{hidim} gives a better bound on the proportion of sections whose divisor has positive dimensional singular locus. But this bound is well-behaved only when $d$ is much larger than the prime $p$. In this lemma, the bound we give is independent of the choice of $p$. In particular, it is valid even when $p$ is much bigger than $d$. 
\end{rmq}
\begin{proof}
	We choose a constant $N\in \Z_{>0}$ satisfying Lemma \ref{relample}. If $\X$ is of dimension $1$, then $\dim(\X_p)=0$ for any prime $p$ and the conclusion holds automatically. When $\X$ is of dimension $2$, for $p$ such that $\X_p$ is smooth and irreducible of dimension $1$ (which is satisfied for all but finitely many $p$), if $\sigma\in \ho^0(\X_p, \li^{\otimes d})$ is such that $\dim\big(\mathrm{Sing(\mathrm{div}\sigma})\big)>0$, then $\mathrm{Sing(\mathrm{div}\sigma})=\X_p$, which is impossible unless $\sigma=0$. This means in the case of dimension $2$, when $d$ is large enough we always have
	\[
		\frac{\#\left\{ \sigma\in \ho^0(\X_p, \li^{\otimes d})\ ;\  \dim\big(\mathrm{Sing(\mathrm{div}\sigma})\big)>0 \right\}}{\#\ho^0(\X_p, \li^{\otimes d})}=\frac{1}{\#\ho^0(\X_p, \li^{\otimes d})} =p^{-h^0(\X_{\Q},\li^{\otimes d} )}<p^{-2}.
	\]

	So the lemma is true when $\dim \X\leq 2$. We prove the higher dimensional case by induction. Assume that for any scheme $\mathcal{Y}$ of dimension smaller than $n$ which is projective over $\Spec\ \Z$ with $\mathcal{Y}_{\mathcal{S}_{\mathcal{Y}}}$ irreducible and smooth over some open subscheme $\mathcal{S}_{\mathcal{Y}}$ of $\Spec\ \Z$, and which is equipped with an ample line bundle $\mathcal{M}$, there exists a constant $c_{\mathcal{Y},\mathcal{M}}>0$ such that for any $d\gg 1$ and any prime $p$ such that $\mathcal{Y}_p$ is smooth and irreducible of dimension $\dim\mathcal{Y}-1$, we have 
	\[
		\frac{\#\left\{ \sigma\in \ho^0(\mathcal{Y}_p, \mathcal{M}^{\otimes d})\ ;\  \dim\big(\mathrm{Sing(\mathrm{div}\sigma})\big)>0 \right\}}{\#\ho^0(\mathcal{Y}_p, \mathcal{M}^{\otimes d})}\leq c_{\mathcal{Y},\mathcal{M}}\cdot p^{-2}.
	\]
	By the classical Bertini theorem over $\Q$, we can find a section $\sigma_{\mathcal{D}}\in \ho^0(\X_{\Q}, \li^{\otimes N})$ whose divisor $\mathcal D_{\Q}$ is a smooth and irreducible divisor of $\X_{\Q}$. By possibly replacing $\sigma_{\mathcal D}$ by a multiple of it, we may assume that $\sigma_{\mathcal D}$ is in fact a section of $ \ho^0(\X, \li^{\otimes N})$. Then the divisor $\mathcal{D}=\mathrm{div}\sigma_{\mathcal{D}}$ on $\X$ has no singular point on the generic fiber $\X_{\Q}$. Let $\mathcal{S}$ be an open subscheme of $\Spec\ \Z$ such that $\X_{\mathcal{S}}$ is smooth over $\mathcal{S}$ and irreducible of dimension $n$. By restricting to a smaller $\mathcal{S}$ we may assume that $\X_{\mathcal{S}}$ does not contain any vertical component of $\mathcal{D}$, and that $\mathcal{D}_{\mathcal{S}}=\mathcal{D}\cap \X_{\mathcal{S}}$ is smooth over $\mathcal{S}$. We may assume moreover that for any $p\in \mathcal{S}$, $\X_p$ and $\mathcal{D}_p$ are both smooth and irreducible such that $\dim\X_p=n-1$ and $\dim\mathcal{D}_p=n-2$.  For the rest of the proof, we fix the divisor $\mathcal{D}$. Note that $\mathcal{D}$ together with the open subscheme $\mathcal{S}$ of $\Spec\ \Z$ and the restriction sheaf $\li|_{\mathcal{D}}$ also satisfies the assumption of the lemma.

	For any prime $p\in \Spec\ \Z-\mathcal{S}$ such that $\X_p$ is smooth and irreducible of dimension $n-1$, we can find a constant $c$ by Corollary \ref{hidim}, such that 
	\[
		\frac{\#\left\{ \sigma\in \ho^0(\X_p, \li^{\otimes d})\ ;\  \dim\big(\mathrm{Sing(\mathrm{div}\sigma})\big)>0 \right\}}{\#\ho^0(\X_p, \li^{\otimes d})}=O(d^{n-1}\cdot p^{-c\frac{d}{p}}).
	\]
	So when $d$ is sufficiently large, the right side can be bounded above by $p^{-2}$. Since $\Spec\ \Z-\mathcal{S}$ is a finite scheme, when $d$ is sufficiently large, for any $p\in \Spec\ \Z-\mathcal{S}$ such that $\X_p$ is smooth and irreducible, we have
	\[
		\frac{\#\left\{ \sigma\in \ho^0(\X_p, \li^{\otimes d})\ ;\  \dim\big(\mathrm{Sing(\mathrm{div}\sigma})\big)>0 \right\}}{\#\ho^0(\X_p, \li^{\otimes d})}\leq p^{-2}.
	\]
	Hence it suffices to prove the lemma for primes $p\in \mathcal{S}$. 
	\\
	
	Now let $p\in \mathcal{S}$. If a section $\sigma\in \ho^0(\X_p, \li^{\otimes d})$ is such that
	\[
		\dim\big( \mathrm{Sing}(\mathrm{div}\sigma) \big)>0,
	\]
	then as $\X_p$ is irreducible and projective by the assumption on $\mathcal{S}$, we have
	\[
		\mathrm{Sing}(\mathrm{div}\sigma)\cap \mathcal{D}_p\not=\emptyset.
	\]
	By induction hypothesis, we know that there exists a constant $c_\mathcal{D}>0$ such that if $d$ is sufficiently large, then for any $p\in \mathcal{S}$, we have
	\[
		\frac{\#\left\{ \sigma\in \ho^0(\mathcal{D}_p, \li^{\otimes d})\ ;\  \dim\big(\mathrm{Sing(\mathrm{div}\sigma})\big)>0 \right\}}{\#\ho^0(\mathcal{D}_p, \li^{\otimes d})}\leq c_\mathcal{D}\cdot p^{-2}.
	\]
	As $\li$ is ample on $\X$, when $d$ is large enough, the restriction map
	\[
		\ho^0(\X, \li^{\otimes d}) \longrightarrow  \ho^0(\mathcal{D}, \li^{\otimes d})
	\]
	is surjective. So for such $d$, the morphism
	\[
		\ho^0(\X_p, \li^{\otimes d}) \longrightarrow  \ho^0(\mathcal{D}_p, \li^{\otimes d})
	\]
	is surjective for any $p\in \mathcal{S}$, and hence
	\[
		\frac{\#\left\{ \sigma\in \ho^0(\X_p, \li^{\otimes d})\ ;\  \dim\big(\mathrm{Sing(\mathrm{div}}\sigma\cap \mathcal{D}_p)\big)>0 \right\}}{\#\ho^0(\X_p, \li^{\otimes d})}\leq c_\mathcal{D}\cdot p^{-2}.
	\]

	We need to bound sections $\sigma\in \ho^0(\X_p, \li^{\otimes d})$ such that $\mathrm{Sing}(\mathrm{div}\sigma)\cap \mathcal{D}_p$ is finite and non-empty. Since $\mathcal{D}$ is of dimension $n-1$, let $c'_0>0$ be a constant such that
	\[
		\#\mathcal{D}(\F_{p^e}) \leq c'_0 p^{(n-2)e}
	\]
	for any prime number $p$ and any integer $e\geq 1$. For any closed point $x\in |\mathcal{D}_p|$ of degree $e\leq \frac{d}{Nn}$, we have by Lemma \ref{singpt} that the proportion of $\sigma\in \ho^0(\X_p, \li^{\otimes d})$ such that $\mathrm{div}\sigma$ is singular at $x$ is $p^{-ne}$. Then we have
	\begin{eqnarray*}
		& &\frac{\#\left\{ \sigma\in \ho^0(\X_p, \li^{\otimes d})\ ;\ \exists x\in |\mathrm{Sing}(\mathrm{div}\sigma)\cap \mathcal{D}_p|,\ \deg x\leq \lfloor \frac{d}{Nn} \rfloor \right\}}{\#\ho^0(\X_p, \li^{\otimes d})} \\
		&\leq & \sum_{x\in |\mathcal{D}_p|,\ \deg x\leq \lfloor \frac{d}{Nn} \rfloor} p^{-n\deg x} \\
		&<& \sum_{e=1}^{\lfloor \frac{d}{Nn} \rfloor} \#\mathcal{D}(\F_{q^e}) p^{-ne} \\
		&\leq & \sum_{e=1}^{\lfloor \frac{d}{Nn} \rfloor}c'_0 q^{(n-2)e}\cdot p^{-ne}\\
		&=&\frac{c'_0p^{-2}}{1-p^{-2}}< 2c'_0p^{-2}.
	\end{eqnarray*}
	
	If a section $ \sigma\in \ho^0(\X_p, \li^{\otimes d})$ whose divisor has positive dimensional singular locus is not included in the above two cases, then it satisfies the two following conditions at the same time :
	\begin{enumerate}[$\bullet$]
		\item $\mathrm{Sing}(\mathrm{div}\sigma\cap \mathcal{D}_p)$ is a finite set;
		\item if $x$ is a closed point of $\mathrm{Sing}(\mathrm{div}\sigma)\cap \mathcal{D}_p\subset \mathrm{Sing}(\mathrm{div}\sigma\cap \mathcal{D}_p)$, then $\deg x\geq \frac{d}{Nn} $.
	\end{enumerate}
	Then to finish the proof, it suffices to show that we can find a constant $c_3>0$ such that when $d$ is large enough, for any $p\in \mathcal{S}$
	\[
		\frac{\#\left\{ \sigma\in \ho^0(\X_p, \li^{\otimes d})\ ;\ \mathrm{Sing}(\mathrm{div}\sigma\cap \mathcal{D}_p) \text{ finite,}\  \exists x\in\mathrm{Sing}(\mathrm{div}\sigma)\cap \mathcal{D}_p,\  \deg x\geq \frac{d}{Nn} \right\}}{\#\ho^0(\X_p, \li^{\otimes d})}< c_3p^{-2}.
	\]
	\bigskip
	
	For large enough $d$, consider the surjective morphism
	\begin{eqnarray*}
		\ho^0(\X_p, \li^{\otimes d})\times \ho^0(\X_p, \li^{\otimes (d-N)}) \longrightarrow \ho^0(\X_p, \li^{\otimes d})
	\end{eqnarray*}
	which sends $(\sigma_0, \sigma_1)\in \ho^0(\X_p, \li^{\otimes d})\times \ho^0(\X_p, \li^{\otimes (d-N)})$ to the section $\sigma=\sigma_0+\sigma_1\cdot \sigma_{\mathcal{D}}$. For any $\sigma=\sigma_0+\sigma_1\cdot \sigma_{\mathcal{D}}$ in $\ho^0(\X_p, \li^{\otimes d})$, the singular locus of $\mathrm{div}\sigma\cap \mathcal{D}_p$ is independent of $\sigma_1$, i.e.
	\[
		\mathrm{Sing}(\mathrm{div}\sigma\cap \mathcal{D}_p)=\mathrm{Sing}(\mathrm{div}\sigma_0\cap \mathcal{D}_p).
	\]
	If $(\sigma_0, \sigma_1)\in \ho^0(\X_p, \li^{\otimes d})\times \ho^0(\X_p, \li^{\otimes (d-N)})$ is such that $\mathrm{Sing}(\mathrm{div}\sigma_0\cap \mathcal{D}_p)$ is finite, we assume that $\mathrm{Sing}(\mathrm{div}\sigma_0\cap \mathcal{D}_p)=\{ x_1,\dots ,x_l \}$. Applying Lemma \ref{singnum} to $\mathcal{D}$, we have $l=O(d^{n-2})$ with coefficients depending on $\li$ and $\mathcal{D}$ but not on $p$. For a fixed $\sigma_0$ and any $x_i\in \mathrm{Sing}(\mathrm{div}\sigma_0\cap \mathcal{D}_p)$, let $x_i'$ be the first order infinitesimal neighbourhood of $x_i$ in $\X_p$. If $\mathrm{div}\left(\sigma_0+\sigma_1\sigma_{\mathcal{D}_p}\right)$ is singular at $x_i$, then the image of $\sigma_0+\sigma_1\sigma_{\mathcal{D}_p}$ in $\ho^0(x_i', \li^{\otimes d})$ by the natural restriction morphism is $0$. 
	Let $\mathfrak{m}_{\X_p, x_i}$ be the ideal sheaf of $x_i$ in $\X_p$. We have a natural exact sequence of sheaves on $\X_p$ 
	\[
		0\longrightarrow \li^{\otimes (d-N)} \longrightarrow \li^{\otimes d} \longrightarrow \li^{\otimes d}\otimes \mathcal{O}_{\mathcal{D}}\longrightarrow 0,
	\]
	where the morphism $\li^{\otimes (d-N)} \longrightarrow \li^{\otimes d}$ is the multiplication by $\sigma_{\mathcal{D}}$. Restricting this exact sequence of sheaves to the closed subscheme $x_i'$, we get a right exact sequence
	\[
		\frac{\mathcal{O}_{\X_p}}{\mathfrak{m}_{\X_p, x_i}^2}\otimes \li^{\otimes (d-N)} \longrightarrow \frac{\mathcal{O}_{\X_p}}{\mathfrak{m}_{\X_p, x_i}^2}\otimes \li^{\otimes d}\longrightarrow \frac{\mathcal{O}_{\X_p}}{\mathfrak{m}^2_{\X_p, x_i}+\mathcal{I}_{\mathcal{D}}}\otimes \li^{\otimes d}\longrightarrow 0,
	\]
	where $\mathcal{I}_{\mathcal{D}}$ is the ideal sheaf of $\mathcal{D}$. Note that the sheaf $\frac{\mathfrak{m}_{\X_p, x_i}}{\mathfrak{m}_{\X_p, x_i}^2}\otimes \li^{\otimes (d-N)}$ is contained in the kernel of the first morphism as $x_i\in \mathcal{D}$. So the above right exact sequence induces the following right exact sequence
	\[
		 \frac{\mathcal{O}_{\X_p}}{\mathfrak{m}_{\X_p, x_i}}\otimes \li^{\otimes (d-N)} \longrightarrow \frac{\mathcal{O}_{\X_p}}{\mathfrak{m}_{\X_p, x_i}^2}\otimes \li^{\otimes d}\longrightarrow \frac{\mathcal{O}_{\X_p}}{\mathfrak{m}^2_{\X_p, x_i}+\mathcal{I}_{\mathcal{D}}}\otimes \li^{\otimes d}\longrightarrow 0.
	\]
	Since $\mathcal{D}$ is nonsingular at $x_i$, the multiplication by $\sigma_{\mathcal{D}}$ morphism
	\[
		\frac{\mathcal{O}_{\X_p}}{\mathfrak{m}_{\X_p, x_i}^2}\otimes \li^{\otimes (d-N)} \longrightarrow \frac{\mathcal{O}_{\X_p}}{\mathfrak{m}_{\X_p, x_i}^2}\otimes \li^{\otimes d}
	\]
	is not a zero map. This implies that the induced morphism
	\[
		\frac{\mathcal{O}_{\X_p}}{\mathfrak{m}_{\X_p, x_i}}\otimes \li^{\otimes (d-N)} \longrightarrow \frac{\mathcal{O}_{\X_p}}{\mathfrak{m}_{\X_p, x_i}^2}\otimes \li^{\otimes d}
	\]
	is also non-zero.
	As the left term is a $\kappa(x)$ linear space of dimension $1$, this morphism is in fact injective and hence we have the following short exact sequence
	\[
		0\longrightarrow \frac{\mathcal{O}_{\X_p}}{\mathfrak{m}_{\X_p, x_i}}\otimes \li^{\otimes (d-N)} \longrightarrow \frac{\mathcal{O}_{\X_p}}{\mathfrak{m}_{\X_p, x_i}^2}\otimes \li^{\otimes d}\longrightarrow \frac{\mathcal{O}_{\X_p}}{\mathfrak{m}^2_{\X_p, x_i}+\mathcal{I}_{\mathcal{D}}}\otimes \li^{\otimes d}\longrightarrow 0.
	\]
	\\
	
	Now, with the same notation as above, $x_i\in \mathrm{Sing}(\mathrm{div}\sigma_0\cap \mathcal{D}_p)$ means that the image of the restriction of $\sigma_0$ in $\frac{\mathcal{O}_{\X_p}}{\mathfrak{m}^2_{\X_p, x_i}+\mathcal{I}_{\mathcal{D}}}\otimes \li^{\otimes d}$ is $0$. If $x_i\in \mathrm{Sing}(\mathrm{div}\sigma_0)\cap \mathcal{D}_p$, then $\sigma_0+\sigma_1\sigma_{\mathcal{D}}$ has image 0 by the restriction to $\frac{\mathcal{O}_{\X_p}}{\mathfrak{m}_{\X_p, x_i}^2}\otimes \li^{\otimes d}$. By the above exact sequence, this is a condition on $\sigma_1(x_i)\in\ho^0(x_i, \li^{\otimes (d-N)})$. By the exactness of the sequence, there is only one value of $\sigma_1(x_i)\in \ho^0(x_i, \li^{\otimes (d-N)})$ which makes $\mathrm{div}(\sigma_0+\sigma_1\sigma_{\mathcal{D}})$ singular at $x_i$. If moreover we have $\deg x_i \geq\frac{d}{Nn}$, then for any $\beta\in \ho^0(x_i, \li^{\otimes (d-N)})$, by Lemma \ref{hi} the number of sections in $\ho^0(\X_p, \li^{\otimes (d-N)})$ whose image in $\ho^0(x_i, \li^{\otimes (d-N)})$ by the restriction map is $\beta$ is bounded above by
	\[
		p^{{-\min(\lfloor\frac{d}{N}\rfloor, \deg x_i)}}\cdot \#\ho^0(\X_p, \li^{\otimes (d-N)})\leq p^{-\frac{d}{Nn}}\cdot \#\ho^0(\X_p, \li^{\otimes (d-N)}).
	\]
	Therefore, we have
	\begin{eqnarray*}
		& &\frac{\#\left\{ \sigma\in \ho^0(\X_p, \li^{\otimes d})\ ;\ \begin{array}{ll}
			\mathrm{Sing}\left(\mathrm{div}\sigma \cap \mathcal{D}_p\right) \text{ finite,}\\  \exists x\in\mathrm{Sing}(\mathrm{div}\sigma)\cap \mathcal{D}_p,\  \deg x\geq \frac{d}{Nn} 
		\end{array} \right\}}{\#\ho^0(\X_p, \li^{\otimes d})}\\
		&=&\frac{\#\left\{ (\sigma_0,\sigma_1)\ ;\ 
		\begin{array}{ll}
			\mathrm{Sing}\left(\mathrm{div}(\sigma_0+\sigma_1\sigma_{\mathcal{D}})\cap \mathcal{D}_p\right) \text{ finite,}\\  \exists x\in\mathrm{Sing}(\mathrm{div}(\sigma_0+\sigma_1\sigma_{\mathcal{D}}))\cap \mathcal{D}_p,\  \deg x\geq \frac{d}{Nn}
		\end{array}
		\right\}}{\#\ho^0(\X_p, \li^{\otimes d})\cdot \#\ho^0(\X_p, \li^{\otimes (d-N)})}\\
		&\leq& \frac{\#\ho^0(\X_p, \li^{\otimes d})\cdot \left[O(d^{n-2}) p^{-\frac{d}{Nn}}\#\ho^0(\X_p, \li^{\otimes (d-N)})\right]}{\#\ho^0(\X_p, \li^{\otimes d})\cdot \#\ho^0(\X_p, \li^{\otimes (d-N)})}\\
		&=&O(d^{n-2}) p^{-\frac{d}{Nn}}.
	\end{eqnarray*}
	Obviously, the last term is bounded above by $p^{-2}$ when $d$ is large enough. Hence we finish the proof.
\end{proof}

\subsection{Proof of Proposition \ref{singmed}}

\begin{proof}
	We fix a positive integer $N$ that satisfies Lemma \ref{relample}. Let $p$ be a prime such that $\X_p$ is smooth and irreducible. By Lemma \ref{sur}, for any closed point $x\in |\X_{p}|$ satisfying $d\geq N(n\deg x)$, i.e. $\deg x\leq \frac{d}{Nn}$, the restriction morphism
	\[
		\varphi_{p^2,x} : \ho^0(\X_{p^2}, \li^{\otimes d})\longrightarrow \ho^0(x', \li^{\otimes d}),
	\]
	is surjective, where $x'$ is the first order infinitesimal neighbourhood of $x$ in $\X$. Therefore the proportion of global sections in $\ho^0(\X_{p^2}, \li^{\otimes d})$ whose divisor satisfies $\dim_{\kappa(x)}\frac{\mathfrak{m}_{\mathrm{div}\sigma,x}}{\mathfrak{m}_{\mathrm{div}\sigma,x}^2}=n$ is equal to
	\[
		\frac{\#\Ker\ \varphi_{p^2,x}}{\#\ho^0(\X_{p^2}, \li^{\otimes d})}=p^{-(n+1)\deg x}.
	\]
	Then with the constant $c_0$ defined in Section \ref{c0}, we have
	\begin{eqnarray*}
		& &\frac{\#\left\{ \sigma\in \ho^0(\X_{p^2}, \li^{\otimes d})\ ;\ \exists x\in |\X_{p^2}|,\ \deg x\leq \frac{d}{Nn},\ \dim_{\kappa(x)}\frac{\mathfrak{m}_{\mathrm{div}\sigma,x}}{\mathfrak{m}_{\mathrm{div}\sigma,x}^2}=n \right\}  }{\#\ho^0(\X_{p^2}, \li^{\otimes d})}\\
		&\leq&\sum_{ x\in |\X_{p^2}|,\ \deg x\leq \frac{d}{Nn}}p^{-(n+1)\deg x}\\
		&\leq& \sum_{e=1}^{\lfloor \frac{d}{Nn} \rfloor}\#\X_{p^2}(\F_{p^e}) p^{-(n+1)e} \\
		&\leq& \sum_{e=1}^{\lfloor \frac{d}{Nn} \rfloor} c_0p^{(n-1)e}\cdot p^{-(n+1)e}\\
		&\leq& \sum_{e=1}^{\infty} c_0 p^{-2e}= c_0p^{-2}+\sum_{e=2}^{\infty} c_0 p^{-2e}\leq 2c_0p^{-2}.
	\end{eqnarray*}
	
	Note that Lemma \ref{posdivsing} tells us that in $\ho^0(\X_{p^2}, \li^{\otimes d})$, the proportion of sections whose divisor has positive dimensional singular locus is bounded above by $c_Hp^{-2}$. Consequently, as the restriction 
	\[
		\varphi_{d,p} : \ho^0(\X_{p^2}, \li^{\otimes d}) \longrightarrow \ho^0(\X_p, \li^{\otimes d})
	\]
	is surjective, the proportion of sections $\sigma\in \ho^0(\X_{p^2}, \li^{\otimes d})$ such that 
	\[
		\dim\mathrm{Sing}\big(\mathrm{div} (\sigma|_{\X_p})\big)>0
	\]
	is also bounded above by $c_Hp^{-2}$. To finish the proof, it suffices to bound the proportion of sections in the set
	\[
		\left\{ \sigma\in \ho^0(\X_{p^2}, \li^{\otimes d})\ ;\ \mathrm{Sing}(\mathrm{div}\sigma|_{\X_p}) \text{ finite,}\ \exists x\in |\X_{p^2}|,\ \deg x \geq\frac{d}{Nn},\ \dim_{\kappa(x)}\frac{\mathfrak{m}_{\mathrm{div}\sigma,x}}{\mathfrak{m}_{\mathrm{div}\sigma,x}^2}=n \right\}.
	\]

	Now we take a subset $E_{d,p}\subset \ho^0(\X_{p^2}, \li^{\otimes d})$ such that the restriction map $\varphi_{d,p}$ induces a bijection from $E_{d,p}$ to $\ho^0(\X_p, \li^{\otimes d})$. For example, if we choose a $\Z/p^2\Z$-basis of $\ho^0(\X_{p^2}, \li^{\otimes d})$, we can take $E_{d,p}$ to be the set of sections having coefficients in $\{0,1,\dots, p-1\}\subset \Z/p^2\Z$ when written as linear combination of sections in this basis. Then any section $\sigma\in\ho^0(\X_{p^2}, \li^{\otimes d})$ can be written uniquely as
	\[
		\sigma=\sigma_1+p\sigma_2
	\]
	for some $\sigma_1,\sigma_2\in E_{d,p}$. Then $\mathrm{Sing}(\mathrm{div}(\sigma|_{\X_p})) =\mathrm{Sing}(\mathrm{div}(\sigma_1|_{\X_p})) $. Now let $\sigma$ be a section in $ \ho^0(\X_{p^2}, \li^{\otimes d})$ such that $\mathrm{Sing}(\mathrm{div}(\sigma|_{\X_p}))$ is finite. We may assume that as a set, $|\mathrm{Sing}(\mathrm{div}(\sigma|_{\X_p}))|=\{ x_1,\dots, x_l  \}$. Then by Lemma \ref{singnum}, we have
	\[
		l=O(d^{n-1}).
	\]
	Moreover, for $i\in\{1,\dots, l \}$, $\mathrm{div}(\sigma_1+p\sigma_2)$ is singular at $x_i$ if and only if the image of $\sigma_1+p\sigma_2$ in $\ho^0(x_i', \li^{\otimes d})$ is $0$, where $x_i'$ is the first order infinitesimal neighbourhood of $x_i$ in $\X_{p^2}$. Let $\mathfrak{m}_{x_i}$ be the ideal sheaf of $x_i$ in $\X_{p^2}$. Then $x_i'$ is defined by the ideal sheaf $\mathfrak{m}_{x_i}^2$. Now we have a right exact sequence of sheaves
	\[
		\frac{\mathcal{O}_{\X_{p^2}}}{\mathfrak{m}_{x_i}^2}\otimes \li^{\otimes d} \longrightarrow \frac{\mathcal{O}_{\X_{p^2}}}{\mathfrak{m}_{x_i}^2}\otimes \li^{\otimes d} \longrightarrow \frac{\mathcal{O}_{\X_{p^2}}}{\mathfrak{m}_{x_i}^2+p\mathcal{O}_{\X_{p^2}}}\otimes \li^{\otimes d}\longrightarrow 0,
	\]
	where the first morphism is the multiplication by $p$. Note that $\frac{\mathfrak{m}_{x_i}}{\mathfrak{m}_{x_i}^2}\otimes \li^{\otimes d}\subset \frac{\mathcal{O}_{\X_{p^2}}}{\mathfrak{m}_{x_i}^2}\otimes \li^{\otimes d}$ is contained in the kernel of the second morphism, we obtain an exact sequence
	\[
		 \frac{\mathcal{O}_{\X_{p^2}}}{\mathfrak{m}_{x_i}}\otimes \li^{\otimes d} \longrightarrow \frac{\mathcal{O}_{\X_{p^2}}}{\mathfrak{m}_{x_i}^2}\otimes \li^{\otimes d} \longrightarrow \frac{\mathcal{O}_{\X_{p^2}}}{\mathfrak{m}_{x_i}^2+p\mathcal{O}_{\X_{p^2}}}\otimes \li^{\otimes d}\longrightarrow 0.
	\]
	Note that $\X$ is regular. The multiplication by $p$ map
	\[
		\frac{\mathcal{O}_{\X_{p^2}}}{\mathfrak{m}_{x_i}^2}\otimes \li^{\otimes d} \longrightarrow \frac{\mathcal{O}_{\X_{p^2}}}{\mathfrak{m}_{x_i}^2}\otimes \li^{\otimes d} 
	\]
	cannot be zero, so is $ \frac{\mathcal{O}_{\X_{p^2}}}{\mathfrak{m}_{x_i}}\otimes \li^{\otimes d} \longrightarrow \frac{\mathcal{O}_{\X_{p^2}}}{\mathfrak{m}_{x_i}^2}\otimes \li^{\otimes d}$. But if this morphism is not zero, it must be injective as $\frac{\mathcal{O}_{\X_{p^2}}}{\mathfrak{m}_{x_i}}\otimes \li^{\otimes d} $ is in fact a sheaf supported on $x_i$ where its stalk is a $\kappa(x_i)$-vector space of dimension $1$. Hence we get a short exact sequence
	\[
		 0\longrightarrow \frac{\mathcal{O}_{\X_{p^2}}}{\mathfrak{m}_{x_i}}\otimes \li^{\otimes d} \longrightarrow \frac{\mathcal{O}_{\X_{p^2}}}{\mathfrak{m}_{x_i}^2}\otimes \li^{\otimes d} \longrightarrow \frac{\mathcal{O}_{\X_{p^2}}}{\mathfrak{m}_{x_i}^2+p\mathcal{O}_{\X_{p^2}}}\otimes \li^{\otimes d}\longrightarrow 0.
	\]
	Since these sheaves are all supported on $x_i$, this sequence induces the following exact sequence of groups
	\[
		0\longrightarrow \ho^0(x_i, \li^{\otimes d})\longrightarrow \ho^0(x_i', \li^{\otimes d}) \longrightarrow \ho^0(x_i'\cap \X_p, \li^{\otimes d}) \longrightarrow 0.
	\]
	It tells us that there is only one value of $ \ho^0(x_i, \li^{\otimes d})$ for $\sigma_2(x_i)$ which makes $\dim_{\kappa(x)}\frac{\mathfrak{m}_{\mathrm{div}\sigma,x}}{\mathfrak{m}_{\mathrm{div}\sigma,x}^2}=n$ for $\sigma=\sigma_1+p\sigma_2$. Note that Lemma \ref{hi} tells us that when $d$ is large enough, for any $x\in |\X_{p^2}|$ satisfying $\deg x\geq \frac{d}{Nn}$ and any value $s\in \ho^0(x,\li^{\otimes d})$, we have
	\[
		\frac{\#\{ \sigma\in \ho^0(\X_p, \li^{\otimes d})\ ;\ \sigma(x)=s  \} }{\#\ho^0(\X_p,\li^{\otimes d})} \leq  p^{-\lfloor\frac{d}{N}\rfloor} \leq p^{1-\frac{d}{N}}.
	\]
	Since $\varphi_{d,p}$ induces a bijection between $E$ and $\ho^0(\X_p, \li^{\otimes d})$, we have
	\[
		\#\{ \sigma\in E_{d,p}\ ;\ \sigma(x)=s  \} \leq p^{1-\frac{d}{N}}\cdot \#\ho^0(\X_p, \li^{\otimes d}).
	\]
	Therefore, when $d$ is large enough, we have
	\begin{eqnarray*}
		& &\frac{\#
		\left\{ \sigma\in \ho^0(\X_{p^2}, \li^{\otimes d})\ ;\ \begin{array}{ll}
			\mathrm{Sing}(\mathrm{div}(\sigma|_{\X_p})) \text{ finite,}\ \exists x\in |\X_{p^2}|,\\  \deg x\geq \frac{d}{Nn},\ \dim_{\kappa(x)}\frac{\mathfrak{m}_{\mathrm{div}\sigma,x}}{\mathfrak{m}_{\mathrm{div}\sigma,x}^2}=n
		\end{array} \right\}}{\#\ho^0(\X_{p^2}, \li^{\otimes d})}\\
		&=&\frac{\#\left\{ (\sigma_1,\sigma_2)\in E_{d,p}\times E_{d,p}\ ;\ 
		\begin{array}{ll}
			\mathrm{Sing}(\mathrm{div}(\sigma_1|_{\X_p})) \text{ finite,}\ \exists x\in |\X_{p^2}|,\ \deg x\geq \frac{d}{Nn}, \\  \dim_{\kappa(x)}\frac{\mathfrak{m}_{\mathrm{div}\sigma,x}}{\mathfrak{m}_{\mathrm{div}\sigma,x}^2}=n, \text{ with }\sigma=\sigma_1+p\sigma_2
		\end{array}
		\right\}}{\#\ho^0(\X_{p^2}, \li^{\otimes d})}\\
		&\leq& \frac{\#E_{d,p}\cdot \left[O(d^{n-1}) p^{1-\frac{d}{N}}\#\ho^0(\X_p, \li^{\otimes d})\right]}{\#\ho^0(\X_{p^2}, \li^{\otimes d})}\\
		&=&O(d^{n-1}) p^{1-\frac{d}{N}}\leq p^{-2}.
	\end{eqnarray*}
	This finishes the proof.
\end{proof}

\subsection{Proof of Theorem \ref{mainvar}}\label{finalarakelov}

Note that in Section \ref{smallresidual} we have shown that the proportion of sections in $\ho_{\mathrm{Ar}}^0(\X,\Li^{\otimes d})$ whose divisor has no singular point of residual characteristic smaller than or equal to $d^{\frac{1}{n+1}}$ tends to $\zeta_\X(n+1)^{-1}$ already. Setting $\epsilon_0=\frac{\varepsilon_0}{2}$ where $\varepsilon_0$ is defined in Proposition \ref{epsilon}, Theorem \ref{mainvar} can be reduced to the following : 
\begin{prop}\label{medsing}
	Let $\X$ be a regular projective arithmetic variety of absolute dimension $n$, and let $\Li$ be an ample Hermitian line bundle on $\X$. For any $0<\varepsilon<\epsilon_0$, set
	\[
		\mathcal{Q}_{d}^m:=\left\{ \sigma\in \ho^0(\X,\Li^{\otimes d})\ ;\ \begin{array}{ll}
		\mathrm{div}\sigma \text{ has a singular point of residual} \\
		\text{characteristic between } d^{\frac{1}{n+1}} \text{ and } e^{\varepsilon d}
		\end{array}\right\}.
	\]
	When $d$ is sufficiently large, we have
	\[
		\frac{\#\left(\mathcal{Q}_{d}^m\cap \ho^0_{\mathrm{Ar}}(\X, \overline{\mathcal{L}}^{\otimes d})\right)}{\#\ho^0_{\mathrm{Ar}}(\X, \overline{\mathcal{L}}^{\otimes d})} = O(d^{-\frac{1}{n+1} }).
	\]
	Here the constant involved in the big $O$ depends only on $\X$ and $\Li$.
	
	In particular, denoting $\mathcal{Q}^m=\bigcup_{d>0}\mathcal{Q}_{d}^m$, we have
	\[
		\mu_{\mathrm{Ar}}(\mathcal{Q}^m)=0.
	\]
\end{prop}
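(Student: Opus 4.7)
The plan is to combine the single-fiber bound of Proposition \ref{singmed} with the lifting statement of Proposition \ref{ed}, then sum the resulting contributions over primes $p$ in the range $d^{\frac{1}{n+1}}<p\leq e^{\varepsilon d}$. The starting observation is that a section $\sigma\in\ho^0_{\mathrm{Ar}}(\X,\Li^{\otimes d})$ has a singular point of $\mathrm{div}\sigma$ with residual characteristic $p$ if and only if its restriction $\psi_{d,p^2}(\sigma)\in\ho^0(\X_{p^2},\Li^{\otimes d})$ admits a closed point $x$ with $\dim_{\kappa(x)}\m_{\mathrm{div}\,\psi_{d,p^2}(\sigma),x}/\m^2_{\mathrm{div}\,\psi_{d,p^2}(\sigma),x}=n$, where $\psi_{d,p^2}$ is the restriction map modulo $p^2$.

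Next, for each prime $p$ in the range such that $\X_p$ is smooth and irreducible, Proposition \ref{singmed} gives a uniform bound $c\,p^{-2}$ on the proportion of such ``bad'' sections in $\ho^0(\X_{p^2},\Li^{\otimes d})$. Choosing the $\varepsilon$ in the current statement to be at most half of the $\varepsilon$ furnished by Proposition \ref{ed} guarantees $p^2\leq e^{\varepsilon' d}$ for the constant $\varepsilon'$ of that proposition, so Proposition \ref{ed} transfers this fiberwise bound to a bound of $4c\,p^{-2}$ on the proportion of $\psi_{d,p^2}^{-1}(\text{bad})\cap\ho^0_{\mathrm{Ar}}(\X,\Li^{\otimes d})$ inside $\ho^0_{\mathrm{Ar}}(\X,\Li^{\otimes d})$. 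The oddness hypothesis on $N=p^2$ in Proposition \ref{ed} is automatic for $p\geq 3$, and $p=2$ lies well below $d^{\frac{1}{n+1}}$ for large $d$ and so falls outside the range. The finitely many primes where $\X_p$ is singular or reducible are bounded in magnitude, hence also lie below $d^{\frac{1}{n+1}}$ for $d$ large and contribute nothing.

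Taking a union bound over the remaining primes yields
\[
\frac{\#\mathcal{Q}_d^m}{\#\ho^0_{\mathrm{Ar}}(\X,\Li^{\otimes d})}\leq 4c\sum_{d^{\frac{1}{n+1}}<p\leq e^{\varepsilon d}}p^{-2}\leq 4c\sum_{k>d^{\frac{1}{n+1}}}k^{-2}\leq 4c\int_{d^{\frac{1}{n+1}}}^{\infty}x^{-2}\,\mathrm{d}x=O(d^{-\frac{1}{n+1}}),
\]
which proves the quantitative estimate and, letting $d\to\infty$, gives $\mu(\mathcal{Q}^m)=0$. There is no genuine conceptual obstacle here: the proposition is essentially a bookkeeping combination of the two ingredients already in place. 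The only point requiring care is the consistent choice of $\varepsilon$, which must simultaneously accommodate the hypothesis $N\leq e^{\varepsilon d}$ of Proposition \ref{ed} applied to $N=p^2$ — forcing $\varepsilon$ to be at most half of the exponent provided there — and the implicit absorption of the finitely many primes of bad reduction into the range $p<d^{\frac{1}{n+1}}$, valid once $d$ is large enough.
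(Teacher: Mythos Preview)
Your proof is correct and follows essentially the same approach as the paper: both combine the fiberwise bound of Proposition \ref{singmed} with the lifting estimate of Proposition \ref{ed} (applied with $N=p^2$), then take a union bound over primes in the range and sum $4c\,p^{-2}$ to obtain $O(d^{-\frac{1}{n+1}})$. The paper likewise sets $\varepsilon=\tfrac{1}{2}\varepsilon'$ with $\varepsilon'$ from Proposition \ref{ed}, and handles the oddness of $p^2$ and the finitely many primes of bad reduction exactly as you do.
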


\begin{proof}
	Since for any $\sigma\in \ho^0(\X, \Li^{\otimes d})$, $\mathrm{div}\sigma$ has a singular point on the fiber $\X_p$ implies $\sigma\ \mathrm{mod} \ p^2\in \mathcal{Q}_{d,p^2}$, we have
	\begin{eqnarray*}
		\frac{\#\left(\mathcal{Q}_d^{m}\cap \ho^0_{\mathrm{Ar}}(\X, \overline{\mathcal{L}}^{\otimes d})\right)}{\#\ho^0_{\mathrm{Ar}}(\X,\Li^{\otimes d})}
		&\leq& \sum_{d^{\frac{1}{n+1}}\leq p \leq e^{\varepsilon d}} \frac{\#\{ \sigma\in \ho^0_{\mathrm{Ar}}(\X,\Li^{\otimes d})\ ;\ \sigma\ \mathrm{mod}\ p^2\in \mathcal{Q}_{d,p^2} \}}{\#\ho^0_{\mathrm{Ar}}(\X,\Li^{\otimes d})}.
	\end{eqnarray*}
	As $p\geq d^{\frac{1}{n+1}}$ implies that $p^2$ is odd, we can apply Proposition \ref{ed} to the case $N=p^2$ and $E=\mathcal{Q}_{d,p^2}$, and obtain that for any $0<\delta<\varepsilon_0$ with $\varepsilon_0$ defined in Proposition \ref{epsilon}, when $d$ is large enough and $p^2\leq e^{\delta d}$, we have
	\begin{eqnarray*}
		 & &\frac{\#\{ \sigma\in \ho^0_{\mathrm{Ar}}(\X,\Li^{\otimes d})\ ;\ \sigma\ \mathrm{mod}\ p^2\in \mathcal{Q}_{d,p^2} \}}{\#\ho^0_{\mathrm{Ar}}(\X,\Li^{\otimes d})}\\
		 &\leq& 4p^{-2\mathrm{rk}(\ho^0(\X,\Li^{\otimes d}))}\cdot \#\mathcal{Q}_{d,p^2}  \\
		 &=&4\frac{\#\mathcal{Q}_{d,p^2} }{\#\mathrm{H}^0(\X_{p^2}, \li^{\otimes d})}.
	\end{eqnarray*}
	Since $\X$ is a regular arithmetic variety, it is irreducible and generically smooth. So if $d$ is large enough, for any prime number $p\geq d^{\frac{1}{n+1}}$, $\X_p$ is irreducible and smooth over $\F_p$. Then Proposition \ref{singmed} tells us that there exists a constant $c>0$ such that
	\[
		\frac{\#\mathcal{Q}_{d,p^2} }{\#\mathrm{H}^0(\X_{p^2}, \li^{\otimes d})}\leq cp^{-2}.
	\]
	Note that if $\varepsilon>0$ satisfies $\varepsilon<\epsilon_0$, then $2\varepsilon$ satisfies the condition on $\delta$. Therefore we conclude with 
	\begin{eqnarray*}
		\frac{\#\left(\mathcal{Q}_d^{m}\cap \ho^0_{\mathrm{Ar}}(\X, \overline{\mathcal{L}}^{\otimes d})\right)}{\#\ho^0_{\mathrm{Ar}}(\X,\Li^{\otimes d})} &\leq& \sum_{d^{\frac{1}{n+1}}\leq p \leq e^{\varepsilon d}} \frac{\#\{ \sigma\in \ho^0_{\mathrm{Ar}}(\X,\Li^{\otimes d})\ ;\ \sigma\ \mathrm{mod}\ p^2\in \mathcal{Q}_{d,p^2} \}}{\#\ho^0_{\mathrm{Ar}}(\X,\Li^{\otimes d})}\\
		&\leq&\sum_{d^{\frac{1}{n+1}}\leq p \leq e^{\varepsilon d}}4\frac{\#\mathcal{Q}_{d,p^2} }{\#\mathrm{H}^0(\X_{p^2}, \li^{\otimes d})} \\
		&\leq& \sum_{d^{\frac{1}{n+1}}\leq p \leq e^{\varepsilon d}} 4cp^{-2} \\
		&=& 4c\left(\sum_{d^{\frac{1}{n+1}}\leq p \leq \infty} p^{-2}\right)\\
		&<& 4c d^{-\frac{1}{n+1}},
	\end{eqnarray*}
	which is the statement of the proposition.
\end{proof}

\begin{proof}[Proof of Theorem \ref{mainvar}]
	Since $\mathcal{P}_{A,\varepsilon}\subset \mathcal{P}_B\subset \mathcal{P}_A\cup \mathcal{Q}^m$, we get that 
	\[
		|\mu_{\mathrm{Ar}}(\mathcal{P}_{A,\varepsilon})-\mu_{\mathrm{Ar}}(\mathcal{P}_B)|\leq \mu_{\mathrm{Ar}}(\mathcal{Q}^m)=0.
	\]
	Therefore we have
	\[
		\mu_{\mathrm{Ar}}(\mathcal{P}_{A,\varepsilon})=\mu_{\mathrm{Ar}}(\mathcal{P}_B)=\zeta_{\X}(1+n)^{-1}.
	\]
	This finishes the proof.
\end{proof}

\subsection{Proof of Corollary \ref{singR}}

\begin{proof}
	Let $\X$ be a regular projective arithmetic variety of dimension $n$, and let $\Li$ be an ample Hermitian line bundle on $\X$. By Proposition \ref{epsilon}, there exists a positive constant $\varepsilon_0$ such that for any large enough integer $d$, $\ho^0(\X, \Li^{\otimes d})$ has a basis consisting of sections with norm smaller than $e^{-\varepsilon_0 d}$. Choose the constant $c$ to be a real number satisfying $1<c<e^{\varepsilon_0}$. For any $R>1$, set $\Li'=(\li, \lVert\cdot\rVert' )$ where $\lVert\cdot\rVert'=\lVert\cdot\rVert R^{-1}$. Since $(\li, \lVert\cdot\rVert e^{-\delta})$ is ample for any $\delta>0$, the Hermitian line bundle $\Li'$ is also ample. Then by construction, for any large enough integer $d$, $\ho^0(\X, \Li'^{\otimes d})$ has a basis consisting of sections with norm smaller than $R^{-d}e^{-\varepsilon_0 d}=e^{-(\varepsilon_0+\log R)d}$. Set $\varepsilon=\frac{1}{2}\log(cR)$. Then we have
	\[
		\varepsilon=\frac{1}{2}(\log c+\log R)< \frac{1}{2}(\varepsilon_0+\log R),
	\]
	and we can apply Theorem \ref{mainvar} to $\Li'$ with constant $\varepsilon$ chosen as above. Then the density result is exactly what we need to prove.
	
\end{proof}

\appendix
\section{Bertini smoothness theorem over finite fields}\label{finitefield}

In the Appendix, we prove a slightly generalized version of B. Poonen's Bertini theorem over finite fields. The precise statement is the following :

\begin{thm}\label{bertinifini}
	Let $\F_q$ be a finite field of characteristic $p$. Let $Y$ be a projective scheme of dimension $n$ over $\F_q$, and $X$ a smooth subscheme of $Y$ of dimension $m$. Let $\mathcal{L}$ be an ample line bundle on $Y$. Assume that there exists a smooth open subscheme $U$ in $Y$ containing $X$. 
	Set 
	\[
		\mathcal{P}_{d}:=\{\sigma\in \ho^0(Y, \mathcal{L}^{\otimes d})\ ;\ \mathrm{div}\sigma\cap X \text{ is smooth of dimension }m-1\}
	\]
	and $\mathcal{P}=\bigcup_{d\geq 0}\mathcal{P}_{d}$.
	We have
	\[
		\mu(\mathcal{P})=\lim_{d\rightarrow \infty} \frac{ \#\mathcal{P}_{d}}{\# \ho^0(Y, \mathcal{L}^{\otimes d})}
		=\zeta_X(m+1)^{-1}>0.
	\]
\end{thm}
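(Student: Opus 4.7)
The plan is to adapt Poonen's proof of Theorem~1.1 of \cite{Po04} to the present setting, where the ambient scheme $Y$ may be singular and the line bundle $\mathcal{L}$ is only assumed to be ample rather than very ample. As a first reduction, the ampleness of $\mathcal{L}$ guarantees that for $d$ sufficiently large the restriction map $\ho^0(Y,\mathcal{L}^{\otimes d})\to\ho^0(X,(\mathcal{L}|_X)^{\otimes d})$ is surjective, and its fibres are all cosets of the same kernel $\ho^0(Y,\mathcal{I}_X\otimes\mathcal{L}^{\otimes d})$ and therefore have the same cardinality. Since $\mathrm{div}\sigma\cap X=\mathrm{div}(\sigma|_X)$ scheme-theoretically inside $X$, this reduces the statement to the case $Y=X$, a smooth projective $\F_q$-scheme of dimension $m$ equipped with an ample line bundle $\mathcal{L}$.

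The next step is to prepare the ample geometry. Fix a positive integer $N$ such that $\mathcal{L}^{\otimes N}$ is very ample and the multiplication maps $\ho^0(X,\mathcal{L}^{\otimes a})\otimes\ho^0(X,\mathcal{L}^{\otimes b})\to\ho^0(X,\mathcal{L}^{\otimes(a+b)})$ are surjective for $a,b\geq N$. Cover $X$ by finitely many opens $U_{\alpha}$ on which $\Omega^1_{U_{\alpha}/\F_q}$ is free with basis $\mathrm{d}t_{\alpha,1},\dots,\mathrm{d}t_{\alpha,m}$, and such that $X-U_{\alpha}=\mathrm{div}\tau_{\alpha}$ for some $\tau_{\alpha}\in\ho^0(X,\mathcal{L}^{\otimes(N+1)})$ and $U_{\alpha}=\bigcup_{j}\bigl(X-\mathrm{div}\tau_{\alpha,j}\bigr)$ for sections $\tau_{\alpha,j}\in\ho^0(X,\mathcal{L}^{\otimes N})$. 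These data let us form dehomogenized maps $\Phi_{\alpha,j}(\sigma)=\sigma\cdot\tau_{\alpha,j}^{d}/\tau_{\alpha}^{d}\in\ho^0(U_{\alpha},\mathcal{O}_{U_{\alpha}})$ and to take partial derivatives $\partial_{\alpha,i}\Phi_{\alpha,j}(\sigma)$ whose products with $\tau_{\alpha}^{d+\delta}$ extend to global sections of $\mathcal{L}^{\otimes(N+1)(d+\delta)}$ for $\delta$ larger than some fixed $N_{1}$.

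As in Poonen we split the sections $\sigma\in\ho^0(X,\mathcal{L}^{\otimes d})$ according to the degree of the singular points of $\mathrm{div}\sigma$. For the \emph{low-degree} range $\deg x\leq r$, the simultaneous restriction map from $\ho^0(X,\mathcal{L}^{\otimes d})$ to $\prod_{\deg x\leq r}\ho^0(x',(\mathcal{L}|_{x'})^{\otimes d})$, where $x'$ denotes the first order infinitesimal neighborhood of $x$ in $X$, is surjective once $d$ is sufficiently large relative to $r$; the proportion of $\sigma$ whose divisor is smooth at every closed point of degree $\leq r$ is then exactly $\prod_{\deg x\leq r}(1-q^{-(m+1)\deg x})$, which tends to $\zeta_X(m+1)^{-1}$ as $r\to\infty$. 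For the \emph{medium-degree} range $r<\deg x\leq(d-N)/(Nm)$, a union bound combined with the Lang--Weil estimate $\#X(\F_{q^{e}})\leq c_0 q^{me}$ controls the proportion of $\sigma$ having a singular point in that range by $O(q^{-2(r+1)})$, uniformly in $d$.

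The main obstacle is the \emph{high-degree} range $\deg x>(d-N)/(Nm)$. On each chart $U_{\alpha}$ we decompose $d=pk+(N+1)\ell$ with $\ell$ in a fixed interval and parametrize $\sigma$ surjectively as
\[
\sigma=\sigma_{0}+\sum_{i=1}^{m}\beta_{i}^{p}\,t_{\alpha,i}\tau_{\alpha}^{\ell}+\gamma^{p}\tau_{\alpha}^{\ell},
\]
with $\sigma_{0}\in\ho^0(X,\mathcal{L}^{\otimes d})$ and $\beta_{i},\gamma\in\ho^0(X,\mathcal{L}^{\otimes k})$. Taking Frobenius $p$-th powers kills the derivative on the $\beta_{i}$ and $\gamma$ factors, so a direct computation of $\partial_{\alpha,i}\Phi_{\alpha,j}(\sigma)$ shows that, modulo $\mathrm{div}\sigma$, varying $\beta_{i}$ translates $\partial_{\alpha,i}\Phi_{\alpha,j}(\sigma)$ by an affine term independent of the other $\beta_{i'}$. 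A Bertini-style dimension induction on the vanishing loci $W_{\alpha,j,i}=\{g_{\alpha,j,1}=\cdots=g_{\alpha,j,i}=0\}\cap U_{\alpha}$, followed by a counting argument for the final parameter $\gamma$ applied to the finite set $W_{\alpha,j,m-1}\cap\mathrm{div}\sigma$, bounds the proportion of $\sigma$ with a high-degree singular point on $U_{\alpha}$ by $O\bigl(d^{m-1}q^{-c d}\bigr)$ for some $c>0$, which tends to $0$ as $d\to\infty$. Combining the three estimates, letting first $d\to\infty$ and then $r\to\infty$, yields $\mu(\mathcal{P})=\zeta_X(m+1)^{-1}$; positivity follows from absolute convergence of the Euler product at $s=m+1$.
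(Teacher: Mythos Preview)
Your initial reduction to $Y=X$ has a genuine gap. The theorem allows $X$ to be a locally closed (quasi-projective) subscheme of $Y$; indeed, the paper explicitly notes that taking $Y=\pr^n_{\F_q}$ and $\mathcal{L}=\mathcal{O}(1)$ recovers Poonen's theorem, which is stated for quasi-projective $X$. When $X$ is not closed, $\ho^0(X,(\mathcal{L}|_X)^{\otimes d})$ need not be finite-dimensional and the restriction map from $\ho^0(Y,\mathcal{L}^{\otimes d})$ is certainly not surjective, so your passage to ``$Y=X$ smooth projective'' fails. The paper handles this by never making such a reduction: all three ranges are treated with sections $\sigma\in\ho^0(Y,\mathcal{L}^{\otimes d})$, restricting only to first-order neighborhoods $x'$ of closed points $x\in X$ (computed inside the smooth open $U\subset Y$), and the high-degree argument is carried out on charts of $U$ rather than of $X$. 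You could salvage the reduction by replacing $Y$ with the closure $\overline{X}$, but $\overline{X}$ is then generally singular along $\overline{X}\setminus X$, so you are back to the original setup with a possibly singular ambient scheme and must use the smooth neighborhood hypothesis anyway.

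Apart from this, your outline follows the paper's proof closely, with a few numerical slips that do not affect convergence: the medium-degree bound is $O(q^{-r})$ rather than $O(q^{-2(r+1)})$ (the latter is the arithmetic-fiber exponent from Section~\ref{effective}); the degree cutoff is $\frac{d-N}{N(m+1)}$ since $\dim_{\F_q}\ho^0(x',\mathcal{O}_{x'})=(m+1)\deg x$; the finite locus in the high-degree step is $W_{\alpha,j,m}$ (there are $m$ partial derivatives, not $m-1$); and the high-degree bound carries a factor $d^{m}q^{-c d/p}$ rather than $d^{m-1}q^{-cd}$, because $\#W_{\alpha,j,m}=O(d^m)$ by B\'ezout and the $p$-th-power parametrization forces $k\sim d/p$.
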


Here $\zeta_X$ is the zeta function
\[
	\zeta_X(s)=\prod_{x\in |X|}\big(1-\#\kappa(x)^{-s}\big)^{-1}.
\]
	
\begin{rmq}
	If we take $Y=\pr^n_{\F_q}$, $\li=\mathcal{O}(1)$, we get Poonen's theorem. 
\end{rmq}
	
Note that for a $\sigma\in \ho^0(Y, \mathcal{L}^{\otimes{d}})$, $\mathrm{div}\sigma\cap X$ is smooth if and only if it is non-singular at every closed point of $\mathrm{div}\sigma\cap X$. To prove this theorem, we classify the closed points of $X$ by their degree, so that for each degree there exist only finitely many closed points. In Poonen's proof, he classifies the closed points into three parts for each $\li^{\otimes d}$, which are the following: closed points of degree smaller than or equal to a chosen positive integer $r$, closed points of degree between $r$ and $\frac{d}{m+1}$, and closed points of degree bigger than $\frac{d}{m+1}$. Then he estimates the number of sections in $\ho^0(Y, \mathcal{L}^{\otimes d})$ whose divisor has singular points in these parts, respectively. 
	
Our proof follows his method in a faithful way. But we need more explicit bounds for bad sections, so as to get the speed of convergence for the final limit. For technical reasons, we need the following result:
\begin{lem}\label{ample}
	Let $\li$ be an ample line bundle on a projective scheme $Y$ over a field $k$. Then there exists a positive integer $N$ such that
	\begin{enumerate}[label=\roman*)]
		\item $\li^{\otimes d}$ is very ample for all $d\geq N$;
		\item for any $a, b\geq N$, the natural morphism
		\[
			\ho^0(Y,\li^{\otimes a})\otimes \ho^0(Y,\li^{\otimes b})\longrightarrow \ho^0(Y,\li^{\otimes (a+b)})
		\]
		is surjective.
	\end{enumerate}
\end{lem}
\begin{proof}
	This is a classical result. The first statement is part of \cite[Theorem 1.2.6]{La04}, and the second statement can be deduced directly from \cite[Theorem 1.8.3]{La04}. 
\end{proof}
	
We choose a positive integer $r$, an integer $N$ satisfying this lemma and depending possibly on $q$, and set
\begin{eqnarray*}
	\mathcal{P}_{d,\leq r}&=&\{ \sigma\in \ho^0(Y,\li^{\otimes d})\ ;\ \forall x\in X,\ \deg x\leq r,\ \mathrm{div}\sigma\cap X \text{ is smooth of dimension } m-1 \text{ at }x \}, \\
	\mathcal{Q}_{d, >r}^{\mathrm{med}} &=& \{ \sigma\in \ho^0(Y,\li^{\otimes d})\ ;\ \exists x\in X,\ r<\deg x\leq \frac{d}{(m+1)N},\ \mathrm{div}\sigma\cap X \text{ is singular at }x  \},  \\
	\mathcal{Q}_{d}^{\mathrm{high}}&=& \{ \sigma\in \ho^0(Y,\li^{\otimes d})\ ;\ \exists x\in X,\ \deg x\geq \frac{d}{(m+1)N},\ \mathrm{div}\sigma\cap X \text{ is singular at }x \}.
\end{eqnarray*}
Then clearly
\[
	\mathcal{P}_{d}\subset\mathcal{P}_{d,\leq r}\subset \mathcal{P}_{d}\cup \mathcal{Q}_{d, >r}^{\mathrm{med}}\cup \mathcal{Q}_{d}^{\mathrm{high}}.
\]
	
We give bound for the proportion of these three sets.
	
\subsection{Singular points of small degree}

\begin{lem}\label{surject}
	Let $Y$ be a projective scheme over $\F_q$, $\li$ an ample line bundle over $Y$. Let $Z$ be a finite sub-scheme of $Y$. Let $N$ be a positive integer satisfying Lemma \ref{ample}. Then the restriction morphism
	\[
		\phi_{d,Z}: \ho^0(Y, \li^{\otimes d})\longrightarrow \ho^0(Z, \li^{\otimes d})
	\]
	is surjective for all $d\geq N h_Z$, where $h_Z=\dim_{\F_q} \ho^0(Z, \mathcal{O}_Z)$.
\end{lem}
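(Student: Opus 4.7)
The plan is to reduce the problem to the case $N\mid d$ via the multiplication surjectivity of Lemma~\ref{ample}(ii), and to handle that case by pulling back from projective space.

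First, I would use that $\li^{\otimes N}$ is very ample to obtain a closed embedding $\iota : Y \hookrightarrow \pr^M$ with $\iota^{*}\mathcal{O}(1) = \li^{\otimes N}$, where $M = \dim_{\F_q}\ho^0(Y,\li^{\otimes N}) - 1$. Iterating Lemma~\ref{ample}(ii) makes the induced map $\ho^0(\pr^M, \mathcal{O}(k)) \to \ho^0(Y, \li^{\otimes Nk})$ surjective for every $k \geq 1$. Viewing $Z$ as a $0$-dimensional subscheme of $\pr^M$ of length $h_Z$, it is $h_Z$-regular in the sense of Castelnuovo--Mumford, so $\ho^0(\pr^M, \mathcal{O}(k)) \to \ho^0(Z, \mathcal{O}(k))$ is surjective for $k \geq h_Z$. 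Combining the two, the restriction $\ho^0(Y, \li^{\otimes Nk}) \to \ho^0(Z, \li^{\otimes Nk})$ is surjective for $k \geq h_Z$; in particular $\phi_{Nh_Z,Z}$ is surjective.

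For general $d \geq N(h_Z+1)$, I would decompose $d = Nh_Z + d_2$ with $d_2 \geq N$, and apply Lemma~\ref{ample}(ii) to get
\[
\ho^0(Y,\li^{\otimes Nh_Z}) \otimes \ho^0(Y,\li^{\otimes d_2}) \twoheadrightarrow \ho^0(Y,\li^{\otimes d}).
\]
Setting $A = \ho^0(Z,\mathcal{O}_Z)$, I choose trivializations of $\li^{\otimes Nh_Z}|_Z$, $\li^{\otimes d_2}|_Z$ and $\li^{\otimes d}|_Z$ compatible with the above product, and let $J_e \subseteq A$ denote the image of $\phi_{e,Z}$ under the corresponding identification $\ho^0(Z,\li^{\otimes e}) \simeq A$. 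The diagram then forces $J_d \supseteq J_{Nh_Z}\cdot J_{d_2}$, and by the previous step $J_{Nh_Z} = A$, so $J_d \supseteq A\cdot J_{d_2}$. Since $d_2 \geq N$, $\li^{\otimes d_2}$ is itself very ample by Lemma~\ref{ample}(i), so for each closed point $z_i \in Z$ there is a section of $\li^{\otimes d_2}$ nonvanishing at $z_i$; hence $J_{d_2}$ is not contained in any maximal ideal of the Artinian ring $A$, and Nakayama's lemma gives $A\cdot J_{d_2} = A$. It follows that $J_d = A$, i.e.\ $\phi_{d,Z}$ is surjective.

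The main obstacle will be the invocation of the Castelnuovo--Mumford bound on the regularity of a $0$-dimensional subscheme of length $h_Z$ in projective space. While classical, one could alternatively replace this step by a direct induction on $h_Z$ that lifts sections one composition factor of $A$ at a time, using very ampleness of $\li^{\otimes N}$ to supply the required global sections of $\mathcal{I}_{Z'}\otimes\li^{\otimes Nh_Z}$ at each step. The remainder of the argument is essentially formal, and the slack $N(h_Z+1) = Nh_Z + N$ in the bound is precisely what is needed to combine the Castelnuovo--Mumford step (contributing $Nh_Z$) with a single application of the surjective multiplication (contributing the extra $N$).
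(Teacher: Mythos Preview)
Your argument is correct and follows essentially the same strategy as the paper: both reduce to the very ample case for a fixed multiple of $N$ (the paper via \cite[Lemma 2.1]{Po04}, you via Castelnuovo--Mumford regularity, which is the same content), then bridge to arbitrary $d\ge N(h_Z+1)$ using the multiplication surjectivity of Lemma~\ref{ample}(ii). The one genuine technical difference is in how the ``remainder'' factor is handled: the paper passes to a finite extension of $\F_q$ to find a \emph{single} section $t\in\ho^0(Y,\li^{\otimes r_d})$ nonvanishing at every point of $Z$, thereby trivializing $\li^{\otimes r_d}|_Z$; you instead observe that very ampleness of $\li^{\otimes d_2}$ already supplies, for each closed point of $Z$, a section not vanishing there, so $J_{d_2}$ misses every maximal ideal of $A$ and Nakayama gives $A\cdot J_{d_2}=A$. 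Your version avoids the base-change step entirely, which is a small simplification; the paper's version makes the reduction to the known surjectivity of $\phi_{s_dN,Z}$ more transparent. Either way the arithmetic of the bound $N(h_Z+1)=Nh_Z+N$ is the same.
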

\begin{proof}
	If $\li$ is very ample, by \cite[Lemma 2.1]{Po04}, $\phi_{d,Z}$ is surjective when $d\geq h_Z-1$, and this lemma is also true. When $L$ is only ample, for any $\delta_0\geq N$, $\li^{\delta_0}$ is very ample and $\phi_{d\delta_0,Z}$ is surjective for any $d\geq h_Z-1$. Now for any $d\geq N h_Z$, we can find $s_d\geq h_Z-1$ and $N\leq r_d\leq 2N$ such that $d=s_dN+r_d$. By Lemma \ref{ample}, we have a surjection
	\[
		\ho^0(Y, \li^{\otimes s_dN})\otimes \ho^0(Y, \li^{\otimes r_d})\longrightarrow \ho^0(Y, \li^{\otimes d}).
	\]
	Moreover, since $Z$ is finite, for all $d\geq 0$, we have $\ho^0(Z, \li^{\otimes d})\simeq \ho^0(Z, \mathcal{O}_Z)$. These isomorphisms are not canonical, but can give us an isomorphism
	\[
		\ho^0(Z, \li^{\otimes s_dN})\otimes_{\ho^0(Z, \mathcal{O}_Z)} \ho^0(Z, \li^{\otimes r_d})\longrightarrow \ho^0(Z, \li^{\otimes d})
	\]
	which makes the following diagram commutative:
	\[
		\xymatrix{
			\ho^0(Y,\mathcal{L}^{\otimes s_d N})\otimes \ho^0(Y, \mathcal{L}^{\otimes r_d}) \ar@{->>}[r] \ar[d] & \ho^0(Y,\mathcal{L}^{\otimes d})\  \ar[d] \\
			\ho^0(Z, \mathcal{L}^{\otimes s_dN})\otimes_{\ho^0(Z,\mathcal{O}_Z)} \ho^0(Z, \mathcal{L}^{\otimes r_d}) \ar[r]^{\qquad\qquad\quad\ \ \sim} & \ho^0(Z, \mathcal{L}^{\otimes d}).
		}
	\]
	Thus it suffices to show that the left vertical morphism is surjective. Since $Y$ is projective, $\ho^0(Y, \mathcal{L}^{\otimes r_d})$ is of finite $\F_q$-dimension. Let $\ho^0(Y, \mathcal{L}^{\otimes r_d})=\bigoplus_i \F_q t_i$. As $\li^{r_d}$ is very ample, it is globally generated. So for each $z\in |Z|$, we can find one $t_i$ in the set of generators such that $t_i(z)\not=0$. Since surjectivity is stable under field base change, replacing $\F_q$ by a finite field extension, we can assume that there exists a linear combination $t=\sum_i a_it_i$ such that $t(z)\not=0$ for any $z\in |Z|$. Then we have
	\[
		\xymatrix{
			\ho^0(Y, \mathcal{L}^{\otimes s_dN})\otimes \F_q t  \ar@{^(->}[r] \ar@{->>}[d] & \ho^0(Y,\mathcal{L}^{\otimes s_d N})\otimes \ho^0(Y, \mathcal{L}^{\otimes r_d}) \ar[d] \\
			\ho^0(Z, \mathcal{L}^{\otimes s_dN})\otimes \F_q t|_Z \ar[r] & \ho^0(Z, \mathcal{L}^{\otimes s_dN})\otimes_{\ho^0(Z,\Oc_Z)} \ho^0(Z, \mathcal{L}^{\otimes r_d}).
		}
	\]
	By our construction, the section $t$ trivializes $\ho^0(Z, \mathcal{L}^{\otimes r_d})$. So the bottom morphism is an isomorphism. Hence the right vertical morphism is surjective. By the commutativity of the first diagram, the morphism
	\[
		\ho^0(Y,\li^{\otimes d})\longrightarrow \ho^0(Z, \li^{\otimes d})
	\]
	is also surjective, which is what we need to show.
\end{proof}

With this lemma, we can control the proportion of $\mathcal{P}_{d,\leq r}$. 

\begin{prop}\label{smallf}
	Let $Y$ be a projective scheme of dimension $n$ over $\F_q$ equipped with an ample line bundle $\li$. Let $X$ be a subscheme smooth over $\F_q$ of dimension $m$ of $Y$. Set
	\[
		\mathcal{P}_{d,\leq r}=\{ \sigma\in \ho^0(Y,\li^{\otimes d})\ ;\ \forall x\in X,\ \deg x\leq r,\ \mathrm{div}\sigma\cap X \text{ is smooth of dimension } m-1 \text{ at }x \},
	\]
	and $\mathcal{P}_{\leq r}=\bigcup_{d\geq 0}\mathcal{P}_{d,\leq r}$.
	We have 
	\[
		\mu(\mathcal{P}_{\leq r})=\lim_{d\rightarrow \infty} \frac{\#\mathcal{P}_{d,\leq r}}{\#\ho^0(Y, \li^{\otimes d})}=\prod_{\deg x\leq r} \left( 1-q^{-(m+1)\deg x} \right).
	\]
	In fact, with a positive integer $N$ satisfying Lemma \ref{ample}, for any $d\geq N\Big(\sum_{\deg x\leq r}(1+m)\deg x\Big)$, we have
	\[
		\frac{\#\mathcal{P}_{d,\leq r}}{\#\ho^0(Y, \li^{\otimes d})}=\prod_{\deg x\leq r} \left( 1-q^{-(m+1)\deg x} \right).
	\]
\end{prop}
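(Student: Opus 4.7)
The plan is to reduce the condition defining $\mathcal{P}_{d,\leq r}$ to a single surjective restriction map, so that Lemma~\ref{surject} applies directly. For each closed point $x$ of $X$, let $x'$ denote the first-order infinitesimal neighborhood of $x$ inside $X$, i.e.\ the closed subscheme of $X$ cut out by $\mathfrak{m}_{X,x}^2$. Since $X$ is smooth of dimension $m$ over $\F_q$, we have $\dim_{\kappa(x)}\mathcal{O}_{x'} = 1+m$, and hence $\dim_{\F_q}\ho^0(x',\mathcal{O}_{x'}) = (m+1)\deg x$. The first step is to identify the condition ``$\mathrm{div}\sigma\cap X$ is smooth of dimension $m-1$ at $x$'' with ``$\sigma|_{x'}\neq 0$ in $\ho^0(x',\li^{\otimes d})$'': when $\sigma(x)\neq 0$ both sides are vacuously true, while when $\sigma(x)=0$ the Jacobian criterion says the divisor is smooth at $x$ precisely when the linear part of $\sigma$ along $X$ at $x$ is nonzero.

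Next I will form the disjoint union $X'_{\leq r} := \coprod_{x\in |X|,\,\deg x\leq r} x'$, a finite subscheme of $X \subset Y$ with
\[
h_{X'_{\leq r}} := \dim_{\F_q}\ho^0(X'_{\leq r},\mathcal{O}_{X'_{\leq r}}) = \sum_{\deg x\leq r}(m+1)\deg x.
\]
Its ring of global sections splits canonically as $\ho^0(X'_{\leq r},\li^{\otimes d}) \simeq \prod_{\deg x\leq r}\ho^0(x',\li^{\otimes d})$, and $\sigma\in\mathcal{P}_{d,\leq r}$ if and only if the restriction of $\sigma$ to $X'_{\leq r}$ has nonzero image in every factor. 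Applying Lemma~\ref{surject} to $Z=X'_{\leq r}$ gives the surjectivity of
\[
\phi_{d,X'_{\leq r}}\colon \ho^0(Y,\li^{\otimes d}) \longrightarrow \ho^0(X'_{\leq r},\li^{\otimes d})
\]
as soon as $d \geq N(h_{X'_{\leq r}}+1)$, which is exactly the numerical bound stated in the proposition.

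For such $d$, because $\phi_{d,X'_{\leq r}}$ is a surjective homomorphism of finite abelian groups, all of its fibers have the same size. Each local factor $\ho^0(x',\li^{\otimes d})$ is a free $\mathcal{O}_{x'}$-module of rank one (since $x'$ is local Artinian), so it has exactly $q^{(m+1)\deg x}$ elements, a unique one of which is zero. Multiplying the ``good'' proportion in each factor therefore yields
\[
\frac{\#\mathcal{P}_{d,\leq r}}{\#\ho^0(Y,\li^{\otimes d})} = \prod_{\deg x\leq r}\frac{q^{(m+1)\deg x}-1}{q^{(m+1)\deg x}} = \prod_{\deg x\leq r}\bigl(1-q^{-(m+1)\deg x}\bigr),
\]
and the density statement follows by letting $d\to\infty$.

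I do not expect a genuine obstacle here: everything has been isolated in advance, the only delicate point being the passage from very ample to ample line bundles, which is already handled by Lemma~\ref{surject}. The one technical subtlety worth flagging is that $x'$ must be taken as an infinitesimal neighborhood \emph{inside $X$} (so that $\dim \mathfrak{m}_{X,x}/\mathfrak{m}_{X,x}^2 = m$), rather than inside $Y$, so that the equivalence with smoothness of $\mathrm{div}\sigma\cap X$ at $x$ is an honest ``if and only if'' and produces the correct factor $1-q^{-(m+1)\deg x}$.
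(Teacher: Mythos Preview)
Your proof is correct and follows essentially the same approach as the paper: form the disjoint union $X'_{\leq r}$ of first-order infinitesimal neighborhoods $x'$ (taken inside $X$), apply Lemma~\ref{surject} to obtain surjectivity of the restriction map for $d\geq N(1+\sum_{\deg x\leq r}(m+1)\deg x)$, and then count. Your explicit remark that $x'$ must be taken inside $X$ rather than $Y$ is a useful clarification.
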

\begin{proof}
	For any closed point $x$ of $X$, let $x'$ be the closed subscheme in $X$ defined by $\mathfrak{m}_x^2$, where $\mathfrak{m}_x$ is the ideal sheaf of $x$ in $X$. Then $x'$ is the first order infinitesimal neighbourhood of $x$ in $X$. Let $X'_{\leq r}$ be the union of the closed subschemes $x'$ for all $x\in X$ with $\deg x\leq r$. Note that the number of closed points of $X$ with degree smaller than or equal to $r$ is finite. This union is a disjoint finite union. So $X'_{\leq r}$ is a finite subscheme of $X$ defined by the ideal sheaf $\prod_{\deg x\leq r}\mathfrak{m}_x^2$. Hence
	\[
		\ho^0(X'_{\leq r},\mathcal{O}_{X'_{\leq r}})=\ho^0(X'_{\leq r}, \prod_{\deg x\leq r } \mathcal{O}_{x'})=\prod_{\deg x\leq r }\ho^0(Y, \mathcal{O}_X/\mathfrak{m}_x^2).
	\]
	Since $X$ is smooth over $\F_q$, for any closed point $x$, we have 
	\[
		\dim_{\kappa(x)} \ho^0(X, \mathfrak{m}_x/\mathfrak{m}_x^2)=m.
	\]
	Hence
	\[
		\dim_{\F_q} \ho^0(X, \mathcal{O}_X/\mathfrak{m}_x^2)=(1+m)\deg x,
	\]
	and we have
	\[
		\dim_{\F_q}\ho^0(X'_{\leq r},\mathcal{O}_{X'_{\leq r}})=\sum_{\deg x\leq r} (1+m)\deg x .
	\]
	Apply Lemma \ref{surject} to the case $Z=X'_{\leq r}$. We get that the morphism
	\[
		\ho^0(Y, \li^{\otimes d})\longrightarrow \ho^0(X'_{\leq r}, \li^{\otimes d})
	\]
	is surjective if $d\geq N\Big(\sum_{\deg x\leq r}(1+m)\deg x\Big)$. 
		
	Note that for a section $\sigma\in \ho^0(Y, \li^{\otimes d})$, the intersection $\mathrm{div}\sigma\cap X$ is singular at a closed point $x\in X$ if and only if the image of $\sigma$ in $\ho^0(x', \li^{\otimes d})$ by the restriction map
	\[
		\ho^0(Y, \li^{\otimes d})\longrightarrow \ho^0(x', \li^{\otimes d})
	\]
	is zero. So $\mathrm{div}\sigma\cap X$ has no singular point of degree smaller than or equal to $r$ if and only if its restriction to $\ho^0(X'_{\leq r},\mathcal{O}_{X'_{\leq r}})\simeq\prod_{\deg x\leq r }\ho^0(x', \li^{\otimes d})$ lies in the subset $\prod_{\deg x\leq r }\left(\ho^0(x', \li^{\otimes d})-\{0\}\right)$. So for any $d\geq N\Big(\sum_{\deg x\leq r}(1+m)\deg x\Big)$,
	\begin{eqnarray*}
		\frac{\#\mathcal{P}_{d,\leq r}}{\#\ho^0(Y, \li^{\otimes d})}&=&\frac{\#\prod_{\deg x\leq r }\left(\ho^0(X, \mathcal{O}_X/\mathfrak{m}_x^2)-\{0\}\right)}{\#\prod_{\deg x\leq r }\ho^0(X, \mathcal{O}_X/\mathfrak{m}_x^2)}\\
		&=&\frac{\prod_{\deg x\leq r }\left(q^{(m+1)\deg x}-1\right)}{\prod_{\deg x\leq r }q^{(m+1)\deg x}}\\
		&=&\prod_{\deg x\leq r} \left( 1-q^{-(m+1)\deg x} \right).
	\end{eqnarray*}
	This shows the result.
\end{proof}
	
\subsection{Singular points of medium degree}

\begin{lem}\label{singpt}
	Let $Y$ be a projective scheme over $\F_q$ of dimension $n$ and $X$ a smooth subscheme of $Y$ of dimension $m$. Let $\li$ be an ample line bundle of $Y$. Let $N$ be a positive integer satisfying Lemma \ref{ample}. For a fixed $d$, let $x\in X$ be a closed point of $X$ of degree $e$ such that 
	\[
		e\leq \lfloor \frac{d}{N(m+1)} \rfloor.
	\]
	Then the proportion of $\sigma\in \ho^0(Y, \li^{\otimes d})$ such that $\mathrm{div}\sigma\cap X$ is not smooth of dimension $m-1$ at $x$ is $q^{-(m+1)e}$.
\end{lem}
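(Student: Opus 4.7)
The plan is to translate the smoothness condition at a single point into a linear condition on $\sigma$, then apply the surjectivity result of Lemma \ref{surject} to count exactly how many sections satisfy it.

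First, I would show that for a closed point $x\in X$, the divisor $\mathrm{div}\sigma\cap X$ fails to be smooth of dimension $m-1$ at $x$ precisely when the image of $\sigma$ in $\ho^0(x',\li^{\otimes d})$ vanishes, where $x'$ denotes the first order infinitesimal neighbourhood of $x$ in $X$, i.e.\ the closed subscheme defined by $\mathfrak{m}_x^2\subset \mathcal{O}_X$. Indeed, choosing a local trivialization of $\li$ near $x$, the section $\sigma$ corresponds to $f\in\mathcal{O}_{X,x}$. Using the convention that smoothness at $x$ is vacuously true when $x\notin\mathrm{div}\sigma\cap X$, the failure of smoothness of dimension $m-1$ at $x$ is equivalent to $f\in\mathfrak{m}_x^2$, which is the vanishing of the image of $\sigma$ in $\mathcal{O}_{X,x}/\mathfrak{m}_x^2 = \ho^0(x',\mathcal{O}_{x'})$.

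Second, I would compute the dimension of $\ho^0(x',\li^{\otimes d})$ over $\F_q$. Since $\li^{\otimes d}$ restricted to the zero-dimensional scheme $x'$ is free, this reduces to computing $\dim_{\F_q}\ho^0(x',\mathcal{O}_{x'})$. Because $X$ is smooth of dimension $m$ at $x$, the conormal module $\mathfrak{m}_x/\mathfrak{m}_x^2$ is a $\kappa(x)$-vector space of dimension $m$, so the short exact sequence
\[
0\longrightarrow \mathfrak{m}_x/\mathfrak{m}_x^2 \longrightarrow \mathcal{O}_X/\mathfrak{m}_x^2 \longrightarrow \kappa(x)\longrightarrow 0
\]
gives $\dim_{\F_q}\ho^0(x',\mathcal{O}_{x'})=(m+1)\deg x = (m+1)e$, hence $\#\ho^0(x',\li^{\otimes d})=q^{(m+1)e}$.

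Third, I would apply Lemma \ref{surject} with $Z=x'$, whose $h_Z=(m+1)e$. The lemma yields surjectivity of the restriction morphism
\[
\phi_{d,x'}\colon \ho^0(Y,\li^{\otimes d})\longrightarrow \ho^0(x',\li^{\otimes d})
\]
as soon as $d\geq N((m+1)e+1)$, which is precisely the condition $e\leq \lfloor (d-N)/(N(m+1))\rfloor$ imposed in the statement. Under this surjectivity, the set of $\sigma$ whose image in $\ho^0(x',\li^{\otimes d})$ is zero is a coset of a subgroup of index $\#\ho^0(x',\li^{\otimes d})=q^{(m+1)e}$, so its proportion inside $\ho^0(Y,\li^{\otimes d})$ is $q^{-(m+1)e}$, giving the claim.

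The only real subtlety is the first step: one must justify the convention about smoothness at points lying outside the divisor and, equivalently, verify that the vanishing condition in $\mathcal{O}_{X,x}/\mathfrak{m}_x^2$ exactly captures both the vanishing $\sigma(x)=0$ and the vanishing of the differential. Once this identification is in place, steps two and three are purely bookkeeping applications of results already established in the excerpt.
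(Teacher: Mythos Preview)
Your proposal is correct and follows essentially the same approach as the paper: identify the failure of smoothness at $x$ with the vanishing of $\sigma$ in $\ho^0(x',\li^{\otimes d})$, compute $h_Z=(m+1)e$, apply Lemma \ref{surject} to obtain surjectivity under the stated degree bound, and read off the proportion as $q^{-(m+1)e}$. The only difference is that you spell out the dimension computation via the conormal short exact sequence, which the paper leaves implicit.
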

\begin{proof}
	Let $x'$ be the first order infinitesimal neighbourhood of $x$ in $X$. Apply Lemma \ref{surject} to the case $Z=x'$. We obtain that the restriction map 
	\[
		\ho^0(Y, \li^{\otimes d})\longrightarrow \ho^0(x', \li^{\otimes d})
	\]
	is surjective when 
	\[
		Nh^0(x', \li^{\otimes d})=N(m+1)\deg x\leq d,
	\]
	that is, when
	\[
		\deg x\leq \lfloor \frac{d}{N(m+1)} \rfloor.
	\]
	Since a section $\sigma\in \ho^0(Y, \li^{\otimes d})$ is such that $\mathrm{div}\sigma\cap X$ is singular at $x$ if and only if the image of $\sigma$ in $\ho^0(x', \li^{\otimes d})$ is $0$. Hence when the degree condition for $x$ is satisfied, the proportion of such sections is equal to
	\[
		\frac{1}{\#\ho^0(x', \li^{\otimes d})}=q^{-(m+1)e},
	\]
	Thus we get the result. 
\end{proof}
\begin{prop} \label{mediumf}
	Let $Y$ be a projective scheme over $\F_q$ of dimension $n$ and $X$ a smooth subscheme of $Y$ of dimension $m$. Let $\li$ be an ample line bundle of $Y$. Let $N$ be a positive integer satisfying Lemma \ref{ample}. Set
	\[
		\mathcal{Q}_{d, >r}^{\mathrm{med}} = \{ \sigma\in \ho^0(Y,\li^{\otimes d})\ ;\ \exists x\in X,\ r<\deg x\leq \frac{d}{(m+1)N},\ \mathrm{div}\sigma\cap X \text{ is singular at }x  \}. 
	\]
	Then there exists a constant $c_0$ such that
	\[
		\frac{\#\mathcal{Q}_{d, >r}^{\mathrm{med}}}{\# \ho^0(Y, \mathcal{L}^{\otimes d})} \leq 2c_0q^{-r}.
	\]
	In particular,
	\[
		\overline{\mu}(\mathcal{Q}_{d, >r}^{\mathrm{med}} )\leq 2c_0q^{-r}.
	\]
\end{prop}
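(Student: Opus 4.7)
The plan is to apply a straightforward union bound combined with Lemma \ref{singpt} and the Lang-Weil type point count for $X$. For any closed point $x \in X$ with $\deg x \leq \frac{d-N}{N(m+1)}$, Lemma \ref{singpt} tells us that the proportion of $\sigma \in \ho^0(Y,\li^{\otimes d})$ whose divisor is singular at $x$ on $X$ is exactly $q^{-(m+1)\deg x}$. So summing over the offending closed points,
\[
    \frac{\#\mathcal{Q}_{d,>r}^{\mathrm{med}}}{\#\ho^0(Y,\li^{\otimes d})} \;\leq\; \sum_{\substack{x\in |X|\\ r<\deg x\leq \lfloor\frac{d-N}{N(m+1)}\rfloor}} q^{-(m+1)\deg x}.
\]

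Next I would regroup by degree. Since $X$ is of dimension $m$ over $\F_q$, the Lang-Weil estimates provide a constant $c_0>0$ (depending only on $X$) such that $\#X(\F_{q^e}) \leq c_0 q^{me}$ for every $e\geq 1$; in particular the number of closed points of $X$ of degree exactly $e$ is at most $\#X(\F_{q^e}) \leq c_0 q^{me}$. Therefore
\[
    \sum_{\substack{x\in |X|\\ r<\deg x\leq \lfloor\frac{d-N}{N(m+1)}\rfloor}} q^{-(m+1)\deg x} \;\leq\; \sum_{e=r+1}^{\infty} c_0 q^{me} \cdot q^{-(m+1)e} \;=\; c_0 \sum_{e=r+1}^{\infty} q^{-e}.
\]

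Finally, summing the geometric series gives
\[
    c_0 \sum_{e=r+1}^{\infty} q^{-e} \;=\; \frac{c_0\, q^{-(r+1)}}{1-q^{-1}} \;\leq\; 2 c_0\, q^{-r},
\]
since $1-q^{-1} \geq 1/2$ for $q \geq 2$. This yields the desired bound $\#\mathcal{Q}_{d,>r}^{\mathrm{med}}/\#\ho^0(Y,\li^{\otimes d}) \leq 2c_0 q^{-r}$, and since this holds uniformly in $d$ it also gives $\overline{\mu}(\mathcal{Q}_{>r}^{\mathrm{med}}) \leq 2c_0 q^{-r}$. There is no real obstacle here: the only input beyond the previously established Lemma \ref{singpt} is the standard point-counting bound for varieties over finite fields, and the computation is then just a single geometric series.
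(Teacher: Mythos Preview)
Your proof is correct and follows essentially the same approach as the paper: both apply Lemma~\ref{singpt} to get the pointwise probability $q^{-(m+1)\deg x}$, bound the number of closed points of degree $e$ via the Lang--Weil estimate $\#X(\F_{q^e})\leq c_0 q^{me}$, and sum the resulting geometric series. The only cosmetic difference is that the paper writes the degree-$e$ count as $\#X(\F_{q^e})-\#X(\F_{q^{e-1}})$ and starts the tail sum at $e=r$, whereas you bound directly by $\#X(\F_{q^e})$ and start at $e=r+1$; both routes land on $2c_0 q^{-r}$.
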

\begin{proof}
	Identifying $Y$ to a closed subscheme of a projective space, we can see $X$ as a subscheme of the same projective space. By \cite{LW54}, we can find a constant $c_0>0$ such that for any $e\geq 1$,
	\[
		\#X(\F_{q^e})\leq c_0 q^{me}.
	\]
	Let $N$ be the positive integer as in the previous lemma. Then the lemma tells us that if $x\in X$ is a closed point of degree $e\leq \lfloor \frac{d}{(m+1)N}\rfloor$, the proportion of sections $\sigma\in \ho^0(Y, \li^{\otimes d})$ such that $\mathrm{div}\sigma\cap X$ is singular at $x$ is $q^{-(m+1)e}$. Therefore we have
	\begin{eqnarray*}
		&&\frac{\#\mathcal{Q}_{d, >r}^{\mathrm{med}}}{\# \ho^0(Y, \mathcal{L}^{\otimes d})} \\
		&\leq& \sum_{e=r+1}^{\lfloor\frac{d}{(m+1)N}\rfloor}\#X(\mathbb{F}_{q^e})\cdot q^{-(m+1)e} \\
		&< & \sum_{e=r}^{\infty}\#X(\mathbb{F}_{q^e})q^{-(m+1)e} \\
		&\leq & \sum_{e=r}^{\infty}c_0q^{em}\cdot q^{-(m+1)e}=\sum_{e=r}^{\infty}c_0q^{-e}=\frac{c_0q^{-r}}{1-q^{-1}}.
	\end{eqnarray*}
	Since $q\geq 2$, we have $1-q^{-1}\geq \frac{1}{2}$ and $\frac{c_0q^{-r}}{1-q^{-1}}\leq 2c_0q^{-r}$. Hence we get $\overline{\mu}(\mathcal{Q}_{d, >r}^{\mathrm{med}} )\leq  2c_0q^{-r}$. This implies our result.
\end{proof}
	
\subsection{Singular points of high degree}

\begin{lem}
	Let $Y$ be a projective scheme over $\F_q$ of dimension $n$. Let $Z$ be a finite closed subscheme of $Y$ whose support is included in the smooth locus of $Y$. Let $\li$ be an ample line bundle over $Y$ and let $N$ be a positive integer satisfying Lemma \ref{ample}. After replacing $\F_q$ by a finite extension of $\F_q$ if needed, we can find a linear subspace $V\subset \ho^0(Y, \li^{N})$ of dimension $n+1$ such that the rational map
	\[
		\varphi: Y \dashrightarrow \pr(V^{\vee}),
	\]
	with $V^{\vee}$ the dual space of $V$, is dominant, and that $\varphi$ induces a closed embedding of $Z$ in $\pr(V^{\vee})$.
\end{lem}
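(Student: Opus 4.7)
The plan is to realize $V$ via iterated projection from points in $\pr(\ho^0(Y,\li^N)^\vee)$. By Lemma \ref{ample} the sheaf $\li^N$ is very ample, so the complete linear system yields a closed embedding $\iota\colon Y\hookrightarrow \pr(W^\vee)=\pr^M$ with $W=\ho^0(Y,\li^N)$ and $\li^N\simeq \mathcal{O}_{\pr^M}(1)|_Y$. An $(n+1)$-dimensional subspace $V\subset W$ is the same datum as a linear projection $\pi_V\colon \pr^M\dashrightarrow \pr(V^\vee)\cong\pr^n$ from a center $\Lambda\subset\pr^M$ of codimension $n+1$, and the rational map $\varphi$ of the lemma is $\pi_V\circ\iota$. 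The task thus reduces to constructing, after a finite extension of $\F_q$, a linear subspace $\Lambda\subset\pr^M$ of dimension $M-n-1$ that is disjoint from $Y$, induces a dominant map on $Y$, and restricts to a closed embedding on $Z$.

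I would build $\Lambda$ as the span of $M-n$ points $p_1,\dots,p_{M-n}$ chosen one at a time. Starting from $Y^{(0)}=Y\hookrightarrow\pr^M$, suppose at stage $i$ we have a finite surjective morphism $Y\to Y^{(i)}\hookrightarrow\pr^{M-i}$ with $Y^{(i)}$ of dimension $n$ and with $Z$ embedded in $Y^{(i)}$ as a closed subscheme. Assuming $M-i>n$, I look for $p_{i+1}\in\pr^{M-i}$ outside a ``bad locus'' $B\subset\pr^{M-i}$ consisting of (a) $Y^{(i)}$ itself, (b) every secant line $\overline{zz'}$ between distinct geometric points of $Z$, and (c) for each geometric point $z$ of $Z$, the projective embedded tangent space of $Z$ at $z$, namely the projective closure of $(\m_{Z,z}/\m_{Z,z}^2)^\vee$ through $z$. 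The crucial estimate is that the component in (c) has dimension at most $\dim_{\kappa(z)}\m_{Z,z}/\m_{Z,z}^2\leq n$, where the inequality uses only that $\mathrm{supp}(Z)$ lies in the smooth locus of the \emph{original} $Y$, fixing the embedding dimension of $\mathcal{O}_{Z,z}$ once and for all. Hence $\dim B\leq n<M-i$, so $\pr^{M-i}\setminus B$ is a non-empty open subscheme and, by the Lang--Weil estimates, admits an $\F_{q^e}$-point for every sufficiently large $e$; after replacing $\F_q$ by a finite extension we pick such a point as $p_{i+1}$.

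The projection from $p_{i+1}$ is finite on $Y^{(i)}$ since $p_{i+1}\notin Y^{(i)}$ forces each fibre $\overline{p_{i+1}q}\cap Y^{(i)}$ to be a proper closed subscheme of a line; its image $Y^{(i+1)}$ is therefore $n$-dimensional, while avoidance of (b) and (c) guarantees that the restriction to $Z$ is a closed embedding, so $Z$ persists inside $Y^{(i+1)}$ isomorphic as a scheme to $Z$. After $M-n$ iterations we reach $Y^{(M-n)}\subset\pr^n$: a closed $n$-dimensional subvariety of the irreducible $n$-dimensional $\pr^n$, hence $Y^{(M-n)}=\pr^n$, so the composite $Y\to\pr^n$ is surjective, in particular dominant. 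This composite coincides with the single projection from the linear span $\Lambda=\langle p_1,\dots,p_{M-n}\rangle$; imposing at each stage the additional open condition that $p_{i+1}$ lies outside the span of the previously chosen points forces $\dim\Lambda=M-n-1$, so $\Lambda$ corresponds to the sought $(n+1)$-dimensional subspace $V\subset W$. The main subtlety is verifying the uniform dimension bound $\dim B\leq n$ across iterations: this hinges on the observation that the embedding dimension of the Artinian local ring $\mathcal{O}_{Z,z}$ is intrinsic to $Z$ and, thanks to the smooth-support hypothesis, bounded by $n$ at every stage, regardless of how the geometry of $Y^{(i)}$ may degenerate under repeated projection.
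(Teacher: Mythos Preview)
Your argument is correct and follows the same core idea as the paper: realize $\varphi$ as a linear projection from the very ample embedding $Y\hookrightarrow\pr(\ho^0(Y,\li^N)^\vee)$, chosen generically enough to be dominant on $Y$ and an embedding on $Z$. The paper's execution differs only in packaging: it first passes to an affine chart $\A^K$ by choosing a section nonvanishing on $|Z|$, then selects a single linear projection $\A^K\to\A^n$ all at once, using the Jacobian criterion to verify \'etaleness near each $z_i$ and a line-contraction count to get injectivity on $|Z|$. Your iterated point-projection with explicit secant/tangent avoidance achieves the same end without the detour through affine space or the \'etale language, and your observation that the embedding dimension of $\mathcal{O}_{Z,z}$ is fixed by the original smooth-support hypothesis (so the tangent bound survives each projection) is exactly the content the paper encodes in the Jacobian isomorphism $\Omega_{\A^n,\varphi(z_i)}\simeq\Omega_{U_1,z_i}$. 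Either route is standard; yours is arguably more self-contained.
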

\begin{proof}
	We may assume that $|Z|=\{ z_1,\dots, z_l \}$. It suffices to find a linear subspace $V\subset \ho^0(Y, \li^{N})$ of dimension $n+1$ such that the induced rational map
	\[
		\varphi: Y \dashrightarrow \pr(V^{\vee})
	\]
	is defined and étale on a neighbourhood of $Z$ in $Y$, and satisfies the condition that $\varphi(z_i)\not=\varphi(z_j)$ for any $z_i\not=z_j\in |Z|$. 
		
	Since $\li^{\otimes N}$ is very ample on $Y$, we can first embeds $Y$ in $\pr^K=\pr(\ho^0(Y, \li^{\otimes N})^{\vee})$. Replacing $\F_q$ by a larger finite field if needed, we can find a section $s\in \ho^0(Y, \li^{\otimes N})$ which is non-zero at any point of $|Z|$. Write $U_1:=Y-\mathrm{div}s$. the embedding of $Y$ in $\pr^K$ induces an embedding of $U_1$ in $\A^K$. The hyperplane $\pr^K-\A^K$ is defined by the section $s\in \ho^0(\pr^K, \mathcal{O}(1))=\ho^0(Y, \li^{\otimes N})$. Moreover, the scheme $Z$, being a closed subscheme of $U_1$, is also embedded in $\A^K$. To finish the proof, we only need to find a projection $\A^K\rightarrow \A^n$ which is étale when restricted to a neighbourhood of $Z$ in $U_1$ and injective when restricted to $Z$. In fact, we show that a general projection satisfies these two conditions. Here general means all projections $\A^K\rightarrow \A^n$ contained in a non-empty open subsheme of $\mathrm{Gr}(n, K)$, which is the moduli space of such projections. 
		
	For a general projection $\A^K\rightarrow \A^n$, the composition $\varphi: U_1\rightarrow \A^K\rightarrow \A^n$ is étale on a neighbourhood of $Z$ in $U_1$. To see this, we show that for a general projection $\A^K\rightarrow \A^n$, $\varphi: U_1\rightarrow \A^K\rightarrow \A^n$ is étale at any point of $Z$. For $z_i\in Z$, the exact sequence
	\[
		0 \longrightarrow C_{U_1/\A^K,z_i} \longrightarrow \Omega_{\A^K, z_i} \longrightarrow \Omega_{U_1,z_i}\longrightarrow 0
	\]
	splits, and $ \Omega_{U_1,z_i}$ is free of rank $n$ by hypothesis. Therefore for any projection $\A^K\rightarrow \A^n$, the composition $\varphi: U_1\rightarrow \A^K\rightarrow \A^n$ induces a morphism on differential sheaves
	\[
		\Omega_{\A^n, \varphi(z_i)} \longrightarrow \Omega_{\A^K, z_i}\longrightarrow \Omega_{U_1, z_i}.
	\]
	By the Jacobian criterion, a general projection $\A^K\rightarrow \A^n$ induces an isomorphism
	\[
		\Omega_{\A^n, \varphi(z_i)} \stackrel{\sim}\longrightarrow \Omega_{U_1, z_i}.
	\]
	As $Z$ is a finite scheme, a general projection $\A^K\rightarrow \A^n$ is étale at any point of $Z$. Moreover, if a $\A^K\rightarrow \A^n$ sends two different points $z_i, z_j$ of $Z$ to the same point, then it contracts the $\A^1$ containing $z_i,z_j$. Projections contracting a certain fixed line is contained in a strictly closed subscheme of the moduli space of projections $\A^K\rightarrow \A^n$. Thus a general projection do not contract this line. Since $Z$ is finite, there are only finitely many lines in $\A^K$ joining two of the points in $Z$. Therefore a general projection $\A^K\rightarrow \A^n$ sends the set of points $|Z|$ injectively to $\A^n$, hence injective when restricted to $Z$. Such a projection induces a rational map $\pr^K \dashrightarrow \pr^n$, which shows our result. 
			
\end{proof}
	
\begin{lem}\label{hi}
	Let $Y,Z,\li,N$ be as in the above lemma.
	Set $h_Z=\dim_{\F_q}\ho^0(Z, \mathcal{O}_Z)$.Then for any $d>2N$ the proportion of global sections $\sigma\in \ho^0(Y, \li^{\otimes d})$ which are sent to $0$ by the restriction morphism
	\[
		\ho^0(Y. \li^{\otimes d}) \longrightarrow \ho^0(Z, \li^{\otimes d})
	\]  
	is at most $q^{-\min(\lfloor\frac{d}{N}\rfloor, h_Z)}$.
\end{lem}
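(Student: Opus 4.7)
The plan is to show that the image of the restriction map
\[
\phi_d : \ho^0(Y,\li^{\otimes d})\longrightarrow \ho^0(Z,\li^{\otimes d})
\]
has $\F_q$-dimension at least $\min(\lfloor d/N\rfloor, h_Z)$; then the kernel has codimension at least this quantity, giving the proportion bound $q^{-\min(\lfloor d/N\rfloor, h_Z)}$. Since $\dim_{\F_q}\mathrm{Image}(\phi_d)$ is unchanged by finite scalar extension, I may freely enlarge $\F_q$ throughout. After such an extension, the preceding lemma supplies an $(n+1)$-dimensional subspace $V\subset\ho^0(Y,\li^{\otimes N})$ whose rational map $\varphi : Y\dashrightarrow \pr(V^\vee)\simeq \pr^n$ is dominant and embeds $Z$ as a closed subscheme, identifying $\li^{\otimes N}|_Z$ with $\varphi^*\mathcal{O}(1)$.

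Set $k_0=\lfloor d/N\rfloor$ and write $d=(k_0-1)N+(N+r)$ with $0\le r<N$; the hypothesis $d>2N$ gives $k_0\ge 2$. Since $N+r\ge N$, Lemma \ref{ample} makes $\li^{\otimes(N+r)}$ very ample, so after one more extension I pick $\tau'\in\ho^0(Y,\li^{\otimes(N+r)})$ nonzero at every point of $|Z|$; a generic section works, since the locus of sections vanishing somewhere on $|Z|$ is a finite union of proper subspaces. Consider the subspace $\mathrm{Sym}^{k_0-1}V\cdot\tau'\subset\ho^0(Y,\li^{\otimes d})$. Because $\tau'|_Z$ is a unit, multiplication by $\tau'|_Z$ induces an isomorphism $\ho^0(Z,\li^{\otimes(k_0-1)N})\stackrel{\sim}\longrightarrow\ho^0(Z,\li^{\otimes d})$, so the image of $\mathrm{Sym}^{k_0-1}V\cdot\tau'$ in $\ho^0(Z,\li^{\otimes d})$ has the same dimension as the image of $\mathrm{Sym}^{k_0-1}V\simeq\ho^0(\pr^n,\mathcal{O}(k_0-1))$ in $\ho^0(Z,\mathcal{O}(k_0-1))$---namely, the Hilbert function value $h_Z(k_0-1)$ of $Z\subset\pr^n$.

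The remaining input is the classical bound $h_Z(k)\ge\min(k+1,h_Z)$ for any $0$-dimensional subscheme $Z\subset\pr^n$. This follows from the fact that the Hilbert function strictly increases below $h_Z$: choose a linear form $\ell$ on $\pr^n$ not vanishing on $|Z|$, so that $\ell|_Z$ is a unit and, via division by $\ell^k$, the images $W_k:=V_k/\ell^k$ form an ascending chain of subspaces of $\ho^0(Z,\mathcal{O}_Z)$; writing $V_{k+2}=\sum_i x_i\cdot V_{k+1}$ one checks that $W_k=W_{k+1}$ forces $W_{k+j}=W_k$ for all $j\ge 0$, contradicting Serre vanishing which gives $W_{k+j}=\ho^0(Z,\mathcal{O}_Z)$ for large $j$. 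Applied with $k=k_0-1$ this yields $\dim\mathrm{Image}(\phi_d)\ge\min(k_0,h_Z)=\min(\lfloor d/N\rfloor,h_Z)$, as required. The main technical obstacle is the modest bookkeeping around field extensions and generic choices; the geometric content lies in the Hilbert function bound and in the prior lemma's construction of $V$.
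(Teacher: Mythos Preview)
Your proof is correct and follows essentially the same route as the paper: reduce to bounding below the rank of the restriction map, pass to a finite extension, use the previous lemma's subspace $V\subset\ho^0(Y,\li^{\otimes N})$ to embed $Z$ in $\pr^n$, and then invoke the Hilbert function bound $h_Z(k)\ge\min(k+1,h_Z)$ for a zero-dimensional subscheme. The paper carries this out in affine coordinates (dehomogenizing along a hyperplane $H_0$ and multiplying polynomials of degree $\le r$ by a section $\sigma_{H_0}$ vanishing to order $r$ along $H_0$), citing Poonen's Lemma~2.5 for the Hilbert function bound; you work homogeneously with $\mathrm{Sym}^{k_0-1}V\cdot\tau'$ and spell out the ascending-chain argument for the Hilbert function yourself, but the content is the same.
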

\begin{proof}
	By the previous lemma, we can find a finite extension $\F$ of $\F_q$ and a subspace $V\subset \ho^0(Y_\F, \li^{\otimes N})$ of dimension $n+1$ which induces a dominant rational map 
	\[
		\varphi: Y_{\F}\dashrightarrow \pr(V^{\vee})
	\]
	such that $\varphi|_{Z_{\F}}$ is injective. Now we use this rational map to show that 
	\[
		\dim_\F\mathrm{Im}\left( \ho^0(Y_\F, \li^{\otimes d})\longrightarrow \ho^0(Z_{\F}, \li^{\otimes d}) \right)\geq \min(\lfloor\frac{d}{N}\rfloor, h_Z ).
	\]
	As the dimension of the image is invariant under field base change, we then get
	\[
		\dim_{\F_q}\mathrm{Im}\left( \ho^0(Y, \li^{\otimes d})\longrightarrow \ho^0(Z, \li^{\otimes d}) \right)\geq \min(\lfloor\frac{d}{N}\rfloor, h_Z ).
	\]
		
	Let $(\sigma_0,\dots, \sigma_n)$ be a base of $V$ and $H_0=\mathrm{div}\sigma_0$ in $Y_{\F}$. The sections $\sigma_i$ can also be regarded as global sections of $\mathcal{O}(1)$ on $\pr(V^{\vee})$. This way we can identify $\pr(V^{\vee})-\mathrm{div}\sigma_0$ with $\A^n$ with coordinates $x_1=\frac{\sigma_1}{\sigma_0},\dots, x_n=\frac{\sigma_n}{\sigma_0}$. Then the rational map $\varphi$ can be represented by a morphism
	\[
		\varphi: Y_{\F}-H_0 \longrightarrow \A^n.
	\]
	Moreover, we can assume that $Z_{\F}$ is a closed subscheme of $Y_{\F}-H_0$. 
		
	For all $r>0$ with $(r+1)N<d$, the sheaf $\li^{\otimes d}\otimes \mathcal{O}(-rH_0)\simeq \li^{\otimes (d-rN)}$ is very ample on $Y_{\F}$. So we can find a section $\sigma_{H_0}\in \ho^0(Y_{\F}, \li^{\otimes d})$ which vanishes of order $r$ along $H_0$ but does not vanish identically on $Y_{\F}$. Let $P\in \F[x_1,\dots, x_n]$ be a polynomial of total order smaller than or equal to $r$. 
	Then the section $\varphi^*(P)\cdot \sigma_{H_0}\in \ho^0(Y_{\F}-H_0, \li^{\otimes d})$ extends to a global section on $Y_{\F}$. As $\varphi$ is dominant, linearly independent polynomials of degree smaller than or equal to $r$ induce linearly independent sections in $\ho^0(Y_{\F}-H_0, \li^{\otimes d})$, hence in $\ho^0(Y_{\F}, \li^{\otimes d})$. Thus we get a injective homomorphism
	\[
		\F[x_1,\dots, x_n]^{\leq r} \lhook\joinrel\longrightarrow \ho^0(Y_{\F}, \li^{\otimes d}).
	\]
	Moreover, we can choose an isomorphism $\ho^0(Z_{\F}, \mathcal{L}^{\otimes d})\stackrel{\sim}\longrightarrow \ho^0(Z_{\F}, \Oc_{Z_{\F}})$ so that the following diagram commutes:
	\[
		\xymatrixcolsep{0.06pc}\xymatrix{
			\F[x_1,\dots, x_n]^{\leq r} \ar@{^{(}->}[rr]\ar[d]&  & \ho^0(Y_{\F}, \mathcal{L}^{\otimes d}) \ar[d] \\
			\ho^0(\varphi(Z_{\F}), \Oc_{\varphi(Z_{\F})}) \ar[rd]^{\sim} & & \ho^0(Z_{\F}, \mathcal{L}^{\otimes d}) \ar[ld]_{\sim} \\
			& \ho^0(Z_{\F}, \Oc_{Z_{\F}}).
		}
	\]
	Then we have
		\begin{eqnarray*}
			& &\dim_{\F}\mathrm{Im}\left( \ho^0(Y_{\F}, \mathcal{L}^{\otimes d})\longrightarrow \ho^0(Z_{\F}, \mathcal{L}^{\otimes d}) \right) \\
			&\geq &\dim_{\F}\mathrm{Im}\left( \F[x_1,\dots, x_n]^{\leq r}\longrightarrow \ho^0(\varphi(Z_{\F}),\Oc_{\varphi(Z_{\F})})\right) \\
			&\geq & \min(h_Z, r+1),
		\end{eqnarray*}
		where the last inequality follows from Lemma 2.5 of \cite{Po04}. Now choose $r=\lfloor\frac{d}{N}\rfloor-1$, which is possible as $(r+1)N=\lfloor\frac{d}{N}\rfloor\cdot N\leq d$. Then the above inequality becomes
		\[
			\dim_{\F}\mathrm{Im}\left( \ho^0(Y_{\F}, \mathcal{L}^{\otimes d})\longrightarrow \ho^0(Z_{\F}, \mathcal{L}^{\otimes d}) \right)\geq \min(h_Z, \lfloor\frac{d}{N}\rfloor).
		\]
	This induces, as said above, that
	\[
		\dim_{\F_q}\mathrm{Im}\left( \ho^0(Y, \li^{\otimes d})\longrightarrow \ho^0(Z, \li^{\otimes d}) \right)\geq \min(\lfloor\frac{d}{N}\rfloor, h_Z ).
	\]
	Therefore the proportion of global sections $\sigma\in \ho^0(Y, \li^{\otimes d})$ which are sent to $0$ by the restriction morphism $\ho^0(Y. \li^{\otimes d}) \longrightarrow \ho^0(Z, \li^{\otimes d}) $ is at most $q^{-\min(\lfloor\frac{d}{N}\rfloor, h_Z)}$.
\end{proof}

\begin{prop}\label{hifinite}
	Let $\F_q$ be a finite field of characteristic $p$. Let $Y$ be a projective scheme of dimension $n$ over $\F_q$, and $X$ a smooth subscheme of $Y$ of dimension $m$. Assume that there exists a smooth open subscheme $U$ in $Y$ containing $X$. Let $\mathcal{L}$ be an ample line bundle on $Y$, and let $N$ be a sufficiently large integer. 
	Set 
	\[
		\mathcal{Q}_{d}^{\mathrm{high}}= \{ \sigma\in \ho^0(Y,\li^{\otimes d})\ ;\ \exists x\in X,\ \deg x\geq \frac{d}{(m+1)N},\ \mathrm{div}\sigma\cap X \text{ is singular at }x \}
	\]
	and $\mathcal{Q}^{\mathrm{high}}=\bigcup_{d\geq 0}\mathcal{Q}_{d}^{\mathrm{high}}$.
	There exists a constant $c>0$ only depending on $X,U$ and the choice of $N$, independent of $d$, such that 
	\[
		\frac{ \#\mathcal{Q}_d^{\mathrm{high}}}{\# \ho^0(Y, \mathcal{L}^{\otimes d})}=O(d^{m}\cdot q^{-c\frac{d}{p}}).
	\]
	Here the constant involved in the big $O$ only depends on the sheaf $\li$, and the schemes $\overline{X}=X\times_{\Spec\ \F_q}\Spec\ \overline{\F_q}$, $\overline{Y}=Y\times_{\Spec\ \F_q}\Spec\ \overline{\F_q}$, where $\overline{\F_q}$ is any algebraic closure of $\F_q$, hence is independent of $d,q$.
	In particular,
	\[
		\overline{\mu}(\mathcal{Q}^{\mathrm{high}})=\limsup_{d\rightarrow \infty} \frac{ \#\mathcal{Q}_d^{\mathrm{high}}}{\# \ho^0(Y, \mathcal{L}^{\otimes d})}=0. 
	\]
\end{prop}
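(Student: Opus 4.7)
The plan is to adapt Poonen's $p$-th power trick to this setting, mirroring the argument of Section \ref{step3} but simplified by working over $\F_q$ rather than $\Z/p^2\Z$. The proof proceeds by a local analysis on affine opens of $X$, a decomposition of sections exploiting that $\beta\mapsto \beta^p$ has zero derivative in characteristic $p$, and an inductive dimension argument that cuts the candidate singular locus down to a finite set whose high-degree points are then avoided by a final random coefficient.

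First I would cover the smooth open $U\supset X$ by finitely many affine opens $V_\alpha\subset U$ satisfying analogues of the conditions in Lemma \ref{opensub}: there exist $t_{\alpha,1},\dots,t_{\alpha,m}\in \ho^0(V_\alpha,\mathcal{O}_{V_\alpha})$ whose restrictions to $X\cap V_\alpha$ trivialize $\Omega^1_{(X\cap V_\alpha)/\F_q}$; a section $\tau_\alpha\in \ho^0(Y,\li^{\otimes(N+1)})$ cuts out $Y-V_\alpha$; and $V_\alpha$ is covered by opens $Y-\mathrm{div}\,\tau_{\alpha,j}$ for sections $\tau_{\alpha,j}\in \ho^0(Y,\li^{\otimes N})$, where $N$ satisfies Lemma \ref{ample} and is chosen large enough that $N+1$ is prime. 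Ampleness of $\li$ together with local freeness of $\Omega^1_{X/\F_q}$ on $U$ makes this possible. For each pair $(\alpha,j)$, introduce $\Phi_{\alpha,j}(\sigma):=\sigma\,\tau_{\alpha,j}^d/\tau_\alpha^d\in \ho^0(V_\alpha,\mathcal{O}_{V_\alpha})$; since $\tau_\alpha$ is a unit on $V_\alpha$, the singular points of $\mathrm{div}\,\sigma\cap X$ lying in $V_\alpha$ are exactly the common zeros, restricted to $X$, of $\Phi_{\alpha,j}(\sigma)$ and its partial derivatives $\partial_1\Phi_{\alpha,j}(\sigma),\dots,\partial_m\Phi_{\alpha,j}(\sigma)$ with respect to the coordinates $t_{\alpha,i}$. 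It suffices to bound, for each such chart, the proportion of $\sigma$ for which this common zero set has a point of degree $>(d-N)/((m+1)N)$.

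Next, I would apply the $p$-th power decomposition. For $d$ large, write $d=pk_d+(N+1)l_d$ with $l_d$ in a bounded interval determined by Lemma \ref{extend}, and consider the surjection
\[
\ho^0(Y,\li^{\otimes d})\times\prod_{i=1}^{m}\ho^0(Y,\li^{\otimes k_d})\times \ho^0(Y,\li^{\otimes k_d})\longrightarrow \ho^0(Y,\li^{\otimes d})
\]
sending $(\sigma_0,\beta_1,\dots,\beta_m,\gamma)$ to $\sigma=\sigma_0+\sum_i\beta_i^p t_{\alpha,i}\tau_\alpha^{l_d}+\gamma^p\tau_\alpha^{l_d}$. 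Since $\partial_i(\beta_i^p)=0$ in characteristic $p$, the same calculation as in Section \ref{step3} shows that on the zero locus of $\Phi_{\alpha,j}(\sigma)$, the partial derivative $\partial_i\Phi_{\alpha,j}(\sigma)$ agrees with
\[
g_{j,i}(\sigma_0,\beta_i):=\partial_i\Phi_{\alpha,j}(\sigma_0)-\frac{l_d\,\Phi_{\alpha,j}(\sigma_0)}{\Phi_{\alpha,j}(\tau_\alpha)}\partial_i\Phi_{\alpha,j}(\tau_\alpha)+\Phi_{\alpha,j}(\beta_i)^p\Phi_{\alpha,j}(\tau_\alpha)^{l_d},
\]
which depends only on $\sigma_0$ and $\beta_i$. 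Setting $W_i:=X\cap V_\alpha\cap\{g_{j,1}=\cdots=g_{j,i}=0\}$, the singular locus of $\mathrm{div}\,\sigma\cap X$ inside $V_\alpha$ is contained in $\mathrm{div}\,\sigma\cap W_m$, and the key $p$-th power terms $\Phi_{\alpha,j}(\beta_i)^p\Phi_{\alpha,j}(\tau_\alpha)^{l_d}$ provide enough independent variation in $\beta_i$ to force $W_i$ to drop in dimension as $i$ increases.

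The final step combines a dimension-reduction induction with a finite-set avoidance argument. By Lemma \ref{reddim} applied to each slice, for fixed $\sigma_0,\beta_1,\dots,\beta_{i-1}$ with $\dim W_{i-1}\leq m-i+1$, the proportion of $\beta_i$ forcing $\dim W_i\leq m-i$ is $1-O(d^{i-1}q^{-c'd/p})$; iterating, $W_m$ is finite of cardinality $O(d^{m-1})$ with probability $1-O(d^{m-1}q^{-c'd/p})$. For such a tuple, Lemma \ref{finiteset}, which ultimately rests on Lemma \ref{hi}, shows that the proportion of $\gamma$ for which $\mathrm{div}\,\sigma$ meets any point of $W_m$ of degree $>(d-N)/((m+1)N)$ is $O(d^{m}q^{-c''d/p})$. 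Summing over the finitely many charts $(\alpha,j)$ then yields $\#\mathcal{Q}_d^{\mathrm{high}}/\#\ho^0(Y,\li^{\otimes d})=O(d^m q^{-cd/p})$ for some $c>0$, and the right-hand side tends to $0$ as $d\to\infty$. The hardest technical point will be verifying that the implicit constants in the successive applications of Lemmas \ref{reddim}, \ref{finiteset}, and \ref{hi} depend only on $Y$, $\li$ and $N$, not on $d$ or $q$; this follows because the geometric inputs (the degree of $Y$ in the embedding determined by $\li^{\otimes(N+1)}$ and the finite covering $\{V_\alpha\}$) are fixed once and for all by the data of $Y$ and $\li$, after possibly replacing $\F_q$ by a finite extension to ensure the covering sections exist.
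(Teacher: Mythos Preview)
Your proposal is correct and follows essentially the same approach as the paper: cover $X$ by charts trivializing $\Omega^1_{X/\F_q}$, apply the $p$-th power decomposition $\sigma=\sigma_0+\sum_i\beta_i^p t_i\tau^{l_d}+\gamma^p\tau^{l_d}$, define the auxiliary $g_{j,i}$ and $W_{j,i}$, and then invoke Lemmas \ref{reddim} and \ref{finiteset} exactly as you outline. One inconsequential slip: B\'ezout gives $\#W_m=O(d^m)$ rather than $O(d^{m-1})$, but since you already quote the output of Lemma \ref{finiteset} as $O(d^m q^{-c''d/p})$, your final bound is unaffected.
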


We need some reduction before proving this proposition. 
	
\begin{lem}\label{cover}
	 Let $\{ U_{\alpha} \}_{\alpha\in I}$ be a finite open covering of $U$. If the proposition is true for all $X\cap U_{\alpha}$, then it is also true for $X$.
\end{lem}
\begin{proof}[Proof]
	If $\{ U_{\alpha} \}_{\alpha\in I}$ is a finite open covering of $U$, then $X=\bigcup_{\alpha\in I} (X\cap U_{\alpha})$. If we write 
	\[
		\mathcal{Q}_{d,\alpha}^{\mathrm{high}}= \{ \sigma\in \ho^0(Y,\li^{\otimes d})\ ;\ \exists x\in X\cap U_{\alpha},\ \deg x\geq \frac{d}{(m+1)N},\ \mathrm{div}\sigma\cap X\cap U_{\alpha} \text{ is singular at }x \},
	\]
	then we have
	\[
		\mathcal{Q}_{d}^{\mathrm{high}}\subset \bigcup_{\alpha\in I}\mathcal{Q}_{d,\alpha}^{\mathrm{high}}.
	\]
	Hence if the proposition is true for all $X\cap U_{\alpha}$, we can find constants $c_{U_{\alpha}}$ for each $U_{\alpha}$ such that
	\[
		 \frac{ \#\mathcal{Q}_{d,\alpha}^{\mathrm{high}}}{\# \ho^0(Y, \mathcal{L}^{\otimes d})}=O(d^{m}\cdot q^{-c_{U_{\alpha}}\frac{d}{p}}),
	\]
	Then setting $c=\min_{\alpha}\{ c_{U_{\alpha}} \}$, we get
	\[
		\frac{ \#\mathcal{Q}_d^{\mathrm{high}}}{\# \ho^0(Y, \mathcal{L}^{\otimes d})}\leq \sum_{\alpha\in I} \frac{ \#\mathcal{Q}_{d,\alpha}^{\mathrm{high}}}{\# \ho^0(Y, \mathcal{L}^{\otimes d})}=O(d^{m}\cdot q^{-c\frac{d}{p}}).
	\]
\end{proof}
Therefore we may replace $X$ by one of the $X\cap U_{\alpha}$. In particular, we have the following : 
\begin{cor}\label{trivi}
	We may assume that the smooth open subscheme $U$ containing $X$ in the statement satisfies the following condition : there exist $t_1, \dots, t_n\in \ho^0(U, \mathcal{O}_Y)$ such that $X$ is defined by $t_{m+1}=\cdots=t_n=0$, and that
	\[
		\Omega_{U/\F_q}^1\simeq \bigoplus_{i=1}^n\Oc_U \mathrm{d} t_i,\qquad \Omega_{X/\F_q}^1\simeq \bigoplus_{i=1}^m\Oc_X\mathrm{d}t_i.
	\]
\end{cor}
\begin{proof}[Proof]
	As the two condition in the corollary is satisfied locally at any point of $X$, the corollary follows from Lemma \ref{cover} as $U$ is quasi-compact.
\end{proof}

\begin{lem}\label{complem}
	For any positive integer $M$, with a choice of an integer $N_0\geq M$, we can cover $U$ by finitely many open subschemes $U'$ satisfying the condition that we can find a section $\tau\in \ho^0(Y, \li^{\otimes (N_0+1)})$ and sections $\tau_1,\dots, \tau_s\in  \ho^0(Y, \li^{\otimes N_0})$ such that
	\[
		U'=Y-\mathrm{div}(\tau)=\bigcup_{j=1}^s\left( Y-\mathrm{div}(\tau_j)\right).
	\]
\end{lem}
\begin{proof}[Proof]
	First, take an integer $N'_0>0$ satisfying Lemma \ref{ample} and that $\mathcal{I}_{Y-U}\otimes \li^{\otimes d}$ is globally generated for all $d\geq N'_0$. Here $\mathcal{I}_{Y-U}$ is the ideal sheaf of the closed subscheme $Y-U$ with the reduced induced structure. We may then choose non-zero sections 
	\[
		\tau'_1,\dots, \tau'_t\in \ho^0(Y, \mathcal{I}_{Y-U}\otimes \li^{\otimes N'_0})\subset \ho^0(Y, \li^{\otimes N'_0}),
	\]
	generating $\mathcal{I}_{Y-U}\otimes \li^{\otimes N'_0}$. This means that set theoretically, we have $Y-U=\bigcap_{i}\mathrm{div}(\tau'_i)$. In other words,
	\[
		U=\bigcup_{i}\big( Y-\mathrm{div}(\tau'_i) \big),
	\]
	where $Y-\mathrm{div}(\tau'_i)$ are open subschemes of $Y$. Without loss of generality, we may assume that $U$ itself is one of such open subschemes, i.e. there exists a section $\tau'\in \ho^0(Y, \li^{\otimes N'_0})$ such that $U=Y-\mathrm{div}(\tau')$. We denote $\mathrm{div}(\tau')$ by $D$. 

	Now set $N_0=r N'_0-1$ for some positive integer $r$ such that $N_0\geq M$ and that the sheaf $\mathcal{I}_D\otimes \li^{\otimes N_0}$ is globally generated. Then in particular we can find sections $\tau_1,\dots, \tau_s\in \ho^0(Y, \mathcal{I}_D\otimes \li^{\otimes N_0})\subset \ho^0(Y, \li^{\otimes N_0})$ such that $D=\bigcap_{j=1}^s\mathrm{div}(\tau_j)$ set theoretically. This suggests that 
	\[
		U=\bigcup_{j=1}^s\left( Y-\mathrm{div}(\tau_j)\right).
	\]
	We also set $\tau=(\tau')^r\in \ho^0(Y, \li^{\otimes (N_0+1)})$. In this situation we still have $D=\mathrm{div}(\tau)$ set theoretically. The section $\tau$ and sections $\tau_1,\dots, \tau_s$ are then what we need in the lemma.
\end{proof}

\begin{cor}
	We may assume that the smooth open subscheme $U$ containing $X$ in the statement satisfies the condition in Corollary \ref{trivi} and the condition that we can find a section $\tau\in \ho^0(Y, \li^{\otimes (N_0+1)})$ and sections $\tau_1,\dots, \tau_s\in \ho^0(Y, \li^{\otimes N_0})$ such that
	\[
		U=Y-\mathrm{div}(\tau)=\bigcup_{j=1}^s\left( Y-\mathrm{div}(\tau_j)\right).
	\]
\end{cor}
\begin{proof}[Proof]
	This is a direct consequence of Lemma \ref{cover} and Lemma \ref{complem}.
\end{proof}

For any $d>0$ and any $1\leq j\leq s$, consider the morphism
\begin{eqnarray*}
	\Phi_j: \ho^0(Y,\mathcal{L}^{\otimes d}) & \longrightarrow & \ho^0(U, \Oc_Y) \\
	\sigma\qquad & \longmapsto &\ \ \frac{\sigma \cdot \tau_j^d}{\tau^d}.
\end{eqnarray*}
For simplicity of notations, we don't distinguish morphisms $\Phi_j$ for different $d$. This will not cause any confusion as the source of $\Phi_j$ will be clear by the context. Then for any $\sigma\in \ho^0(Y, \li^{\otimes d})$, we have
\[
	\mathrm{div}\sigma \cap U=\bigcup_{1\leq j\leq s}\Big(U_j\cap \mathrm{div} \Phi_j(\sigma) \Big),
\]
where $U_j:=\left( Y-\mathrm{div}(\tau_j)\right)$. Let $\partial_i\in Der_{\F_q}(\mathcal{O}_U, \mathcal{O}_U)\simeq \Hom_{\mathcal{O}_U}(\Omega^1_{U/\F_q}, \mathcal{O}_U)$ be the dual of $\mathrm{d}t_i\in \ho^0(U, \Omega^1_{U/\F_q})$. If a global section $\sigma\in \ho^0(Y, \li^{\otimes d})$ is such that $\mathrm{div}\sigma\cap X$ is singular at a closed point $x\in X$, then for a $U_j$ containing $x$, we have
\[
	\Phi_j(\sigma)(x)=\left( \partial_1 \Phi_j(\sigma)\right)(x)=\cdots =\left(\partial_m \Phi_j(\sigma) \right)(x)=0.
\]

We want to show that there exists a positive integer $N_1$ such that for each $i$, $\left(\partial_i \Phi_j(\sigma) \right)\cdot \tau^{d+N_1}$ can be extended to a global section of $\li^{\otimes (N_0+1)(d+N_1)}$. To show this, we need to study the derivation map
\[
	\mathrm{d} : \ho^0(U, \mathcal{O}_Y)\longrightarrow \ho^0(U, \Omega_{Y/\F_q}^1).
\]
For any section $f\in \ho^0(U, \mathcal{O}_Y)$, we denote its image under the derivation by $\mathrm{d}f\in \ho^0(U, \Omega_{Y/\F_q}^1)$.
\begin{lem}\label{preextend}
	In the setting of the above corollary, there exists a positive integer $d_0$ such that when $d\geq d_0$, for any $f\in \ho^0(U, \mathcal{O}_Y)$, if $f\cdot \tau^d$ can be extended to a global section in $ \ho^0(Y, \li^{\otimes (N_0+1)d})$, then $\mathrm{d} f\cdot \tau^{d+1}$ can be extended to a global section in $\ho^0(Y, \Omega_{Y/\F_q}^1\otimes \li^{\otimes (N_0+1)(d+1)})$.
\end{lem}
\begin{proof}[Proof]
	By assumption, the sheaf $\li^{\otimes (N_0+1)}$ is very ample. So it induces a closed embedding
	\[
		Y\lhook\joinrel\longrightarrow \pr\left( \ho^0(Y, \li^{\otimes (N_0+1)}) \right).
	\]
	To simplify the notation, we denote the projective space $\pr\left( \ho^0(Y, \li^{\otimes (N_0+1)})\right)$ by $\pr^{K_0}$ with homogeneous coordinates $T_0,T_1,\dots, T_{K_0}$. In particular, let $T_0$ be the section corresponding to $\tau$ in $\ho^0(\pr^{K_0},\mathcal{O}(1))$. Therefore the closed embedding $Y\lhook\joinrel\longrightarrow \pr^{K_0}$ identifies $U$ with a closed subscheme of $\A^{K_0}=\pr^{K_0}-\mathrm{div}\ T_0$. Write $x_i=\frac{T_i}{T_0}$. Then $x_1,\dots, x_{K_0}$ form a system of coordinates of $\A^{K_0}$. Let $d_0>0$ be an integer such that for any $d\geq d_0$, the restriction morphism 
	\[
		\ho^0(\pr^{K_0},\mathcal{O}(d)) \longrightarrow \ho^0(Y, \li^{\otimes (N_0+1)d})
	\]
	is surjective. Take $f\in \ho^0(U, \mathcal{O}_Y)$ and $d\geq d_0$ such that $f\cdot \tau^d$ can be extended to a global section in $ \ho^0(Y, \li^{\otimes (N_0+1)d})$. The surjectivity of the above restriction morphism suggests that we can find $\widetilde{F} \in\ho^0(\pr^{K_0},\mathcal{O}(d))$ whose restriction to $\ho^0(Y, \li^{\otimes (N_0+1)d})$ is the chosen extension of $f\cdot \tau^d$. Set
	\[
		\widetilde{f}= \frac{\widetilde{F}}{T_0^d}\in \ho^0(\A^{K_0},\mathcal{O}_{\pr^{K_0}}).
	\]
	Then $\widetilde{f}$ has image $f$ when restricted to $U$. So $\widetilde{f}= \ho^0(\A^{K_0},\mathcal{O}_{\pr^{K_0}})$ is a section such that $\widetilde{f}\cdot T_0^d$ can be extended to a section in $\ho^0(\pr^{K_0},\mathcal{O}(d))$ and that $\widetilde{f}|_U=f$. 
	
	Now we consider the derivation
	\[
		 \ho^0(\A^{K_0},\mathcal{O}_{\pr^{K_0}}) \longrightarrow  \ho^0(\A^{K_0},\Omega_{\pr^{K_0}/\F_q}^1)
	\]
	sending $\widetilde{f}$ to 
	\[
		\mathrm{d}\widetilde{f}=\sum_{i=1}^{K_0}\left(\frac{\partial}{\partial x_i}\widetilde{f}\right) \mathrm{d}x_i.
	\]
	The fact that $\widetilde{f}\cdot T_0^d$ can be extended to a section in $\ho^0(\pr^{K_0},\mathcal{O}(d))$ means that $\widetilde{f}$ is a polynomial of total degree smaller than or equal to $d$ in $x_1,\dots x_{K_0}$. As for each $i$, $\frac{\partial}{\partial x_i}\widetilde{f}$ is of degree strictly smaller than $\widetilde{f}$ if non-zero, all $\left(\frac{\partial}{\partial x_i}\widetilde{f}\right)\cdot T_0^{d-1}$ can be extended to a global section in $\ho^0(\pr^{K_0},\mathcal{O}(d-1))$. Note that we have a short exact sequence
	\[
		0\longrightarrow \Omega_{\pr^{K_0}/\F_q}^1 \longrightarrow \mathcal{O}_{\pr^{K_0}/\F_q}(-1)^{\oplus (K_0+1)} \longrightarrow \mathcal{O}_{\pr^{K_0}/\F_q} \longrightarrow 0
	\]
	identifying $\Omega_{\pr^{K_0}/\F_q}^1$ with a locally free subsheaf of $\mathcal{O}_{\pr^{K_0}/\F_q}(-1)^{\oplus (K_0+1)}$. Under this identification, for each $1\leq i\leq K_0$ we can write 
	\[
		\mathrm{d}x_i=\mathrm{d} \left( \frac{T_i}{T_0} \right)=\frac{1}{T_0}e_i-\frac{T_i}{T_0^2}e_0, 
	\]
	where $e_0,\dots, e_{K_0}$ is a chosen basis of $ \mathcal{O}_{\pr^{K_0}/\F_q}(-1)^{\oplus (K_0+1)} $. Therefore for each $1\leq i\leq K_0$, $\mathrm{d} x_i\cdot T_0^2$ can be extended to a global section in $\ho^0(\pr^{K_0},\Omega_{\pr^{K_0}/\F_q}^1\otimes \mathcal{O}(2))$. As a consequence, $\mathrm{d}\widetilde{f}\cdot T_0^{d+1}$ can be extended to a global section in the space $\ho^0(\pr^{K_0},\Omega_{\pr^{K_0}/\F_q}^1\otimes \mathcal{O}(d+1))$.

	When restricted to $Y$, we have a natural morphism
	\[
		 \left(\Omega_{\pr^{K_0}/\F_q}^1\otimes \mathcal{O}(d+1)\right)\Big|_Y\simeq \left(\Omega_{\pr^{K_0}/\F_q}^1\right)\Big|_Y\otimes \li^{\otimes(d+1)}\stackrel{r}\longrightarrow \Omega_{Y/\F_q}^1\otimes \li^{\otimes(d+1)},
	\]
	which gives us a section $r(\mathrm{d}\widetilde{f}\cdot T_0^{d+1})\in \ho^0(Y, \Omega_{Y/\F_q}^1\otimes \li^{\otimes(N_0+1)(d+1)})$. To finish the proof, it suffices to show that 
	\[
		r\left(\mathrm{d}\widetilde{f}\cdot T_0^{d+1}\right)\Big|_U=\mathrm{d}f\cdot \tau^{d+1}.
	\]
	But as $r$ is induced by $\left(\Omega_{\pr^{K_0}/\F_q}^1\right)\big|_Y\longrightarrow \Omega_{Y/\F_q}^1$, it commutes with the multiplication by $T_0, \tau$. So the equality is clear. Therefore $r\left(\mathrm{d}\widetilde{f}\cdot T_0^{d+1}\right)$ is an extension of $\mathrm{d}f\cdot \tau^{d+1}$ in $\ho^0(Y, \Omega_{Y/\F_q}^1\otimes \li^{\otimes(N_0+1)(d+1)})$.
\end{proof}

\begin{lem}\label{beforeextend}
	There exists a positive integer $N'_1$ which only depends on the sections $t_1,\dots, t_n\in \ho^0(U, \mathcal{O}_Y)$ and $\tau\in \ho^0(Y, \li^{\otimes (N_0+1)})$ satisfying the following condition : if a section $f \in  \ho^0(U, \mathcal{O}_Y)$ is such that $\mathrm{d}f\cdot \tau^{d}$ can be extended to a global section in $\ho^0(Y, \Omega_{Y/\F_q}^1\otimes \li^{\otimes(N_0+1)d})$, then for any $1\leq i\leq m$ and any $\delta \geq N'_1$, the section $\partial_i f \cdot \tau^{d+\delta}$ can be extended to a global section of $\li^{\otimes (N_0+1)(d+\delta)}$. If this condition is satisfied by $N'_1$, it is also satisfied by any integer bigger than $N'_1$.
\end{lem}
\begin{proof}[Proof]
	Note that for each $1\leq i\leq n$, the dual $\partial_i\in Der_{\F_q}(\mathcal{O}_U, \mathcal{O}_U)\simeq \Hom_{\mathcal{O}_U}(\Omega^1_{U/\F_q}, \mathcal{O}_U)$ of $\mathrm{d}t_i\in \ho^0(U, \Omega^1_{U/\F_q})$ can be regarded as a section in $\ho^0\left( U, \mathcal{H}om_{\mathcal{O}_Y}(\Omega^1_{Y/\F_q}, \mathcal{O}_Y) \right)$. Since $\mathcal{H}om_{\mathcal{O}_Y}(\Omega^1_{Y/\F_q}, \mathcal{O}_Y)$ is coherent and that $U$ is the complement of $\mathrm{div}\tau$ in $Y$, there exists an integer $N'_{1,i}>0$ such that for any $\delta\geq N'_{1,i}$, the section $\partial_i\cdot \tau^{\delta}\in \ho^0\left( U, \mathcal{H}om_{\mathcal{O}_Y}(\Omega^1_{Y/\F_q}, \mathcal{O}_Y)\otimes \li^{(N_0+1)\delta} \right)$ can be extended to a global section in $\ho^0\left( Y, \mathcal{H}om_{\mathcal{O}_Y}(\Omega^1_{Y/\F_q}, \mathcal{O}_Y)\otimes \li^{(N_0+1)\delta} \right)$. Set 
	\[
		N'_1=\max \{ N'_{1,i}\ ;\ 1\leq i\leq n\}.
	\]
	Then when $\delta\geq N'_1$, for any $1\leq i\leq n$ the section $\partial_i\cdot \tau^{\delta}\in \ho^0\left( U, \mathcal{H}om_{\mathcal{O}_Y}(\Omega^1_{Y/\F_q}, \mathcal{O}_Y)\otimes \li^{(N_0+1)\delta} \right)$ can be extended to a global section in 
	\[
		\ho^0\left( Y, \mathcal{H}om_{\mathcal{O}_Y}(\Omega^1_{Y/\F_q}, \mathcal{O}_Y)\otimes \li^{(N_0+1)\delta} \right)\simeq \Hom(\Omega^1_{Y/\F_q}, \li^{(N_0+1)\delta}). 
	\]
	Note that 
	\[
		\Hom(\Omega^1_{Y/\F_q}, \li^{(N_0+1)\delta}) \simeq \Hom(\Omega^1_{Y/\F_q}\otimes \li^{(N_0+1)d}, \li^{(N_0+1)(d+\delta)}).
	\]
	The global section 
	\[
		(\partial_i\cdot \tau^{\delta})\big( \mathrm{d}f\cdot \tau^{d} \big)\in \ho^0(Y, \li^{(N_0+1)(d+\delta)})
	\]
	satisfies
	\[
		\left((\partial_i\cdot \tau^{\delta})\big( \mathrm{d}f\cdot \tau^{d} \big)\right)\Big|_U= \left(\partial_i f \right)\cdot \tau^{d+\delta},
	\]
	which means that $\left(\partial_i f \right)\cdot \tau^{d+\delta}$ can be extended to a global section of $\li^{(N_0+1)(d+\delta)}$. Hence we conclude.
\end{proof}
\begin{lem}\label{extend}
	There exists a positive integer $N_1$ which only depends on the sheaf $\li$, the sections $t_1,\dots, t_n\in \ho^0(U, \mathcal{O}_Y)$ and $\tau\in \ho^0(Y, \li^{\otimes (N_0+1)})$ such that for any $\sigma \in  \ho^0(Y, \li^{\otimes d})$, any $1\leq i\leq m$, $1\leq j\leq s$, the section $\left(\partial_i \Phi_j(\sigma) \right)\cdot \tau^{d+\delta}$ can be extended to a global section of $\li^{\otimes (N_0+1)(d+\delta)}$ for any $\delta \geq N_1$. If this condition is satisfied by $N_1$, it is also satisfied by any integer bigger than $N_1$.
\end{lem}
\begin{proof}[Proof]
	For any $\sigma \in  \ho^0(Y, \li^{\otimes d})$, as $\Phi_j(\sigma)=\frac{\sigma \cdot \tau_j^d}{\tau^d}$, evidently $\Phi_j(\sigma)\cdot \tau^d$ can be extended to a global section in $\ho^0(Y, \li^{\otimes (N_0+1)d})$. Let $d_0$ be the constant defined in Lemma \ref{preextend}. In particular, $\Phi_j(\sigma)\cdot \tau^{d+d_0}$ can also be extended to a global section in $\ho^0(Y, \li^{\otimes (N_0+1)(d+d_0)})$. Then by Lemma \ref{preextend} $\mathrm{d}\Phi_j(\sigma)\cdot \tau^{d+d_0+1}$ can be extended to a global section in $\ho^0(Y, \Omega_{Y/\F_q}^1\otimes \li^{\otimes (N_0+1)(d+d_0+1)})$. 
	Set $N_1=N'_1+d_0+1$. So when $\delta\geq N_1$, $\delta-d_0-1\geq N'_1$. Applying Lemma \ref{beforeextend}, we obtain that for any $\delta\geq N_1$, the section 
	\[
		\left(\partial_i \Phi_j(\sigma) \right)\cdot \tau^{d+\delta}=\left(\partial_i \Phi_j(\sigma) \right)\cdot \tau^{(d+d_0+1)+(\delta-d_0-1)}
	\]
	can be extended to a global section in $\ho^0(Y, \li^{(N_0+1)(d+\delta)})$.
\end{proof}
	
Now we assume that $p$ is the characteristic of $\F_q$ and that $(N_0+1, p)=1$. We also assume that $N_1\geq N_0$. Take $N= (N_0+1)(N_1+p-1)+p$. For any $d\geq N$, there exists an $l_d$ such that $N_1\leq l_d\leq N_1+p$ such that 
\[
	d\ \mathrm{mod}\ p  \equiv (N_0+1)l_d\ \mathrm{mod}\ p.
\]
Set $k_d=\frac{1}{p}[d-(N_0+1)l_d]$. Then for any $d\geq N$, fixing extensions of $t_i\tau^{l_d}$ to global sections for all $1\leq i\leq m$, we can construct morphisms of groups
\begin{eqnarray*}
	\ho^0(Y, \mathcal{L}^{\otimes k_d}) & \longrightarrow & \ho^0(Y, \mathcal{L}^{\otimes d}) \\
	\beta & \longmapsto & \beta^p\cdot t_i\tau^{l_d}
\end{eqnarray*}
for any $1\leq i\leq m$, and 
\begin{eqnarray*}
	\ho^0(Y, \mathcal{L}^{\otimes k_d}) & \longrightarrow & \ho^0(Y, \mathcal{L}^{\otimes d}) \\
	\gamma & \longmapsto & \gamma^p\cdot \tau^{l_d}.
\end{eqnarray*}
We construct a surjective morphism
\begin{eqnarray*}
	\ho^0(Y,\mathcal{L}^{\otimes d})\times \left( \prod_{i=1}^m \ho^0(Y,\mathcal{L}^{\otimes k_d}) \right) \times \ho^0(Y, \mathcal{L}^{\otimes k_d})  \longrightarrow  \ho^0(Y, \mathcal{L}^{\otimes d})
\end{eqnarray*}
which sends $(\sigma_0, (\beta_1,\dots, \beta_m), \gamma)$ to
\[
	\sigma=\sigma_0+\sum_{i=1}^m \beta_i^pt_i\tau^{l_d}+\gamma^p\tau^{l_d}.
\]
Naturally, this morphism commutes with all $\Phi_j$:
\[
	\Phi_j(\sigma)=\Phi_j(\sigma_0)+\sum_{i=1}^m \Phi_j(\beta_i)^pt_i\Phi_j(\tau)^{l_d}+\Phi_j(\gamma)^p\Phi_j(\tau)^{l_d}.
\]
Since
\[
	\partial_i[\Phi_j(\beta_i)^p t_i \Phi_j(\tau)^{l_d}]=\Phi_j(\beta_i)^p \Phi_j(\tau)^{l_d} + l_d \Phi_j(\beta_i)^pt_i\Phi_j(\tau)^{l_d-1}\cdot \partial_i \Phi_j(\tau),
\]
and for $i'\not=i$, 
\[
	\partial_i[\Phi_j(\beta_{i'})^p t_{i'} \Phi_j(\tau)^{l_d}]=l_d \Phi_j(\beta_{i'})^p t_{i'}\Phi_j(\tau)^{l_d-1}\cdot \partial_i \Phi_j(\tau),
\]
the differential of $\Phi_j(\sigma)$ can be written as
\begin{eqnarray*}
	\partial_i\Phi_j(\sigma) &=& \left[ \sum_{i'=1}^m l_d\Phi_j(\beta_{i'})^p t_{i'}\Phi_j(\tau)^{l_d-1}+l_d \Phi_j(\gamma)^p\Phi_j(\tau)^{l_d-1} \right]\cdot \partial_i \Phi_j(\tau)\\
	& & \qquad \qquad\qquad\qquad\qquad\qquad+\partial_i\Phi_j(\sigma_0)+\Phi_j(\beta_i)^p\Phi_j(\tau)^{l_d} \\\\
	&=& \partial_i\Phi_j(\sigma_0)+ \frac{l_d(\Phi_j(\sigma)-\Phi_j(\sigma_0))}{\Phi_j(\tau)}\partial_i\Phi_j(\tau)+ \Phi_j(\beta_i)^p \Phi_j(\tau)^{l_d}.
\end{eqnarray*}
To prove Proposition \ref{hifinite}, for a section $\sigma\in \ho^0(Y, \mathcal{L}^{\otimes d})$, we need to study the singular locus of $\mathrm{div}\sigma\cap X$. Since
\[
	\mathrm{div}\sigma \cap U=\bigcup_{1\leq j\leq s}\Big(U_j\cap \mathrm{div} \Phi_j(\sigma) \Big),
\]
and $X\subset U$,	we have
\[
	\mathrm{Sing}(\mathrm{div}\sigma\cap X)\subset \bigcup_{1\leq j\leq s}\Big( U_j\cap \mathrm{Sing}(\mathrm{div}(\Phi_j(\sigma))\cap X) \Big).
\]
Note that for a $\sigma\in \ho^0(Y, \mathcal{O}_Y)$, $\mathrm{div}(\Phi_j(\sigma))\cap X$ is singular at a point $x\in X$ if and only if
\[
	\Phi_j(\sigma)(x)=\partial_1\Phi_j(\sigma)(x)=\cdots=\partial_m\Phi_j(\sigma)(x)=0,
\]
by conditions on $\partial_i$, $1\leq i\leq n$. Therefore we have
\[
	\mathrm{Sing}(\mathrm{div}(\Phi_j(\sigma))\cap X)=\mathrm{div}(\Phi_j(\sigma))\cap \mathrm{div}(\partial_1\Phi_j(\sigma))\cap\cdots\cap \mathrm{div}(\partial_m\Phi_j(\sigma))
\]
in $U$.
	
Now for any $(\sigma_0, (\beta_1,\dots, \beta_m), \gamma) \in \ho^0(Y,\mathcal{L}^{\otimes d})\times \left( \prod_{i=1}^m \ho^0(Y,\mathcal{L}^{\otimes k_d}) \right) \times \ho^0(Y, \mathcal{L}^{\otimes k_d})$, set 
\[
	g_{j,i}(\sigma_0,\beta_i)=\partial_i \Phi_{j}(\sigma_0)-\frac{l_d\Phi_{j}(\sigma_0)}{\Phi_{j}(\tau)} \partial_i\Phi_{j}(\tau)+\Phi_{j}(\beta_i)^p\Phi_{j}(\tau)^{l_d},
\]
and
\[
	W_{j,i}:=X\cap U_j\cap \{ g_{j,1}=\cdots=g_{j,i}=0 \}.
\]
Then for any $\sigma=\sigma_0+\sum_{i=1}^{n-1} \beta_i^pt_i\tau_{0,p}^{l_d}+\gamma^p\tau_{0,p}^{l_d}$, comparing the expressions of $g_{j,i}$ and $\partial_i\Phi_{j}$, which give
\[
	g_{j,i}(\sigma_0,\beta_i)=\partial_i\Phi_{j}(\sigma)-\frac{l_d\Phi_j(\sigma)}{\Phi_j(\tau)}\partial_i\Phi_j(\tau),
\]
we have
\[
	g_{j,i}(\sigma_0,\beta_i)|_{ \mathrm{div\ } \Phi_{j}(\sigma)}=\partial_i\Phi_{j}(\sigma)|_{ \mathrm{div\ } \Phi_{j}(\sigma)}.
\]
Applying Lemma \ref{extend} to the section $\sigma\in \ho^0(Y, \mathcal{L}^{\otimes d})$, we know that $\left(\partial_i \Phi_j(\sigma) \right)\cdot \tau^{d+\delta}$ can be extended to a global section of $\li^{\otimes (N_0+1)(d+\delta)}$ for any $\delta \geq N_1$. By the same lemma, the section $\partial_i\Phi_j(\tau) \cdot \tau^{(N_0+1)+\delta}$ extends to a global section of $\li^{\otimes (N_0+1)((N_0+1	)+\delta)}$ for any $\delta \geq N_1$. Note that on $U$ we have
\begin{eqnarray*}
	\frac{l_d\Phi_j(\sigma)}{\Phi_j(\tau)}&=& l_d\frac{\sigma \cdot \tau_j^d}{\tau^d} \cdot\left(\frac{\tau \cdot \tau_j^{N_0+1}}{\tau^{N_0+1}}\right)^{-1}\\
	&=&l_d\frac{\sigma \cdot \tau_j^{d-N_0-1}}{\tau^{d-N_0}}.
\end{eqnarray*}
So the section
\begin{eqnarray*}
	\frac{l_d\Phi_j(\sigma)}{\Phi_j(\tau)}\partial_i\Phi_j(\tau)\cdot \tau^{d+\delta} &=& \left( \frac{l_d\Phi_j(\sigma)}{\Phi_j(\tau)}\cdot \tau^{d-N_0}\right)\cdot \left( \partial_i\Phi_j(\tau) \cdot \tau^{(N_0+1)+(\delta-1)} \right)
\end{eqnarray*}
extends to a global section of $\li^{\otimes (N_0+1)(d+\delta)}$ for any $\delta \geq N_1+1$. 
Therefore the section 
\[
	g_{j,i}(\sigma_0,\beta_i)\cdot \tau^{d+\delta}=\left(\partial_i\Phi_{j}(\sigma)-\frac{l_d\Phi_j(\sigma)}{\Phi_j(\tau)}\partial_i\Phi_j(\tau)\right)\cdot \tau^{d+\delta}\in \ho^0(U, \li^{\otimes (N_0+1)(d+\delta)})
\]
can be extended to a global section in $\ho^0(Y, \li^{\otimes (N_0+1)(d+\delta)})$ for any $\delta\geq N_1+1$.
	
\begin{lem}\label{reddim}
	For $0\leq i\leq m-1$, with a fixed choice of $\sigma_0,\beta_1,\dots, \beta_i$ such that $\dim W_{j,i}\leq m-i$, the proportion of $\beta_{i+1}$ in $\ho^0(Y,\mathcal{L}^{\otimes k_d})$ such that $\dim W_{j,i+1} \leq m-i-1$ is  $1-O(d^i\cdot q^{2-\frac{d}{(N_0+1)N_1p}})$, where $p$ is the characteristic of $\F_q$ and the constant involved is independent of $d,q$.
\end{lem}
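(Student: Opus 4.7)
The plan is to fix $\sigma_0,\beta_1,\ldots,\beta_i$ with $\dim W_{j,i}\le m-i$, decompose $W_{j,i}$ into its top-dimensional irreducible components $V_1,\ldots,V_l$ of dimension exactly $m-i$, and count the $\beta_{i+1}\in\ho^0(Y,\mathcal{L}^{\otimes k_d})$ that fail to cut down at least one $V_k$. Since the $\beta_{i+1}$-dependent part of $g_{j,i+1}(\sigma_0,\beta_{i+1})$ is only the summand $\Phi_j(\beta_{i+1})^p\,\Phi_j(\tau)^{l_d}$, the condition $\dim W_{j,i+1}\le m-i-1$ is equivalent to $g_{j,i+1}$ not vanishing identically on any $V_k$.

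The first key step is to bound $l=O(d^i)$ by refined Bezout. Embed $Y$ via $\mathcal{L}^{\otimes(N_0+1)}$ into a fixed projective space. By Lemma~\ref{extend}, each global section $g_{j,k}(\sigma_0,\beta_k)\cdot\tau^{k_dp+N_1}$ for $1\le k\le i$ lies in $\ho^0(Y,\mathcal{L}^{\otimes(N_0+1)(k_dp+N_1)})$, hence defines a divisor on $Y$ of degree $O(d)$ in the embedding, with constant depending only on $(Y,\mathcal{L},N_0,N_1)$. The $V_k$ are top-dimensional irreducible components of $X\cap U_j\cap\bigcap_{k=1}^i\{g_{j,k}=0\}$, so by the refined Bezout theorem \cite[Thm.~12.3]{Fu84} we have $l\le (\deg_{\mathcal{L}^{\otimes(N_0+1)}}X)\cdot O(d^i)=O(d^i)$, uniformly in $d$ and $q$.

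Next, for each $V_k$ I would convert the vanishing of $g_{j,i+1}$ on $V_k$ into a restriction condition on $\beta_{i+1}$. Since $\Phi_j(\tau)=\tau_j^{N_0+1}$ is invertible on $V_k\subset U_j$, the equation $g_{j,i+1}|_{V_k}\equiv 0$ pins down $\Phi_j(\beta_{i+1})^p|_{V_k}$ uniquely; the injectivity of Frobenius on $(\mathcal{O}_{V_k})_{\mathrm{red}}$ then determines $\Phi_j(\beta_{i+1})|_{V_k}$, and therefore $\beta_{i+1}|_{V_k}$, to be a specific section. To bound the $\beta_{i+1}$ restricting to a prescribed value, choose (for $m-i\ge 1$) a zero-dimensional closed subscheme $Z_k\subset V_k$ with $\dim_{\F_q}\ho^0(Z_k,\mathcal{O}_{Z_k})\ge\lfloor k_d/N_0\rfloor$, obtained by iteratively slicing $V_k$ by hyperplanes of the $\mathcal{L}^{\otimes(N_0+1)}$-embedding (after at most a harmless bounded base extension of $\F_q$, which is compatible with Lemma~\ref{hi}); for $m-i=0$ take $Z_k=V_k$. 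Lemma~\ref{hi} applied to $Z_k$ then bounds the proportion of such $\beta_{i+1}$ by $q^{-\lfloor k_d/N_0\rfloor}\le q^{-d/(N_1p(N_0+1))}$ for $d$ large, using $k_d\ge d/p-O(1)$. Summing over $k=1,\ldots,l$ gives the claimed $O(d^i\,q^{-d/(N_1p(N_0+1))})$ bound on the bad proportion.

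The main obstacle will be keeping all constants uniform in $q$ and $d$. The Bezout step succeeds because intersection degrees are computed in the fixed embedding by $\mathcal{L}^{\otimes(N_0+1)}$, whose degree data depend only on $(Y,X,\mathcal{L},N_0)$. The more delicate point is producing $Z_k$ of sufficient length uniformly in $q$: a generic hyperplane need not be defined over $\F_q$, so one works instead with coordinate hyperplanes from the fixed embedding, and if necessary passes to a finite base extension whose degree depends only on $\deg V_k$ (which is itself controlled independently of $q$). This extension is absorbed into the implicit constant in the final $O(\cdot)$ estimate, which then depends only on the geometry of $(Y,X,\mathcal{L})$ and on $N_0,N_1$, as required.
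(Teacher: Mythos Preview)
Your overall architecture matches the paper exactly: bound the number $l$ of top-dimensional components of $(W_{j,i})_{\mathrm{red}}$ by refined B\'ezout (yielding $l=O(d^i)$), observe that the ``bad'' set for each component $V_k$ is either empty or a coset of the kernel of restriction $\ho^0(Y,\mathcal L^{\otimes k_d})\to\ho^0(V_k,\mathcal L^{\otimes k_d})$, and then bound the proportion of that coset. The divergence is in how you bound the codimension of this kernel.

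There is a genuine gap in your construction of $Z_k$. Slicing $V_k$ by $\dim V_k$ hyperplanes in the $\mathcal L^{\otimes(N_0+1)}$-embedding produces a zero-dimensional scheme of length $\deg_{\mathcal L^{\otimes(N_0+1)}}V_k$, not something growing like $k_d/N_0$. In the worst case $i=0$, the $V_k$ are just the irreducible components of $X\cap U_j$, whose degrees are constants depending only on $(X,\mathcal L,N_0)$; so your $h_{Z_k}$ is $O(1)$ and Lemma~\ref{hi} only yields the useless bound $q^{-O(1)}$. The appeal to a ``bounded base extension'' does not help here: the problem is not rationality of the hyperplanes but the intrinsic degree of $V_k$. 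A correct variant of your route would be to take $Z_k$ to be a sufficiently thick infinitesimal neighbourhood of a closed point of $V_k$, or a closed point of large degree (these exist since $\dim V_k\ge 1$), so that $h_{Z_k}\ge\lfloor k_d/N_0\rfloor$; then Lemma~\ref{hi} applies as you intended.

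The paper avoids this detour entirely. Since $\mathrm d t_1,\dots,\mathrm d t_m$ freely generate $\Omega^1_{X/\F_q}$, the map $(t_1,\dots,t_m)\colon X\to\A^m$ is \'etale, hence quasi-finite; so on each positive-dimensional $V_e$ some $t_i$ is non-constant. Writing $k_d=k_{d,1}(N_0+1)+k_{d,2}N_0$ with $k_{d,2}$ minimal, the sections
\[
\tau^{k_{d,1}}\tau_j^{k_{d,2}},\ t_i\,\tau^{k_{d,1}}\tau_j^{k_{d,2}},\ \dots,\ t_i^{\lfloor k_{d,1}/N_1\rfloor}\,\tau^{k_{d,1}}\tau_j^{k_{d,2}}
\]
lie in $\ho^0(Y,\mathcal L^{\otimes k_d})$ (using that $t_i\tau^{N_1}$ extends globally) and restrict to linearly independent functions on $V_e$. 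This directly gives codimension $\ge\lceil k_{d,1}/N_1\rceil\sim d/(N_1p(N_0+1))$, with no auxiliary $Z_k$ and no base extension. Your approach, once repaired, reaches the same conclusion but through a longer path; the paper's explicit family of sections is both shorter and more transparent about where the exponent $d/(N_1p(N_0+1))$ comes from.
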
 
\begin{proof}[Proof]
	Let $V_1,\dots V_s$ be the $(m-i)$-dimensional $\F_q$-irreducible components of the reduced scheme $(W_{j.i})_{\mathrm{red}}$. The closure of the $V_i$'s in $Y$ are contained in the set of $(m-i)$-dimensional $\F_q$-irreducible components of $\overline{X}\cap \mathrm{div}\ g_{j,1}\tau^{d+N_1+1}\cap \cdots \cap \mathrm{div}\ g_{j,i}\tau^{d+N_1+1}$. Since the sections $g_{j,l}\tau^{d+N_1+1}$ are global sections of $\ho^0(Y, \li^{\otimes (N_0+1)(d+N_1+1)})$, and that $\li^{\otimes (N_0+1)}$ induces a closed embedding of $Y$ into $\pr(\ho^0(Y, \li^{\otimes (N_0+1)})^{\vee})$, the sections $g_{j,l}\tau^{d+N_1+1}$ can be extended uniquely to sections of $\ho^0(\pr(\ho^0(Y, \li^{\otimes (N_0+1)})^{\vee}), \mathcal{O}(d+N_1+1))$.
	Applying refined Bézout's theorem (see \cite[Theorem 12.3]{Fu84} for a precise statement), we get
	\[
		s\leq (\deg \overline{X})(\deg \ g_{j,1}\tau^{d+N_1+1})\cdots (\deg \ g_{j,i}\tau^{d+N_1+1})=(\deg \overline{X})(d+N_1+1)^i=O(d^i),
	\]
	where coefficients involved in $O(d^i)$ only depends on $\deg \overline{X}$. 
	Since for $1\leq e\leq s$ we have $\dim V_e\geq 1$, so for each $V_e$ there exists a $t_i$ such that $t_i|_{V_e}$ is not constant. We want to bound
	\[
		G_{e,j}^{\mathrm{bad}}:=\{ \beta_{i+1}\in \ho^0(Y, \li^{\otimes k_d}) \ ;\ g_{j, i+1}(\sigma_0, \beta_{i+1}) \text{ is identically }0 \text{ on }V_e \}. 
	\]
	Note that if $\beta_{i+1},\beta'_{i+1}\in G_{e,j}^{\mathrm{bad}}$, then $\beta_{i+1}-\beta'_{i+1}$ is identically $0$ on $V_e$. In fact, as on $V_e$
	\begin{eqnarray*}
		& & g_{j,i+1}(\sigma_0, \beta_{i+1})-g_{j,i+1}(\sigma_0, \beta'_{i+1}) \\
		&=& \Phi_j(\beta_{i+1})^p\Phi_j(\tau)^{l_d}-\Phi_j(\beta'_{i+1})^p\Phi_j(\tau)^{l_d} \\
		&=& \Phi_j(\beta_{i+1}-\beta'_{i+1})^p\Phi_j(\tau)^{l_d},
	\end{eqnarray*}
	and $\Phi_j(\tau)$ is everywhere non-zero, we have that if $G_{e,j}^{\mathrm{bad}}\not=\emptyset$, then it is a coset of the subspace of sections of $\ho^0(Y, \li^{\otimes k_d})$ which vanishes on $V_e$. When $d$ is large, we can decompose $k_d$ by $k_d=k_{d,1}(N_0+1)+k_{d,2}N_0$ with $k_{d,1},k_{d,2}\geq 0$ and $k_{d,2}$ minimal among all the decompositions. Note that if $k_{d,2}\geq N_0+1$, we can replace $k_{d,2}$ by $k_{d,2}-(N_0+1)$ and $k_{d,1}$ by $k_{d,1}+N_0$, which gives another decomposition of $k_d$. So when $k_{d,2}$ is minimal, we have $k_{d,2}\leq N_0$ and therefore 
	\[
		k_{d,1}\geq \frac{k_d-N_0^2}{N_0+1}.
	\]
	Then the sections
	\[
		\tau^{k_{d,1}}\tau_j^{k_{d,2}}, t_i\tau^{k_{d,1}}\tau_j^{k_{d,2}},\dots, t_i^{\lfloor \frac{k_{d,1}}{N_1} \rfloor}\tau^{k_{d,1}}\tau_j^{k_{d,2}}\in \ho^0(Y, \li^{\otimes k_d})
	\]
	restricting to $V_e$ are linearly independent. So the codimension of the subspace of sections in $\ho^0(Y, \li^{\otimes k_d})$ vanishing on $V_e$ is bigger than or equal to $\lfloor \frac{k_{d,1}}{N_1} \rfloor+1$. As $\lfloor \frac{k_{d,1}}{N_1} \rfloor+1\geq \frac{k_{d,1}}{N_1} $, this implies that the probability that $g_{j, i+1}$ vanishes on one of the $V_e$'s is at most 
	\[
		s\cdot q^{-\lfloor \frac{k_{d,1}}{N_1} \rfloor-1}\leq s\cdot q^{-\frac{k_{d,1}}{N_1}}.
	\]
	As $k_{d,1}\geq \frac{k_d-N_0^2}{N_0+1}$, $N_1\geq N_0$ and 
	\[
		k_d=\frac{1}{p}[d-(N_0+1)l_d]>\frac{1}{p}[d-(N_0+1)(N_1+p)],
	\]
	we get
	\begin{eqnarray*}
		s\cdot q^{- \frac{k_{d,1}}{N_1}}&\leq&s\cdot q^{-\frac{k_d-N_0^2}{(N_0+1)N_1}}\\
		&\leq&s\cdot q^{\frac{N_0^2}{(N_0+1)N_1}-\frac{d-(N_0+1)(N_1+p)}{ (N_0+1)N_1p}}\\
		&\leq&s\cdot q^{\frac{N_0}{N_1}+\frac{N_1+p}{N_1p}-\frac{d}{ (N_0+1)N_1p}}\\
		&\leq& s\cdot q^{2-\frac{d}{ (N_0+1)N_1p}} \\
		&=&O(d^i q^{2-\frac{d}{ (N_0+1)N_1p}}),
	\end{eqnarray*}
	where the constant involved in only depends on the degree of $\overline{X}$ as a closed subscheme of $\pr(\ho^0(Y, \li^{\otimes (N_0+1)})^{\vee})$, hence is independent of $d$ and $q$. Since $\dim W_{j,i+1} \leq m-i-1$ if and only if $g_{j,i+1}$ does not vanishing on any $V_e$, we get that the proportion of $\beta_{i+1}$ in $\ho^0(Y,\mathcal{L}^{\otimes k_d})$ such that $\dim W_{j,i+1} \leq m-i-1$ is $1-O(d^i\cdot q^{2-\frac{d}{(N_0+1)N_1p}})$.
\end{proof}
		
\begin{lem}\label{finiteset}
	With a fixed choice of $\sigma_0, \beta_1,\dots, \beta_m$ such that $W_{j,m}$ is finite, we have for $d$ sufficiently large, the proportion of $\gamma $ in $\ho^0(Y,\mathcal{L}^{\otimes k_d})$ such that 
	\[
		\mathrm{div}\sigma\cap W_{j,m}\cap \left\{ x\in |X|\ ;\ \deg x\geq\frac{d}{(m+1)N} \right\}=\emptyset
	\]
	is
	\[
		1-O(d^m q^{-\frac{d}{(m+1)N}}),
	\]
	where $\sigma=\sigma_0+\sum_{i=1}^{m} \beta_i^pt_i\tau^{l_d}+\gamma^p\tau^{l_d}$ and the constant involved only depends on the degree of $\overline{X}$ as a closed subscheme of $\pr(\ho^0(Y, \li^{\otimes (N_0+1)})^{\vee})$, hence is independent of $d,q$.
\end{lem}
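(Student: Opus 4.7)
The plan is to combine a Bézout-type cardinality bound on $W_{j,m}$ with a per-point probability estimate coming from Lemma \ref{hi}, then close with a union bound.

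First I would bound the cardinality of $W_{j,m}$, following the same device used in Lemma \ref{reddim}. Recall that each $g_{j,i}(\sigma_0,\beta_i)\cdot \tau^{d+N_1}$ extends to a global section of $\li^{\otimes (N_0+1)(d+N_1)}$, and via the closed embedding $Y\hookrightarrow \pr(\ho^0(Y,\li^{\otimes(N_0+1)})^{\vee})$ becomes a hypersurface section of degree $d+N_1$. Since $W_{j,m}=X\cap U_j\cap\{g_{j,1}=\cdots=g_{j,m}=0\}$ is zero-dimensional, refined Bézout (\cite[Theorem 12.3]{Fu84}) applied to $\overline{X}$ (of dimension $m$) intersected with the $m$ hypersurfaces yields
\[
|W_{j,m}|\leq (\deg\overline{X})(d+N_1)^m=O(d^m),
\]
with a constant independent of $d$ and $q$.

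Next, fix a closed point $x\in W_{j,m}$ of degree $>\frac{d-N}{(m+1)N}$ and estimate the proportion of $\gamma\in\ho^0(Y,\li^{\otimes k_d})$ forcing $\sigma(x)=0$. Since $x\in U$ we have $\tau(x)\neq 0$, and the equation $\sigma(x)=0$ rewrites as
\[
\gamma(x)^p \;=\; \frac{-\sigma_0(x)}{\tau(x)^{l_d}}-\sum_{i=1}^m \beta_i(x)^p t_i(x),
\]
a specific element of $\ho^0(x,\li^{\otimes pk_d})$. Bijectivity of Frobenius on the finite field $\kappa(x)$ lets us extract a unique prescribed value for $\gamma(x)\in \ho^0(x,\li^{\otimes k_d})$. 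Applying Lemma \ref{hi} to $Z=x$ (zero-dimensional, lying in the smooth locus $U$), the restriction map $\ho^0(Y,\li^{\otimes k_d})\to \ho^0(x,\li^{\otimes k_d})$ has image of dimension at least $\min(\lfloor k_d/N\rfloor,\deg x)$, so the proportion of $\gamma$ realizing the prescribed value is at most $q^{-\min(\lfloor k_d/N\rfloor,\deg x)}$. Using $k_d=(d-(N_0+1)l_d)/p$ with $l_d\leq N_1+p$ and the inequality $N\geq (N_0+1)(N_1+p-1)+p$, a direct numerical comparison shows that for $d$ large enough, both $\lfloor k_d/N\rfloor$ and $\deg x$ exceed $\frac{d-N}{(m+1)N}$, so the bound becomes $q^{-\frac{d-N}{(m+1)N}}$.

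Finally, summing over the $O(d^m)$ closed points of $W_{j,m}$ of degree $>\frac{d-N}{(m+1)N}$, a union bound gives
\[
\frac{\#\big\{\gamma\in\ho^0(Y,\li^{\otimes k_d})\ ;\ \exists x\in W_{j,m},\ \deg x>\tfrac{d-N}{(m+1)N},\ \sigma(x)=0\big\}}{\#\ho^0(Y,\li^{\otimes k_d})} \;=\; O\!\left(d^m q^{-\frac{d-N}{(m+1)N}}\right),
\]
yielding the stated estimate. The main obstacle is the intertwining of two independent constraints: the Frobenius-twisted reduction from $\gamma(x)^p$ to $\gamma(x)$ (which relies crucially on $\kappa(x)$ being a perfect field), and the numerical matching $\lfloor k_d/N\rfloor\geq \frac{d-N}{(m+1)N}$, which is what dictates the particular lower bound on $N$ chosen linear in $p$; once these are in place, the argument is a direct Bézout-plus-union-bound.
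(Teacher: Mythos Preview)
Your approach is essentially the same as the paper's: a Bézout bound on $\#W_{j,m}$, a per-point coset/Frobenius argument reducing to Lemma \ref{hi}, a numerical check that the exponent from Lemma \ref{hi} dominates $\frac{d-N}{(m+1)N}$, and a union bound. The paper phrases the per-point step as ``$H^{\mathrm{bad}}$ is a coset of $\Ker(\mathrm{ev}_x\circ\Phi_j)$'' rather than explicitly invoking Frobenius bijectivity, but this is the same observation.

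There is one notational slip worth flagging. The constant ``$N$'' in the conclusion of Lemma \ref{hi} is the integer from Lemma \ref{ample}, which in the present context is called $N_0$; the symbol $N$ in Lemma \ref{finiteset} denotes the larger constant $N\geq (N_0+1)(N_1+p-1)+p$. Your bound from Lemma \ref{hi} should therefore read $q^{-\min(\lfloor k_d/N_0\rfloor,\deg x)}$, not $q^{-\min(\lfloor k_d/N\rfloor,\deg x)}$. This matters: with the big $N$ the inequality $\lfloor k_d/N\rfloor\geq \frac{d-N}{(m+1)N}$ would force roughly $d/p\geq d/(m+1)$, i.e.\ $p\leq m+1$, which is false in general. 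With $N_0$ in the denominator the comparison goes through exactly as in the paper, since $N\geq (N_0+1)(N_1+p-1)\geq N_0 p$ gives $\frac{d-N}{(m+1)N}\leq \frac{d}{2N_0 p}\leq \lfloor k_d/N_0\rfloor$ for large $d$. Once this is corrected your argument is complete and matches the paper's.
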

\begin{proof}[Proof]
	Applying once more Bézout's theorem, we obtain that 
	\[
		\#W_{j,m}=O(d^m)
	\]
	with constant involved independent of $q$. For any $x\in W_{j,m}$, the set $H^{\mathrm{bad}}$ of sections $\gamma\in \ho^0(Y, \li^{\otimes k_d})$ such that $x$ is contained in $\mathrm{div}\sigma$ with $\sigma=\sigma_0+\sum_{i=1}^{m} \beta_i^pt_i\tau^{l_d}+\gamma^p\tau^{l_d}$ is a coset of 
	\[
		\mathrm{Ker}\left( \mathrm{ev}_x\circ \Phi_j: \ho^0(Y, \mathcal{L}^{\otimes k_d})\rightarrow \kappa(x) \right),
	\]
	where $\kappa(x)$ is the residual field of $x$ and $\mathrm{ev}_x$ is the evaluation at $x$. If moreover $\deg x\geq\frac{d}{(m+1)N}$, Lemma \ref{hi} tells us that 
	\[
		\frac{\#H^{\mathrm{bad}}}{\#\ho^0(Y, \li^{\otimes k_d})} \leq q^{-\min( \lfloor\frac{k_d}{N_0}\rfloor, \frac{d}{(m+1)N})}. 
	\]
	Thus when $d$ tends to infinity, the proportion of sections $\gamma \in \ho^0(Y, \li^{\otimes k_d})$ such that for $\sigma=\sigma_0+\sum_{i=1}^{m} \beta_i^pt_i\tau^{l_d}+\gamma^p\tau^{l_d}$,
	\[
		\mathrm{div}\sigma\cap W_{j,m}\cap \Big\{ x\in |X|\ ;\ \deg x\geq\frac{d}{(m+1)N} \Big\}\not=\emptyset,
	\]
	is bounded above by
	\begin{eqnarray*}
		\#W_{j,m}\cdot q^{-\min( \lfloor\frac{k_d}{N_0}\rfloor, \frac{d}{(m+1)N})}= O(d^m q^{-\min( \lfloor\frac{k_d}{N_0}\rfloor, \frac{d}{(m+1)N})}),
	\end{eqnarray*}
	where the constant involved is independent of $d,q$. Since $k_d=\frac{1}{p}[d-(N_0+1)l_d]$ with $N_1\leq l_d< N_1+p$, we have $k_d\geq \frac{1}{p}[d-(N_0+1)(N_1+p)]$. Then 
	\[
		\lfloor\frac{k_d}{N_0}\rfloor\geq \frac{d-(N_0+1)(N_1+p)}{N_0 p}-1\geq \frac{d-(N_0+1)(N_1+2p)}{N_0 p}\geq \frac{d}{2N_0p}
	\]
	for large $d$. However, since $N= (N_0+1)(N_1+p-1)+p$, we have 
	\begin{eqnarray*}
		\frac{d}{(m+1)N}= \frac{d}{(m+1)\big[(N_0+1)(N_1+p-1)+p\big]}\leq \frac{d}{2(N_0+1)(N_1+p-1)}.
	\end{eqnarray*}
	When $d$ is large, clearly 
	\[
		 \frac{d}{2(N_0+1)(N_1+p-1)}\leq \frac{d}{2N_0p}.
	\]
	Therefore $\lfloor\frac{k_d}{N_0}\rfloor\geq \frac{d}{(m+1)N}$, and the proportion of sections $\gamma \in \ho^0(Y, \li^{\otimes k_d})$ such that for $\sigma=\sigma_0+\sum_{i=1}^{m} \beta_i^pt_i\tau^{l_d}+\gamma^p\tau^{l_d}$,
	\[
		\mathrm{div}\sigma\cap W_{j,m}\cap \{ x\in |X|\ ;\ \deg x\geq\frac{d}{(m+1)N} \}=\emptyset
	\]
	is
	\[
		1-O(d^m q^{-\frac{d}{(m+1)N}}),
	\]
	where the constant involved is independent of $d,q$.
\end{proof}

\begin{proof}[Proof of Proposition \ref{hifinite}]
	Choose 
	\[
		\Big(\sigma_0, (\beta_1,\dots, \beta_m), \gamma\Big)\in \ho^0(Y,\mathcal{L}^{\otimes d})\times \left( \prod_{i=1}^m \ho^0(Y,\mathcal{L}^{\otimes k_d}) \right) \times \ho^0(Y, \mathcal{L}^{\otimes k_d})
	\]
	uniformly at random. Lemma \ref{reddim} and \ref{finiteset} show that as $d\rightarrow \infty$, writing 
	\[
		\sigma=\sigma_0+\sum_{i=1}^{m} \beta_i^pt_i\tau^{l_d}+\gamma^p\tau^{l_d},
	\]
	the proportion of $(\sigma_0, (\beta_1,\dots, \beta_m), \gamma)$ such that 
	\[
		\dim W_{j,i}=m-i,\ \ 0\leq i\leq m
	\]
	and
	\[
		\mathrm{div}\sigma\cap W_{j,m}\cap \left\{ x\in |X|\ ;\ \deg x\geq\frac{d}{(m+1)N} \right\}=\emptyset,
	\]
	is
	\begin{eqnarray*}
		& &\left[ \prod_{i=0}^{m-1}\left(1-O(d^i\cdot q^{2-\frac{d}{(N_0+1)N_1p}})\right) \right]\cdot \left(1-O(d^m q^{-\frac{d}{(m+1)N}}) \right) \\
		&=&\left(1-O(d^{m-1}\cdot q^{2-\frac{d}{(N_0+1)N_1p}})\right)\cdot \left(1-O(d^m q^{-\frac{d}{(m+1)N}}) \right).
	\end{eqnarray*}
	Since for $d$ sufficiently large, 
	\begin{eqnarray*}
		\frac{d}{(m+1)N}&=& \frac{d}{(m+1)\big[(N_0+1)(N_1+p-1)+p\big]}\\
		&\geq& \frac{d}{(m+1)\big[(N_0+2)(N_1+p-1)\big]}\\
		&\geq&\frac{d}{(m+1)(N_0+2)N_1 p}
	\end{eqnarray*}
	and
	\[
		\frac{d}{(N_0+1)N_1p}-2\geq \frac{d}{2(N_0+2)N_1 p}\geq \frac{d}{(m+1)(N_0+2)N_1 p}
	\]
	the probability above can be written as 
	\begin{eqnarray*}
		& &\left(1-O\Big(d^{m-1}\cdot q^{-\frac{d}{(m+1)(N_0+2)N_1 p}}\Big)\right)\cdot \left(1-O\Big(d^{m}\cdot q^{-\frac{d}{(m+1)(N_0+2)N_1 p}}\Big)\right)\\
		&=&1-O\Big(d^{m}\cdot q^{-\frac{d}{(m+1)(N_0+2)N_1 p}}\Big).
	\end{eqnarray*}
	On the other hand, for $\sigma=\sigma_0+\sum_{i=1}^{m} \beta_i^pt_i\tau^{l_d}+\gamma^p\tau^{l_d}$, as 
	\[
		g_{j,i}(\sigma_0,\beta_i)|_{ \mathrm{div\ } \Phi_{j}(\sigma)}=\partial_i\Phi_{j}(\sigma)|_{ \mathrm{div\ } \Phi_{j}(\sigma)},
	\]
	we have
	\begin{eqnarray*}
		& &\mathrm{Sing}(\mathrm{div}(\Phi_j(\sigma))\cap X)\cap U_j \\
		&=& \mathrm{div}(\Phi_j(\sigma))\cap U_j\cap \{ \partial_1\Phi_j(\sigma)=\cdots=\partial_m\Phi_j(\sigma)=0 \}\\
		&=&\mathrm{div}(\Phi_j(\sigma))\cap U_j\cap \{ g_{j,1}(\sigma_0,\beta_1)=\cdots=g_{j,m}(\sigma_0,\beta_m)=0\} \\
		&=& \mathrm{div}(\Phi_j(\sigma))\cap W_{j,m}.
	\end{eqnarray*}
	Since $\sigma=\sigma_0+\sum_{i=1}^{m} \beta_i^pt_i\tau^{l_d}+\gamma^p\tau^{l_d}$ defines a surjective homomorphism of groups
	\begin{eqnarray*}
		\ho^0(Y,\mathcal{L}^{\otimes d})\times \left( \prod_{i=1}^m \ho^0(Y,\mathcal{L}^{\otimes k_d}) \right) \times \ho^0(Y, \mathcal{L}^{\otimes k_d})  \longrightarrow  \ho^0(Y, \mathcal{L}^{\otimes d}),
	\end{eqnarray*}
	we obtain that when $d\rightarrow \infty$, the proportion of $\sigma\in \ho^0(Y, \mathcal{L}^{\otimes d})$ such that
	\[
		\mathrm{Sing}\Big(\mathrm{div}\big(\Phi_j(\sigma)\big)\cap X\Big) \cap U_j\cap \left\{ x\in |X|\ ;\ \deg x\geq\frac{d}{(m+1)N} \right\}=\emptyset
	\]
	is
	\[
		1-O\Big(d^{m}\cdot q^{-\frac{d}{(m+1)(N_0+2)N_1 p}}\Big).
	\]
	Since 
	\[
		\mathrm{Sing}(\mathrm{div}\sigma\cap X)\subset \bigcup_{j} \Big( U_j\cap \mathrm{Sing}(\mathrm{div}(\Phi_j(\sigma))\cap X)\Big),
	\]
	setting $c=\frac{1}{(m+1)(N_0+2)N_1}$, we have when $d\rightarrow \infty$, the proportion of $\sigma\in \ho^0(Y, \mathcal{L}^{\otimes d})$ such that
	\[
		\mathrm{Sing}\Big(\mathrm{div}(\sigma)\cap X\Big) \cap \left\{ x\in |X|\ ;\ \deg x\geq\frac{d}{(m+1)N} \right\}=\emptyset
	\]
	is
	\[
		1-O\Big(d^{m}\cdot q^{-c\frac{d}{p}}\Big),
	\]
	which finishes the proof of Proposition \ref{hifinite}.
\end{proof}

\begin{cor}\label{hidim}
	In the same setting as in Proposition \ref{hifinite}, there exists a constant $c>0$ independent of $d,q$ such that 
	\[
		\frac{ \#\{ \sigma \in \ho^0(Y, \mathcal{L}^{\otimes d})\ ;\ \dim\left(\mathrm{Sing}\Big( \mathrm{div}\sigma\cap X \Big)\right)>0 \}}{\# \ho^0(Y, \mathcal{L}^{\otimes d})}=O(d^{m}\cdot q^{-c\frac{d}{p}}),
	\]
	the constant involved is independent of $d,q$.
\end{cor}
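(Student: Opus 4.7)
The plan is to reduce Corollary \ref{hidim} directly to Proposition \ref{hifinite} via an elementary containment of sets. The key observation is that any positive-dimensional closed subscheme of a scheme of finite type over the finite field $\F_q$ must contain closed points of arbitrarily large degree.

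More precisely, I would argue as follows. Fix $d$ large enough that the bound $\frac{d-N}{(m+1)N}$ is larger than any prescribed constant. Let $\sigma \in \ho^0(Y, \li^{\otimes d})$ be such that $\dim\bigl(\mathrm{Sing}(\mathrm{div}\sigma\cap X)\bigr) > 0$. Then $W := \mathrm{Sing}(\mathrm{div}\sigma\cap X)$ is a closed subscheme of $X$ of positive dimension over $\F_q$. Since $X$ is of finite type over $\F_q$, for every integer $r\geq 0$ the number of closed points of $X$ (and hence of $W$) of degree at most $r$ is finite; on the other hand, a positive-dimensional scheme over $\F_q$ has infinitely many closed points (for instance, by Lang--Weil the number of $\F_{q^e}$-points grows at least like $q^{e\dim W}$, forcing closed points of unbounded degree). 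Consequently $W$ contains a closed point $x$ of degree strictly greater than $\tfrac{d-N}{(m+1)N}$, and therefore $\sigma \in \mathcal{Q}_d^{\mathrm{high}}$.

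This shows the inclusion
\[
\{\sigma \in \ho^0(Y, \li^{\otimes d}) \ ;\ \dim(\mathrm{Sing}(\mathrm{div}\sigma\cap X))>0\} \subset \mathcal{Q}_d^{\mathrm{high}},
\]
and Proposition \ref{hifinite} then yields the bound $O(d^m \cdot q^{-c\frac{d}{p}})$ with the constants depending only on the data already fixed (in particular independent of $d$ and $q$), which is exactly the claim of the corollary.

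There is no real obstacle here; the corollary is a direct consequence of the containment above combined with the already-established Proposition \ref{hifinite}. The only point requiring a word of justification is that a positive-dimensional closed subscheme of $X$ necessarily has closed points of arbitrarily large degree, but this is an elementary consequence of Lang--Weil (or, even more elementarily, of the facts that a scheme of finite type over $\F_q$ has only finitely many closed points of each bounded degree while a positive-dimensional scheme over $\F_q$ has infinitely many closed points in total).
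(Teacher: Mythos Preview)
Your proof is correct and takes essentially the same approach as the paper: the paper's proof consists of exactly the containment $\{\sigma : \dim(\mathrm{Sing}(\mathrm{div}\sigma\cap X))>0\}\subset \mathcal{Q}_d^{\mathrm{high}}$ followed by an appeal to Proposition \ref{hifinite}. You have in fact been more explicit than the paper in justifying why this containment holds.
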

\begin{proof}
	This follows directly from Proposition \ref{hifinite} once we notice that
	\[
		\left\{ \sigma \in \ho^0(Y, \mathcal{L}^{\otimes d})\ ;\ \dim\big(\mathrm{Sing}\big( \mathrm{div}\sigma\cap X \big)\big)>0 \right\}\subset  \mathcal{Q}_{d}^{\mathrm{high}}.
	\]
\end{proof}
	
\subsection{Proof of Bertini smoothness theorem over finite fields}

\begin{proof}[Proof of Theorem \ref{bertinifini}]
	The zeta function $\zeta_X(s)$ is convergent for $s>\dim X$. So in particular $\zeta_X(m+1)^{-1}=\prod_{x\in |X|}(1-q^{-(m+1)\deg x})$ is convergent. By Proposition \ref{smallf},
	\[
		\lim_{r\rightarrow \infty}\mu(\mathcal{P}_{\leq r})=\zeta_X(m+1)^{-1}.
	\]
	On the other hand, by construction of $\mathcal{P}_{d,\leq r},\mathcal{Q}_{d, >r}^{\mathrm{med}},\mathcal{Q}_{d}^{\mathrm{high}}$, we have
	\begin{eqnarray*}
		\mathcal{P}_{d} \subset \mathcal{P}_{d,\leq r} \subset \mathcal{P}_{d}\cup \mathcal{Q}_{d, >r}^{\mathrm{med}}\cup \mathcal{Q}_{d}^{\mathrm{high}}.
	\end{eqnarray*}
	Hence
	\[
		\left|  \frac{ \#\mathcal{P}_{d}}{\# \ho^0(Y, \mathcal{L}^\otimes{d})} -\frac{\#\mathcal{P}_{d,\leq r}}{\#\ho^0(Y, \li^{\otimes d})} \right|\leq  \frac{ \#\mathcal{Q}_{d, >r}^{\mathrm{med}}}{\# \ho^0(Y, \mathcal{L}^\otimes{d})}+\frac{ \#\mathcal{Q}_{d}^{\mathrm{high}}}{\# \ho^0(Y, \mathcal{L}^\otimes{d})}.
	\]
	When $d\rightarrow \infty$, by Proposition \ref{mediumf} and \ref{hifinite} we have
	\[
		\frac{ \#\mathcal{Q}_{d, >r}^{\mathrm{med}}}{\# \ho^0(Y, \mathcal{L}^\otimes{d})}+\frac{ \#\mathcal{Q}_{d}^{\mathrm{high}}}{\# \ho^0(Y, \mathcal{L}^\otimes{d})}
		=O(q^{-r})+O(d^{m}\cdot q^{-c\frac{d}{p}}).
	\]
	Hence $\overline{\mu}(\mathcal{P})$ and $\underline{\mu}(\mathcal{P})$ differ from $\mu(\mathcal{P}_{\leq r})$ by at most $\overline{\mu}(\mathcal{Q}_{>r}^{\mathrm{med}})+\overline{\mu}(\mathcal{Q}_{d}^{\mathrm{high}})=O(q^{-r})$. So letting $r$ tend to $\infty$, we get
	\[
		\mu(\mathcal{P})=\lim_{r\rightarrow \infty}\mu(\mathcal{P}_{\leq r})=\zeta_X(m+1)^{-1}.
	\]
\end{proof}

\end{document}